\newcommand{\Bmu}{\mbox{$\raisebox{-0.59ex}
		{$l$}\hspace{-0.18em}\mu\hspace{-0.88em}\raisebox{-0.98ex}{\scalebox{2}
			{$\color{white}.$}}\hspace{-0.416em}\raisebox{+0.88ex}
		{$\color{white}.$}\hspace{0.46em}$}{}}
\sloppy \theoremstyle{plain}
\newtheorem{theorem}{Theorem}[subsection]
\newtheorem{thm}[theorem]{Theorem}
\newtheorem{proposition}[theorem]{Proposition}
\newtheorem{corollary}[theorem]{Corollary}
\newtheorem{lemma}[theorem]{Lemma}
\newtheorem{definition}[theorem]{Definition}
\newtheorem{defn}[theorem]{Definition}
\newtheorem{notation}[theorem]{Notation}
\newtheorem{remark}[theorem]{Remark}
\newtheorem{prop}[theorem]{Proposition}
\DeclareMathOperator{\GL}{GL}
\DeclareMathOperator{\Aut}{Aut}
\DeclareMathOperator{\Hom}{Hom}
\DeclareMathOperator{\End}{End}
\DeclareMathOperator{\Ad}{Ad}
\DeclareMathOperator{\Res}{Res}
\DeclareMathOperator{\trace}{trace}
\DeclareMathOperator{\rk}{rk}
\DeclareMathOperator{\Ind}{Ind}
\DeclareMathOperator{\Spec}{Spec}
\DeclareMathOperator{\Gal}{Gal}
\DeclareMathOperator{\Fl}{Fl}
\DeclareMathOperator{\Irr}{Irr}
\newcommand{\N}{\mathbb N}
\newcommand{\bfM}{\mathbf M }
\definecolor{dblue}{RGB}{6,69,173}
\definecolor{lblue}{RGB}{11,0,128}
\newcommand{\colorlinks}{true}
\newcommand{\linkcolor}{lblue}
\newcommand{\citecolor}{green}
\newcommand{\urlcolor}{dblue}
\newcommand{\linkbordercolor}{red}
\newcommand{\citebordercolor}{green}
\newcommand{\urlbordercolor}{cyan}
\newcommand{\hrefHid}[2]{
\hypersetup{urlbordercolor={1 1 1}}%
\hypersetup{urlcolor=black}%
\href{#1}{#2}%
\hypersetup{urlbordercolor=\urlbordercolor}%
\hypersetup{urlcolor=\urlcolor}%
}
\newcommand{\inhref}[2]{\hyperref[#1]{#2}}
\newcommand{\inhrefHid}[2]{%
\hypersetup{linkbordercolor={1 1 1}}%
\hypersetup{linkcolor=black}%
\inhref{#1}{#2}%
\hypersetup{linkbordercolor=\linkbordercolor}%
\hypersetup{linkcolor=\linkcolor}%
}
\newcommand{\defHref}[3]{\newcommand{#1}[1][#3]{\href{#2}{##1}}}
\newcommand{\defInhref}[3]{\newcommand{#1}[1][#3]{\inhref{#2}{##1}}}
\newcommand{\defHrefHid}[3]{\newcommand{#1}[1][#3]{\hrefHid{#2}{##1}}}
\newcommand{\defInhrefHid}[3]{\newcommand{#1}[1][#3]{\inhrefHid{#2}{##1}}}
\newcommand{\defHrefBoth}[3]{%
\expandafter\defHrefHid \csname #3Hid\endcsname {#1}{#2}%
\expandafter\defHref \csname #3Vis\endcsname {#1}{#2}%
}
\newcommand{\defInhrefBoth}[3]{%
  \expandafter\defInhrefHid \csname #3Hid\endcsname {#1}{#2}%
  \expandafter\defInhref \csname #3Vis\endcsname {#1}{#2}%
}
\newcommand{\defHrefBothVis}[3]{%
\defHrefBoth{#1}{#2}{#3}%
\expandafter\defHref \csname #3\endcsname {#1}{#2}%
}
\newcommand{\defInhrefBothVis}[3]{%
  \defInhrefBoth{#1}{#2}{#3}%
  \expandafter\defInhref \csname #3\endcsname {#1}{#2}%
}
\newcommand{\defHrefBothHid}[3]{%
\defHrefBoth{#1}{#2}{#3}%
\expandafter\defHrefHid \csname #3\endcsname {#1}{#2}%
}
\newcommand{\defInhrefBothHid}[3]{%
  \defInhrefBoth{#1}{#2}{#3}%
  \expandafter\defInhrefHid \csname #3\endcsname {#1}{#2}%
}
\sloppy \theoremstyle{plain}
\newtheorem*{theorem*}{Theorem}
\newtheorem*{remark*}{Remark}
\newtheorem*{conjecture*}{Conjecture}
\newtheorem{introtheorem}{Theorem}
\newtheorem{introcorollary}[introtheorem]{Corollary}
\newtheorem{introconjecture}[introtheorem]{Conjecture}
\newcommand{\C}{\mathbb C}
\DeclareMathOperator{\spec}{Spec}
\DeclareMathOperator{\Rad}{Rad}
\newcommand{\ind}{\operatorname{ind}}
\newcommand{\cM}{{\mathcal M}}
\newcommand{\cK}{{\mathcal K}}
\newcommand{\cT}{{\mathcal T}}
\renewcommand{\dim}{{\operatorname{dim}}}
\renewcommand{\Hom}{{\operatorname{Hom}}}
\newcommand{\irr}{\operatorname{Irr}}
\newcommand{\Rami}[1]{{{#1}}}
\newcommand{\co}[1]{{{#1}}}
\newcommand{\NextVer}[1]{{}}
\newcommand{\bG}{\mathbf{G}}
\newcommand{\bfG}{\mathbf{G}}
\newcommand{\bH}{\mathbf{H}}
\newcommand{\bN}{\mathbf{N}}
\newcommand{\bfH}{\mathbf{H}}
\newcommand{\bfT}{\mathbf{T}}
\newcommand{\bfX}{\mathbf{X}}
\newcommand{\bfB}{\mathbf{B}}
\newcommand{\bK}{\mathbf{K}}
\newcommand{\bfK}{\mathbf{K}}
\newcommand{\bS}{\mathbf{S}}
\newcommand{\bX}{\mathbf{X}}
\newcommand{\bF}{\mathbb{F}}
\newcommand{\F}{\mathbb{F}}
\newcommand{\Z}{\mathbb{Z}}
\newcommand{\Q}{\mathbb{Q}}
\newcommand{\fL}{\mathfrak{L}}
\newcommand{\fM}{\mathfrak{M}}
\newcommand{\fK}{\mathfrak{K}}
\newcommand{\alg}{\mathrm{rd}}
\newcommand{\ralg}{\overline{\alg}}
\newcommand{\cS}{{\mathcal S}}
\renewcommand{\cR}{{\mathcal R}}
\newcommand{\cE}{{\mathcal E}}
\newcommand{\cF}{{\mathcal F}}
\newcommand{\cG}{{\mathcal G}}
\newcommand{\cI}{{\mathcal I}}
\newcommand{\calH}{{\mathcal H}}
\newcommand{\LP}{{C_{LP0}}}
\newcommand{\LPAd}{{C_{LP0}^{adj}}}
\newcommand{\LPss}{{C_{LP0}^{ss}}}
\newcommand{\LPsym}{{C_{LP}}}
\newcommand{\mon}{C_{mon0}}
\newcommand{\monSym}{C_{mon}}
\newcommand{\fX}{{\mathfrak X}}
\newcommand{\eps}{{\varepsilon}}
\newenvironment{customthm}[1]
  {\innercustomthm}
  {\endinnercustomthm}
\title{Bounds on multiplicities of symmetric pairs of finite groups}
\author{Avraham Aizenbud}
\address{Avraham Aizenbud,
Faculty of Mathematics and Computer Science, Weizmann Institute of Science, Israel. }
\email{aizenr@gmail.com}
\urladdr{\url{http://www.aizenbud.org}}
\author{Nir Avni}
\address{Nir Avni, Department of Mathematics, Northwestern University, Evanston, IL, USA.}
\email{avni.nir@gmail.com}
\urladdr{\url{http://math.northwestern.edu/\~nir}}
\keywords{Representations of finite groups, symmetric pairs, compact $p$-adic groups, harmonic analysis on spherical spaces}
\subjclass[2010]{20C15, 20G25, 43A85}
\newtheorem{cor}[theorem]{Corollary}
\begin{document}
\maketitle
\begin{abstract} Let $\Gamma$ be a finite group, let $\theta$ be an involution of $\Gamma$, and let $\rho$ be an irreducible complex representation of $\Gamma$. We bound $\dim \rho^{\Gamma^{\theta}}$ in terms of the smallest dimension of a faithful $\F_p$-representation of $\Gamma/\Rad_p(\Gamma)$, where $p$  is any odd prime and $\Rad_p(\Gamma)$ is the maximal normal $p$-subgroup of $\Gamma$.

This implies, in particular, that if $\bG$ is a group scheme over $\mathbb{Z}$ and $\theta$ is an involution of $\mathbf{G}$, then the multiplicity of any irreducible representation in $C^\infty \left( \mathbf{G}(\mathbb{Z}_p)/ \mathbf{G} ^{\theta}(\mathbb{Z}_p) \right)$ is bounded, uniformly in $p$.
\end{abstract}

\tableofcontents 
\section{Introduction}
 
The main result of this paper is the following:

\begin{introtheorem}[see \S \ref{ssec:ded.str} below]\label{thm:main.str} There is an increasing function $C^{fin}:\N\to \N$
such that, for any
\begin{itemize}
\item Odd prime $p$,
\item Positive integer $d$,
\item Finite group $\Gamma$,
\item Normal $p$-subgroup $N\lhd \Gamma$,
\item Embedding $\Gamma/N \hookrightarrow GL_d(\F_p)$,
\item Involution $\theta$ of $\Gamma$, 
\item Irreducible representation $\rho$ of $\Gamma$,
\end{itemize}
the space $\rho ^{\Gamma ^ \theta}$ of $\Gamma ^ \theta$-invariant vectors of $\rho$ has dimension at most $C^{fin}(d)$.
\end{introtheorem}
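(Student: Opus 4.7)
The strategy is a Clifford-theoretic reduction to a Gelfand-pair result for odd $p$-groups, with the residual combinatorics controlled by the structure of $\Gamma/N \hookrightarrow GL_d(\F_p)$.

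First I would establish the key preliminary statement: for any finite $p$-group $M$ with $p$ odd and any involution $\theta$ of $M$, the pair $(M,M^\theta)$ is a Gelfand pair, i.e., $\dim\psi^{M^\theta}\le 1$ for every $\psi\in\Irr(M)$. The proof is Gelfand's trick with the anti-involution $g\mapsto\theta(g^{-1})$: one verifies every $M^\theta$-double coset is preserved by this anti-involution using odd $p$, which allows one to solve equations of the form $y\theta(y)^{-1}=x$ by induction along the lower central series of $M$.

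Next, since $\theta(\Rad_p(\Gamma))$ is a normal $p$-subgroup of $\Gamma$ it equals $\Rad_p(\Gamma)$, so I may replace $N$ by $\Rad_p(\Gamma)$ and assume $\theta(N)=N$. Decomposing $\rho|_N$ by Clifford's theorem, write $\rho=\Ind_T^\Gamma\tilde\rho$ where $T=\Stab_\Gamma(\psi)$ for some $\psi\in\Irr(N)$ appearing in $\rho|_N$. Mackey's formula gives
\[
\dim\rho^{\Gamma^\theta}=\sum_{[g]\in\Gamma^\theta\backslash\Gamma/T}\dim(\tilde\rho^g)^{\Gamma^\theta\cap gTg^{-1}}.
\]
A summand can be nonzero only if $(\psi^g)^{N^\theta}\ne 0$, which by the odd-$p$ Gelfand pair is a restrictive ``sphericity'' condition on $\psi^g$ inside the $\Gamma$-orbit $\mathcal{O}$ of $\psi$. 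The number of $\Gamma^\theta$-orbits of spherical elements in $\mathcal{O}$ equals the number of $\F_p$-points of a symmetric-space-type subvariety of $\Gamma/T$, which I would bound uniformly in $p$ by a function of $d$ using a Lang--Steinberg-type argument inside $GL_d(\F_p)$.

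For each contributing class, after conjugating I may assume $T$ and $\psi$ are $\theta$-stable. Writing $\tilde\rho\cong\tilde\psi\otimes\sigma$ for an extension $\tilde\psi$ of $\psi$ (possibly projective) and a (projective) representation $\sigma$ of $T/N$, the Gelfand pair gives $\dim \tilde\psi^{N^\theta}\le 1$, reducing the problem to bounding $\dim\sigma^{(T/N)^\theta}$, which I would attack by applying the theorem inductively to the smaller group $T/N\hookrightarrow GL_d(\F_p)$. The main obstacle is that the naive $d$-induction does not decrease when passing from $\Gamma/N$ to $T/N$: I would need to strengthen the inductive statement (allowing projective representations, and inducting on a finer invariant such as the dimension of the Zariski closure in $GL_{d,\F_p}$ or the position of $\Gamma/N$ in a Larsen--Pink-style filtration), and to lift projective extensions $\tilde\psi$ to a suitable central extension to handle the possibly $p$-torsion Schur multiplier.
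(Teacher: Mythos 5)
Your proposal captures the right skeleton---Gelfand property for odd $p$-groups, Clifford/Mackey reduction, and induction on a finer Larsen--Pink-style invariant (the paper's $\ralg_p$)---but it leaves three pieces undone that are not merely technical, and one of your proposed fixes would not work as stated.

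\textbf{The double-coset count.} You propose to bound the number of contributing Mackey cosets by counting $\F_p$-points of a ``symmetric-space-type subvariety of $\Gamma/T$'' via a Lang--Steinberg argument. But $\Gamma$ is an abstract finite group and $T$ an abstract subgroup; $\Gamma/T$ is a finite set, not a variety, and in general there is no algebraic group whose $\F_p$-points recover $\Gamma$ or $T$ exactly (Larsen--Pink only gives approximations after passing to a bounded-index subgroup and modding out a $p$-group). The paper instead uses a \emph{Lapid--Prasad criterion} (Corollary~\ref{cor:gel.Lap.Pra}(2)): a nonzero summand forces $(\tau^g)^*\circ\theta\simeq\tau^g$, and a direct computation identifies the set of such double cosets with $\ker\bigl(H^1(S_2,\Delta)\to H^1(S_2,\Gamma)\bigr)$. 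One then bounds $|H^1(S_2,\Delta)|$ (Corollary~\ref{cor:h1.her}), which requires the $S_2$-equivariant Larsen--Pink machinery of \S\ref{sec:LP} (including Lemma~\ref{lem:norm.sub}) to replace $\Delta$ with a bounded-index subgroup that is a big subgroup of $\bH(\F_p)$ for a reductive $\bH$, and \emph{there} Lang's theorem applies. Without the Lapid--Prasad criterion you do not even know which cosets to count, and without Larsen--Pink you cannot geometrize.

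\textbf{The non-decreasing case of the induction.} You observe correctly that the invariant may fail to decrease when passing to the stabilizer $T$. The paper resolves this by a case split: if $\Rad_p(\Gamma)\ne\Rad_p(T)$, then Corollary~\ref{cor:LP.ind.sym} produces a $\theta$-stable bounded-index normal subgroup $T^\circ$ with strictly smaller $\ralg_p$, and induction applies; if $\Rad_p(\Gamma)=\Rad_p(T)$, the invariant truly does not drop, and the argument must terminate by a different route. That route is the \emph{trivial-$p$-radical bound on twisted multiplicities} (Corollary~\ref{cor:nr}), proved by reducing, via Larsen--Pink, to finite groups of Lie type and then running a character-sheaf argument (Appendix~\ref{sec:Shai}, a twisted variant of \cite{She}). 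This is the genuine ``base case'' of the method, and nothing in your proposal substitutes for it. Note also that the multiplicity needed here is \emph{twisted} (because the one-dimensional $N^\theta$-invariant line in $\tilde\psi$ carries a character of $T^\theta/N^\theta$), so the untwisted statement would not suffice.

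\textbf{The projective extension / Schur multiplier.} You flag the issue but do not resolve it. To replace the projective extension $\tilde\psi$ by an honest representation and split $\sigma$ off as a representation of $T/N$, one needs $H^2(T/N,\Bmu_{p^\infty})=0$. The paper proves this for $p$ large relative to $\ralg_p$ (Proposition~\ref{prop:H2}), again via Larsen--Pink reduction to semisimple groups plus Steinberg's vanishing of $H^2$ for the universal cover; for small $p$ one bounds $|\Gamma/\Rad_p(\Gamma)|$ directly. Without this, your decomposition $\tilde\rho\cong\tilde\psi\otimes\sigma$ with $\sigma\in\Irr(T/N)$ is not available, and the induction does not close.

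In short, your outline points in the right direction but it is missing the Lapid--Prasad cohomological identification of the contributing cosets, the entire trivial-$p$-radical/character-sheaf base case, and the $H^2$-vanishing input; and the proposed Lang--Steinberg count on $\Gamma/T$ would need to be replaced by the $H^1(S_2,\Delta)$ bound since $\Gamma/T$ is not a variety.
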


As a corollary, we deduce the following

\begin{introcorollary}[see \S\ref{ssec:ded.str} below]\label{cor:main} For every integer $d$, there is an integer $\Lambda$ such that, if 
\begin{itemize}
\item $p$ is an odd prime, 
\item $F$ is a purely ramified extension of $\Q_p$,
\item $\mathbf{G}$ is a connected linear algebraic group over $F$ whose reductive quotient has dimension at most $d$,
\item $K \subset \mathbf{G}(F)$ is a compact subgroup,
\item $\theta$ is an involution of $K$,
\item $\rho$ is an irreducible representation of $K$,
\end{itemize}
then $$\dim \left( \rho^{K^\theta} \right) \leq \Lambda.$$
\end{introcorollary}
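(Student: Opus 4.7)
The plan is to reduce Corollary~\ref{cor:main} to Theorem~\ref{thm:main.str} by passing to a suitable finite quotient of $K$. Since $K$ is profinite and $\rho$ is continuous, $\ker\rho$ is open in $K$ and $\rho$ factors through the finite group $K/\ker\rho$. My goal is to find an open normal subgroup $N \lhd K$ with $N \subset \ker\rho$ such that: (i) $N$ is $\theta$-invariant and pro-$p$; (ii) $\Gamma := K/N$ contains a normal $p$-subgroup whose quotient embeds in $GL_{d'}(\F_p)$ for some $d' = d'(d)$ independent of $p$; and (iii) $\rho^{K^\theta} = \rho^{\Gamma^{\bar\theta}}$, where $\bar\theta$ is the involution of $\Gamma$ induced by $\theta$.

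The central step is the construction of a characteristic pro-$p$ normal subgroup $K_0 \lhd K$ (the pro-$p$ radical) with $K/K_0 \hookrightarrow GL_{d'}(\F_p)$, based on the structure of compact subgroups of $\mathbf{G}(F)$. Let $\mathbf{U}$ be the unipotent radical of $\mathbf{G}$ and $\mathbf{H} := \mathbf{G}/\mathbf{U}$ the reductive quotient, of dimension $\leq d$. The subgroup $K \cap \mathbf{U}(F)$ is pro-$p$, since $\mathbf{U}(F)$ is an iterated extension of copies of $F$ whose compact subgroups lie in $\mathcal{O}_F$, a pro-$p$ group because $F$ has residue characteristic $p$. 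The image $\bar K$ of $K$ in $\mathbf{H}(F)$ is compact, hence contained in some parahoric $P$ of $\mathbf{H}(F)$; since $F/\Q_p$ is totally ramified, the residue field is $\F_p$, and the Levi quotient $P/P^+$ is $\bar L(\F_p)$ for a connected reductive $\F_p$-group $\bar L$ of dimension at most $\dim \mathbf{H} \leq d$. As any such $\bar L$ admits a faithful representation of dimension bounded by a function $d'(d)$ of $d$ alone (uniformly in the base field), $\bar L(\F_p) \hookrightarrow GL_{d'}(\F_p)$; pulling back yields a normal pro-$p$ subgroup $K_1 \lhd K$ of finite index with $K/K_1 \hookrightarrow GL_{d'}(\F_p)$. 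Passing to the (characteristic, hence $\theta$-invariant) pro-$p$ radical $K_0 \supseteq K_1$ preserves this embedding dimension via a standard composition-factors argument: $K_0/K_1 = O_p(K/K_1)$, and for any finite $G \subset GL_n(\F_p)$ the quotient $G/O_p(G)$ embeds block-diagonally into $GL_n(\F_p)$, because normal $p$-subgroups act trivially on every irreducible $\F_p[G]$-module, so $G/O_p(G)$ embeds into the product of the $GL$'s of the composition factors of $\F_p^n$.

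With $K_0$ in hand, set $N := K_0 \cap \ker\rho \cap \theta(\ker\rho)$: this is $\theta$-invariant, pro-$p$ (as a subgroup of $K_0$), open in $K$, and contained in $\ker\rho$. Letting $\Gamma := K/N$ and $\bar\theta$ be the induced involution, the subgroup $K_0/N$ is a normal $p$-subgroup of $\Gamma$ with $\Gamma/(K_0/N) \cong K/K_0 \hookrightarrow GL_{d'}(\F_p)$, and Theorem~\ref{thm:main.str} yields $\dim \rho^{\Gamma^{\bar\theta}} \leq C^{fin}(d'(d))$. For the identification $\dim\rho^{K^\theta} = \dim\rho^{\Gamma^{\bar\theta}}$, I would verify surjectivity of $K^\theta \to \Gamma^{\bar\theta}$: given $kN \in \Gamma^{\bar\theta}$, the element $c := k^{-1}\theta(k) \in N$ satisfies $\theta(c) = c^{-1}$, and since $N$ is pro-$p$ with $p$ odd, the Schur--Zassenhaus theorem applied to $\langle\theta\rangle \ltimes N$ (equivalently, the vanishing of $H^1(\langle\theta\rangle, N)$) produces $n \in N$ with $c = n\theta(n)^{-1}$, so that $kn^{-1}$ is a $\theta$-fixed representative of $kN$. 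Thus $\Lambda := C^{fin}(d'(d))$ works.

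The main obstacle I expect is the second step --- producing the embedding $K/K_0 \hookrightarrow GL_{d'}(\F_p)$ with $d'$ depending only on $d$ and not on $p$. This combines the Bruhat--Tits description of the Levi quotient of a parahoric with the uniform (over all prime fields) bound on the minimal faithful-representation dimension of a connected reductive group of dimension $\leq d$. Once these ingredients are in place, the remaining reduction to Theorem~\ref{thm:main.str} is essentially formal.
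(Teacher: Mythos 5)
Your proposal is correct and reduces to Theorem~\ref{thm:main.str} in essentially the same way as the paper, but the route you take for the crucial embedding step is genuinely different. The paper's own argument never invokes Bruhat--Tits theory: it composes the reductive quotient $\psi:\mathbf{G}\to\mathbf{R}$ with the faithful $F$-representation $\varphi:\mathbf{R}\hookrightarrow GL_{C^{lin}(d)}$ from Lemma~\ref{cor:faithful.rep.local}, conjugates $\varphi\psi(K)$ into $GL_{C^{lin}(d)}(O_F)$, sets $M=K\cap(\varphi\psi)^{-1}(K_1)$ with $K_1$ the first congruence subgroup, and obtains $K/M\hookrightarrow GL_{C^{lin}(d)}(O_F)/K_1=GL_{C^{lin}(d)}(\F_p)$ purely from $O_F/\mathfrak m_F=\F_p$. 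It then takes $P=L\cap\theta(L)$ with $L=M\cap\ker\rho$ and applies Theorem~\ref{thm:main.str} to $\Gamma=K/P$ with normal $p$-subgroup $M/P$. Your version instead passes through the reductive quotient of a parahoric of $\mathbf{H}(F)$ and then applies Lemma~\ref{cor:faithful.rep.local} over $\F_p$; this is workable, but it drags in the structure theory of stabilizers in the building --- in particular one must handle the difference between full point-stabilizers and connected parahorics and the possible disconnectedness of the special fiber, exactly the subtleties you yourself flag as the main obstacle, and which the paper's direct approach sidesteps entirely. Your detour through the pro-$p$ radical $K_0$ and the $O_p$/composition-factors argument is also avoidable: intersecting $K_1$ with $\theta(K_1)$ and $\ker\rho$, $\theta(\ker\rho)$ already gives a $\theta$-invariant open normal pro-$p$ $N$ with $K_1/N$ a (not necessarily $\theta$-invariant, but that is not required by Theorem~\ref{thm:main.str}) normal $p$-subgroup of $K/N$. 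On the other hand, your explicit verification that $K^\theta\twoheadrightarrow\Gamma^{\bar\theta}$ --- via $H^1(\langle\theta\rangle,N)=1$ for odd pro-$p$ $N$, which is Lemma~\ref{lem:H1odd} passed to the limit --- addresses a point the paper leaves implicit and is genuinely needed, since a priori Theorem~\ref{thm:main.str} only bounds $\dim\rho^{\Gamma^{\bar\theta}}$, which is a subspace of $\rho^{K^\theta}$, not the reverse.
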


\subsection {Background and motivation} Let $G$ be a group and let $X$ be a transitive $G$-space. A basic problem of representation theory is to compute the multiplicities with which irreducible representations of $G$ appear in the space of functions on $X$. This problem can be studied in several settings. In each setting, one considers a different kind of function space. For an example in the algebraic setting, if $G$ is a connected reductive algebraic group over $\mathbb{C}$ and $X$ is a spherical $G$-variety (this means that the Borel subgroup of $G$ has an open orbit in $X$), then $\mathbb{C}[X]$ is multiplicity-free as a $G$-representation.

Multiplicities for spherical $G$-varieties are of great interest in other settings. In non-algebraic settings, these multiplicities may be greater than one. However, we make the following:

\begin{introconjecture} \label{conj:bdd.mult.spherical} Let $\mathbf{G}$ be a reductive group scheme over $\mathbb{Z}$ and let $\bX$ be a $\mathbf{G}$-scheme. Assume that $\mathbf{X}(\mathbb{C})$ is a spherical $\mathbf{G}(\mathbb{C})$-space. Then there is an integer $C$ such that, if $R$ is either a finite field, a local field of large enough characteristic, or the ring of integers in such a local field, and $\rho$ is an irreducible\footnote{if $R$ is a local field, we also assume smooth and admissible} representation $\bG(R)$, then $$\dim \Hom (\rho,C^\infty(\bX(R)) ) <C.$$
\end{introconjecture}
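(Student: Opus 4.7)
The plan is to follow the strategy by which the paper deduces Corollary B from Theorem A, with a finite-group statement for general spherical pairs in place of Theorem A. The three cases of $R$ in the conjecture---finite field, local field of large residual characteristic, and the ring of integers of such a local field---can all be reduced to representations of the finite groups $\bG(\mathcal{O}/\varpi^n)$. When $R=\F_q$ this is tautological, and when $R$ is a local field $F$ with ring of integers $\mathcal{O}$ and uniformizer $\varpi$, Frobenius reciprocity at a point of $\bX(R)$ with stabilizer $\bH$ expresses $\dim \Hom(\rho, C^\infty(\bX(R)))$ as a sum of dimensions $\dim \sigma^{\bH(\mathcal{O})}$ over the irreducible constituents $\sigma$ of $\rho|_{\bG(\mathcal{O})}$. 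Each such $\sigma$ factors through some $\bG(\mathcal{O}/\varpi^n)$, whose quotient by the maximal normal $p$-subgroup embeds in $GL_d(\F_p)$ with $d$ depending only on $\dim \bG$, by the same mechanism that yields Corollary B.

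The second and principal step is to prove a spherical analog of Theorem A: whenever $\Gamma$ has a normal $p$-subgroup $N$ with $\Gamma/N\hookrightarrow GL_d(\F_p)$, and $H\leq\Gamma$ is the $\F_q$-points of a closed subgroup $\bH$ for which $\bG/\bH$ is spherical, the dimension $\dim \rho^H$ is bounded by a function of $d$ alone. The strategy would mirror that of Theorem A, combining Mackey-type bookkeeping of $H$-double cosets in $\Gamma$ with geometric input from sphericity---most crucially Brion's theorem that a spherical $\bG$-variety has only finitely many Borel orbits over an algebraically closed field---made uniform in $q$. Where the proof of Theorem A uses the involution to identify $\bG^\theta$-orbits on $\bG$ with $\theta$-twisted conjugacy classes, one would instead invoke Brion's finiteness together with the Luna--Vust classification of spherical embeddings to control the relevant double-coset pattern, reducing to a bound on invariants along a bounded number of parabolic-type subgroups.

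The hard part is that in the symmetric case the involution provides rigid character-theoretic information (via $\theta$-twisted trace identities, or Gelfand-pair type arguments) that translates orbit bounds into genuine multiplicity bounds; for general spherical $\bH$ no such structure is available. Converting a uniform Brion-type orbit count into a bound on $\dim \rho^H$ that is uniform in both $p$ and $q$, and that survives the descent through the $p$-radical $N$ of $\Gamma$, is where I expect the real obstruction to lie. One plausible substitute is to parametrize the irreducibles of the reductive quotient $\Gamma/N$ via Deligne--Lusztig theory by pairs $(T,\chi)$ and bound $\dim \rho^H$ by a weighted count of $H$-orbits on appropriate Deligne--Lusztig varieties; but making this uniform in $q$ and then lifting through $N$---presumably by an orbit method on $N$ combined with an inductive argument on the derived length---is likely to require techniques genuinely beyond the symmetric case, and indeed might require restricting the conjecture to spherical $\bH$ that are \emph{wavefront} or satisfy an analogous regularity hypothesis.
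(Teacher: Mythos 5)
This statement is labeled as a \emph{conjecture} in the paper, and the paper does not prove it: the authors explicitly state that the full conjecture is open, that the non-archimedean local field case is known only under restrictions, that the finite field case was settled in \cite{AA_bnd,She}, and that their own contribution (Theorem~\ref{thm:main.str} and Corollary~\ref{cor:main}) handles only symmetric pairs over rings of integers of purely ramified extensions of $\Q_p$. So there is no ``paper proof'' to compare against; what you have written is a program, not a proof, and you say so yourself in the last paragraph.

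There is also a concrete error in the first step of your reduction. You claim that when $R=F$ is a local field, Frobenius reciprocity reduces $\dim\Hom_{\bG(F)}(\rho,C^\infty(\bX(F)))$ to a sum of $\dim\sigma^{\bH(\mathcal O)}$ over irreducible constituents $\sigma$ of $\rho|_{\bG(\mathcal O)}$. This is not correct. For the non-compact group $\bG(F)$ acting on $\bX(F)$, the multiplicity is controlled by $\bH(F)$-invariant functionals on $\rho$ (the full $F$-points of the stabilizer, not the $\mathcal O$-points), and this does not decompose along the restriction of $\rho$ to the maximal compact $\bG(\mathcal O)$. If such a reduction worked, the local-field case of the conjecture would follow from the finite-group case, and the authors make clear in the introduction that it does not: they list the non-archimedean local case as a separate open level of the conjecture, with essentially no results in the non-multiplicity-one setting. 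The genuinely hard ingredient in the $p$-adic setting (comparable to Casselman-type finiteness or the geometry of the wavefront set) is absent from your outline.

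Your second step --- a spherical analog of Theorem~\ref{thm:main.str} for finite groups --- also lacks the crucial mechanism. In the symmetric case the paper's argument hinges on three facts that are specific to involutions: the Gelfand and Lapid--Prasad properties for odd-order groups (Corollary~\ref{cor:gel.Lap.Pra}), the bound on $H^1(S_2,\Delta)$ (Corollary~\ref{cor:h1.her}) which controls the number of contributing double cosets in Mackey's formula, and the $S_2$-equivariant Larsen--Pink machinery (Corollary~\ref{cor:LP.ind.sym}). You correctly note that for general spherical $\bH$ none of these structures is available, and you do not propose a working substitute; Brion's orbit-finiteness gives a bound on Borel orbits but, as you say, does not directly convert into a multiplicity bound after descent through the $p$-radical. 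Identifying this obstruction is a fair assessment of the state of the problem, but it confirms that the proposal does not close the gap.
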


One can study variants of this conjecture in various levels.
\begin{itemize}
\item The most basic level is when both $R$ and $\rho$ are fixed. Here the only relevant cases are when $G(R)$ is non-compact (so $R$ is a local field). This case was mostly done, see \cite{vdB87,Del,SV}. 
\item The next level  is when only $R$ is fixed and $C$ is required to be independent of $\rho$. The conjecture is non-trivial only if $R$ is infinite. The main known results in this level are for the archimedian case. See  \cite{vdB87,KO,KS16,AGM}. 
\item The last level is the full conjecture where both $R$ and $\rho$ vary. Until recently, the only known cases of this level were cases where the spherical spaces are multiplicity free (Gelfand pairs) and related situations. Although there are many known Gelfand pairs (see e.g.~%
\cite{GK,Sha,vD86,Fli91,BvD94,Nie,Yak,AGRS, AGS,OS,AG_AMOT,AG_HC,AG_RegPar,AAG,Zha,JSZ2,JSZ1,AS,AGJ,AG_M1J,SZ,Aiz,CS,Car,Rub}, and the reference therein), general spherical spaces are not multiplicity free. 
\item Conjecture \ref{conj:bdd.mult.spherical} when $R$ ranges over all finite fields was recently proved in \cite{AA_bnd,She}.
\end{itemize}

Corollary \ref{cor:main} is a first step in generalizing \cite{AA_bnd,She} from finite fields to rings of integers in local fields. More precisely, we work with rings of integers of purely unramified extensions of $\mathbb{Q}_p$ and restrict the class of spherical spaces to the class of symmetric spaces, which are spaces of the form $\mathbf{X}=\mathbf{G} / \mathbf{G} ^ \theta$, where $\theta$ is an involution. If $O$ is the ring of integers in a non-archimedean local field and $\mathbf{X}=\mathbf{G} / \mathbf{G} ^ \theta$ is a symmetric space, then $\mathbf{X}(O)$ is a union of a bounded number of sets of the form $\mathbf{G}(O)/\mathbf{G}(O)^{\theta_i}$, for some involutions $\theta_i$ of $\mathbf{G}$, and
\[
\dim \Hom \left( \rho,C^\infty(\mathbf{G}(O)/\mathbf{G}(O)^{\theta_i})\right) = \dim \rho ^{\mathbf{G}(O)^{\theta_i}},
\]
for which Corollary \ref{cor:main} applies.


\begin{remark} Originally, we were interested only in the special case of Corollary \ref{cor:main} in which $K=\mathbf{G}(O_F)$. However, since our argument is inductive, it turns out to be easier to prove a more general claim.
\end{remark}

\subsection{The Larsen--Pink Theorem} A central ingredient in the proof of Theorem \ref{thm:main.str} is a theorem of \cite{LP} roughly stating that finite subgroups of $\GL_d(\F_p)$ are close to groups of $\F_p$-points of connected algebraic subgroups of  $\GL_d$. We use the Larsen--Pink theorem in several ways:
\begin{itemize}
\item The Larsen--Pink theorem attaches an algebraic group of $\GL_n$ to finite subgroups of $\GL_n(\mathbb{F}_p)$, and we prove Theorem \ref{thm:main.str} by induction on the dimension on this algebraic group. In particular, the Larsen--Pink theorem implies that the lengths of decreasing chains of perfect subgroups of $\GL_d(\F_p)$ are bounded when we vary $p$. 
\item It allows us to reduce statements about finite groups with no normal $p$-subgroups to finite groups of Lie type. We use this to prove the main theorem for groups with trivial $p$-radical (See \S\ref{sec:triv.prad}) and to get bounds on various cohomology groups in \S\ref{sec:H1} and \S\ref{sec:H2}.
\end{itemize}
We discuss the Larsen--Pink theorem and its applications in \S\ref{sec:LP}.

\subsection{Sketch of the proof of Theorem \ref{thm:main.str}}

We first analyze the case of groups with odd order. The analysis is based on the simple observation that every element in such a group has a unique square root. We prove a Gelfand property (i.e., multiplicity one property) for symmetric pairs of such groups. In addition, we prove a necessary condition (related to conjectures of Lapid and Prasad \cite{Gla18,Pra}) for a representation of a group $G$ of odd order to be distinguished with respect to a symmetric subgroup of $G$. Finally, we show that the first cohomology of $S_2$ with coefficients in groups of odd order vanishes. We treat this case in \S\ref{sec:odd}.\\

Next, we analyze the case of a group with a trivial $p$-radical. Here, we prove a twisted version of the main theorem. Using the Larsen-Pink theorem, we reduce this case to the case of finite groups of Lie type, where we apply a similar reasoning as in \cite{AA_bnd,She}. We treat this case in \S\ref{sec:triv.prad} \Rami{and Apendix \ref{sec:Shai}}.\\

For the general case, we introduce the following invariant of a group $\Gamma$: $\ralg_p(\Gamma)$ is the smallest possible dimension of a connected reductive group $\mathbf{G}$ such that $\Gamma / \Rad_p(\Gamma) \subseteq \mathbf{G}(\mathbb{F}_p)$. Since, in the notations of Theorem \ref{thm:main.str}, $\ralg_p(\Gamma) \leq d^2$, it is enough to bound $\dim \rho ^{\Gamma ^ \theta}$ in terms of $\ralg_p(\Gamma)$. This is done by induction on $\ralg_p(\Gamma)$. In the rest of the section, we describe the induction step.\\

Clifford theory implies that there is a group $\Delta$ satisfying $\Rad_p(\Gamma) < \Delta<\Gamma$ and an irreducible representation $\sigma$ of $\Delta$ such that
\begin{itemize}
\item $\rho=Ind_\Delta^\Gamma(\sigma)$.
\item $\sigma|_{\Rad_p(\Gamma)} $ is isotypic.
\end{itemize}
By Mackey's formula, the multiplicity $\dim \rho^{\Gamma^\theta}$ is a sum of multiplicities of $\sigma$ in various transitive $\Delta$-sets.  A-priory, the number of transitive $\Delta$-sets that might contribute to $\dim \rho^{\Gamma^\theta}$ is $|\Gamma^\theta   \backslash \Gamma/\Delta|$, which is unbounded. We use the Lapid--Prasad criterion to bound the number of subgroups of $\Delta$ whose contribution is non-zero by $|H^1(S_2,\Delta)|$, which we can bound. 

In order to bound the individual contribution of a transitive $\Delta$-set, we analyze two possibilities:
\begin{itemize}
\item $\Rad_p(\Delta)=\Rad_p(\Gamma)$.\\ 
In this case, the bound on $H^2(\Delta / \Rad_p(\Delta),\Bmu_{p^{\infty}})$ implies that, for large $p$, the representation $\sigma$ is a tensor product of a representation $\sigma_1$ that is trivial on $\Rad_p(\Delta)$ and a representation $\sigma_2$ that is irreducible when restricted to $\Rad_p(\Delta)$. The multiplicity of $\sigma_2$ is at most one, since $\Rad_p(\Delta)$ has odd order. The bound on the multiplicity $\sigma_1$ follows from the analysis of the case with trivial $p$-radical mentioned above. At this point of the argument, we need to bound twisted multiplicities of representations of $\Delta / \Rad_p(\Delta)$, rather than usual multiplicities. The reason is that the one-dimensional multiplicity space obtained for $\Rad_p(\Delta)$ manifests itself as a twist here.
\item $\Rad_p(\Delta)\neq \Rad_p(\Gamma)$.\\
In this case, the Larsen--Pink Theorem implies that there is subgroup of bounded index $\Delta^\circ\lhd \Delta$ such that $\ralg(\Delta^\circ)<\ralg(\Gamma)$. We deduce the required bound from the induction assumption.
\end{itemize}

\subsection{Complication related to the action of ${S_2}$} The sketch above overlooks one technical point. Namely, while the Larsen--Pink theorem was proved for groups, we need it for symmetric pairs or, equivalently, in the $S_2$-equivariant setting. One way around this difficulty is to embed $\Gamma$ into $\Gamma\times \Gamma$ using the graph of $\theta$. Under this embedding, $\theta$ becomes the flip $(x,y) \mapsto (y,x)$, which clearly extends to the ambient algebraic group. This way is implemented in Lemma \ref{lem:doubling} below. The drawback of this method is that it doubles the dimension of the ambient algebraic group, so it is not suitable for induction. So, in some parts of the argument, we use a different method: We use an iterative procedure, based on the Larsen-Pink Theorem that allows us to replace (without increasing the value of $\ralg_p$) a subgroup of bounded index with a smaller $S_2$-invariant subgroup, also of bounded index. We implement this procedure in Lemma \ref{lem:norm.sub} below. This procedure is very costly in terms of the bounds on the  indexes, and it is one of the main reasons why our bound on the multiplicities is very large.

\subsection{limitation of our result} 
\begin{itemize}
\item Our bounds on the multiplicities are given in terms of an embedding into a group of $\mathbb{F}_p$-points rather than a group of $\overline{\mathbb{F}_p}$-points. Therefore, we do not bound the multiplicity of symmetric pairs with $G=\mathbf{G}(O_F)$ when $F$ ranges over all extensions of a given local non-archimedean field (of course, if the degree $[O_F/\mathfrak{m}_F : \bF_p]$ is fixed, we do get uniform bounds). The reason is that, unlike $\mathbf{G}(\F_p)$, the group $\mathbf{G}( \overline{\F_p})$ has decreasing chains of perfect subgroups of arbitrary length. For this reason, we do not conjecture that Theorem \ref{thm:main.str} holds if we replace $\F_p$ with $\overline{\F_p}$. 

\item The bounds on the multiplicities we obtain are extremely large. We did not try to optimize the bounds since our argument cannot prove any reasonable bounds.

\item In the general case, we only bound usual multiplicities and not twisted ones. The reason is that our analysis of odd order groups does not work well in the twisted case. We do not expect any problems with the twisted Gelfand property, and also think that it will be easy to obtain a criterion for twisted distinction. However, this criterion will be different from the untwisted, so the number of symmetric subgroups of $\Delta$ contributing to the multiplicity will not be the size of any homology but rather some other number that we do not know how to bound. It would be interesting to resolve the twisted case, especially since we use bounds on twisted multiplicities in the case of trivial $p$-radical in order to bound the usual multiplicities in the general case.
\end{itemize}
\subsection{Structure of the paper}

In \S\ref{sec:not} we fix notations and formulate our main result. See Theorem \ref{thm:main}. In particular we introduce an invariant $\ralg_p$ to measure ``dimension" of a finite group. See Definition \ref{dfn:rd}.

In \S\ref{sec:prel} we recall some group theoretic facts.

In \S\ref{sec:LP} we quote the Larsen--Pink Theorem and deduce two corollaries that we will use in the paper:
Corollary \ref{cor:LP.reductive.with.involution} and Corollary \ref{cor:LP.ind.sym}.

In \S\ref{sec:odd} we treat the case of groups of odd order. In this case, we prove  a stronger form of the main result along with some other results for this special case. See Lemma \ref{lem:H1odd}  and Corollary \ref{cor:gel.Lap.Pra}.

In \S\ref{sec:H1} we bound the size of the first cohomology group of $S_2$ with coefficients in a finite group $\Gamma$ in terms of $\ralg_p(\Gamma)$. See Corollary \ref{cor:h1.her}.

In \S\ref{sec:H2} we prove a vanishing result for $H^2(\Gamma,\mathbb{Z} / p^n)$, where $\Gamma$ is a finite group, assuming $p$ is large enough with respect to $\ralg_p(\Gamma)$. See  Proposition \ref{prop:H2}.

In \S\ref{sec:triv.prad} we prove a twisted version of the main result for the case of groups with trivial $p$-radical.  See  Corollary \ref{cor:nr}.

In \S\ref{sec:Clif} we recall some basic results of Clifford theory which are needed in our proof.

In \S\ref{sec:pf.main} we prove our main result, Theorem \ref{thm:main}. In \S\S \ref{ssec:ded.str} we deduce Theorem \ref{thm:main.str} and Corollary \ref{cor:main} from Theorem \ref{thm:main}.

In Appendix \ref{sec:Shai} we prove a twisted version of the main result for finite groups of Lie type. The argument is an adaptation (to the twisted case) of \cite{She}.

In Appendix \ref{app:fam} we construct a family of symmetric pairs of reductive groups that includes all symmetric pairs of reductive groups of a given dimension over all finite fields, see Lemma \ref{lem:fam.of.sym}. We use this construction in \S\ref{sec:LP} in order to express the bounds given by the Larsen--Pink theorem in terms of $\ralg_p$.

\subsection{Acknowledgements} We thank Uri Bader, Shachar Carmeli, Yoav Segev, and Shai Shechter for helpful discussions. 

\Rami{
A.A. was partially supported by ISF grant 249/17 and a Minerva
foundation grant. N.A. was partially supported by NSF grant DMS-1902041. We were both partially supported by BSF grant 2018201.}

\section{Conventions, notations, and reformulation of the main result}\label{sec:not}
\subsection{Conventions}
\begin{itemize}
\item By a finite symmetric pair, we will mean a pair $(\Gamma,\theta)$, where $\Gamma$ is a finite group and $\theta$ is a (possibly trivial) involution of $\Gamma$. For a symmetric pair $(\Gamma,\theta)$, we get a symmetric subgroup $\Gamma^\theta \subset \Gamma$, a symmetric space $\Gamma/\Gamma^\theta$, and an action of $S_2$ on $\Gamma$. 
\item For a group $G$, we denote the derived subgroup of $G$ by $G'$ and the center of $G$ by $Z(G)$. If $\mathbf{G}$ is an algebraic group, we denote the connected component of identity in $\mathbf{G}$ by $\mathbf{G} ^\circ$. 
\item All schemes considered in this paper are assumed to be of finite type over Noetherian base schemes.
\item By a simple algebraic group we mean a connected algebraic group whose Lie algebra is simple. 
\item Throughout the paper, we will formulate and prove several lemmas that assert the existence of increasing functions $\N\to \N$ satisfying certain conditions. Each of those lemmas will give the corresponding function a distinct notation. It is implied that, after each such lemma, we fix such a function and use that notation to refer to it. The choices of such functions are not unique, but the only effect of a different choice is different bounds. Since we just claim the existence of bounds, this is irrelevant to us.
\item We will usually use capital boldface letters to denote varieties, capital calligraphic letters to denote schemes, and capital gothic letters to denote sheaves.
\end{itemize}
\subsection{Notations}

We will use the following invariants of a finite group.
\begin{defn}\label{dfn:rd}
Let $\Gamma$  be a finite group and $p$ be a prime.
\begin{enumerate}
\item Define the $p$-reductivity dimension $\alg_p(\Gamma)$ of $\Gamma$ to be the minimal $n$ such that there exist a connected $n$-dimensional reductive algebraic group $\bG$ and an embedding $\Gamma \hookrightarrow \bG(\F_p)$.
\item Define the reduced $p$-reductivity dimension $\ralg_p(\Gamma)$ by  $$\ralg_p(\Gamma):=\alg_p(\Gamma/\Rad_p(\Gamma)),$$ where $\Rad_p(\Gamma)$ is the maximal normal $p$-subgroup of $\Gamma.$
\end{enumerate}
\end{defn}

\begin{defn} Let $(\Gamma,\theta)$  be a finite symmetric pair and let $p$ be a prime.
\begin{enumerate}
\item Define the $p$-reductivity dimension $\alg_p(\Gamma,\theta)$ of $(\Gamma,\theta)$ to be the minimal $n$ such that there exist an $n$-dimensional reductive algebraic group $\bG$, an involution $t$ of $\bG$, and an embedding $i:\Gamma \to \bG(\F_p)$ such that $i(\theta(\gamma))=t(i(\gamma))$ for all $\gamma\in \Gamma$.
\item Define the reduced $p$-reductivity dimension $\ralg_p(\Gamma,\theta)$ by  $$\ralg_p(\Gamma,\theta):=\alg_p(\Gamma/\Rad_p(\Gamma),\bar \theta),$$ where $\bar \theta$ is the involution of $\Gamma/\Rad_p(\Gamma)$ induced by $\theta$.
\end{enumerate}
\end{defn}

\begin{lemma} \label{lem:doubling} For any symmetric pair $(\Gamma,\theta)$ and every $p$, $\alg_p(\Gamma,\theta)\leq 2 \alg_p(\Gamma)$.
\end{lemma}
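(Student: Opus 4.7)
The plan is to implement the doubling trick the authors already hinted at in the introduction. Let $\bG$ be a connected reductive algebraic group of dimension $n = \alg_p(\Gamma)$ together with an embedding $i\colon \Gamma \hookrightarrow \bG(\F_p)$. I will use the product $\bG \times \bG$, which is again connected reductive of dimension $2n$, equipped with the flip involution $t\colon (x,y) \mapsto (y,x)$, and define $j\colon \Gamma \to (\bG \times \bG)(\F_p)$ by
\[
j(\gamma) := \bigl(i(\gamma),\, i(\theta(\gamma))\bigr).
\]

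First I would verify that $j$ is a group homomorphism: since $\theta$ and $i$ are homomorphisms, $j(\gamma_1\gamma_2) = (i(\gamma_1)i(\gamma_2),\, i(\theta(\gamma_1))i(\theta(\gamma_2))) = j(\gamma_1) j(\gamma_2)$. Next, $j$ is injective because $i$ is: if $j(\gamma)=(1,1)$, the first coordinate forces $\gamma = 1$. Finally I would check the equivariance property required by the definition of $\alg_p(\Gamma,\theta)$: using $\theta^2 = \mathrm{id}$,
\[
j(\theta(\gamma)) = \bigl(i(\theta(\gamma)),\, i(\gamma)\bigr) = t\bigl(i(\gamma),\, i(\theta(\gamma))\bigr) = t(j(\gamma)).
\]

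Since $\bG\times \bG$ is a connected reductive algebraic group of dimension $2n$ and the flip $t$ is an algebraic involution, the data $(\bG\times\bG,\, t,\, j)$ witnesses $\alg_p(\Gamma,\theta) \leq 2n = 2\alg_p(\Gamma)$, which is the desired inequality. There is no serious obstacle: the only thing to watch is that the definition requires $t$ to be an involution of an algebraic group (satisfied, as swapping factors is an automorphism of order two of $\bG \times \bG$) and that $\bG \times \bG$ be reductive and of the claimed dimension (both immediate from the corresponding properties of $\bG$).
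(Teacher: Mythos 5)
Your proof is correct and follows exactly the same doubling construction as the paper: embed $\Gamma$ into $\bG\times\bG$ via $\gamma\mapsto(i(\gamma),i(\theta(\gamma)))$ and use the flip involution. The only difference is that you spell out the routine verifications (homomorphism, injectivity, equivariance) that the paper leaves implicit.
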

\begin{proof} Let $i:\Gamma \hookrightarrow \mathbf{G}(\mathbb{F}_p)$ with $\mathbf{G}$ reductive and $\dim \, \mathbf{G}=\alg_p(\Gamma)$. Let $\mathbf{H}=\mathbf{G} \times \mathbf{G}$, let $t:\mathbf{H} \rightarrow \mathbf{H}$ be the flip $\theta(x,y)=(y,x)$, and let $j:\Gamma \hookrightarrow \mathbf{H}(\mathbb{F}_p)$ be $j(\gamma) =(i(\gamma),i(\theta(\gamma)))$. The triple $(\mathbf{H},t,j)$ gives an equivariant embedding as required.
\end{proof}

Next, we introduce some notations relating to multiplicities.
\begin{notation} \label{nota:mu.nu}
Let $\Gamma$  be a finite group.
\begin{itemize}
\item We denote the set of (isomorphism classes of) complex irreducible representations of $\Gamma$ by $\irr (\Gamma)$.
\item We denote the set of (one dimensional) characters of $\Gamma$  by $\widehat{\Gamma}$.
\item Suppose that $\theta$ is an involution of $\Gamma$. Denote 
$$\nu(\Gamma,\theta)=\max_{\rho\in \irr(\Gamma)}  \dim \rho^{\Gamma^\theta}, $$
$$\nu(\Gamma)=\max_{\text{$\theta$ involution of $\Gamma$}} \nu(\Gamma,\theta),$$
$$\nu'_p(\Gamma)=\max_{\text{$\theta$ involution of $\Gamma$}}\max_{\substack{\rho\in \irr(\Gamma) \text{ such that}\\ \text{$\rho|_{\Rad_p(\Gamma)}$ is isotypic}}}  \dim \rho^{\Gamma^\theta},$$
$$\mu(\Gamma,\theta)=\max_{\rho\in \irr(\Gamma),\,\, \chi \in \widehat{\Gamma^\theta}}  \dim \rho^{\Gamma^\theta,\chi},$$
and
$$\mu(\Gamma)=\max_{\text{$\theta$ involution of $\Gamma$}} \mu(\Gamma,\theta).$$
\end{itemize}

\end{notation}

\subsection{Reformulation of the main theorem}

Using the notations above, Theorem \ref{thm:main.str} has the following reformulation:
\begin{theorem}[main]\label{thm:main}
There is an increasing function $C:\N\to \N$ such that, for every prime $p>2$ and every finite group $\Gamma$,
$$\nu(\Gamma)<C(\ralg_p(\Gamma)).$$
\end{theorem}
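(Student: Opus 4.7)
My plan is to prove Theorem \ref{thm:main} by induction on $\ralg_p(\Gamma)$, fleshing out the sketch given in the introduction. Fix a finite group $\Gamma$, an involution $\theta$ realizing $\nu(\Gamma)=\nu(\Gamma,\theta)$, an irreducible $\rho\in\irr(\Gamma)$ of maximal $\Gamma^\theta$-invariants, and set $R:=\Rad_p(\Gamma)$. The first move is Clifford-theoretic: the results of \S\ref{sec:Clif} produce an intermediate subgroup $R\le\Delta\le\Gamma$ and $\sigma\in\irr(\Delta)$ with $\sigma|_R$ isotypic and $\rho\cong\Ind_\Delta^\Gamma\sigma$. Mackey's formula then expresses $\dim\rho^{\Gamma^\theta}$ as a sum over the double cosets $\Gamma^\theta\backslash\Gamma/\Delta$ of local multiplicities of the form $\dim\sigma_{(g)}^{\Delta\cap g^{-1}\Gamma^\theta g}$, each attached to a (possibly conjugated) involution of $\Delta$ and to a twist of $\sigma$. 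The remaining task is to bound both the number of nonzero terms in this sum and the size of each.

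To bound the number of contributing double cosets, I apply the Lapid--Prasad-type necessary condition developed in \S\ref{sec:odd} (Corollary \ref{cor:gel.Lap.Pra}): a double coset contributes only if the associated $1$-cocycle is a coboundary, so the count is at most $|H^1(S_2,\Delta)|$, which in turn is bounded in terms of $\ralg_p(\Delta)\le\ralg_p(\Gamma)$ by Corollary \ref{cor:h1.her}. To bound an individual local multiplicity I split into two cases. If $\Rad_p(\Delta)\neq R$, then $\Rad_p(\Delta)$ strictly contains $R$ and Larsen--Pink (Corollary \ref{cor:LP.reductive.with.involution}), combined with the iterative construction in Lemma \ref{lem:norm.sub}, produces a normal $S_2$-invariant subgroup $\Delta^\circ\lhd\Delta$ of bounded index with $\ralg_p(\Delta^\circ)<\ralg_p(\Gamma)$; the induction hypothesis applied to $\Delta^\circ$, together with the standard index estimate for induced representations, closes the case. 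If $\Rad_p(\Delta)=R$, then for $p$ large with respect to $\ralg_p(\Gamma)$, Proposition \ref{prop:H2} forces a tensor splitting $\sigma\cong\sigma_1\otimes\sigma_2$ in which $\sigma_1$ is inflated from $\Delta/R$ and $\sigma_2|_R$ is irreducible; since $|R|$ is odd, the Gelfand property from \S\ref{sec:odd} bounds the $\sigma_2$-contribution by $1$, and the $\sigma_1$-contribution is a twisted multiplicity on $\Delta/R$ (whose $p$-radical is trivial), bounded by Corollary \ref{cor:nr}.

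The finitely many primes $p$ below the threshold of Proposition \ref{prop:H2} admit only finitely many finite groups $\Gamma$ with any given value of $\ralg_p(\Gamma)$, so they are absorbed into the bounding function $C$ at no extra cost; and taking the maximum over involutions $\theta$ in the definition of $\nu(\Gamma)$ is harmless, because the argument above is run for an arbitrary $\theta$.

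The step I expect to be the main obstacle is the case $\Rad_p(\Delta)=R$. The splitting $\sigma\cong\sigma_1\otimes\sigma_2$ must be constructed $S_2$-equivariantly, so that the character twist arising from the one-dimensional multiplicity space of $\sigma_2$ descends to an honest character twist on $\Delta/R$; this is precisely the conceptual reason the trivial-$p$-radical input in Corollary \ref{cor:nr} is stated in terms of the twisted multiplicity $\mu$ rather than the untwisted $\nu$. Ensuring this equivariance requires combining the $H^2$-vanishing of \S\ref{sec:H2} with the $S_2$-equivariant form of the Larsen--Pink theorem and threading the resulting character carefully through Clifford theory; once this technical point is secured, the induction closes and Theorem \ref{thm:main} follows.
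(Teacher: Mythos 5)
Your overall strategy matches the paper's proof closely: induction on $\ralg_p(\Gamma)$, Clifford theory to reduce to $\rho=\Ind_\Delta^\Gamma\sigma$ with $\sigma|_{\Rad_p(\Gamma)}$ isotypic, Mackey's formula, the Lapid--Prasad criterion of Corollary~\ref{cor:gel.Lap.Pra} to cut down the contributing double cosets to a set of size $\leq |H^1(S_2,\Delta)|$ (bounded by Corollary~\ref{cor:h1.her}), and the case split on whether $\Rad_p(\Delta)$ equals $\Rad_p(\Gamma)$. The two cases are handled by Corollary~\ref{cor:LP.ind.sym} (built from Lemma~\ref{lem:norm.sub}) plus induction, and by the tensor splitting coming from $H^2$-vanishing plus the Gelfand property of odd-order pairs plus Corollary~\ref{cor:nr}, respectively. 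This is exactly the architecture of the paper's proof (via Lemma~\ref{lem:split.case} and Corollary~\ref{cor:nup}).

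There is, however, one genuine flaw in your argument: your treatment of the finitely many primes $p$ below the threshold of Proposition~\ref{prop:H2}. You assert that for such a fixed $p$ there are ``only finitely many finite groups $\Gamma$ with any given value of $\ralg_p(\Gamma)$,'' but this is false. The invariant $\ralg_p(\Gamma)$ is computed from $\Gamma/\Rad_p(\Gamma)$, and the $p$-radical itself is unconstrained; for instance $\ralg_p\big((\Z/p)^k\big)=0$ for every $k$. What is actually bounded, when $p$ is bounded and $\ralg_p(\Gamma)\leq n$, is only $|\Gamma/\Rad_p(\Gamma)|$ (since it embeds into $\bG(\F_p)$ with $\dim\bG\leq n$, so has size $\leq (p+1)^n$). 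The correct fix --- and what the paper does in Case 1 of the proof of Corollary~\ref{cor:nup} --- is to combine this bound on $|\Gamma/\Rad_p(\Gamma)|$ with the odd-order Gelfand property $\nu(\Rad_p(\Gamma))=1$ from Corollary~\ref{cor:gel.Lap.Pra}\eqref{cor:gel.Lap.Pra:1} and the index estimate of Lemma~\ref{lem:mult.iso}\eqref{lem:mult.iso:2}, to get $\nu(\Gamma)\leq|\Gamma/\Rad_p(\Gamma)|\cdot\nu(\Rad_p(\Gamma))\leq(C_{H2}(n)+1)^n$. You should replace your finiteness claim with this argument.

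A smaller point: you flag the $S_2$-equivariance of the splitting $\sigma\cong\sigma_1\otimes\sigma_2$ as a potential obstacle, but it is in fact not needed. In Lemma~\ref{lem:split.case}, one only uses that $\sigma_1^{\Rad_p(\Gamma)^\theta}$ is at most one-dimensional; if it is nonzero, the group $\Gamma^\theta/\Rad_p(\Gamma)^\theta$ acts on it by a character $\chi$, and then $\dim(\sigma_1\otimes\sigma_2)^{\Gamma^\theta}=\dim(\chi\otimes\sigma_2)^{\Gamma^\theta/\Rad_p(\Gamma)^\theta}\leq\mu(\Gamma/\Rad_p(\Gamma))$. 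The twist $\chi$ arises automatically from the $\Gamma^\theta$-action on the one-dimensional multiplicity space; no equivariance of the splitting itself is invoked. Your diagnosis that $\mu$ rather than $\nu$ is needed on the trivial-$p$-radical side is correct, but the reason is this induced character $\chi$, not an equivariance constraint on the decomposition.
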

This theorem appears to be slightly weaker then Theorem \ref{thm:main.str}. We will deduce Theorem \ref{thm:main.str} from it in \S\ref{ssec:ded.str}.

\section{Preliminaries on finite groups \Rami{and algebraic groups}}\label{sec:prel}
In this section, we collect several properties of finite and algebraic groups. 

\subsection{Finite groups of Lie type}
The following theorem is well known:

\begin{theorem}\label{thm:simp.con} For any finite field $F$ which is not one of $\bF_2,\bF_3,\bF_4,\bF_8,\bF_9$ and for any connected, simply-connected, semi-simple algebraic group $\bG$ defined over $F$, the following hold
\begin{enumerate}
\item \label{thm:simp.con:0} $\mathbf{G}(F)$ is generated by its unipotents.
\item \label{thm:simp.con:1} $H^2(\bG( F),A)=1$, for every trivial $\mathbf{G}(F)$-module $A$.
\item \label{thm:simp.con:2} $\bG(F)$ is perfect.
\item \label{thm:simp.con:2.5} $Z(\mathbf{G} (F))=Z(\mathbf{G})(F)$. 
\item \label{thm:simp.con:3} $|Z(\bG( F))|\leq 2^{\dim \bG}$.
\end{enumerate}
\end{theorem}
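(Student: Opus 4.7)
The plan is to derive the five properties from the standard Steinberg--Chevalley presentation of $\mathbf{G}(F)$, following Steinberg's \emph{Lectures on Chevalley Groups} and Carter's \emph{Finite Groups of Lie Type}.

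For (\ref{thm:simp.con:0}), I would apply the Bruhat decomposition $\mathbf{G}(F) = \bigsqcup_{w \in W} \mathbf{U}(F) \mathbf{T}(F) \dot w \mathbf{U}(F)$: each Weyl representative $\dot w$ is a product of elements $n_\alpha(\pm 1)$ from root $\SL_2$-subgroups, and in the simply connected case the torus $\mathbf{T}(F)$ is generated by the coroot elements $h_\alpha(t) = n_\alpha(t) n_\alpha(1)^{-1}$; both types lie in the subgroup generated by the unipotent root groups $\mathbf{U}_\alpha(F)$. Part (\ref{thm:simp.con:2}) then follows from (\ref{thm:simp.con:0}) together with the Chevalley identity $[h_\alpha(t), u_\alpha(s)] = u_\alpha((t^2-1)s)$, which exhibits all of $\mathbf{U}_\alpha(F)$ as commutators as soon as $F^\times$ has a non-involutive element; the analogous analysis for the torus generators yields precisely the listed small-field exceptions.

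For (\ref{thm:simp.con:1}), the strategy is first to apply (\ref{thm:simp.con:2}) to conclude $\mathbf{G}(F)^{\mathrm{ab}} = 0$, then invoke Steinberg's computation that the Schur multiplier $H_2(\mathbf{G}(F),\Z)$ vanishes outside the stated exceptions; the Universal Coefficient Theorem then gives $H^2(\mathbf{G}(F), A) = \Hom(H_2(\mathbf{G}(F),\Z), A) \oplus \mathrm{Ext}(\mathbf{G}(F)^{\mathrm{ab}}, A) = 0$ for any trivial module $A$. For (\ref{thm:simp.con:2.5}), the inclusion $Z(\mathbf{G})(F) \subseteq Z(\mathbf{G}(F))$ is trivial; for the reverse, using (\ref{thm:simp.con:0}) a central element $g$ centralizes every root-subgroup element $u_\alpha(t)$, and the conjugation identity $g u_\alpha(t) g^{-1} = u_\alpha(\alpha(g) t)$ combined with the assumption $|F| \ge 4$ forces $\alpha(g) = 1$ for every root, placing $g$ in the scheme-theoretic kernel $Z(\mathbf{G})(F)$.

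For (\ref{thm:simp.con:3}), the center of a simply connected semisimple group is a finite diagonalizable group scheme of order $[P:Q]$, the index of the root lattice inside the weight lattice. Decomposing $\mathbf{G}$ into its simply connected simple factors and consulting the Killing--Cartan classification, each simple factor of rank $r_i$ satisfies $|Z_i| \le 2^{r_i}$ (the worst case $A_n$ gives $n+1 \le 2^n$), so $|Z(\mathbf{G})| \le 2^{\sum r_i} \le 2^{\dim \mathbf{G}}$. The main technical obstacle is the bookkeeping in (\ref{thm:simp.con:0})--(\ref{thm:simp.con:2}): one must verify that the exceptional list is exactly $\{\bF_2,\bF_3,\bF_4,\bF_8,\bF_9\}$, which requires careful analysis of rank-one subsystems and of twisted $\SL_2$-type behaviour.
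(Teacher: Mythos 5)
Your proposal follows essentially the same route as the paper: both rest on Chevalley--Steinberg theory of finite groups of Lie type, and the paper simply cites Steinberg's \emph{Lectures on Chevalley Groups} (Theorem 12.4 for (1), Remark 12.8(b) for (2)) and Malle--Testerman for (4), where you instead sketch the underlying arguments (Bruhat decomposition, the commutator identity $[h_\alpha(t),u_\alpha(s)]=u_\alpha((t^2-1)s)$, the Universal Coefficient Theorem, and the Cartan-matrix-determinant bound for the center). Those sketches are correct in outline for split groups, and you correctly flag that the twisted forms require the modified root-group parametrizations, which is exactly what the citation to Steinberg is doing for the paper.

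There is, however, a real gap in your argument for part (4). You apply the torus conjugation formula $g\,u_\alpha(t)\,g^{-1}=u_\alpha(\alpha(g)t)$ to a central element $g\in Z(\mathbf{G}(F))$, but that identity is only valid when $g$ lies in the maximal torus $\mathbf{T}$, and you never establish $g\in\mathbf{T}(F)$. One cannot deduce this from $g$ centralizing $\mathbf{T}(F)$: over a finite field $\mathbf{T}(F)$ is a finite set and hence not Zariski dense in $\mathbf{T}$, so centralizing $\mathbf{T}(F)$ does not a priori imply centralizing $\mathbf{T}$. Showing that a central element of the finite group is semisimple and lies in a maximal torus (equivalently, $Z(\mathbf{G}(F))\subseteq Z(\mathbf{G})(F)$) is precisely the nontrivial content of the Malle--Testerman reference the paper cites together with part (1); once that is granted, the rest of your step for (4) (and the use of (4) in (5)) goes through.
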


\begin{remark} Claim \eqref{thm:simp.con:0} follows from \cite[Theorem 12.4]{Ste68}, so Claim \eqref{thm:simp.con:1} follows from \cite[Remark 12.8(b)]{Ste68}. 

Claim \eqref{thm:simp.con:2} also follows, since every unipotent is contained in the unipotent radical of some $F$-rational Borel, every unipotent radical is generated by root subgroups, and every element in a root subgroup is a commutator. 

Claim \eqref{thm:simp.con:2.5} follows from \cite[Theorem 1.5.6 (i)]{Mar} and Claim \eqref{thm:simp.con:0}.


Since $|Z(\bG(F))| \leq |Z(\bG(\overline{F}))|$ and the latter is the determinant of the Cartan matrix of the Dynkin diagram of $\bG$, Claim \eqref{thm:simp.con:3} follows.
\end{remark}

\begin{corollary} \label{cor:univ.cover} Let $F$ be a finite field of characteristic greater than $3$, let $\mathbf{G}$ be a connected reductive group over $F$, and let $\phi:\widetilde{\mathbf{G} '} \rightarrow \mathbf{G}'$ be the universal cover of the derived subgroup $\mathbf{G}'$. Then $\phi \left( \widetilde{\mathbf{G}'} (F)\right) = \mathbf{G}(F)'$.
\end{corollary}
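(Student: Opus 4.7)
The plan is to verify the two inclusions $\phi(\widetilde{\mathbf{G}}(F))\subseteq\mathbf{G}(F)'$ and $\mathbf{G}(F)'\subseteq\phi(\widetilde{\mathbf{G}}(F))$ separately, using Theorem~\ref{thm:simp.con} for one direction and the standard cohomology sequence of an isogeny for the other.

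For the forward inclusion: since $\characteristic(F)>3$, the field $F$ is none of the exceptional fields $\mathbb{F}_2,\mathbb{F}_3,\mathbb{F}_4,\mathbb{F}_8,\mathbb{F}_9$ excluded in Theorem~\ref{thm:simp.con}. Applying part~(\ref{thm:simp.con:2}) of that theorem to the simply-connected semisimple group $\widetilde{\mathbf{G}}$ gives $\widetilde{\mathbf{G}}(F)=\widetilde{\mathbf{G}}(F)'$, so the image $\phi(\widetilde{\mathbf{G}}(F))\subseteq\mathbf{G}(F)$ is also perfect and, in particular, contained in $\mathbf{G}(F)'$.

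For the reverse inclusion it suffices to show that $\mathbf{G}(F)/\phi(\widetilde{\mathbf{G}}(F))$ is abelian. Let $Z=\ker\phi$; since $\widetilde{\mathbf{G}}\to\mathbf{G}$ is a central isogeny of semisimple groups, $Z$ is a finite, central, and hence commutative $F$-group scheme. The short exact sequence
$$1\to Z\to \widetilde{\mathbf{G}}\to \mathbf{G}\to 1$$
of $F$-group schemes induces an exact sequence of pointed sets
$$1\to Z(F)\to\widetilde{\mathbf{G}}(F)\to \mathbf{G}(F)\xrightarrow{\delta} H^1(F,Z)$$
in flat cohomology (we work with fppf cohomology to accommodate the possibility that $Z$ is not smooth when $\characteristic(F)\mid |Z(\overline F)|$). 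Since $Z$ is commutative, $H^1(F,Z)$ is an abelian group, so $\mathbf{G}(F)/\phi(\widetilde{\mathbf{G}}(F))$ embeds into an abelian group, whence $\mathbf{G}(F)'\subseteq\phi(\widetilde{\mathbf{G}}(F))$.

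There is no serious obstacle in this argument: it is an immediate combination of Theorem~\ref{thm:simp.con}\,(\ref{thm:simp.con:2}) with the standard cohomology sequence. The only point requiring minor attention is verifying that the hypothesis $\characteristic(F)>3$ is indeed enough to exclude the exceptional small fields appearing in Theorem~\ref{thm:simp.con}.
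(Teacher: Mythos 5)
Your proof is correct in spirit and takes a genuinely different route from the paper's. The paper avoids cohomology entirely: for $g,h\in\mathbf{G}(F)$, it observes that the commutator set $[\phi^{-1}(g),\phi^{-1}(h)]\subseteq\widetilde{\mathbf{G}}(\overline F)$ is a singleton $\{x\}$ (because $\ker\phi$ is central), that $x$ is Galois-invariant and hence lies in $\widetilde{\mathbf{G}}(F)$, and that $\phi(x)=[g,h]$. This shows directly that every commutator in $\mathbf{G}(F)$ lifts. Your argument packages the same phenomenon through the connecting map $\delta:\mathbf{G}(F)\to H^1(F,Z)$; the paper's version is more self-contained, while yours is shorter once the relevant cohomological machinery is taken as given.

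There is one small gap you should close. You write that the long exact sequence of fppf cohomology is an exact sequence of \emph{pointed sets}, and then conclude that $\mathbf{G}(F)/\phi(\widetilde{\mathbf{G}}(F))$ \emph{embeds as a group} into the abelian group $H^1(F,Z)$. That last step uses more than exactness of pointed sets: you need that $\phi(\widetilde{\mathbf{G}}(F))$ is normal in $\mathbf{G}(F)$ and that the induced map on cosets is a group homomorphism. Both follow from the fact that $\delta$ is a \emph{group homomorphism} when the kernel $Z$ is central (one checks $\delta(g_1g_2)=\delta(g_1)\delta(g_2)$ using centrality of $Z$ to rearrange the resulting cocycle), but you should state this explicitly rather than rely on the pointed-set exactness, since for general nonabelian kernels $\delta$ is not a homomorphism and the conclusion would fail. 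Once you add this remark the proof is complete.
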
 

\begin{proof} The inclusion $\phi \left( \widetilde{\mathbf{G}'} (F)\right) \subseteq \mathbf{G}(F)'$ follows from \ref{thm:simp.con}\eqref{thm:simp.con:2}. For the other direction, it is enough to show that a commutator of two elements of $\mathbf{G}(F)$ belongs to $\phi \left( \widetilde{\mathbf{G}'} (F)\right)$. Let $g_1,g_2\in \mathbf{G}(F)$. Choose $z_1,z_2\in Z\left( \mathbf{G}(\overline{F})\right)$ such that $g_1z_1,g_2z_2\in \mathbf{G}'(\overline{F})$, and choose elements $h_1,h_2\in \widetilde{\mathbf{G}'}(\overline{F})$ such that $\phi(h_1)=g_1z_1,\phi(h_2)=g_2z_2$. Since $\phi ^{-1} (Z(\mathbf{G}'))=Z \left( \widetilde{\mathbf{G}'} \right)$, the element $[h_1,h_2]\in \widetilde{\mathbf{G}'}(\overline{F})$ is independent of the choices of $z_i$ and $h_i$, and hence is fixed by $\Gal(\overline{F}/F)$. Therefore, $[h_1,h_2]\in \widetilde{\mathbf{G}'}(F)$. Hence, $[g_1,g_2]=\phi([h_1,h_2])\in \phi \left( \widetilde{\mathbf{G}'} (F)\right)$.
\end{proof} 

Corollary \ref{cor:univ.cover} and Theorem \ref{thm:simp.con}\eqref{thm:simp.con:2} imply

\begin{corollary} \label{cor:perfect.prime} Let $F$ be a finite field of characteristic greater than $3$ and let $\mathbf{G}$ be a connected reductive group over $F$. Then $\mathbf{G}(F)'$ is perfect.
\end{corollary} 

\begin{lemma}\label{lem:coker}    Let $\phi:\tilde \bG\to \bG$ be an isogeny of algebraic groups defined over a finite field $\mathbb{F}_q$ and let $\bK$  be the kernel of $\phi$.
Then 
$$
\left[ \mathbf{G}(\mathbb{F}_q) : \phi \left( \widetilde{\mathbf{G}}(\mathbb{F}_q) \right) \right] \leq |\mathbf{K}(\overline{\mathbb{F}}_q)|
$$
\end{lemma}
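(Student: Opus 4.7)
The plan is to run Galois cohomology on the short exact sequence of algebraic $\mathbb{F}_q$-groups
$$1 \to \mathbf{K} \to \widetilde{\mathbf{G}} \xrightarrow{\phi} \mathbf{G} \to 1$$
(surjectivity of $\phi$ on $\overline{\mathbb{F}}_q$-points is built into the definition of an isogeny). Passing to $\overline{\mathbb{F}}_q$-points and taking the long exact sequence in continuous cohomology of $\Gal(\overline{\mathbb{F}}_q/\mathbb{F}_q)$ produces
$$\widetilde{\mathbf{G}}(\mathbb{F}_q) \xrightarrow{\phi} \mathbf{G}(\mathbb{F}_q) \to H^1\bigl(\Gal(\overline{\mathbb{F}}_q/\mathbb{F}_q),\, \mathbf{K}(\overline{\mathbb{F}}_q)\bigr) \to H^1\bigl(\Gal(\overline{\mathbb{F}}_q/\mathbb{F}_q),\, \widetilde{\mathbf{G}}(\overline{\mathbb{F}}_q)\bigr).$$

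Next I would invoke Lang's theorem: since $\widetilde{\mathbf{G}}$ is a connected algebraic group over the finite field $\mathbb{F}_q$, the rightmost term vanishes. This yields an injection
$$\mathbf{G}(\mathbb{F}_q)\big/\phi(\widetilde{\mathbf{G}}(\mathbb{F}_q)) \hookrightarrow H^1\bigl(\Gal(\overline{\mathbb{F}}_q/\mathbb{F}_q),\, \mathbf{K}(\overline{\mathbb{F}}_q)\bigr).$$
As a finite normal subgroup of the connected group $\widetilde{\mathbf{G}}$, the kernel $\mathbf{K}$ is necessarily central and hence abelian, so this $H^1$ is classical group cohomology of an abelian coefficient module.

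Finally I would bound the right-hand side by $|\mathbf{K}(\overline{\mathbb{F}}_q)|$. Setting $A = \mathbf{K}(\overline{\mathbb{F}}_q)$ and writing $\sigma$ for the Frobenius (which topologically generates the Galois group), the standard computation of continuous $\hat{\mathbb{Z}}$-cohomology of a finite discrete module gives $H^1(\hat{\mathbb{Z}}, A) \cong A/(\sigma - 1)A$. Since $A$ is finite, the endomorphism $\sigma - 1$ of $A$ satisfies $|\ker(\sigma - 1)| = |\mathrm{coker}(\sigma - 1)|$, so
$$\bigl|H^1\bigl(\Gal(\overline{\mathbb{F}}_q/\mathbb{F}_q),\, A\bigr)\bigr| \;=\; |A^{\sigma}| \;\leq\; |A|,$$
which gives the claimed inequality. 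The argument is essentially bookkeeping once Lang's theorem is in hand, so I do not anticipate any substantive obstacle; the only point requiring mild care is confirming that $\mathbf{K}$ is central so that $H^1$ may be treated via abelian group cohomology.
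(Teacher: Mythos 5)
Your proof is essentially the same as the paper's: both run the long exact sequence of Galois cohomology for $1\to \mathbf{K}\to\widetilde{\mathbf{G}}\to\mathbf{G}\to 1$ and bound the index by $|H^1(\Gal(\overline{\mathbb{F}}_q/\mathbb{F}_q),\mathbf{K}(\overline{\mathbb{F}}_q))| \le |\mathbf{K}(\overline{\mathbb{F}}_q)|$. The Lang's theorem step is superfluous (the injection of $\mathbf{G}(\mathbb{F}_q)/\phi(\widetilde{\mathbf{G}}(\mathbb{F}_q))$ into $H^1$ already follows from exactness once $\mathbf{K}$ is abelian, without vanishing of $H^1(\widetilde{\mathbf{G}})$), and in fact the paper deliberately avoids it, which is prudent since the lemma is later applied with $\widetilde{\mathbf{G}}=\mathbf{H}'\times Z(\mathbf{H})$, which need not be connected.
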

\begin{proof}
From the long exact sequence of Galois cohomologies
\[
\mathbf{K}( \mathbb{F} _q) \rightarrow \widetilde{\mathbf{G}}( \mathbb{F} _q ) \rightarrow \mathbf{G}( \mathbb{F} _q ) \rightarrow H^1(Gal(\overline {\mathbb{F}}_q/\mathbb{F}_q),  \mathbf{K}(\overline{\mathbb{F}}_q)) \rightarrow H^1(Gal(\overline {\mathbb{F}}_q/\mathbb{F}_q), \widetilde{\mathbf{G}}(\overline{\mathbb{F}}_q)),
\]
we get
\begin{align*}
\left[ \mathbf{G}(\mathbb{F}_q) : \phi \left( \widetilde{\mathbf{G}}(\mathbb{F}_q) \right) \right]
&=
|Ker(H^1(Gal(\overline {\mathbb{F}}_q/\mathbb{F}_q),  K(\overline{\mathbb{F}}_q))\to H^1(Gal(\overline {\mathbb{F}}_q/\mathbb{F}_q), \widetilde{\mathbf{G}}(\mathbb{F}_q) ))|
\\&
\leq 
|H^1(Gal(\overline {\mathbb{F}}_q/\mathbb{F}_q),  K(\overline{\mathbb{F}}_q))|
\leq 
|K(\overline{\mathbb{F}}_q)|
\end{align*}
\end{proof}

\begin{lemma}\label{lem:der} 
For every connected reductive group $\bG$ defined over a finite field $F$ \Rami{of characteristic larger than 3}, we have 
$$[\bG'(F):\bG(F)']<2^{\dim(\bG)}.$$
\end{lemma}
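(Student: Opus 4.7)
The plan is to reduce the inequality to a statement about the semisimple derived group $\mathbf{G}'$ using the tools assembled earlier in this section. Put $\mathbf{H}:=\mathbf{G}'$, let $\phi\colon \widetilde{\mathbf{H}}\to \mathbf{H}$ be its universal cover, and let $\mathbf{K}:=\ker\phi$; this $\mathbf{K}$ is a finite group scheme contained in $Z(\widetilde{\mathbf{H}})$.

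The first move is to note the elementary sandwich
\[
\mathbf{H}(F)' \;\subseteq\; \mathbf{G}(F)' \;\subseteq\; \mathbf{H}(F),
\]
in which the left inclusion uses $\mathbf{H}(F)\subseteq\mathbf{G}(F)$ and the right is the defining property of the derived subgroup of $\mathbf{G}$. This reduces the lemma to bounding $[\mathbf{H}(F):\mathbf{H}(F)']$. Since $\operatorname{char}(F)>3$ and $\mathbf{H}$ is semisimple, Corollary \ref{cor:univ.cover} applies and yields $\phi(\widetilde{\mathbf{H}}(F)) = \mathbf{H}(F)'$. Applying Lemma \ref{lem:coker} to the isogeny $\phi$ then produces
\[
[\mathbf{H}(F):\mathbf{H}(F)'] \;\leq\; |\mathbf{K}(\overline F)|.
\]

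To conclude, I would observe that $|\mathbf{K}(\overline F)|\leq |Z(\widetilde{\mathbf{H}})(\overline F)|$, which equals the determinant of the Cartan matrix of $\mathbf{H}$ and is bounded by $2^{\dim \mathbf{H}} \leq 2^{\dim \mathbf{G}}$ exactly as in the proof of Theorem \ref{thm:simp.con}\eqref{thm:simp.con:3}. For the strict inequality, I would split into two cases: if $\mathbf{G}\neq \mathbf{G}'$ then $\dim \mathbf{H}<\dim \mathbf{G}$ and strictness of the final bound follows immediately; if $\mathbf{G}$ is semisimple, then the Cartan determinant of each simple factor is already strictly smaller than $2^{\dim}$ (e.g.\ $A_1$ gives $2 < 2^3$, and an analogous inspection for every other simple type), and these strict inequalities multiply over the simple factors.

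I expect this argument to go through almost verbatim once the sandwich is spotted; the only place where some attention is needed — and hence the main obstacle I anticipate — is ensuring that the final inequality is strict rather than merely $\leq$. However, this is a routine finite check over the simple Lie types, so no serious work is involved.
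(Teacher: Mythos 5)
Your proof is correct and follows the same route as the paper's: reduce to the semisimple part $\mathbf{H}=\mathbf{G}'$ (your sandwich $\mathbf{H}(F)'\subseteq\mathbf{G}(F)'\subseteq\mathbf{H}(F)$ is exactly the unstated content of the paper's "we can assume $\mathbf{G}$ is semisimple"), then combine Corollary \ref{cor:univ.cover}, Lemma \ref{lem:coker}, and the center bound from Theorem \ref{thm:simp.con}\eqref{thm:simp.con:3}. Your extra care about strictness is warranted — the paper's proof only cites the non-strict bound $|\operatorname{coker}\phi|\le 2^{\dim\mathbf G}$ — and your type-by-type check that the Cartan determinant of each simple factor is strictly below $2^{\dim}$ is the right way to close that gap.
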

\begin{proof}
We can assume that $\bG$ is semisimple. Let $\phi :\widetilde{\mathbf{G}} \rightarrow \mathbf{G}$ be the universal cover. By Theorem \ref{thm:simp.con}\eqref{thm:simp.con:3} and Lemma \ref{lem:coker}, $| coker \phi | \leq 2^{\dim \mathbf{G}}$. The result follows from Corollary \ref{cor:univ.cover}.
\end{proof}

\begin{lemma} \label{lem:Goursat}
Let $\Gamma_i$ be simple non-abelian groups and let $\Gamma:=\prod_{i=1}^n \Gamma_i$. Any normal subgroup $\Delta$ of $\Gamma$ is of the form $\Delta=\prod_{i\in I} \Gamma_i$ for some index set $I\subset \{1,\dots,n\}$. The same holds when $ \Gamma_i$ are simple adjoint algebraic groups and $ \Delta $ is a normal algebraic subgroup of $ \Gamma $.
\end{lemma}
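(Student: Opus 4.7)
The plan is to prove the lemma by induction on $n$, using the standard ``commutator trick'' that underlies Goursat's lemma, and then to observe that the same argument works verbatim in the algebraic setting.

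The base case $n=1$ is immediate from simplicity of $\Gamma_1$. For the inductive step, let $\pi_n : \Gamma \to \Gamma_n$ be the projection onto the last factor, and identify $\Gamma_n$ with its image in $\Gamma$ as $\{1\} \times \cdots \times \{1\} \times \Gamma_n$. First I would consider $\Delta \cap \Gamma_n$, which is a normal subgroup of $\Gamma_n$, hence is either $\Gamma_n$ or $\{1\}$ by simplicity.

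If $\Delta \cap \Gamma_n = \Gamma_n$, then $\Gamma_n \subseteq \Delta$, so $\Delta = \Gamma_n \times \Delta'$, where $\Delta' := \Delta \cap \prod_{i<n} \Gamma_i$ is normal in $\prod_{i<n} \Gamma_i$, and I apply the induction hypothesis to $\Delta'$. If instead $\Delta \cap \Gamma_n = \{1\}$, then for every $\delta \in \Delta$ and every $g \in \Gamma_n \subset \Gamma$, the commutator $[g,\delta]$ lies in $\Delta \cap \Gamma_n = \{1\}$, because $[g,\delta]$ is supported only in the $n$-th coordinate (as $g$ is) and lies in $\Delta$ (by normality). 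Hence $\pi_n(\Delta)$ centralizes $\Gamma_n$, so $\pi_n(\Delta) \subseteq Z(\Gamma_n) = \{1\}$, since $\Gamma_n$ is simple and non-abelian. Therefore $\Delta \subseteq \prod_{i<n} \Gamma_i$, and the induction hypothesis again finishes the argument.

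For the algebraic version, the exact same proof applies: the projections $\pi_i$ are morphisms of algebraic groups, $\Delta \cap \Gamma_n$ is a normal algebraic subgroup of $\Gamma_n$ (hence trivial or all of $\Gamma_n$ by simplicity), the center of a simple adjoint group is trivial, and the commutator map is algebraic. I do not foresee any real obstacle here; the only point requiring a little care is the identification $\Delta = \Gamma_n \times \Delta'$ as schemes in the case $\Gamma_n \subseteq \Delta$, which is automatic from the direct product decomposition of the ambient group.
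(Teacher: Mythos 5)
Your proof is correct and rests on the same essential idea as the paper's: the commutator trick combined with triviality of the center of a simple non-abelian (or simple adjoint) group. The paper runs it directly (showing $[\Delta,\Gamma_i]=\Gamma_i\subset\Delta$ whenever $\pi_i(\Delta)\neq 1$, then taking $I=\{i:\pi_i(\Delta)\neq 1\}$) rather than by induction on $n$, but the two arguments are essentially interchangeable.
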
 

\begin{proof} Assume that $ \Gamma _i$ are simple non-abelian groups, and identify $ \Gamma _i$ as a subgroup of $ \Gamma $. Let $\pi_i: \Gamma \rightarrow \Gamma_i$ be the projection. If $ \pi _i( \Delta ) \neq 1$, then $[\Delta , \Gamma _i]$ is a non-trivial normal subgroup of $ \Gamma _i$, so $ \Gamma _i = [ \Delta , \Gamma _i] \subset \Delta $. Thus, the lemma holds with $I= \left\{ i \mid \pi_i( \Delta ) \neq 1 \right\}$.

The proof in the case $ \Gamma _i$ are algebraic is similar.
\end{proof}

\begin{cor} \label{cor:aut.perm} Let $\Gamma_i$ and $\Gamma$  be as in Lemma \ref{lem:Goursat}, and let $\theta:\Gamma\to\Gamma$ be an automorphism. Then there is a permutation $\sigma\in S_n$ such that $\theta(\Gamma_i)=\Gamma_{\sigma(i)}$.
\end{cor}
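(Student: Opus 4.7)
The plan is to deduce this from Lemma \ref{lem:Goursat} by observing that the factors $\Gamma_i$ are characterized group-theoretically as the minimal nontrivial normal subgroups of $\Gamma$, so any automorphism must permute them.

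More precisely, I would first note that for each $i$, the subgroup $\Gamma_i \lhd \Gamma$ is a minimal nontrivial normal subgroup: it is normal, and any normal subgroup of $\Gamma$ contained in $\Gamma_i$ is in particular normal in $\Gamma_i$, hence (since $\Gamma_i$ is simple) is either trivial or all of $\Gamma_i$. Conversely, by Lemma \ref{lem:Goursat}, every nontrivial normal subgroup of $\Gamma$ has the form $\prod_{i \in I} \Gamma_i$ with $I \neq \emptyset$, and such a subgroup is minimal only when $|I| = 1$. Therefore the set of minimal nontrivial normal subgroups of $\Gamma$ is exactly $\{\Gamma_1, \ldots, \Gamma_n\}$.

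Since $\theta$ is an automorphism, it sends minimal normal subgroups to minimal normal subgroups. Hence $\theta(\Gamma_i) \in \{\Gamma_1, \ldots, \Gamma_n\}$ for each $i$, defining a map $\sigma: \{1,\ldots,n\} \to \{1,\ldots,n\}$ with $\theta(\Gamma_i) = \Gamma_{\sigma(i)}$. Applying the same reasoning to $\theta^{-1}$ gives a two-sided inverse, so $\sigma \in S_n$.

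There is essentially no obstacle; the statement is a direct corollary of Lemma \ref{lem:Goursat}. The only thing to be slightly careful about is that the same argument should be given in both settings (abstract finite groups and simple adjoint algebraic groups), using in the algebraic case that an automorphism of algebraic groups sends normal algebraic subgroups to normal algebraic subgroups, and that the algebraic version of Goursat's lemma is already asserted in Lemma \ref{lem:Goursat}.
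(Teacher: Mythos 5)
Your proof is correct and is essentially the argument the paper leaves implicit (the corollary is stated without proof, as an immediate consequence of Lemma \ref{lem:Goursat}). One small streamlining worth noting: you do not really need the characterization of the $\Gamma_i$ as the minimal nontrivial normal subgroups. It suffices to observe that $\theta(\Gamma_i)$ is a normal subgroup of $\Gamma$, hence by Lemma \ref{lem:Goursat} equals $\prod_{j\in J}\Gamma_j$ for some $J$; since $\theta(\Gamma_i)\cong\Gamma_i$ is simple (respectively, a simple algebraic group), $|J|=1$, so $\theta(\Gamma_i)=\Gamma_{\sigma(i)}$, and injectivity of $\theta$ forces $\sigma$ to be a permutation. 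This avoids having to argue separately that each $\Gamma_i$ is minimal normal. Either way the reasoning is sound, and your remark that the same argument works verbatim in the algebraic setting (using the algebraic clause of Lemma \ref{lem:Goursat}) is exactly the right point to flag.
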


\begin{cor} \label{cor:ext}
If $p>3$ is a prime number and $\mathbf{G}$ is a connected semisimple adjoint group defined over $\mathbb{F}_p$, then any automorphism of $\mathbf{G}(\mathbb{F}_p)'$ extends to an algebraic automorphism of $\mathbf{G}$.
\end{cor}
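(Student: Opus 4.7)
The plan is to reduce the claim to the automorphism theory of finite simple groups of Lie type, via the Goursat-type decomposition already encoded in Lemma \ref{lem:Goursat} and Corollary \ref{cor:aut.perm}. The first step is to decompose $\mathbf{G}$ over $\F_p$ as a product of $\F_p$-simple adjoint factors, $\mathbf{G} = \prod_{j} \mathbf{H}_j$, where $\mathbf{H}_j = \Res_{\F_{p^{n_j}}/\F_p}(\mathbf{G}_j)$ for some absolutely simple adjoint algebraic group $\mathbf{G}_j$ defined over $\F_{p^{n_j}}$. Correspondingly, $\mathbf{G}(\F_p)' = \prod_j \mathbf{G}_j(\F_{p^{n_j}})'$. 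Applying Corollary \ref{cor:univ.cover} identifies each factor with the image $\phi_j(\widetilde{\mathbf{G}_j}(\F_{p^{n_j}}))$ of the universal cover, and Theorem \ref{thm:simp.con}\eqref{thm:simp.con:2.5} identifies this with the quotient $\widetilde{\mathbf{G}_j}(\F_{p^{n_j}})/Z(\widetilde{\mathbf{G}_j}(\F_{p^{n_j}}))$, which is a nonabelian finite simple group for $p>3$ by the classical classification of finite simple groups of Lie type.

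Next, I would apply Corollary \ref{cor:aut.perm} to conclude that $\theta$ permutes the simple factors $\mathbf{G}_j(\F_{p^{n_j}})'$. Whenever $\theta$ identifies two factors, those factors are abstractly isomorphic finite simple groups of Lie type in characteristic $p>3$; since exceptional isomorphisms between such groups do not occur in characteristic $\geq 5$, this forces the corresponding algebraic groups $\mathbf{H}_j$ to be $\F_p$-isomorphic. Thus the permutation itself is realized by some algebraic automorphism $\sigma$ of $\mathbf{G}$ over $\F_p$, and replacing $\theta$ by $\sigma^{-1}\circ \theta$ reduces us to the case in which $\theta$ preserves each factor $\mathbf{G}_j(\F_{p^{n_j}})'$ setwise.

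In the reduced situation, each restriction $\theta|_{\mathbf{G}_j(\F_{p^{n_j}})'}$ is an abstract automorphism of a nonabelian finite simple group of Lie type in characteristic $>3$. By Steinberg's classification of automorphisms of such groups, it decomposes as a product of an inner/diagonal automorphism (conjugation by an element of $\mathbf{G}_j(\F_{p^{n_j}}) = \mathbf{H}_j(\F_p)$, which is algebraic over $\F_p$ since $\mathbf{G}_j$ is adjoint), a field automorphism (an element of $\Gal(\F_{p^{n_j}}/\F_p)$, which becomes algebraic over $\F_p$ via the Weil restriction structure on $\mathbf{H}_j$), and a graph automorphism (an automorphism of the Dynkin diagram of $\mathbf{G}_j$, algebraic over $\F_{p^{n_j}}$ and hence over $\F_p$). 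Assembling these factor-wise gives the required algebraic automorphism of $\mathbf{G}$ over $\F_p$ extending $\theta$. The main obstacle in the plan is the substantial classical input of Steinberg's theorem, together with the need to rule out exceptional isomorphisms between finite simple groups of Lie type when comparing factors; both are available under the hypothesis $p>3$.
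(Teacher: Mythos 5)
Your proposal is correct and follows essentially the same route as the paper's proof: decompose $\mathbf{G}$ over $\F_p$ into simple adjoint factors realized as Weil restrictions of absolutely simple groups, observe that the corresponding finite groups $\mathbf{G}_j(\F_{p^{n_j}})'$ are nonabelian simple, invoke Corollary \ref{cor:aut.perm} to reduce to a permutation combined with factor-preserving automorphisms, and then appeal to the classification of isomorphisms/automorphisms of finite groups of Lie type (the paper's Theorem \ref{thm:class}, covering your appeals to the absence of exceptional isomorphisms in characteristic $\geq 5$ and to Steinberg's description of automorphisms). The only cosmetic difference is that you establish simplicity of the factors via Corollary \ref{cor:univ.cover} and Theorem \ref{thm:simp.con}\eqref{thm:simp.con:2.5}, whereas the paper simply cites Theorem \ref{thm:class}\eqref{thm:class:1}.
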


\Rami{
For the proof we will need the following:
\begin{thm}[{easy direction of the classification of finite simple groups, cf. \cite[\S1]{GLS} and \cite[3.2]{Ste60}}] \label{thm:class}  For any two finite fields $F_1,F_2$ of characteristic greater than 3 and any two absolutely simple and adjoint algebraic groups $\bG,\bH$ defined over  $F_1,F_2$ respectively,
\begin{enumerate}
\item\label{thm:class:1} $\bG(F_1)'$ is simple.
\item\label{thm:class:2} If $\bG(F_1)'\simeq \bH(F_2)'$ then $F_1\simeq F_2$ and $\bG\simeq\bH$.
\item\label{thm:class:3} Any isomorphism $\bG(F_1)'\to \bH(F_2)'$ is the composition of $\phi:\bG(F_1)' \rightarrow \bG^\phi (F_2)'$ and $\psi: \bG^\phi (F_2)' \rightarrow \bH(F_2)$, where $\phi:F_1 \rightarrow F_2$ is a field isomorphism, $\bG^\phi=\bG \times_{\Spec(F_1)} \Spec(F_2)$, and $\psi$ is the restriction of an isomorphism $\bG^\phi \rightarrow \bH$.
\end{enumerate}
\end{thm}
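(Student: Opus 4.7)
The statement is the ``easy'' direction of the classification of finite simple groups of Lie type, and the plan is to deduce each of the three parts from classical inputs: Tits' simplicity theorem for $(B,N)$-pairs, the Chevalley--Steinberg order formula combined with Artin's tabulation of order coincidences, and Steinberg's theorem on automorphisms. The structural facts collected in Theorem \ref{thm:simp.con} provide the necessary reductions.

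For \eqref{thm:class:1}, since $\bG$ is absolutely simple and adjoint, $Z(\bG)=1$; by Corollary \ref{cor:univ.cover} the derived group $\bG(F_1)'$ is the image of the universal cover $\widetilde{\bG}(F_1)$ under the central isogeny $\phi:\widetilde{\bG}\to\bG$, and its center lies in the image of the (finite) kernel of $\phi$. Over a finite field every connected reductive group is quasi-split by Lang's theorem, so $\bG$ has a split $(B,N)$-pair of positive rank; Tits' simplicity theorem then yields that $\bG(F_1)'$ modulo its center is simple. A short computation using adjointness of $\bG$ shows that this center is trivial, giving simplicity of $\bG(F_1)'$. The small-characteristic exceptions in Theorem \ref{thm:simp.con} are ruled out by the hypothesis $p>3$.

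For \eqref{thm:class:2}, I would invoke the Chevalley--Steinberg order formula, expressing $|\bG(F_1)'|$ as a polynomial in $q=|F_1|$ whose coefficients are determined by the Dynkin type of $\bG$ together with the Galois action on the diagram. Standard order comparison, combined with Artin's classical tabulation of the (finitely many) order coincidences among finite simple groups of Lie type, shows that all such coincidences occur in small characteristic; under $p>3$, equality of orders forces $|F_1|=|F_2|$ (whence $F_1\cong F_2$) and forces the Dynkin data of $\bG$ and $\bH$ to agree, whence $\bG\cong\bH$.

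For \eqref{thm:class:3}, the plan is to appeal to Steinberg's theorem on automorphisms of finite simple groups of Lie type: every abstract isomorphism $\bG(F_1)'\to\bH(F_2)'$ factors as a product of an inner, diagonal, graph, and field automorphism followed by an isomorphism induced by an algebraic isomorphism $\bG^\phi\to\bH$. Since $\bG$ is adjoint, the diagonal automorphisms coincide with the inner ones; the inner and graph parts then come from algebraic automorphisms of $\bG^\phi$, while the field part supplies the isomorphism $\phi$. The principal obstacle, and the reason for the hypothesis $p>3$, is to exclude the exceptional graph-field automorphisms of Suzuki type ($B_2, F_4$ in characteristic $2$) and Ree type ($G_2$ in characteristic $3$), which do not come from algebraic automorphisms of a geometrically simple group; these exceptions are precisely what is ruled out by our assumption.
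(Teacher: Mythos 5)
The paper does not prove Theorem \ref{thm:class}; it quotes it with references to \cite[\S1]{GLS} and \cite[3.2]{Ste60}, so there is no in-paper proof to compare against. Assessed on its own merits, your sketch for parts \eqref{thm:class:1} and \eqref{thm:class:3} follows the standard route (Tits' simplicity theorem applied to the simply connected form modulo its center, together with Theorem \ref{thm:simp.con} and Corollary \ref{cor:univ.cover}; Steinberg's isomorphism theorem, with the hypothesis $p>3$ removing both the small fields excluded in Theorem \ref{thm:simp.con} and the Suzuki--Ree exceptional isogenies) and is essentially sound, modulo some loose phrasing in \eqref{thm:class:1} about where the center of $\bG(F_1)'$ sits.

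Part \eqref{thm:class:2}, however, has a genuine gap. You propose to conclude $F_1\cong F_2$ and $\bG\cong\bH$ from equality of orders together with Artin's tabulation, on the grounds that ``all such coincidences occur in small characteristic.'' That is false: for every prime power $q$ and every $n\geq 2$ the adjoint groups of types $B_n$ and $C_n$ over $\F_q$ have derived subgroups of the same order, and for $n\geq 3$ and $q$ odd these derived subgroups are \emph{not} isomorphic. So the $B_n/C_n$ order coincidence persists in arbitrarily large characteristic, and equality of orders does not force the Dynkin data of $\bG$ and $\bH$ to agree under the hypothesis $p>3$. (Artin's tabulation identifies exactly these $B_n/C_n$ pairs, plus the characteristic-$2$ coincidence between the derived groups of $A_2(4)$ and $A_3(2)$, as the exceptional order coincidences; only the latter is removed by restricting the characteristic.) The correct organization is to prove \eqref{thm:class:3} first and deduce \eqref{thm:class:2} as an immediate corollary: once any abstract isomorphism $\bG(F_1)'\to\bH(F_2)'$ is known to factor through a field isomorphism $\phi:F_1\to F_2$ followed by an algebraic isomorphism $\bG^\phi\to\bH$, the conclusions $F_1\cong F_2$ and $\bG\cong\bH$ are automatic. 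This is how the cited sources proceed; the Chevalley--Steinberg order formula is not, by itself, a proof of part \eqref{thm:class:2}.
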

}

\begin{proof}[Proof of Corollary \ref{cor:ext}]
Write $\bG=\prod \bG_i$ where $\bG_i$ are simple (not necessarily absolutely simple) adjoint groups defined over $\mathbb{F}_p$. For each $i$, there is a finite field $\mathbb{F}_{q_i}$ and an absolutely simple adjoint group $\mathbf{S}_i$ defined over $\mathbb{F}_{q_i}$ such that $\bG_i\cong (\bS_i)_{\F_{q_i}/\F_p}$. We have $\bG(\F_q)'=\prod \bS_i(\F_{q_i})'$. By Theorem \ref{thm:class}\eqref{thm:class:1}, the groups $\bS_i(\F_{q_i})'$ are simple. By Corollary \ref{cor:aut.perm}, there is a permutation $\sigma$ such that $\theta(\bS_i(\F_{q_i})')=\bS_{\sigma(i)}(\F_{q_\sigma(i)})'$. The assertion now follows from Theorem \ref{thm:class}\eqref{thm:class:3}.
\end{proof}

\subsection{A versal family of reductive groups}
In order to prove uniform results for all reductive groups of a bounded dimension over an arbitrary finite field, we will use the following lemma.
\begin{lemma} \label{lem:fam.of.sym} For any integer $n>0$, there exist a scheme $\cS_n$ of finite type, a smooth group scheme $\Phi_n:\cR_n\to \cS_n$, and an involution $\tau_n:\cR_n\to \cR_n$ over $\cS_n$ such that the following hold: 
\begin{enumerate}
\item \label{lem:fam.of.sym:1} For every finite field $F$ and every $s\in \mathcal{S}_n(F)$, the group $(\mathcal{R}_n)_s$ is connected and reductive.
\item \label{lem:fam.of.sym:2} For every connected and reductive group $\mathbf{G}$ of dimension at most $n$ over a finite field $F$ and for any involution $t$ of $\mathbf{G}$, there is $s \in \mathcal{S}_n(F)$ with $$(\mathbf{G},t)\simeq ((\mathcal{R}_n)_s,(\tau_n)_s).$$
\item \label{lem:fam.of.sym:3} For any root datum $\fX$, there is a subscheme  $\cS^\fX\subset \cS_n$ such that, for any geometric point $x$ of $\cS_n$, the (absolute) root datum of  $(\cR_n)_x$ is $\fX$ if and only if $x$ factors through $\cS^\fX$. Moreover, $\mathcal{S} ^\mathfrak{X}$ is a union of connected components of $\mathcal{S}_n$.
\end{enumerate}
\end{lemma}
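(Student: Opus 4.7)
My plan is to build $\mathcal{S}_n$, $\mathcal{R}_n$, $\tau_n$ by stratifying according to absolute root datum and then applying a standard versal torsor construction in each stratum. By the Chevalley--Demazure classification, only finitely many isomorphism classes of root data $\mathfrak{X}_1, \ldots, \mathfrak{X}_r$ can occur as the root datum of a connected reductive group of dimension at most $n$. Hence it suffices, for each $i$, to build a triple $(\mathcal{S}^{\mathfrak{X}_i}, \mathcal{R}^{\mathfrak{X}_i}, \tau^{\mathfrak{X}_i})$ whose universal family has absolute root datum constantly $\mathfrak{X}_i$ and which satisfies \eqref{lem:fam.of.sym:1} together with the analog of \eqref{lem:fam.of.sym:2} for pairs of this type; the disjoint union then satisfies \eqref{lem:fam.of.sym:3} automatically, since the absolute root datum is locally constant on any smooth family of connected reductive group schemes.

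For fixed $\mathfrak{X}$, let $\mathbf{H} := \mathbf{H}_{\mathfrak{X}}$ denote the Chevalley--Demazure split form over $\Spec \mathbb{Z}$, let $\mathbf{A} := \underline{\Aut}(\mathbf{H})$ be its (smooth, affine, finite-type) automorphism group scheme, and let $\mathrm{Inv}(\mathbf{H}) \subset \mathbf{A}$ be the closed subscheme of involutions $\{\sigma : \sigma^2 = 1\}$. I pick a finite-dimensional linear representation $\mathbf{V}$ of $\mathbf{A}$ on which $\mathbf{A}$ acts generically freely, letting $\mathbf{V}^\circ \subset \mathbf{V}$ denote the free locus; such $\mathbf{V}$ exists by taking enough copies of a faithful representation. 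I then set
\[ \mathcal{S}^{\mathfrak{X}} := (\mathbf{V}^\circ \times \mathrm{Inv}(\mathbf{H}))/\mathbf{A}, \qquad \mathcal{R}^{\mathfrak{X}} := (\mathbf{V}^\circ \times \mathrm{Inv}(\mathbf{H}) \times \mathbf{H})/\mathbf{A}, \]
with $\mathbf{A}$ acting diagonally (via the chosen representation on $\mathbf{V}^\circ$, by conjugation on $\mathrm{Inv}(\mathbf{H})$, and tautologically on $\mathbf{H}$), and take $\tau^{\mathfrak{X}}$ to be the descent of $(v, \sigma, h) \mapsto (v, \sigma, \sigma(h))$. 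Freeness on the $\mathbf{V}^\circ$-factor makes these bona fide schemes of finite type, and the group structure and smoothness of $\mathcal{R}^{\mathfrak{X}} \to \mathcal{S}^{\mathfrak{X}}$ are inherited from $\mathbf{H}$; the fibers are twisted forms of $\mathbf{H}$, hence connected reductive of root datum $\mathfrak{X}$, giving \eqref{lem:fam.of.sym:1}.

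For property \eqref{lem:fam.of.sym:2}, given $(\mathbf{G}, t)$ of type $\mathfrak{X}$ over a finite field $F$, Galois descent yields an $\mathbf{A}$-torsor $\mathbf{P}$ over $F$ with $\mathbf{G} \cong \mathbf{P} \times^{\mathbf{A}} \mathbf{H}$, and $t$ corresponds to a $\mathbf{P}$-twisted $F$-point of $\mathrm{Inv}(\mathbf{H})$. Lifting to an $F$-point of $\mathcal{S}^{\mathfrak{X}}$ reduces to producing an $F$-point of the twisted free locus $\mathbf{P} \times^{\mathbf{A}} \mathbf{V}^\circ$, whose closure $\mathbf{P} \times^{\mathbf{A}} \mathbf{V}$ is an affine space over $F$ (since $H^1(F, \mathrm{GL}_N) = 0$), with complement of fixed bounded codimension. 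The Lang--Weil estimates then supply $F$-points for all sufficiently large $|F|$, with the threshold controlled by $\dim \mathbf{V}$; taking $\dim \mathbf{V}$ large enough handles all but finitely many small fields, which can be covered by attaching finitely many auxiliary components (one per uncovered pair) to $\mathcal{S}^{\mathfrak{X}}$. The main obstacle I anticipate is this last uniformity step---ensuring the versal covering is attained over every finite field, not merely over large ones---which requires explicit but routine bookkeeping beyond the generic Lang--Weil argument.
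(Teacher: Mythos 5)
There is a genuine gap at the very first step, before the rest of the argument can even start. You write ``let $\mathbf{A} := \underline{\Aut}(\mathbf{H})$ be its (smooth, affine, finite-type) automorphism group scheme,'' but for a split reductive group scheme $\mathbf{H}$ over $\Spec\mathbb{Z}$ the automorphism functor is in general \emph{not} of finite type. It sits in an extension $1 \to \mathbf{H}/Z(\mathbf{H}) \to \underline{\Aut}(\mathbf{H}) \to \underline{\Out}(\mathbf{H}) \to 1$, and $\underline{\Out}(\mathbf{H})$ is the constant group scheme attached to the automorphism group of the root datum $\mathfrak{X}$, which is infinite as soon as the radical has rank $\geq 2$: already for $\mathbf{H} = \mathbb{G}_m^2$ one has $\underline{\Aut}(\mathbf{H}) = \underline{\GL_2(\mathbb{Z})}$. (The paper is explicit about this when citing Conrad: ``representable by a (not necessarily finite type) $\mathbb{Z}$-group scheme.'') The consequence is fatal for your construction: there is no finite-dimensional representation of $\underline{\GL_r(\mathbb{Z})}$ over $\mathbb{Z}$ with a nonempty free locus on special fibers, because the reduction map $\GL_r(\mathbb{Z}) \to \GL(\mathbf{V})(\mathbb{F}_q)$ has infinite kernel for every finite field $\mathbb{F}_q$, so every point of $\mathbf{V}(\overline{\mathbb{F}_q})$ has infinite stabilizer. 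Hence $\mathbf{V}^\circ$ is empty over the fibers that matter, the quotient $\mathcal{S}^\mathfrak{X}$ does not exist as a finite-type scheme, and the twisted family $\mathcal{R}^\mathfrak{X}$ cannot be built. Since the whole point of the lemma is to parametrize reductive groups of dimension $\leq n$ (which includes positive-rank central tori), this case cannot be excluded.

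For the record, the paper circumvents exactly this issue by \emph{not} working with the full $\underline{\Aut}$-torsor. It first bounds the degree of the splitting field of any $n$-dimensional reductive group over a finite field (Lemma \ref{lem:split.ext}), then shows there are only finitely many actions $S_2 \times C_d \to \Aut(\mathfrak{X})$ up to conjugacy for bounded $d$ (Lemma \ref{lem:f.act.on.root.dat}), and only then builds, for each such fixed action $\alpha$, a finite-type scheme $\cI_{\mathfrak{X},\alpha}$ of automorphisms lying in a fixed outer class and commuting with $\alpha$---this is a coset of the ($\alpha$-centralizer in the) inner automorphism group, hence of finite type. The parametrization of Galois twists is then done explicitly via Lang's theorem and the auxiliary finite \'etale family $\cE_k \to \cF_k$ (Lemma \ref{lem:f.F.and.Fr.fam}), rather than via a versal torsor and Lang--Weil. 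Your versal-torsor approach is standard and would be cleaner when $\underline{\Aut}(\mathbf{H})$ \emph{is} of finite type (e.g.\ $\mathbf{H}$ semisimple, where $\underline{\Out}$ is a finite constant group scheme), so a plausible repair would be to split off and treat the radical separately, replacing $\underline{\GL_r(\mathbb{Z})}$ by the finitely many finite subgroups that can arise as Galois images for splitting degree $\leq C^{spt}(n)$---but that is essentially reintroducing the bookkeeping the paper does, and in its current form your proposal does not compile.

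One secondary remark: you assert that for a $\mathbf{G}$ of type $\mathfrak{X}$ and the associated torsor $\mathbf{P}$, the affine space $\mathbf{P}\times^{\mathbf{A}}\mathbf{V}$ has bounded-codimension non-free locus and Lang--Weil supplies points for $|F|$ large, with small fields patched by finitely many auxiliary components. Even granting the finite-type issue were fixed, you should check that the codimension of the non-free locus is uniformly bounded along the whole $\mathbb{Z}$-family (not just on a generic fiber), and that the number of ``uncovered'' pairs over small fields is actually finite---the latter does follow from the splitting-degree bound plus Lemma \ref{lem:f.act.on.root.dat}, but these are precisely the ingredients your sketch does not invoke.
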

We prove this lemma is in Appendix \ref{app:fam}. The proof does not work for infinite fields, but we do have the following:
\begin{lemma} \label{cor:faithful.rep.local} There is a function $C^{lin}:\mathbb{N} \rightarrow \mathbb{N}$ such that any reductive group $\mathbf{G}$ over an arbitrary field $F$ has a faithful $F$-representation of dimension at most $C^{lin}(\dim\, \mathbf{G})$.
\end{lemma}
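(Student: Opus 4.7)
The plan is to construct the required faithful representation as a direct sum of two pieces: the adjoint representation, which is faithful on $\mathbf{G}/Z(\mathbf{G})$, and auxiliary representations whose combined restriction to $Z(\mathbf{G})$ is faithful. Since $\Ad: \mathbf{G} \to GL(\mathfrak{g})$ has dimension $d := \dim \mathbf{G}$ and kernel $Z(\mathbf{G})$, it suffices to bound the dimension of a representation $\tau$ of $\mathbf{G}$ whose restriction to $Z(\mathbf{G})$ is faithful; then $\Ad \oplus \tau$ is faithful on all of $\mathbf{G}$. The center $Z(\mathbf{G})$ is of multiplicative type, with connected part $Z(\mathbf{G})^\circ$ a torus of rank at most $d$ and finite component group of order bounded in $d$ (via the determinant of the Cartan matrix of the absolute root system, which bounds $|Z(\widetilde{\mathbf{G}^{der}})|$).

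For the connected part, I would pull back a faithful representation of the coradical torus $\mathbf{G}/\mathbf{G}^{der}$ along the quotient $\mathbf{G} \twoheadrightarrow \mathbf{G}/\mathbf{G}^{der}$. The composition $Z(\mathbf{G})^\circ \hookrightarrow \mathbf{G} \twoheadrightarrow \mathbf{G}/\mathbf{G}^{der}$ is an isogeny, so faithfulness of the pulled-back representation on $Z(\mathbf{G})^\circ$ follows from faithfulness on the coradical. To construct the latter, I would use the auxiliary fact that any torus $T$ of rank $r$ over an arbitrary field $F$ admits a faithful $F$-representation of dimension bounded by a function of $r$ alone: the character module $X^*(T) \cong \Z^r$ carries a $\Gal(\bar F/F)$-action factoring through a finite subgroup of $GL_r(\Z)$, whose order is at most $2^r \cdot r!$ by Minkowski's bound, so the union of the Galois orbits of a $\Z$-basis of $X^*(T)$ forms a Galois-stable generating set of characters, which defines a faithful $F$-representation of dimension at most $r \cdot 2^r \cdot r!$.

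For the finite part of the center, I would use fundamental representations of the simply-connected cover $\widetilde{\mathbf{G}^{der}}$. This cover decomposes over $F$ as a product $\prod_i \Res_{E_i/F}(\mathbf{H}_i)$ with each $\mathbf{H}_i$ absolutely almost-simple and simply-connected over $E_i$, and $\sum_i [E_i:F] \dim_{E_i} \mathbf{H}_i \leq d$. By the Cartan--Killing classification, every such $\mathbf{H}_i$ admits a faithful $E_i$-representation of dimension bounded by a polynomial in $\dim_{E_i} \mathbf{H}_i$: inner forms of type $A$ take the shape $\SL_m(D)$ with the natural action on $D^m$; the classical types $B, C, D$ arise from quadratic or Hermitian forms and come with their defining modules; and the exceptional types $E_6, E_7, E_8, F_4, G_2$ are finite in number with uniformly bounded-dimension faithful representations. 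Weil restriction scales dimensions by $[E_i:F]$, keeping them bounded in $d$. The main obstacle will be assembling these pieces into a representation of $\mathbf{G}$ rather than of $Z(\mathbf{G})^\circ \times \widetilde{\mathbf{G}^{der}}$: since $\mathbf{G}$ is a quotient by a finite central subgroup $\mathbf{K}$ of bounded order, one descends by choosing the central characters on the two factors to agree on $\mathbf{K}$, which is possible within a bounded-dimensional representation space because $\mathbf{K}$ has bounded exponent.
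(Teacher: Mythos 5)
Your approach is genuinely different from the paper's. The paper first shows (via Jordan's theorem applied to the Galois action on the character lattice of a maximal torus) that $\mathbf{G}$ splits over an extension $F'/F$ of degree bounded in $\dim\,\mathbf{G}$, then observes that there are only finitely many split reductive groups of a given dimension so each has a faithful representation of bounded dimension, and finally composes $\mathbf{G}\hookrightarrow \Res_{F'/F}\mathbf{G}_{F'}\hookrightarrow \Res_{F'/F}\GL_{m,F'}\hookrightarrow \GL_{m[F':F],F}$. This is much shorter than the assembly you propose and avoids type-by-type case analysis; your route is more constructive but also more delicate, since you have to descend representations through the isogeny $\widetilde{\mathbf{G}^{\mathrm{der}}}\times Z(\mathbf{G})^\circ\twoheadrightarrow\mathbf{G}$, a step you have only sketched. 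Incidentally, the paper's Lemma~\ref{lem:split.ext} and your torus argument are essentially the same idea (bounded finite subgroups of $\GL_r(\Z)$ and Galois-stable bases of characters), just packaged differently.

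There is, however, one concretely wrong step in your argument. You assert that because $Z(\mathbf{G})^\circ\to\mathbf{G}/\mathbf{G}^{\mathrm{der}}$ is an isogeny, the representation of $\mathbf{G}$ pulled back from the coradical is faithful on $Z(\mathbf{G})^\circ$. An isogeny has finite kernel, here equal to $Z(\mathbf{G})^\circ\cap\mathbf{G}^{\mathrm{der}}$, so the pulled-back representation kills exactly this finite subgroup of $Z(\mathbf{G})^\circ$; it is \emph{not} faithful on $Z(\mathbf{G})^\circ$ in general. For $\mathbf{G}=\GL_n$ this kernel is $\mu_n$. The overall plan is salvageable, since this kernel is contained in $Z(\mathbf{G}^{\mathrm{der}})$ and should be detected by the representations of $\widetilde{\mathbf{G}^{\mathrm{der}}}$ you introduce for the component group, but as written the decomposition of $Z(\mathbf{G})$ into a ``connected part handled by the coradical'' and a ``finite part handled by $\widetilde{\mathbf{G}^{\mathrm{der}}}$'' does not cleanly separate, and you would need to re-argue that $\Ad$, the coradical piece, and the $\widetilde{\mathbf{G}^{\mathrm{der}}}$ piece together separate all of $Z(\mathbf{G})$. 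Two further minor points: the claim that each absolutely almost-simple $\mathbf{H}_i$ has a faithful representation of dimension \emph{polynomial} in $\dim\mathbf{H}_i$ is false for spin groups, whose minimal faithful representations are exponential in the rank --- this does not break the lemma since the rank is $O(\sqrt{\dim})$, but the stated bound is wrong; and the final descent ``by choosing the central characters on the two factors to agree on $\mathbf{K}$'' needs an actual argument, since you must produce an entire representation of the product trivial on $\mathbf{K}$ with prescribed restriction to $Z(\widetilde{\mathbf{G}^{\mathrm{der}}})\times Z(\mathbf{G})^\circ$, not just a single central character.
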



\section{A theorem of Larsen--Pink and its applications} \label{sec:LP}

A theorem of Larsen and Pink is central to our proof. In this section, we quote the theorem and extract two corollaries (Corollaries \ref{cor:LP.reductive.with.involution} and \ref{cor:LP.ind.sym}) from it.


\begin{definition} Let $\mathcal{S}$ be a scheme and let $f:\mathcal{G} \rightarrow \mathcal{S}$ be a group scheme over $\mathbf{S}$. \begin{enumerate} 
\item A {\em family of subgroups} is a pair consisting of a map $\pi:\mathcal{T} \rightarrow \mathcal{S}$ and a \co{$\mathcal{T}$-}subgroup scheme $\mathcal{H} \subset \mathcal{G} \times_{\mathcal{S}} \mathcal{T}$. In this case, we write $\mathcal{H} \Subset_\pi \mathcal{G}$.
\item Suppose that $\mathcal{H} \Subset_\pi \mathcal{G}$, that $k$ is a field, that $s\in \mathcal{S}(k)$, and that $\Gamma \subset \mathcal{G}_s(k)$ is a subgroup. We say that $\Gamma$ {\em $k$-evades} $\mathcal{H}$ if, for every $t\in \pi ^{-1} (s)(k)$, we have $\Gamma \not\subset \mathcal{H}_{t}(k)$.
\end{enumerate} 
\end{definition} 

\begin{definition} Suppose $\Gamma, \Delta$ are subgroups of some group. We say that $\Gamma$ is big in $\Delta$ if $[\Delta,\Delta] \subset \Gamma \subset \Delta$.
\end{definition} 

The following is a restatement of \cite[Theorem 0.5]{LP}:

\begin{theorem} \label{thm:LP} Let $\mathcal{S}$ be a scheme and let $\mathcal{G} \rightarrow \mathcal{S}$ be a group scheme over $\mathcal{S}$ such that every geometric fiber is connected, simple, and adjoint. There is a family of subgroups $\mathcal{H} \Subset_\pi \mathcal{G}$ such that, for every prime $p$, every $\overline{\mathbb{F}_p}$-point $s:\Spec(\overline{\mathbb{F}_p}) \rightarrow \mathcal{S}$, and every $\Gamma \subset \mathcal{G}_s(\overline{\mathbb{F}_p})$, if $\Gamma$ $\overline{\mathbb{F}_p}$-evades $\mathcal{H}$, then there is a Frobenius map $\Phi : \mathcal{G}_s \rightarrow \mathcal{G}_s$ such that $\Gamma$ is big in $\mathcal{G}_s(\overline{\mathbb{F}_p})^{\Phi}$.
\end{theorem}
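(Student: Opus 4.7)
The plan is to derive this family-version statement from the original Larsen--Pink theorem \cite{LP} by a Noetherian stratification and descent argument.

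First, I would stratify $\mathcal{S}$ by the isomorphism type of the geometric fibers of $\mathcal{G}$. Every connected simple adjoint algebraic group is determined up to isomorphism by its Dynkin diagram, and the rank is a locally constant function on any Noetherian base, so only finitely many Dynkin types occur as fibers of $\mathcal{G}$. This yields a finite stratification of $\mathcal{S}$ into locally closed subschemes $\mathcal{S}_i$, and it suffices to build a family $\mathcal{H}_i \Subset_{\pi_i} \mathcal{G}|_{\mathcal{S}_i}$ over each stratum and then take the disjoint union.

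Second, for each stratum of type $\mathfrak{X}$, I would fix the split Chevalley form $G_0 = G_0^{\mathfrak{X}}$ over $\Spec(\mathbb{Z})$, so that $\mathcal{G}|_{\mathcal{S}_i}$ is an $\Aut(G_0)$-twist of $G_0 \times \mathcal{S}_i$. The original Larsen--Pink theorem, applied to $G_0$ over $\overline{\mathbb{F}_p}$, supplies finitely many proper closed subgroup schemes $H_1, \ldots, H_n \subset G_0$, of dimension strictly less than $\dim G_0$, with the property that every finite $\Gamma \subset G_0(\overline{\mathbb{F}_p})$ not contained in any $G_0(\overline{\mathbb{F}_p})$-conjugate of $(H_j)_{\overline{\mathbb{F}_p}}$ is big in $G_0(\overline{\mathbb{F}_p})^\Phi$ for some Frobenius map $\Phi$. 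One then assembles the family: the $G_0$-conjugates of $H_j$ are the geometric fibers of the quotient $G_0 / N_{G_0}(H_j)$; pulling back to $\mathcal{G}|_{\mathcal{S}_i}$ through the form, and taking a disjoint union over strata and indices, defines $\pi : \mathcal{T} \to \mathcal{S}$ together with the tautological subgroup scheme $\mathcal{H} \subset \mathcal{G} \times_{\mathcal{S}} \mathcal{T}$. By construction, the $\overline{\mathbb{F}_p}$-evading condition on a geometric fiber is exactly the hypothesis of the original theorem, so the Frobenius-big conclusion transfers.

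The main obstacle will be the descent step: the subgroups $H_j$ are given in the split model $G_0$, and to transport the family to the possibly non-split form $\mathcal{G}|_{\mathcal{S}_i}$, the set $\{H_1, \ldots, H_n\}$ must be stable up to $G_0$-conjugation under the full $\Aut(G_0)$-action, including outer automorphisms and Frobenius twists used to define the form. This stability is natural since the Frobenius-twisted conclusion itself is invariant under such twists, but verifying it requires inspecting the construction of the exceptional subgroups in \cite{LP}. Once established, standard étale descent produces the family on $\mathcal{S}$, and minor primes (a finite set where $H_j$ might fail to spread out) can be absorbed into the family by adding, for each bad prime, the few proper algebraic subgroups that arise in characteristic $p$, using that the Noetherian base only sees finitely many primes in a bounded way.
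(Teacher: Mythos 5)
The paper does not prove Theorem~\ref{thm:LP} at all: the sentence immediately preceding it reads ``The following is a restatement of \cite[Theorem 0.5]{LP},'' and indeed \cite[Theorem 0.5]{LP} is already formulated over an arbitrary Noetherian base scheme, with a family of proper subgroup schemes $\mathcal{H}\subset\mathcal{G}\times_{\mathcal{S}}\mathcal{T}$ parametrized by a morphism $\mathcal{T}\to\mathcal{S}$ of finite type, and with the exact dichotomy ``contained in a fiber of $\mathcal{H}$'' versus ``big in $\mathcal{G}_s(\overline{\mathbb{F}_p})^{\Phi}$.'' The only work done here is translating Larsen and Pink's statement into the paper's language of ``$\Gamma$ $k$-evades $\mathcal{H}$'' and ``$\Gamma$ is big in~$\Delta$.'' Your proposal undertakes to rederive the family-over-a-base version from a hypothetical pointwise version of the Larsen--Pink theorem; this rederivation is not needed, and it also misidentifies which version of their result is the input. (You may be thinking of \cite[Theorem 0.2]{LP}, which is the single-field constant-bound statement, but Theorem 0.5 is already the scheme-theoretic family version that the paper quotes.)

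Beyond being unnecessary, the proposal has a real gap where you wave at it yourself: the uniformity in~$p$. You apply the pointwise theorem ``over $\overline{\mathbb{F}_p}$'' but then assert that the resulting exceptional subgroups $H_1,\dots,H_n$ live in the Chevalley form $G_0$ over~$\mathbb{Z}$ and ``spread out'' away from finitely many bad primes; that spreading out, and the fact that a single finite list of exceptional subgroup schemes works simultaneously for all~$p$, is precisely the hard content of \cite[Theorem 0.5]{LP} and cannot be obtained by a soft Noetherianity argument starting from a characteristic-by-characteristic statement. Likewise, the descent step (stability of $\{H_1,\dots,H_n\}$ under $\Aut(G_0)$, including Frobenius twists) is flagged as requiring an inspection of the Larsen--Pink construction that you do not carry out. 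The correct move here is simply to cite \cite[Theorem 0.5]{LP} and check that the paper's definitions of ``family of subgroups,'' ``$k$-evades,'' and ``big'' align with the objects in that theorem; nothing further is required.
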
 

Recall that, for an algebraic group $\bfG$ defined over $\overline{\bF_p}$, a Frobenius map of $\mathbf{G}$ is an automorphism $\Phi : \mathbf{G} \rightarrow \mathbf{G}$ for which some positive power $\Phi ^n$ coincides with some a standard Frobenius.

\begin{corollary} \label{cor:LP.non.ac} Let $\mathcal{S}$ be a scheme and let $\mathcal{G} \rightarrow \mathcal{S}$ be a group scheme whose geometric fibers are connected, simple, and adjoint. There is a family of subgroups $\mathcal{K} \Subset_\tau \mathcal{G}$ such that, for every prime power $q$, every $s\in \mathcal{S}(\mathbb{F}_q)$, and every $\Gamma \subset \mathcal{G}_s(\mathbb{F}_q)$, if $\Gamma$ $\mathbb{F}_q$-evades $\mathcal{K}$, then there is a Frobenius map $\Phi:\mathcal{G}_s(\overline{\mathbb{F}_q}) \rightarrow \mathcal{G}_s(\overline{\mathbb{F}_q})$ such that $\Gamma$ is big in $\mathcal{G}_s(\overline{\mathbb{F}_q})^{\Phi}$.
\end{corollary}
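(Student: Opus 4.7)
The plan is to deduce Corollary \ref{cor:LP.non.ac} from Theorem \ref{thm:LP} by enriching the family $\mathcal{H}$ to a larger family $\mathcal{K}$ whose $\mathbb{F}_q$-fibers detect all Galois-invariant subgroups of $\mathcal{G}_s$ arising as intersections of geometric members of $\mathcal{H}$. The underlying observation is that if $\Gamma \subset \mathcal{G}_s(\mathbb{F}_q)$ is contained in $\mathcal{H}_t(\overline{\mathbb{F}_p})$ for some geometric $t$, then since $\Gamma$ is fixed by $\mathrm{Frob}_q$, it is also contained in $\mathcal{H}_{\sigma t}(\overline{\mathbb{F}_p})$ for every $\sigma \in \mathrm{Gal}(\overline{\mathbb{F}_p}/\mathbb{F}_q)$, and hence in the intersection $\bigcap_\sigma \mathcal{H}_{\sigma t}$, which is Galois-invariant and therefore $\mathbb{F}_q$-defined.

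To construct $\mathcal{K}$, for each $n\geq 1$ I will form the $n$-fold fiber power $\mathcal{T}^{(n)} := \mathcal{T} \times_\mathcal{S} \cdots \times_\mathcal{S} \mathcal{T}$ and consider the scheme-theoretic intersection $\bigcap_{i=1}^n p_i^*\mathcal{H}$ inside $\mathcal{G} \times_\mathcal{S} \mathcal{T}^{(n)}$, where $p_i$ denotes the projection to the $i$-th factor. Being $S_n$-invariant, this subgroup scheme descends to a sub-group-scheme of $\mathcal{G} \times_\mathcal{S} \mathrm{Sym}^n(\mathcal{T}/\mathcal{S})$. Setting $\mathcal{U} := \bigsqcup_{n \geq 1} \mathrm{Sym}^n(\mathcal{T}/\mathcal{S})$ with structure map $\tau : \mathcal{U} \to \mathcal{S}$, the union of these sub-group-schemes is the desired family $\mathcal{K} \Subset_\tau \mathcal{G}$.

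To verify that $\mathcal{K}$ does what we want, suppose $\Gamma \subset \mathcal{G}_s(\mathbb{F}_q)$ $\mathbb{F}_q$-evades $\mathcal{K}$ but, for contradiction, that $\Gamma \subset \mathcal{H}_t(\overline{\mathbb{F}_p})$ for some $t \in \pi^{-1}(s)(\overline{\mathbb{F}_p})$. Let $M$ be the size of the Galois orbit of $t$ under $\Phi := \mathrm{Frob}_q$. By the observation above, $\Gamma \subset \bigcap_{i=0}^{M-1} \mathcal{H}_{\Phi^i t}(\overline{\mathbb{F}_p})$. The multi-set $\{\Phi^i t\}_{i=0}^{M-1}$ is $\Phi$-invariant, so it corresponds to an $\mathbb{F}_q$-point $u$ of $\mathrm{Sym}^M(\mathcal{T}/\mathcal{S})$ lying over $s$, and the fiber $\mathcal{K}_u$ is the $\mathbb{F}_q$-form of $\bigcap_i \mathcal{H}_{\Phi^i t}$. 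Hence $\Gamma \subset \mathcal{K}_u(\mathbb{F}_q)$, contradicting $\mathbb{F}_q$-evasion. Therefore $\Gamma$ $\overline{\mathbb{F}_p}$-evades $\mathcal{H}$, and applying Theorem \ref{thm:LP} to the geometric point above $s$ furnishes a Frobenius map $\Phi$ making $\Gamma$ big in $\mathcal{G}_s(\overline{\mathbb{F}_q})^{\Phi}$.

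The main technical point is the descent step: verifying that the $S_n$-equivariant closed subgroup scheme on $\mathcal{T}^{(n)}$ genuinely descends to a sub-group-scheme on $\mathrm{Sym}^n(\mathcal{T}/\mathcal{S})$, and that $\mathbb{F}_q$-points of the symmetric product correspond exactly to Galois-invariant multi-sets of geometric points of $\mathcal{T}$. This is standard descent theory but requires some care in positive characteristic; if preferred, the symmetric products can be replaced by an appropriate disjoint union of finite-type $\mathcal{S}$-schemes parameterizing Galois-stable configurations, at no cost to the argument.
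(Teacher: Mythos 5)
Your underlying observation — that $\Gamma\subset\mathcal{G}_s(\mathbb{F}_q)$ lies in $\mathcal{H}_{\sigma t}$ for every Galois translate $\sigma t$ of $t$, so that the intersection over the orbit is Galois-invariant and hence $\mathbb{F}_q$-defined — is exactly the right starting point, and it matches the engine of the paper's argument. But your construction of the family $\mathcal{K}$ does not yield a family of subgroups in the sense of this paper, and the gap is not cosmetic.

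The problem is that the size $M$ of the Galois orbit of the geometric point $t\in\pi^{-1}(s)$ is unbounded: for any fixed $q$, the residue field of $t$ can be $\mathbb{F}_{q^m}$ for arbitrary $m$, so $M$ ranges over all positive integers. Your parameter scheme $\mathcal{U}=\bigsqcup_{n\geq 1}\operatorname{Sym}^n(\mathcal{T}/\mathcal{S})$ is therefore an infinite disjoint union, hence not of finite type, violating the standing convention of the paper (``all schemes considered in this paper are assumed to be of finite type over Noetherian base schemes''). This is not a formality: the downstream use of the corollary (in the proof of Proposition~\ref{prop:LP.sym}, where a constant $D(n)$ is extracted as a uniform bound over all fibers of $\mathcal{K}_n$) relies precisely on finiteness of type to get bounded complexity. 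An infinite family gives no uniform bound. Your closing remark that one could replace symmetric powers by a ``disjoint union of finite-type $\mathcal{S}$-schemes'' doesn't help because the disjoint union itself must range over all $n$.

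The paper avoids this with a Noetherian/dimension argument that you are missing. Among all finite subsets $I\subset\mathbb{Z}$, the dimension of $\mathbf{H}_I=\bigcap_{i\in I}\mathcal{H}_{F^i t}$ is minimized by some $I$ with $|I|\leq d=\dim_{\mathcal{S}}\mathcal{G}$ (each strict drop in dimension costs at most $d$ steps), so one only ever needs $d$-fold intersections, and the parameter scheme is the $d$-fold fiber power of $\mathcal{T}$, which is of finite type. The price of bounding $|I|$ is that $\mathbf{H}_I$ itself need no longer be Frobenius-invariant; the paper repairs this by observing that $\mathbf{H}_I^\circ$ \emph{is} Frobenius-invariant (since translating by $F$ produces another minimal-dimension intersection and their common connected component is all of $\mathbf{H}_I^\circ$), and that the open subgroup $\Gamma\cdot\mathbf{H}_I^\circ$ is then defined over $\mathbb{F}_q$ and contains $\Gamma$. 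This is why Lemma~\ref{lem:conn.int} constructs $\mathcal{K}$ to parameterize \emph{open subgroups of} $d$-fold intersections, not the intersections themselves. Your proof, which produces the full Galois-orbit intersection, trades away this second subtlety but at the cost of losing finite type, which is fatal here.
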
 

For the proof of corollary \ref{cor:LP.non.ac}, we use the following preparations:

\begin{defn} Let $\cS$ be a scheme.	Fix a closed embedding $(\GL_n)_\mathbf{S} \hookrightarrow \mathbb{A}_{\mathcal{S}}^N$. 
\begin{enumerate}
\item We say that a regular function $f: (\GL_n)_{\cS} \rightarrow \mathbb{A}^1$ has degree at most $\delta$ if it is the restriction of a polynomial of degree at most $\delta$ on $\mathbb{A}^N_{\cS}$. 
\item We say that the degree of $f: (\GL_n)_{\cS} \rightarrow \mathbb{A}^1$ is $\delta$ if its degree is at most $\delta$ and not at most $\delta-1$. 	
\item  Define the complexity of an $\mathcal{S}$-subgroup scheme $\mathcal{L} \subset (\GL_n)_{\cS}$ to be the minimal $m$ such that the polynomials of degree at most $m$ in the ideal $I(\mathcal{L})$ generate $I(\mathcal{L})$. 
\end{enumerate}
\end{defn}

\begin{lemma}	\label{lem:ultra}
For any two integers $n$ and $A$, there is an integer $B$ such that, for any field $F$, if $\mathbf{L} \subset (\GL_n)_{F}$ is an algebraic subgroup of complexity at most $A$, then $\mathbf{L}^\circ$ is of complexity at most $B$
 \end{lemma}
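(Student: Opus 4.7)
The plan is to argue by contradiction via ultraproducts, which is the cleanest way to convert a uniform bound into a statement about a single object. Suppose no such $B$ exists. Then for each $m\in \mathbb{N}$, pick a field $F_m$ and a subgroup scheme $\mathbf{L}_m \subset (\GL_n)_{F_m}$ of complexity at most $A$ such that $\mathbf{L}_m^\circ$ has complexity at least $m$.

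First I would normalize the presentation of the $\mathbf{L}_m$. Since the space $V_A$ of polynomials of degree at most $A$ on $(\GL_n)_{\mathbb{Z}}$ is spanned by a finite list of universal monomials, say $M_1,\dots,M_R$, each $\mathbf{L}_m$ is cut out by at most $R$ linear combinations of these monomials with coefficients in $F_m$. Passing to a subsequence, I may assume the number of defining equations is a constant $k$. Fix a non-principal ultrafilter $\mathcal{U}$ on $\mathbb{N}$ and let $F = \prod_m F_m / \mathcal{U}$; this is a field. Taking the ultraproduct of the coefficient vectors gives $k$ polynomials $f_1,\dots,f_k$ of degree at most $A$ on $(\GL_n)_F$, cutting out a subscheme $\mathbf{L} \subset (\GL_n)_F$. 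The group axioms are first-order, so \L o\'s's theorem guarantees that $\mathbf{L}$ is a subgroup scheme of complexity at most $A$.

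Next, $\mathbf{L}^\circ$ is an algebraic subgroup of $(\GL_n)_F$ and has some finite complexity $B'$. The contradiction will come from transferring the statement ``$\mathbf{L}^\circ$ has complexity $\leq B'$'' back to almost all $\mathbf{L}_m^\circ$ via \L o\'s. The key auxiliary fact is that the number of (geometric) irreducible components of any subgroup scheme in $(\GL_n)_k$ of complexity at most $A$ is bounded by an integer $N_0 = N_0(n,A)$ coming from a B\'ezout-style estimate. Using $N_0$, the predicate ``the identity component has complexity at most $B'$'' becomes first-order expressible: one says ``there exist polynomials of degree $\leq B'$ cutting out a subgroup scheme $\mathbf{H} \subset \mathbf{L}$, containing the identity, of index at most $N_0$ in $\mathbf{L}$, which cannot be written as a union of two proper closed subsets each defined by polynomials of degree bounded in terms of $B'$ and $N_0$.'' Each clause is expressible by finitely many polynomial equations and inequations in the coefficients of the generators, so \L o\'s applies and gives the desired contradiction.

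The main obstacle is this last step: making the condition ``$\mathbf{L}^\circ$ has complexity at most $B'$'' genuinely first-order, since connectedness is a topological, not a priori equational, property. The workaround described above rests on two effective facts: (i) a B\'ezout-type bound on the number of components of a scheme cut out by polynomials of bounded degree in $(\GL_n)_k$, and (ii) an effective bound on the complexity of each irreducible component in terms of the complexity of the ambient scheme (so that the geometric-irreducibility clause of the first-order sentence requires only bounded-degree witnesses). Both are standard effective commutative algebra; once they are in hand, the ultraproduct argument closes the proof. Alternatively, one could avoid model theory altogether by noting that subgroup schemes of $(\GL_n)_{\mathbb{Z}}$ of complexity $\leq A$ are parametrized by a scheme of finite type over $\Spec \mathbb{Z}$ (a locally closed subscheme of the Grassmannian of $k$-dimensional subspaces of $V_A$), and then invoking Noetherian stratification to bound the complexity of the identity component of the fibers of the universal family.
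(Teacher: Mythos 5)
Your proof is correct and takes essentially the same route as the paper, which simply asserts that ``the complexity of the connected component of $\mathbf{L}$ is at most $B$'' is a first-order condition on the defining coefficients and invokes an ultraproduct argument. The substantive content of your write-up --- the effective B\'ezout and primary-decomposition bounds needed to make that predicate genuinely first-order, and the requirement that the candidate subgroup $\mathbf{H}$ contain the identity (so that topological connectedness over $F$ coincides with geometric connectedness) --- is precisely what the paper leaves implicit, and you have filled it in correctly.
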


\begin{proof}
For any integer $B$, the statement ``the complexity of the connected component of an algebraic group $\mathbf{L}$ is at most $B$" is a first order statement on the coefficients of the polynomials defining $\mathbf{L}\subset (GL_n)_{F}$. 

The result follows now by ultraproduct argument.
\end{proof}

\begin{lemma}\label{lem:conn.int}
Let $\cG\to \cS$ be a group scheme and let $\mathcal{H} \Subset_\pi \mathcal{G}$ be a family of subgroups.
\begin{enumerate}
\item For any integer $d\in\N$, there exists a family of subgroups $\mathcal{K} \Subset_\tau \mathcal{G}$ such that, for any geometric point $s$ of $\mathcal{S}$ and any $d$ geometric points $s_1,\ldots,s_d$ of $\pi ^{-1} (s)$, the group $\mathcal{H}_{s_1} \cap \cdots \cap \mathcal{H}_{s_d}$ is of the form $\mathcal{K}_t$, where $t$ is a point over $s$ (i.e., $\tau(t)=s$).
\item There exists a family of subgroups $\mathcal{P} \Subset_\phi \mathcal{G}$ such that, for any geometric point $s$ of $\mathcal{S}$, any open subgroup of $\mathcal{G}_{s}$ is of the form $\mathcal{P}_t$, where $t$ is a point over $s$.
\end{enumerate}
\end{lemma}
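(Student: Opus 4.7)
For part (1) the construction is entirely formal. I take $\mathcal{T}^{(d)} := \mathcal{T} \times_{\mathcal{S}} \cdots \times_{\mathcal{S}} \mathcal{T}$ (with $d$ factors) and let $\tau : \mathcal{T}^{(d)} \to \mathcal{S}$ be the structure map and $p_i : \mathcal{T}^{(d)} \to \mathcal{T}$ the $i$-th projection. Pulling $\mathcal{H}$ back along each $p_i$ yields closed subgroup schemes $\mathcal{K}_i \subset \mathcal{G} \times_{\mathcal{S}} \mathcal{T}^{(d)}$, and I set $\mathcal{K} := \bigcap_{i=1}^{d} \mathcal{K}_i$ as a scheme-theoretic intersection. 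Compatibility of fiber products with scheme-theoretic intersection shows that the geometric fiber of $\mathcal{K}$ at any $t = (t_1, \ldots, t_d) \in \mathcal{T}^{(d)}$ over $s \in \mathcal{S}$ is exactly $\mathcal{H}_{t_1} \cap \cdots \cap \mathcal{H}_{t_d} \subset \mathcal{G}_s$, as desired.

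For part (2), the key observation is that an open subgroup of $\mathcal{G}_s$ is precisely a closed subgroup scheme containing the identity component $\mathcal{G}_s^\circ$, so there are only finitely many per $s$, indexed by subgroups of the finite component group $\mathcal{G}_s / \mathcal{G}_s^\circ$, and all of them have uniformly bounded ``complexity.'' To make this precise in families, I would stratify $\mathcal{S}$ so as to realize $\mathcal{G}$ as a closed subgroup scheme of $(\GL_N)_{\mathcal{S}}$ of complexity at most some $A$. Lemma \ref{lem:ultra} then gives a bound $B$ on the complexity of $\mathcal{G}_s^\circ$ for all geometric $s$, which in turn bounds the complexity of every open subgroup of $\mathcal{G}_s$ by some $B' = B'(A, B)$. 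I would then take $\mathcal{Q} \to \mathcal{S}$ to be the affine parameter space whose points over $s$ record tuples of coefficients of finitely many polynomials on $(\GL_N)_{\bar{s}}$ of degree at most $B'$, equipped with its tautological closed subscheme $\mathcal{P}_{\text{taut}} \subset (\GL_N)_{\mathcal{S}} \times_{\mathcal{S}} \mathcal{Q}$. The conditions that the fiber be a subgroup, that it be contained in $\mathcal{G}_s$, and that it contain $\mathcal{G}_s^\circ$ are each constructible in $\mathcal{Q}$: the first two are closed (polynomial identities in the coefficients and coordinates), and the third, thanks to the bound $B$ on the complexity of $\mathcal{G}_s^\circ$, reduces to polynomial containment of one bounded-degree ideal in another. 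The locus $\mathcal{P} \subset \mathcal{Q}$ cut out by these conditions, together with the restriction of $\mathcal{P}_{\text{taut}}$, is the required family.

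The main obstacle is the constructibility verification in part (2): that ``the fiber is an open subgroup of $\mathcal{G}_s$'' indeed cuts out a constructible subset of the parameter space $\mathcal{Q}$, uniformly as $s$ varies. Once one has Lemma \ref{lem:ultra} to bound the complexity of $\mathcal{G}^\circ$ in families, this becomes bookkeeping with ideals generated in bounded degree; but one must carry out the reduction to an embedding $\mathcal{G} \hookrightarrow (\GL_N)_{\mathcal{S}}$ globally (possibly after replacing $\mathcal{S}$ by a finite stratification), and verify that each of the cited conditions descends from the geometric fibers to a constructible condition on $\mathcal{Q}$.
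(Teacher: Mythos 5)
Your proof is correct and follows essentially the same route as the paper's. Part (1) is the paper's $d$-fold fiber product construction in slightly different notation (the paper's ``$\mathcal{H} \times_{\mathcal{G}_{\mathcal{S}'}} \cdots \times_{\mathcal{G}_{\mathcal{S}'}} \mathcal{H}$'' should be read with the fiber product taken over $\mathcal{G}$; your pullbacks $\mathcal{K}_i = p_i^*\mathcal{H}$ over $\mathcal{T}^{(d)} = \mathcal{T}\times_{\mathcal{S}}\cdots\times_{\mathcal{S}}\mathcal{T}$ followed by intersection produce exactly the same object). For part (2), both arguments stratify $\mathcal{S}$ to embed $\mathcal{G}$ into $(\GL_N)_{\mathcal{S}}$, invoke Lemma~\ref{lem:ultra} to bound the complexity of $\mathcal{G}_s^\circ$, deduce a uniform bound $M$ on the complexity of every open subgroup of every geometric fiber, and then take a tautological family over a parameter scheme of ideals generated in degree at most $M$.

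One small simplification worth absorbing from the paper: rather than cutting your parameter space $\mathcal{Q}$ down to the locus where the fiber is \emph{exactly} an open subgroup of $\mathcal{G}_s$ --- which is where your ``contains $\mathcal{G}_s^\circ$'' constructibility check enters, and which you correctly flag as the delicate step --- the paper just lets $\mathcal{P}$ parameterize \emph{all} subgroups of $\mathcal{G}_s$ of complexity at most $M$. Since the statement of the lemma only asks that every open subgroup of $\mathcal{G}_s$ appear as \emph{some} fiber $\mathcal{P}_t$, spurious extra fibers (closed subgroups not containing $\mathcal{G}_s^\circ$) are harmless, and the ``contains $\mathcal{G}_s^\circ$'' condition you identify as the main obstacle can simply be dropped from the construction. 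One is still left checking that ``the fiber is a subgroup contained in $\mathcal{G}_s$'' is constructible on $\mathcal{Q}$, but those are straightforward polynomial identities and inclusions of bounded-degree ideals, as you note.
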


\begin{proof}$ $	
\begin{enumerate}
\item Let $\cS'$  be the domain of definition of $\pi$ and let $\cG_{\cS'}:=\cG\times_\cS \cS'$. Define  $$\cK:=\calH \times_{\cG_{\cS'}} \cdots  \times_{\cG_{\cS'}}\calH,$$
and
$$\cS'':=\cS' \times_{\cS} \cdots  \times_{\cS'} \cS',$$
to be the $d$-fold fibered products. The natural maps $\mathcal{S}'' \rightarrow \mathcal{S}$ and $\cK\to \cS''$ give a subfamily as required.

\item After passing to a stratification of $\mathcal{S}$, we can assume that $\mathcal{G} \subset (\GL_n)_\mathcal{S}$ is closed. Fix a closed embedding $(\GL_n)_\mathcal{S} \hookrightarrow \mathbb{A}_{\mathcal{S}}^N$. 

Since $\mathcal{G} \rightarrow \mathcal{S}$ is of finite type, there is a bound $D$ on the complexity of all subgroups $\mathcal{G}_{s}$, where $s$ ranges over all geometric points of $\mathcal{S}$. By Lemma \ref{lem:ultra}, there is a bound $E$ on the complexity of all subgroups $(\mathcal{G}_{s})^\circ$, where $s$ ranges over all geometric points of $\mathcal{S}$. Since $\mathcal{G} \rightarrow \mathcal{S}$ is of finite type, there is a constant $C$ such that, for any geometric point $s$ of $\mathcal{S}$ the group  $\mathcal{G}_{s}$ has at most $C$ connected components. We get that there is a constant $M$ such that for any geometric point $s\in \cS$,  any open subgroup of $\mathcal{G}_s$ has complexity at most $M$.
	
There is a morphism $\mathcal{T} \rightarrow \mathcal{S}$ and a family of subgroups $\mathcal{P} \subset \mathcal{G}_\mathcal{T} \rightarrow \mathcal{T}$ such that the following holds: for every prime power $q$, every $s\in \mathcal{S}(\mathbb{F}_q)$, and every algebraic subgroup $\mathcal{P} \subset \mathcal{G}_s$ which is defined over $\mathbb{F}_q$ and has complexity at most $M$, we have that $\mathcal{P}=\mathcal{P}_t$, for some $t\in \mathcal{T}(\mathbb{F}_q)$. By the arguments above, this family satisfies the requirements.
\end{enumerate}
\end{proof}

\begin{proof}[Proof of Corollary \ref{cor:LP.non.ac}] 
It is enough to construct $\mathcal{K}$ and prove that the claim holds for all but finitely many primes. 
Let $\pi:\mathcal{S}'\rightarrow \mathcal{S}$ and $\mathcal{H} \rightarrow \mathcal{S}'$ be as in Theorem \ref{thm:LP} and let $d=\dim_\cS\, \mathcal{G}$.

By Lemma \ref{lem:conn.int}, we have a family $\mathcal{K}\Subset_{\tau} \mathcal{G}$, with $\tau:\cT\to \cS$, such that,  for any geometric point $s$ of $\mathcal{S}$ and any $d$ geometric points $s_1,\ldots,s_d$ of $\pi ^{-1} (s)$, any open subgroup of $\mathcal{H}_{s_1} \cap \cdots \cap \mathcal{H}_{s_d}$ is of the form $\mathcal{K}_t$, where $t$ is a point over $s$ (i.e., $\tau(t)=s$). We show that such a family $\mathcal{K}$ satisfies the conclusion of the Corollary.			

Let $q$ be a prime power, $s\in \mathcal{S}(\mathbb{F}_q)$, and $\Gamma \subset \mathcal{G}_s(\mathbb{F}_q)$ that $\mathbb{F}_q$-evades $\mathcal{K}$. Let $F: \pi ^{-1}(s)\to \pi ^{-1}(s)$ be the geometric Frobenius. We first show that $\Gamma$ $\overline{\mathbb{F}_q}$-evades $\mathcal{H}$. Assuming the contrary, there is $s' \in \pi ^{-1} (s)(\overline{\mathbb{F}_q})$ such that $\Gamma \subset \mathcal{H}_{s'}(\overline{\mathbb{F}_q})$. For every finite subset $I \subset \mathbb{Z}$, denote $\mathbf{H}_I=\bigcap_{i \in I} \mathcal{H}_{F^is'}$. There are $i_1,\ldots,i_d$ such that $\dim\, \mathbf{H}_{\{i_1,\ldots,i_d\}}=\min \left\{ \dim\, \mathbf{H}_I \mid I \subset \mathbb{Z} \text{ finite}\right\}$. The group $\mathbf{H}_{\{i_1,\ldots,i_d\}}^\circ$ is invariant under the Frobenius, so it is defined over $\mathbb{F}_q$. It follows that the group $\Gamma \mathbf{H}^{\circ}_{\{i_1,\ldots,i_d\}}$ is also defined over $\mathbb{F}_q$. By the assumption on $\mathcal{K}$, we have $\Gamma \mathbf{H}_{\{i_1,\ldots,i_d\}}^\circ = \mathcal{K}_t$, for some $t\in \mathcal{T}(\mathbb{F}_q)$. This contradicts the assumption that $\Gamma$ $\mathbb{F}_q$-evades $\mathcal{K}$.
			
The result now follows from Theorem \ref{thm:LP}.
\end{proof}

\begin{proposition} \label{prop:LP.sym} There is a function $\LP:\mathbb{N} \rightarrow \mathbb{N}$ such that, if $p$ is a prime number, $\mathbf{G}$ is a connected algebraic group over $\mathbb{F}_p$, $t:\mathbf{G} \rightarrow \mathbf{G}$ is an involution, and $\Gamma \subset \mathbf{G}(\mathbb{F}_p)$ is $t$-invariant, then there is a normal $t$-invariant subgroup $\Delta \triangleleft \Gamma$ of index at most $\LP(\dim(\mathbf{G}))$, a connected reductive group $\mathbf{H}$ defined over $\mathbb{F}_p$, an involution $s$ of $\mathbf{H}$, and an $S_2$-equivariant homomorphism $\rho : \Delta \rightarrow \mathbf{H}(\mathbb{F}_p)$ such that: \begin{enumerate} 
\item $\dim \mathbf{H} \leq \dim \mathbf{G}$.
\item $\ker \rho$ is a $p$-group.
\item If $\dim \, \mathbf{H} = \dim \, \mathbf{G}$, then $\ker \rho=1$.
\item $\rho(\Delta)$ is big in $\mathbf{H}(\mathbb{F}_p)$.
\end{enumerate} 
\end{proposition}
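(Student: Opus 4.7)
The plan is to induct on $\dim \mathbf{G}$, after first reducing to the reductive case. The unipotent radical $\mathbf{U}=\mathcal{R}_u(\mathbf{G})$ is characteristic, hence $t$-invariant, and the quotient map $\pi:\mathbf{G}\to \mathbf{G}/\mathbf{U}$ is $S_2$-equivariant. Since $\mathbf{U}(\mathbb{F}_p)$ is a $p$-group, the kernel of $\Gamma\to (\mathbf{G}/\mathbf{U})(\mathbb{F}_p)$ is a $p$-group; when $\mathbf{U}\neq 1$ the dimension strictly drops, making condition (3) vacuous, so any solution for the image of $\Gamma$ in $(\mathbf{G}/\mathbf{U})(\mathbb{F}_p)$ lifts to one for $\Gamma$ by pulling back $\Delta$ through $\pi$. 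Thus we may assume $\mathbf{G}$ is reductive.

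For the inductive step with $\mathbf{G}$ reductive, I would apply Corollary \ref{cor:LP.non.ac} to each simple adjoint factor of $\mathbf{G}^{\mathrm{ad}}$, parametrized by a point of the versal family $\mathcal{S}_n$ of Lemma \ref{lem:fam.of.sym}. In the \emph{small} case, where the image of $\Gamma$ lies in a proper algebraic subgroup $\mathbf{L}\subset \mathbf{G}$ coming from the Larsen--Pink family $\mathcal{K}$, the $t$-invariance of $\Gamma$ forces $\Gamma \subset (\mathbf{L}\cap t(\mathbf{L}))^\circ(\mathbb{F}_p)$, a $t$-invariant connected algebraic subgroup of strictly smaller dimension, with bounded index in $\mathbf{L}\cap t(\mathbf{L})$ by Lemma \ref{lem:ultra}. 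Replacing $\Gamma$ by $\Gamma \cap (\mathbf{L}\cap t(\mathbf{L}))^\circ(\mathbb{F}_p)$ (of bounded index in $\Gamma$) reduces the dimension of the ambient group, and the inductive hypothesis applies.

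In the \emph{big} case, $\Gamma$ is big in $\mathbf{G}(\overline{\mathbb{F}_p})^\Phi$ for some Frobenius map $\Phi$, and one would like to take $\mathbf{H}$ to be the $\mathbb{F}_p$-form of $\mathbf{G}$ corresponding to $\Phi$. The main obstacle is that $\Phi$ need not commute with $t$: the $t$-invariance of $\Gamma$ only guarantees that $\Gamma$ is also big in $\mathbf{G}(\overline{\mathbb{F}_p})^{t\Phi t^{-1}}$. I would resolve this by analyzing the (finite) action of $t$ on the set of Frobenius maps whose fixed locus contains $\Gamma$, and passing to a bounded-index $t$-invariant subgroup $\Delta\triangleleft\Gamma$ for which the Frobenius can be chosen to satisfy $\Phi = t\Phi t^{-1}$. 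Then $t$ descends to an $\mathbb{F}_p$-involution $s$ of $\mathbf{H}$, and the inclusion $\rho:\Delta\hookrightarrow \mathbf{H}(\mathbb{F}_p)$ is $S_2$-equivariant with big image. The doubling trick of Lemma \ref{lem:doubling} is unavailable here since it would violate $\dim \mathbf{H}\leq \dim \mathbf{G}$, so the rigidity argument forcing $\Phi$ and $t$ to commute, implemented inside the single-copy setup using the versal family, is the technical heart of the proof.
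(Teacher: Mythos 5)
Your outline starts well: the reduction to reductive $\mathbf{G}$ via the $t$-invariant unipotent radical matches the last step of the paper's proof, and your treatment of the \emph{small} case (intersecting $\mathbf{L}$ with $t(\mathbf{L})$, passing to the identity component, and invoking Lemma \ref{lem:ultra}) is essentially what the paper does in Case 1 of the adjoint step. But the \emph{big} case, which you correctly identify as the technical heart, is handled by a different mechanism in the paper, and the one you propose has a real gap.

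Concretely: you want to pass to a bounded-index $\Delta\triangleleft\Gamma$ for which the Frobenius $\Phi$ can be chosen to commute with $t$, and then descend $t$ to an $\mathbb{F}_p$-involution of the form $\mathbf{H}$ determined by $\Phi$. As stated this is only an assertion. The set of Frobenius maps with $\Gamma$ big in their fixed points need not contain a $t$-fixed one, and you give no mechanism (conjugating $\Phi$, changing $\Gamma$, or otherwise) that produces one while controlling the index. The paper sidesteps the question entirely. In the adjoint case it takes $\Delta:=\Gamma'$ (automatically perfect and $t$-invariant), identifies $\Delta$ via Goursat's Lemma with $\prod_{i\in I}(\mathbf{S}_i(\overline{\mathbb{F}_{q_i}})^{\Phi_i})'$, invokes Steinberg's existence theorem \cite[11.6]{Ste68} to realize this product as $\mathbf{H}(\mathbb{F}_p)'$ for a \emph{new} connected semisimple $\mathbb{F}_p$-group $\mathbf{H}$, and then uses Corollary \ref{cor:ext} — which rests on the easy direction of the classification of finite simple groups (Theorem \ref{thm:class}) — to extend the abstract involution $t|_\Delta$ of $\mathbf{H}(\mathbb{F}_p)'$ to an algebraic involution $s$ of $\mathbf{H}$. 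So the "rigidity" you are hoping for is exactly the content of the classification-based extension result, not a self-contained Frobenius-commutation argument. If you want to make your version work, you would essentially have to re-derive Corollary \ref{cor:ext} in the guise of choosing a compatible $\Phi$, which is not easier.

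Two further omissions: (a) the involution $t$ can permute the simple adjoint factors of $\mathbf{G}^{\mathrm{ad}}$, and the paper's Step 2 handles this carefully via Corollary \ref{cor:aut.perm} and the $S_2$-equivariant choice of points $\sigma_i$ in the versal family; your proposal treats the factors independently without addressing the permutation. (b) You never descend from the adjoint case to the semisimple and then full reductive case. In the paper this is Steps 3–5 and is not formal: it relies on Lemma \ref{lem:extending.central.extension} to lift the central extension $\Delta\to\overline{\mathbf{H}}(\mathbb{F}_p)'$ to an $S_2$-equivariant algebraic central extension with the big-image property, and on Lemmas \ref{lem:der}, \ref{lem:coker}, \ref{lem:alm.pref} to control the indices. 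These steps are needed precisely because the Larsen–Pink input is only available in the adjoint setting.
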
 
For the proof we will need some preparations:

\begin{lemma} \label{lem:extending.central.extension} Let $p>3$, let $\bfH$ be a connected semi-simple group over $\F_p$, and let $1\to C\to E\to \mathbf{H}(\mathbb{F}_p)' \rightarrow 1$ be a finite central extension. Then there is a finite central extension $\mathbf{E}$ of $\mathbf{H}$ and an embedding $E \hookrightarrow \mathbf{E}(\mathbb{F}_p)$ such that the diagram
\begin{equation}\label{eq:com}
\xymatrix{E \ar[r] \ar@{^{(}->}[d] & \mathbf{H}(\mathbb{F}_p)' \ar@{^{(}->}[d] \\ \mathbf{E}(\mathbb{F}_p) \ar[r] & \mathbf{H}(\mathbb{F}_p)}
\end{equation}
commutes. In addition,
	\begin{enumerate}
\item\label{lem:extending.central.extension:1} If $\sigma$ is an automorphism of $\mathbf{H}$, $\tau$ is an automorphism of $E$ and the map $E \rightarrow \mathbf{H}(\mathbb{F}_p)'$ is equivariant, then there is an automorphism of $\mathbf{E}$ such that the map $\mathbf{E} \rightarrow \mathbf{H}$ is equivariant.
\item\label{lem:extending.central.extension:2} if $|C|$ is prime to $p$, then $E \cap \mathbf{E}^\circ$ is big in $\mathbf{E}^\circ(\mathbb{F}_p)$. 
\end{enumerate} 
\end{lemma}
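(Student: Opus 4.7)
The idea is to realize $\mathbf{E}$ as a finite central quotient of a product of a quotient of the simply-connected cover of $\mathbf{H}$ with the constant $\mathbb{F}_p$-group scheme attached to $C$.

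Let $\phi:\widetilde{\mathbf{H}}\to\mathbf{H}$ be the universal cover. First I would produce a canonical abstract lift $\psi:\widetilde{\mathbf{H}}(\mathbb{F}_p)\to E$ of $\phi$. Pulling back $1\to C\to E\to\mathbf{H}(\mathbb{F}_p)'\to 1$ along $\phi$ gives a central extension of $\widetilde{\mathbf{H}}(\mathbb{F}_p)$ by $C$; this splits by Theorem~\ref{thm:simp.con}\eqref{thm:simp.con:1}, and the splitting is unique because $\widetilde{\mathbf{H}}(\mathbb{F}_p)$ is perfect (Theorem~\ref{thm:simp.con}\eqref{thm:simp.con:2}) so any two splittings differ by a homomorphism into the abelian group $C$. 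Setting $N:=\ker\psi$, one has $N\subset(\ker\phi)(\mathbb{F}_p)\subset Z(\widetilde{\mathbf{H}})(\mathbb{F}_p)$ and $E=\psi(\widetilde{\mathbf{H}}(\mathbb{F}_p))\cdot C$.

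Next I would algebraize the picture. Since $N$ is a finite, Galois-stable subset of the $\mathbb{F}_p$-points of an affine commutative group scheme, it coincides with the $\mathbb{F}_p$-points of its reduced closure $\mathbf{N}$, a finite étale $\mathbb{F}_p$-subgroup scheme of $Z(\widetilde{\mathbf{H}})$. Set $\mathbf{E}_1:=\widetilde{\mathbf{H}}/\mathbf{N}$, let $\underline{C}$ be the constant $\mathbb{F}_p$-group scheme with underlying group $C$, and put $\mathbf{E}_2:=\mathbf{E}_1\times\underline{C}$. The finite anti-diagonal subgroup $K\subset Z(\mathbf{E}_2)(\mathbb{F}_p)$ corresponding to $C\cap\psi(\widetilde{\mathbf{H}}(\mathbb{F}_p))$ has a constant reduced closure $\mathbf{K}$, and I define $\mathbf{E}:=\mathbf{E}_2/\mathbf{K}$. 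The composition $\mathbf{E}_2\to\mathbf{E}_1\to\widetilde{\mathbf{H}}/\ker\phi=\mathbf{H}$ kills $\mathbf{K}$, so it descends to a morphism $\mathbf{E}\to\mathbf{H}$ with finite central kernel, exhibiting $\mathbf{E}$ as a finite central extension of $\mathbf{H}$. The surjection $(x,c)\mapsto\psi(x)c$ from $\mathbf{E}_2(\mathbb{F}_p)$ onto $E$ has kernel exactly $K$, so it descends to an injection $E\hookrightarrow\mathbf{E}(\mathbb{F}_p)$ that makes the diagram~\eqref{eq:com} commute.

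For~\eqref{lem:extending.central.extension:1}, lift $\sigma$ functorially to $\widetilde{\sigma}\in\mathrm{Aut}(\widetilde{\mathbf{H}})$. Both $\tau\circ\psi$ and $\psi\circ\widetilde{\sigma}$ are lifts of $\sigma\circ\phi$, so uniqueness forces $\tau\circ\psi=\psi\circ\widetilde{\sigma}$; hence $\widetilde{\sigma}(\mathbf{N})=\mathbf{N}$ and $\widetilde{\sigma}$ descends to $\mathbf{E}_1$. Combined with the algebraic automorphism of $\underline{C}$ induced by $\tau|_C$, this yields an automorphism of $\mathbf{E}_2$ preserving $\mathbf{K}$, hence an algebraic automorphism of $\mathbf{E}$ extending $\sigma$ compatibly with $\tau$. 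For~\eqref{lem:extending.central.extension:2}, one checks $\mathbf{E}_2^\circ=\mathbf{E}_1$ and $\mathbf{K}\cap\mathbf{E}_1=1$ (the anti-diagonal meets the first factor only at the identity), so $\mathbf{E}^\circ\cong\mathbf{E}_1=\widetilde{\mathbf{H}}/\mathbf{N}$; its universal cover is still $\widetilde{\mathbf{H}}$, so Corollary~\ref{cor:univ.cover} identifies $\mathbf{E}^\circ(\mathbb{F}_p)'$ with the image $\psi(\widetilde{\mathbf{H}}(\mathbb{F}_p))\subset E$, giving bigness.

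The main technical hurdle is the algebraization step: one must verify that the abstract finite subsets $N$ and $K$ really define closed subgroup schemes over $\mathbb{F}_p$, and that $\mathbf{E}_2/\mathbf{K}$ genuinely fits as a finite central extension of $\mathbf{H}$ whose $\mathbb{F}_p$-points contain $E$ compatibly with \eqref{eq:com}. Everything else is essentially formal once $\psi$ is in hand, using the functoriality of the universal cover and the rigidity of constant group schemes.
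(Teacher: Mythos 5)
Your construction of $\mathbf{E}$ is the same as the paper's, just factored in two steps: the paper's $\mathbf{E}=(C\times\widetilde{\mathbf{H}})/\delta(A)$ with $\delta(a)=(\alpha(a),a^{-1})$ is exactly your $(\widetilde{\mathbf{H}}/\mathbf{N})\times\underline{C}$ modulo the anti-diagonal, since $\delta(\ker\alpha)=1\times\ker\alpha$ and $\delta(A)/\delta(\ker\alpha)$ is your $\mathbf{K}$. Your $\psi$ (obtained by splitting the pullback) coincides with the unique map out of the universal central extension $\widetilde{\mathbf{H}}(\mathbb{F}_p)\to E$ that the paper uses, and your verification of part~\eqref{lem:extending.central.extension:1} via $\tau\circ\psi=\psi\circ\widetilde{\sigma}$ matches the paper's ``follows from the construction.''

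Where you genuinely differ is in part~\eqref{lem:extending.central.extension:2}, and your route is cleaner. The paper gets bigness by a $p$-element count: it shows the kernels of $E\cap\mathbf{E}^\circ\to\mathbf{H}(\mathbb{F}_p)'$ and $\mathbf{E}^\circ(\mathbb{F}_p)\to\mathbf{H}(\mathbb{F}_p)'$ are central of order prime to $p$ (this is where the hypothesis on $|C|$ enters their writeup), so the two groups have the same number of $p$-elements, and then uses that $\mathbf{E}^\circ(\mathbb{F}_p)'$ is generated by $p$-elements. You instead observe directly that $\mathbf{E}^\circ\cong\widetilde{\mathbf{H}}/\mathbf{N}$ still has universal cover $\widetilde{\mathbf{H}}$, so Corollary~\ref{cor:univ.cover} gives $\mathbf{E}^\circ(\mathbb{F}_p)'=\operatorname{im}\bigl(\widetilde{\mathbf{H}}(\mathbb{F}_p)\to\mathbf{E}^\circ(\mathbb{F}_p)\bigr)$, which by commutativity of~\eqref{eq:com} equals $\psi(\widetilde{\mathbf{H}}(\mathbb{F}_p))\subset E$; the inclusion $\mathbf{E}^\circ(\mathbb{F}_p)'\subset E\cap\mathbf{E}^\circ\subset\mathbf{E}^\circ(\mathbb{F}_p)$ is immediate. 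This avoids the counting entirely.

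One small imprecision to fix: the formula $(x,c)\mapsto\psi(x)c$ is only defined on $\widetilde{\mathbf{H}}(\mathbb{F}_p)\times C$, not on all of $\mathbf{E}_2(\mathbb{F}_p)=\mathbf{E}_1(\mathbb{F}_p)\times C$, since $\mathbf{E}_1(\mathbb{F}_p)$ may strictly contain $\widetilde{\mathbf{H}}(\mathbb{F}_p)/N$ (there is a potential $H^1(\mathrm{Gal},\mathbf{N})$ obstruction). The correct statement is that the surjection $\widetilde{\mathbf{H}}(\mathbb{F}_p)\times C\twoheadrightarrow E$ has kernel $\{(x,\psi(x)^{-1}):x\in A\}$, which maps onto $K$ under $\widetilde{\mathbf{H}}(\mathbb{F}_p)\times C\to\mathbf{E}_2(\mathbb{F}_p)$; this yields the injection $E\hookrightarrow\mathbf{E}_2(\mathbb{F}_p)/K\hookrightarrow\mathbf{E}(\mathbb{F}_p)$. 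Also, the ``algebraization hurdle'' you flag is not really a hurdle: $\mathbf{N}$ and $\mathbf{K}$ are simply the constant finite étale subgroup schemes attached to the abstract groups $N$ and $K$, which sit as closed central subgroups automatically.
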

\begin{proof} Let $\widetilde{\mathbf{H}}$ be the universal cover of $\mathbf{H}$. By Theorem \ref{thm:simp.con}, the universal central extension (or universal cover, cf. \cite[\S1]{Moo}) of $\mathbf{H}(\mathbb{F}_p)'$ is $\widetilde{\mathbf{H}}(\mathbb{F}_p)$. Denote the kernel of $\widetilde{\mathbf{H}}(\mathbb{F}_p) \rightarrow \mathbf{H}(\mathbb{F}_p)' $ by $A$ and note that $$A \subset Z(\widetilde{\mathbf{H}}(\mathbb{F}_p)) \overset{\text{\ref{thm:simp.con}\eqref{thm:simp.con:2.5}}}{=} Z(\widetilde{\mathbf{H}})(\mathbb{F}_p).$$ The extension $1 \rightarrow C \rightarrow E \rightarrow \mathbf{H}(\mathbb{F}_p)' \rightarrow 1$ corresponds to a homomorphism $\alpha:A \rightarrow C$. In particular, $E=\widetilde{\mathbf{H}}(\mathbb{F}_p) \times C / \delta(A)$, where $\delta(g)=(\alpha(g),g ^{-1})$. 

Consider $A$ and $C$ as discrete algebraic groups. Let $\mathbf{C}$ be the zero-dimensional algebraic group 
\[
\mathbf{C}=\left( C \times Z(\widetilde{\mathbf{H}}) \right)/\delta (A)
\]
and let 
\[
\mathbf{E}=\left( C \times \widetilde{\mathbf{H}} \right) / \delta(A).
\]
The short exact sequence $1 \rightarrow \mathbf{C} \rightarrow \mathbf{E} \rightarrow \mathbf{H} \rightarrow 1$ is a central extension, the map $E = \left( C \times \widetilde{\mathbf{H}}(\mathbb{F}_p)\right) / \delta(A) \rightarrow \mathbf{E}(\mathbb{F}_p)$ is injective, and the diagram \eqref{eq:com} commutes. 

It remains to prove the additional claims. Claim \eqref{lem:extending.central.extension:1} follows from the construction. For Claim \eqref{lem:extending.central.extension:2}, assume  that $|C|$ is prime to $p$. By the construction, $\mathbf{E}^\circ=\widetilde{\mathbf{H}}/\ker(\alpha)$. It follows that $E\cap \mathbf{E}^\circ \supset \widetilde{\mathbf{H}}(\mathbb{F}_p)/\ker(\alpha)$, so the map $E \cap \mathbf{E}^\circ \rightarrow \mathbf{H}(\mathbb{F}_p)'$ is onto. Since the order of $Z(\widetilde{\mathbf{H}})(\overline{\mathbb{F}_p})$ is prime to $p$, the same is true for the size of $\mathbf{C}$ and the size of the kernel of $E \cap \mathbf{E}^\circ \rightarrow \mathbf{H}(\mathbb{F}_p)'$. Since the kernel of the surjection $E \cap \mathbf{E}^\circ \rightarrow \mathbf{H}(\mathbb{F}_p)'$ is prime to $p$, we have that the number of $p$-elements of $E \cap \mathbf{E}^\circ$ is equal to the number of $p$-elements of $\mathbf{H}(\mathbb{F}_p)'$. By the same reasoning applied to the surjection $\mathbf{E}^\circ(\mathbb{F}_p) \rightarrow \mathbf{H}(\mathbb{F}_p)'$, this is also the number of $p$-elements of $\mathbf{E}^\circ(\mathbb{F}_p)$. Hence all $p$-elements in $\mathbf{E}^\circ(\mathbb{F}_p)$ are already in $E\cap \mathbf{E}^\circ$. By Theorem \ref{thm:simp.con} and Corollary \ref{cor:univ.cover}, $\mathbf{E}^\circ(\mathbb{F}_p)'$ is generated by its $p$-elements, and the second claim follows.
\end{proof}

The next lemma follows from \cite[Proposition 1.5.5, Theorem 1.5.6 (i)]{Mar} and Theorem \ref{thm:simp.con}\eqref{thm:simp.con:0}.
\begin{lemma}\label{lem:alm.pref}
Let $p>3$ and let $\bfH$ be a reductive group over $\F_p$. Then any $g\in \bfH(\F_p)$ that commutes with  $\bfH(\F_p)'$ is central in $\bfH$.
\end{lemma}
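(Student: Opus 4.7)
The plan is to reduce to the case where $\bfH$ is semisimple and then combine the fact that $\widetilde{\bfH}(\F_p)$ is generated by unipotents with the cited centralizer result of Margulis. First I would set up the reduction: write $\bfH=Z(\bfH)^\circ\cdot\bfH'$, where $\bfH'$ denotes the derived subgroup. Any element of $\bfH(\F_p)$ automatically commutes with the connected center $Z(\bfH)^\circ$, so $g$ is central in $\bfH$ if and only if $g$ centralizes $\bfH'$ as an algebraic group. Since $\bfH'(\F_p)'\subseteq\bfH(\F_p)'$, I may replace $\bfH$ by $\bfH'$ and assume that $\bfH$ is semisimple.

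Next I would exhibit a convenient generating set for $\bfH(\F_p)'$ in terms of unipotent elements. Applying Theorem \ref{thm:simp.con}\eqref{thm:simp.con:0} to the universal cover $\phi:\widetilde{\bfH}\to\bfH$ (which is valid since $p>3$ rules out the exceptional small finite fields) shows that $\widetilde{\bfH}(\F_p)$ is generated by unipotent elements. By Corollary \ref{cor:univ.cover}, $\phi(\widetilde{\bfH}(\F_p))=\bfH(\F_p)'$, and since the central isogeny $\phi$ carries unipotents to unipotents, the group $\bfH(\F_p)'$ is itself generated by unipotent elements of $\bfH(\F_p)$. Moreover, every connected reductive group over a finite field is quasi-split by Lang's theorem, so $\bfH$ admits a proper $\F_p$-parabolic subgroup, and $\bfH(\F_p)'$ therefore contains the $\F_p$-points of its unipotent radical.

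Finally, I would invoke \cite[Proposition 1.5.5, Theorem 1.5.6 (i)]{Mar}: these results combine to say that the algebraic centralizer in $\bfH$ of the subgroup of $\bfH(\F_p)$ generated by the $\F_p$-points of unipotent radicals of proper $\F_p$-parabolic subgroups is precisely $Z(\bfH)$. Combining with the previous step, $g$ lies in this centralizer, hence $g\in Z(\bfH)$, as required. The main obstacle I anticipate is that $\bfH(\F_p)$ is a \emph{finite} subset of $\bfH(\overline{\F_p})$, so naive Zariski-density arguments to pass from centralizing $\bfH(\F_p)'$ to centralizing the whole algebraic group $\bfH$ cannot work directly; this is exactly why one must appeal to the Margulis results, which are tailored to exploit the structural geometry of parabolic subgroups and their unipotent radicals instead of crude density in the Zariski topology.
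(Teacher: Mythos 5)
Your approach matches the paper's, which simply cites \cite[Proposition 1.5.5, Theorem 1.5.6(i)]{Mar} together with Theorem \ref{thm:simp.con}\eqref{thm:simp.con:0}; you have unpacked that citation correctly, adding Corollary \ref{cor:univ.cover} to pass from $\widetilde{\bfH'}(\F_p)$ to $\bfH'(\F_p)'$.

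One phrasing in your reduction is slightly off and worth tightening. You write ``I may replace $\bfH$ by $\bfH'$ and assume that $\bfH$ is semisimple,'' but the element $g\in\bfH(\F_p)$ need not lie in $\bfH'(\F_p)$, so you cannot literally apply the lemma to the semisimple group $\bfH'$ and deduce the general case. The cleaner way to run the argument, consistent with your final paragraph, is to stay inside the reductive group $\bfH$ throughout: the hypothesis gives that $g$ commutes with $\bfH(\F_p)'\supseteq\bfH'(\F_p)'=\phi(\widetilde{\bfH'}(\F_p))$, which by the universal-cover/unipotent-generation argument contains the $\F_p$-points of unipotent radicals of $\F_p$-parabolics. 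Margulis's result identifies the centralizer of that subgroup in $\bfH'$ with $Z(\bfH')$, and one then notes that for $h\in\bfH(\overline{\F_p})$ commuting with it, writing $h=zs$ with $z\in Z(\bfH)^\circ$ and $s\in\bfH'$ forces $s\in Z(\bfH')$, hence $h\in Z(\bfH)^\circ\cdot Z(\bfH')=Z(\bfH)$. Your final step implicitly does this, but the earlier ``replace $\bfH$ by $\bfH'$'' invites the reader to think $g$ has been moved into $\bfH'(\F_p)$, which it has not; keeping the ambient reductive $\bfH$ and only arguing about centralizers of the unipotent subgroup avoids the ambiguity.
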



\begin{proof}[Proof of Proposition \ref{prop:LP.sym}] For every $n$, let $(\mathcal{R}_n,\mathcal{S}_n,\tau_n)$ be the versal family of reductive groups with involutions from Lemma \ref{lem:fam.of.sym}. By Lemma \ref{lem:fam.of.sym}\eqref{lem:fam.of.sym:3}, there is a subscheme $\mathcal{S}^s_n\subset \mathcal{S}_n$ such that, for any geometric point $x$ of $\mathcal{S}_n$, the group $(\cR_n)_x$ is absolutely simple adjoint iff $x$ factors through $\mathcal{S}^s_n$. Let $\mathcal{R}_n^s \subset \mathcal{R}_n$ be the preimage of $\mathcal{S}^s_n$.
	
Applying Corollary \ref{cor:LP.non.ac} to $\mathcal{R}^{s}_n \rightarrow \mathcal{S}^{s}_n$, we get a family of subgroups $\mathcal{K}_n \Subset \mathcal{R}^{s}_n$ parameterized by an $\mathcal{S}^{s}_n$-scheme $f:\mathcal{S}_n' \rightarrow \mathcal{S}^{s}_n$. Let $D(n)$ be the maximum of the number of connected components of a group of the form $(\mathcal{K}_n)_x \cap (\tau_n)_{f(x)}(\mathcal{K}_n)_x$, or of the form $(\mathcal{K}_n)_x$, where $x$ is a geometric point of $\mathcal{S}_n'$. 

We define three functions $\LP,\LPss,\LPAd:\mathbb{N} \rightarrow \mathbb{N}$ by recursion. Set 
\[
\LP(1)=\LPss(1)=\LPAd(1)=1
\]
and, for $n \geq 2$, set 
\[
\LPAd(n)=\max \left\{ (3^n+2)^n, \LP(n-1)D(n)^\co{n} \right\}.
\]

\[
\LPss(n)=2^n\LPAd(n).
\]

\[
\LP(n)=2^n \LPss(n).
\]


Note that $\LPAd \leq \LPss \leq \LP$.
We will show that the proposition holds with this choice of $\LP$. The proof is by induction on $n:=\dim\,\mathbf{G}$. The base of the induction, $n=0$, is trivial. The induction step is divided to the following steps: 
\begin{enumerate}[{\bf Step 1:}]
\item The claim holds if $p \leq 3^{n}+1$ with the bound $\LP$ replaced by $\LPAd$. \\ In this case, we can take $\Delta =1$, using the bound $|\mathbf{G}(\mathbb{F}_p)| \leq (p+1)^{\dim\, \mathbf{G}}$ from \cite[Lemma 3.5]{Nor}. 
\item  The claim holds if $\mathbf{G}$ is semisimple and adjoint, with the following improvements:
\begin{enumerate}
	\item The bound $\LP$ is replaced by  $\LPAd$. 
	\item Either $\bH$ is semisimple and $Ker(\rho)=1$, or there is a proper connected $t$-invariant subgroup $\bN<\bG$ such that $[\Gamma :\Gamma \cap  \bN(\F_p)]<D(n)$.\\
\end{enumerate} 

By Step 1, we can assume $p>3^{n}+1$. We have $\mathbf{G}=\mathbf{G}_1 \times \cdots \times \mathbf{G}_m$, where $\mathbf{G}_i$ are simple and adjoint. We denote the projection $\mathbf{G} \rightarrow \mathbf{G}_i$ by $pr_i$. Each $\mathbf{G}_i$ is a restriction of scalars from an absolutely simple (and adjoint) group: $\mathbf{G}_i=\Res_{\mathbb{F}_{q_i}/\mathbb{F}_p} \mathbf{S}_i$. 

For any $i\in \{1,\dots, m\}$, we will define a point  $\sigma_i\in \mathcal{S}^s_n(\mathbb{F}_{q_i})$. By Corollary \ref{cor:aut.perm}, for any $i$  we have $t(\mathbf{G}_i)=\mathbf{G}_j$, for some $j$. If $i\neq j$, we take  $\sigma_i\in \mathcal{S}^s_n(\mathbb{F}_{q_i})$ to be such that $(\mathcal{R}_n)_{\sigma_i}\cong \mathbf{S}_i$. This is posible since $\dim(\mathbf{S}_i)\leq \dim(\mathbf{G}_i)$. If $i= j$, we take  $\sigma_i=\sigma_j\in \mathcal{S}^s_n(\mathbb{F}_{q_i})$ to be such that $(\mathcal{R}_n)_{\sigma_i}\cong \mathbf{S}_i$ in an $S_2$-equivariant way.

In both cases, we identify $\mathbf{G}_i(\mathbb{F}_p)$, $\mathbf{S}_i(\mathbb{F}_{q_i})$, and $(\mathcal{R}_n)_{\sigma_i}(\mathbb{F}_{q_i})$. Given $\Gamma \subset \mathbf{G}(\mathbb{F}_p)$, there are two cases:
\begin{enumerate} [{Case} 1:]
\item For some $i$, $pr_i(\Gamma) \subset \mathbf{G}_i(\mathbb{F}_p)$ does not $\mathbb{F}_{q_i}$-evade $\mathcal{K}_n$.\\
For simplicity, we assume that $t(\mathbf{G}_i)=\mathbf{G}_i$; if $t(\mathbf{G}_i)=\mathbf{G}_j$, the proof is similar (and simpler). In this case, there is a point $x\in \cS'(\bF_{q_i})$ that lies over $\sigma_i$ such that $pr_i(\Gamma) \subset (\mathcal{K}_n)_x(\mathbb{F}_{q_i})$. Denote  $\mathbf K:= \mathcal{K}_x$. By the definition of $D(n)$, we have $|\pi_0(\mathbf K \cap \mathbf t_{\sigma_i}(\bK))|\leq D(n)$. Let $\mathbf{M}=\Res_{\mathbb{F}_{q_i}/\mathbb{F}_p}(\mathbf K \cap \mathbf t_{\sigma_i}(\bK))$. We have $|\pi_0(\mathbf M)|\leq D(n)^n$

Using the identification $\mathbf{G}_i=\Res_{\mathbb{F}_{q_i}/\mathbb{F}_p} \mathbf{S}_i$, the group $\mathbf{M}$ is a subgroup of $\mathbf{G}_i$ and is defined over $\mathbb{F}_p$. Note that $pr_i(\Gamma) \subset \mathbf{M}(\mathbb{F}_p)$. Since $\dim(pr_i ^{-1} (\mathbf{M}))<n$, the result now follows from the induction step applied to $pr_i ^{-1} (\mathbf{M})^\circ$  and $\Gamma \cap  pr_i ^{-1} (\mathbf{M})^\circ(\bF_p)$.

\item For all $i$, $pr_i(\Gamma)$ $\mathbb{F}_{q_i}$-evades $\mathcal{K}_n$. \\ 
In this case, there are Frobenius maps $\Phi_i: \mathbf{S}_i(\overline{\mathbb{F}_{q_i}}) \rightarrow \mathbf{S}_i(\overline{\mathbb{F}_{q_i}})$ such that $pr_i(\Gamma)$ is big in $\mathbf{S}_i(\overline{\mathbb{F}_{q_i}})^{\Phi_i}$. Let $\Delta:=\Gamma \cap \prod_i \left( \mathbf{S}_i(\overline{\mathbb{F}_{q_i}})^{\Phi_i} \right) '$. Since
\[
\left[ \mathbf{S}_i(\overline{\mathbb{F}_{q_i}})^{\Phi_i} : \left( \mathbf{S}_i(\overline{\mathbb{F}_{q_i}})^{\Phi_i} \right) '\right] \leq 2^{\dim\, \mathbf{S}_i},
\]
we get
\[
[ \Gamma : \Delta ] \leq 2^{\sum \dim\, \mathbf{S}_i} \leq 2^{\dim\, \mathbf{G}}\leq \LPAd(n).
\]

Since $\Delta \supset \Gamma'$, it follows that $pr_i(\Delta)=\left( \mathbf{S}_i(\overline{\mathbb{F}_{q_i}})^{\Phi_i} \right) '$, for all $i$. Since $\left( \mathbf{S}_i(\overline{\mathbb{F}_{q_i}})^{\Phi_i}\right)'$ are simple groups, Goursat's Lemma implies that there is a subset $I \subset [m]$ such that the projection $\Delta \rightarrow \prod_{i\in I} \left( \mathbf{S}_i(\overline{\mathbb{F}_{q_i}})^{\Phi_i} \right) '$ is an isomorphism. Since $\Delta$ is perfect and $\Delta \subset \Gamma$, it follows that $\Delta=\Gamma'$, and, in particular, $t(\Delta)=\Delta$. 

By \cite[11.6]{Ste68}, there is a connected semisimple $\mathbb{F}_p$ group $\mathbf{H}$ such that $\mathbf{H}(\mathbb{F}_p)\cong \prod_{i\in I} \mathbf{S}_i(\overline{\mathbb{F}_{q_i}})^{\Phi_i}$.

By Corollary \ref{cor:ext}, the restriction of $t$ to $\Delta$ extends to an involution $s$ of $\mathbf{H}$.

By \cite[Lemma 3.5]{Nor}, $(p-1)^{\dim\, \mathbf{H}} \leq \left| \mathbf{H}(\mathbb{F}_p) \right|$ and $\left| \mathbf{G}(\mathbb{F}_p) \right| \leq (p+1)^{\dim\, \mathbf{G}}$, so
\[
(p-1)^{\dim\, \mathbf{H}} \leq \left| \mathbf{H}(\mathbb{F}_p) \right| \leq | \Delta | 2^{\dim\,\mathbf{G}} \leq \left| \mathbf{G}(\mathbb{F}_p) \right|2^{\dim\, \mathbf{G}} \leq (2p+2)^{\dim\, \mathbf{G}} \leq (3p-3)^{\dim\, \mathbf{G}}.
\]
Since $p>3^{\dim\,\mathbf{G}}+1$, we have 
\[
\frac{\dim\, \mathbf{H}}{\dim\,\mathbf{G}} \leq \frac{\log(3p-3)}{\log(p-1)} < 1+\frac{1}{\dim\,\mathbf{G}},
\]
so $\dim\,\mathbf{H} \leq \dim\,\mathbf{G}$.
\end{enumerate} 

\item The claim holds if $\mathbf{G}$ is semisimple (but not necessarily adjoint), with the  bound  $\LPss$.\\
Let $\overline{\mathbf{G}}=\mathbf{G}/\mathbf{Z}(\mathbf{G})$, let $\pi:\mathbf{G} \rightarrow \overline{\mathbf{G}}$ be the projection, and let $\overline{\Gamma}=\pi(\Gamma)$. Applying the previous step to $\overline{\Gamma},\overline{\mathbf{G}}$, there are two possible cases:
\begin{enumerate}[{Case} 1:]
\item There is a proper connected $t$ -invariant subgroup $\overline \bN<\overline \bG$ such that $[\overline \Gamma :\overline \Gamma \cap  \overline \bN(\F_p)]<D(n)$.
\item There is a  subgroup $\overline{\Delta} \subset \overline{\Gamma}$, a semisimple group $\overline{\mathbf{H}}$ with an action of $S_2$, and an $S_2$-equivariant injective homomorphism $\overline{\rho}:\overline{\Delta} \rightarrow \overline{\bfH}(\mathbb{F}_p)$, such that $\overline{\rho}(\overline{\Delta})\subset \overline{\bfH}(\mathbb{F}_p)$ is big and $[\overline \Gamma:\overline\Delta]<\LPAd(n)$.
\end{enumerate}
In the first case, we are done by the induction assumption. For the second case, note that $\rho \left( \overline \Delta' \right)=\overline\bfH(\F_p)'$ and, by Lemma \ref{lem:der}, we have $[\overline \Delta:\overline \Delta']\leq 2^n.$
Denote  $\Delta:=\pi^{-1}(\overline \Delta') \cap \Gamma$. We get that $$[\Gamma:\Delta]<\LPAd(n)2^n=\LPss(n)$$
$\Delta$  is a central extension of  
$\overline{\Delta}'=\overline\bfH(\F_p)'$. By Lemma \ref{lem:extending.central.extension}, this central extension can be extended to an $S_2$-equivariant central extension $\bfH$ of $\overline \bfH$ in such a way that the embedding $$\Delta \to  \bfH(\F_p)$$ has big image, as required.

\item The claim holds if $\mathbf{G}$ is a direct  product of  a semisimple group and a torus, with the bound  $\LPss$.\\
Write $\mathbf{G}=\mathbf{G}'\times \bfT$. Let $p:\mathbf{G}\to\mathbf{G}'$ be the projection and $\bar \Gamma:=p(\Gamma)$.
Applying the previous step to $\bar \Gamma$ we get groups $\bar \Delta$ and  $\bar \bfH$. The claim holds for $\Delta:=\bar \Delta\times \bfT(\F_p)\cap \Gamma$ and $\bfH:=\bar \bfH\times \bfT$.

\item The claim holds if $\mathbf{G}$ is reductive.\\
Let $\tilde \bG:= \bG' \times Z(\bG)^\circ$.
We have an isogeny $\pi:\tilde \bG\to \bG$. By Lemma \ref{lem:coker} and Theorem  \ref{thm:simp.con}\eqref{thm:simp.con:3}, $[\bG(\F_p), \pi(\tilde \bG(\F_p))]\leq 2^n$. Let $\tilde \Gamma:=\pi^{-1}(\Gamma)$. Applying the previous step to $\tilde \Gamma\subset \tilde \bG(\F_p)$ we get groups  $\tilde \Delta$, $\tilde \bfH$, and a map $\tilde \rho:\tilde \Delta\to \tilde \bfH(\F_p)$. Note that $Ker(\pi)\cap \tilde \Delta$ is central in $\tilde \Delta$ and thus $\tilde\rho(Ker(\pi)\cap \tilde \Delta)$ is central in $\tilde\rho(\tilde \Delta)$. Since $\rho(\tilde \Delta)$ is big in $ \tilde \bfH(\F_p)$ we get (by Lemma \ref{lem:alm.pref}) that $\tilde\rho(Ker(\pi)\cap \tilde \Delta)$ is central in $\tilde \bfH$. Define $\bfH=\tilde \bfH/\tilde\rho(Ker(\pi)\cap \tilde \Delta)$ and $\Delta:=\pi(\tilde \Delta)\cong \tilde \Delta/(Ker(\pi)\cap \tilde \Delta)$. Note that $[\Gamma:\Delta]\leq \LP(n)$.
The map $\tilde \rho$ desends to a map $\rho:\Delta\to \bfH(\F_p)$  and we are done.
\item The claim holds for all connected groups $\mathbf{G}$.\\
Denoting the unipotent radical of $\mathbf{G}$ by $\mathbf{U}$, $t$ induces an involution on $\mathbf{G} / \mathbf{U}$ and the projection $\pi:\mathbf{G} \rightarrow \mathbf{G}/\mathbf{U}$ is equivariant. By the previous step, we can assume that $\mathbf{U}$ is positive dimensional. Given $\Gamma$, let $\bar \Gamma =\pi(\Gamma)$. By induction, there is a subgroup $\bar \Delta\subset \bar \Gamma$, an algebraic group $\bar \bfH$, and a homomorphism $\bar \rho: \bar \Delta \rightarrow \bfH (\mathbb{F}_p)$. It is easy to see that $\Delta := \pi ^{-1} (\bar \Delta)$, $\mathbf{H}:=\bar \bfH$, and $\rho:= \bar \rho \circ \pi$ satisfy the requirements of the proposition.
\end{enumerate}


\end{proof} 




\begin{corollary} \label{cor:LP.ind} There is an increasing function $C_{mon}: \mathbb{N} \rightarrow \mathbb{N}$ for which the following holds. If $p$ is a prime and $\Delta \subset \Gamma$ are finite groups such that $\Rad_p(\Delta)\neq \Delta\cap \Rad_p(\Gamma)$, then there is a normal subgroup $\Delta^\circ \lhd \Delta$ of index at most $C_{mon0}(\ralg_p( \Gamma ))$ satisfying $\ralg_p(\Delta^\circ) <  \ralg_p (\Gamma)$.
\end{corollary}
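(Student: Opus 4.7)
\emph{Strategy.} The plan is to apply Proposition \ref{prop:LP.sym} to the image of $\Delta$ in $\mathbf{G}(\mathbb{F}_p)$, where $\mathbf{G}$ is a connected reductive group of dimension $n:=\ralg_p(\Gamma)$ containing $\Gamma/\Rad_p(\Gamma)$. Letting $\bar\Delta\cong \Delta/(\Delta\cap \Rad_p(\Gamma))$ denote this image, I observe that since $\Delta\cap \Rad_p(\Gamma)$ is a normal $p$-subgroup of $\Delta$, it must lie inside $\Rad_p(\Delta)$; the hypothesis that these two groups differ therefore produces a \emph{nontrivial} normal $p$-subgroup $\tilde P\lhd \bar\Delta$, namely the image of $\Rad_p(\Delta)$. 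The existence of $\tilde P$ will be the lever that forces the Larsen--Pink output to reduce the reductive dimension.

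\emph{Main argument.} Apply Proposition \ref{prop:LP.sym} (with trivial involution) to $\bar\Delta\subset \mathbf{G}(\mathbb{F}_p)$ to obtain $\bar\Delta^{\circ}\lhd \bar\Delta$ with $[\bar\Delta:\bar\Delta^{\circ}]\le \LP(n)$, a reductive group $\mathbf{H}/\mathbb{F}_p$ of dimension at most $n$, and a homomorphism $\rho:\bar\Delta^{\circ}\to \mathbf{H}(\mathbb{F}_p)$ with $p$-group kernel and big image, with $\ker\rho=1$ whenever $\dim\mathbf{H}=n$. Define $\Delta^{\circ}$ as the preimage of $\bar\Delta^{\circ}$ in $\Delta$, so $[\Delta:\Delta^{\circ}]\le \LP(n)$. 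Assuming $p$ is large in terms of $n$, I will show that $\dim\mathbf{H}<n$ and that $\Rad_p(\Delta^{\circ})$ coincides with the kernel $K$ of the composition $\phi:\Delta^{\circ}\to \mathbf{H}(\mathbb{F}_p)$, from which $\Delta^{\circ}/\Rad_p(\Delta^{\circ})\hookrightarrow \mathbf{H}(\mathbb{F}_p)$ and hence $\ralg_p(\Delta^{\circ})\le \dim\mathbf{H}<n$, as required. The common engine behind both claims is the group-theoretic fact that, for $p$ larger than an explicit bound depending on $n$, the group $\mathbf{H}(\mathbb{F}_p)'$ admits no nontrivial normal $p$-subgroup. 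I would deduce this from Corollary \ref{cor:univ.cover} and Theorem \ref{thm:simp.con}: via the universal cover, $\mathbf{H}(\mathbb{F}_p)'$ is the quotient of $\widetilde{\mathbf{H}'}(\mathbb{F}_p)$ by a central subgroup of order at most $2^n$, so for $p>2^n$ any normal $p$-subgroup lifts to a central $p$-subgroup, which must be trivial. Granting this, if $\dim\mathbf{H}=n$ then $\rho$ is injective; for $p>\LP(n)$ a Sylow argument forces $\tilde P\subset \bar\Delta^{\circ}$, and since $\mathbf{H}(\mathbb{F}_p)/\mathbf{H}(\mathbb{F}_p)'$ has order prime to $p$ (using Lemma \ref{lem:der} together with $p>2^n$), $\rho(\tilde P)$ is a nontrivial normal $p$-subgroup of $\mathbf{H}(\mathbb{F}_p)'$, a contradiction. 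The same reasoning applied to $\phi(\Rad_p(\Delta^{\circ}))$, which is a normal $p$-subgroup of $\phi(\Delta^{\circ})\supset \mathbf{H}(\mathbb{F}_p)'$, yields $\Rad_p(\Delta^{\circ})\subset K$; the reverse inclusion is automatic since $K$ is a normal $p$-subgroup of $\Delta^{\circ}$.

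\emph{Small primes and main obstacle.} For $p$ below the required threshold, Nori's bound $|\bar\Delta|\le (p+1)^n$ makes $\bar\Delta$ of order bounded in $n$ alone, so I would instead set $\Delta^{\circ}:=\Delta\cap \Rad_p(\Gamma)$; this is normal in $\Delta$ of bounded index, and, being a $p$-group, it satisfies $\ralg_p(\Delta^{\circ})=0<n$ (the hypothesis $\Rad_p(\Delta)\neq \Delta\cap \Rad_p(\Gamma)$ rules out the degenerate case $n=0$, in which $\Gamma$ would itself be a $p$-group). Defining $\mon(n)$ as the maximum of the two regime bounds completes the argument. The main obstacle I anticipate is quantifying the ``no normal $p$-subgroup'' statement precisely enough to pin down a workable threshold on $p$, and in particular handling reductive (non-semisimple or non-adjoint) $\mathbf{H}$ and restriction-of-scalars factors correctly; the reduction to the semisimple adjoint case, along the lines of the last steps in the proof of Proposition \ref{prop:LP.sym}, is conceptually clear but requires careful bookkeeping of central extensions and central tori.
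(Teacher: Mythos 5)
Your proposal is correct and follows essentially the same strategy as the paper: apply Proposition~\ref{prop:LP.sym} to (the image of) $\Delta$, use the hypothesis $\Rad_p(\Delta)\neq\Delta\cap\Rad_p(\Gamma)$ to force $\dim\mathbf{H}<\ralg_p(\Gamma)$, and handle small $p$ by Nori's bound. The only differences are cosmetic: the paper first reduces WLOG to $\Rad_p(\Gamma)=1$ (so that $\bar\Delta=\Delta$ and the group $\tilde P$ is just $\Rad_p(\Delta)$, removing the extra layer of notation), it isolates the ``big subgroups of $\mathbf{H}(\mathbb{F}_p)$ have no nontrivial normal $p$-subgroup'' fact as a standalone Lemma~\ref{lem:no.p} --- which is exactly the bookkeeping step you flag as your main anticipated obstacle, and whose two-line proof avoids the universal-cover lifting argument you sketch --- and for small primes it simply takes $\Delta^\circ=1$ rather than $\Delta\cap\Rad_p(\Gamma)$.
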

For the proof  we will need the following:
\begin{lemma}\label{lem:no.p} Let $p > 3$ be a prime and let $\mathbf{H}$ be a reductive group over $\mathbb{F}_p$. Let $\Gamma<\mathbf{H}(\mathbb{F}_p)$ be a big subgroup. Then  $\Gamma$ does not have a non-trivial normal $p$-subgroup. 
\end{lemma}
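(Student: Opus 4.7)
The plan is to argue by contradiction using the Borel--Tits theorem on unipotent subgroups of reductive groups. Suppose $N \triangleleft \Gamma$ is a nontrivial normal $p$-subgroup. Since every element of $N$ has $p$-power order and the characteristic is $p$, every element of $N$ is unipotent in $\mathbf{H}(\overline{\mathbb{F}_p})$. Let $\overline{N}$ denote the Zariski closure of $N$ in $\mathbf{H}$, equipped with its reduced subscheme structure; then $\overline{N}$ is a nontrivial unipotent $\mathbb{F}_p$-subgroup of $\mathbf{H}$, and since $\Gamma$ acts on $\mathbf{H}$ by algebraic automorphisms (as conjugation is a morphism), $\Gamma$ normalizes $\overline{N}$ as well.

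Applying the Borel--Tits theorem to $\overline{N}$, we obtain an $\mathbb{F}_p$-parabolic subgroup $\mathbf{P}$ of $\mathbf{H}$ such that $\overline{N} \subset R_u(\mathbf{P})$ and the algebraic normalizer $N_{\mathbf{H}}(\overline{N})$ is contained in $\mathbf{P}$. The reductivity of $\mathbf{H}$ forces $\mathbf{P}$ to be a proper parabolic (otherwise $R_u(\mathbf{P}) = R_u(\mathbf{H}) = 1$, contradicting $\overline{N} \neq 1$). Combined with bigness, this yields
\[
\mathbf{H}(\mathbb{F}_p)' \subset \Gamma \subset N_{\mathbf{H}}(\overline{N})(\mathbb{F}_p) \subset \mathbf{P}(\mathbb{F}_p).
\]

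To reach a contradiction, I will exhibit a unipotent element of $\mathbf{H}(\mathbb{F}_p)'$ lying outside $\mathbf{P}(\mathbb{F}_p)$. Since $\mathbb{F}_p$ is perfect, $\mathbf{P}$ admits an $\mathbb{F}_p$-rational Levi decomposition, so the opposite parabolic $\mathbf{P}^{-}$ is defined over $\mathbb{F}_p$ and satisfies $R_u(\mathbf{P}^{-}) \cap \mathbf{P} = 1$. The group $R_u(\mathbf{P}^{-})(\mathbb{F}_p)$ is nontrivial, since it is the $\mathbb{F}_p$-points of a positive-dimensional connected unipotent $\mathbb{F}_p$-group, and its elements are unipotent. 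The final ingredient is that every unipotent $u \in \mathbf{H}(\mathbb{F}_p)$ lies in $\mathbf{H}(\mathbb{F}_p)'$: writing $\phi : \widetilde{\mathbf{H}'} \to \mathbf{H}'$ for the universal cover, the kernel of $\phi$ is of multiplicative type, so its $\overline{\mathbb{F}_p}$-points have order prime to $p$; by the uniqueness of the Jordan decomposition $u$ admits a unique unipotent lift $\widetilde{u} \in \widetilde{\mathbf{H}'}(\overline{\mathbb{F}_p})$, uniqueness forces $\widetilde{u}$ to be Frobenius-invariant, and then Corollary \ref{cor:univ.cover} gives $u = \phi(\widetilde{u}) \in \mathbf{H}'(\mathbb{F}_p)' \subset \mathbf{H}(\mathbb{F}_p)'$. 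Consequently $R_u(\mathbf{P}^{-})(\mathbb{F}_p) \subset \mathbf{H}(\mathbb{F}_p)' \subset \mathbf{P}(\mathbb{F}_p)$, contradicting $R_u(\mathbf{P}^{-}) \cap \mathbf{P} = 1$. The main obstacle is invoking Borel--Tits with the normalizer clause over the finite field $\mathbb{F}_p$; once this is in hand, the rest of the argument is assembled from standard structural facts about reductive groups together with Theorem \ref{thm:simp.con}\eqref{thm:simp.con:0} and Corollary \ref{cor:univ.cover}, which is exactly where the hypothesis $p > 3$ is used.
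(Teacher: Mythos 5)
Your proof is correct, but it takes a genuinely different route from the paper's. The paper argues at the level of finite groups: it observes that both $[\mathbf{H}(\mathbb{F}_p):\mathbf{H}(\mathbb{F}_p)']$ and $|Z(\mathbf{H}(\mathbb{F}_p)')|$ are prime to $p$ (using Lemma \ref{lem:der} and Lemma \ref{lem:alm.pref} respectively), so any normal $p$-subgroup $P$ of $\Gamma$ would inject into $\mathbf{H}(\mathbb{F}_p)'/Z(\mathbf{H}(\mathbb{F}_p)')$, which is a product of nonabelian simple groups; normality of $P$ in $\Gamma \supset \mathbf{H}(\mathbb{F}_p)'$ makes the image a normal subgroup, which by Goursat (Lemma \ref{lem:Goursat}) must be a sub-product, impossible for a nontrivial $p$-group. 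You instead take the Zariski closure of the normal $p$-subgroup to obtain a nontrivial $\mathbb{F}_p$-closed unipotent subgroup, feed it into Borel--Tits to get a proper parabolic $\mathbf{P}$ containing $\Gamma$, and then contradict bigness by producing unipotent elements of $\mathbf{H}(\mathbb{F}_p)'$ inside $R_u(\mathbf{P}^-)(\mathbb{F}_p)$, which misses $\mathbf{P}$. Your approach is more geometric and avoids the normal-subgroup/Goursat analysis, at the cost of importing the Borel--Tits theorem (with the normalizer clause), which the paper does not otherwise use. Both routes ultimately invoke Theorem \ref{thm:simp.con} and Corollary \ref{cor:univ.cover}, and hence the hypothesis $p>3$, in the same place: to relate unipotents and $\mathbf{H}(\mathbb{F}_p)'$ through the simply connected cover. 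One small point worth making explicit in your write-up: the Zariski closure $\overline{N}$ is a finite, hence zero-dimensional, group scheme, so positive-dimensionality of $R_u(\mathbf{P})$ does not follow from $\overline{N}\subset R_u(\mathbf{P})$ having nontrivial dimension; rather, it follows because $R_u(\mathbf{P})$ is connected, so if it is nontrivial it is positive-dimensional, and it is nontrivial because it contains $\overline{N}\neq 1$. You implicitly use this when passing to $R_u(\mathbf{P}^-)(\mathbb{F}_p)$, and the reasoning is sound, but the chain of implications deserves a sentence.
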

\begin{proof} Suppose that $P$ is a non-trivial normal $p$-subgroup of $\Gamma$. Since the index of $\mathbf{H}(\mathbb{F}_p)'$ in $\mathbf{H}(\mathbb{F}_p)$ is prime to $p$, we have  $P\subset \mathbf{H}(\mathbb{F}_p)'$. For similar reasons, $P\cap Z(\mathbf{H}(\mathbb{F}_p)')=1$. This gives an embedding of $P$ into  $\mathbf{H}(\mathbb{F}_p)'/Z(\mathbf{H}(\mathbb{F}_p)')$, which is a product of non-abelian simple groups, a contradiction.
\end{proof}

\begin{proof}[Proof of Corollary \ref{cor:LP.ind}] Set $\mon(n):=\max(4^n,(\LP(n)+1)^n)$. Suppose that $\Delta \subset \Gamma$ are as in the statement of the corollary. Without loss of generality, we can assume that $\Rad_p(\Gamma)$ is trivial. Let $n:=\ralg_p(\Gamma)$.  Using the bound $|\mathbf{G}(\mathbb{F}_p)| \leq (p+1)^{\dim\, \mathbf{G}}$ from \cite[Lemma 3.5]{Nor}, we may also assume that $p>3$ and $p>\LP(n)$ (otherwise, the claim holds with $\Delta^\circ=1$). Embed $\Gamma \hookrightarrow \mathbf{G}(\mathbb{F}_p)$ with $\mathbf{G}$ connected reductive of dimension $n$. Applying Proposition \ref{prop:LP.sym} to $\Delta \subset \mathbf{G}(\mathbb{F}_p)$, there is a normal subgroup $\Delta^\circ \triangleleft \Delta$, a connected reductive group $\mathbf{H}$ defined over $\mathbb{F}_p$, and a homomorphism $\rho: \Delta ^\circ \rightarrow \mathbf{H}(\mathbb{F}_p)$ such that
\begin{enumerate}
\item $[\Delta : \Delta ^\circ] \leq \LP(n)$.
\item $\ker \rho$ is a $p$-group.
\item $\rho(\Delta ^\circ)$ is big in $\mathbf{H}(\mathbb{F}_p)$.
\item $\dim\,\mathbf{H} \leq n$.
\item If $\dim\,\mathbf{H} = n$ then $\ker\rho=1$.
\end{enumerate} 
By Lemma \ref{lem:no.p}, $\ker \rho = \Rad_p(\Delta ^\circ)$. In particular, $\ralg_p(\Delta^\circ) \leq \dim\,\mathbf{H}$.

If $\dim\,\mathbf{H} < n$, we are done. Otherwise, since $[\Delta : \Delta ^\circ] < \LP(n) < p$, we get that 
\[
\Rad_p(\Delta) \subset \Rad_p(\Delta ^\circ)=\ker \rho=1,
\]
contradicing the condition that $\Rad_p(\Delta)\neq \Delta \cap \Rad_p(\Gamma)$.

\end{proof} 

\begin{corollary} \label{cor:LP.reductive.with.involution} There is an increasing function $\LPsym: \mathbb{N} \rightarrow \mathbb{N}$ for which the following holds. If $p$ is a prime, $(\Gamma,\theta)$ is a finite symmetric pair, and $\Rad_p(\Gamma)=1$, then there is a $\theta$-invariant normal subgroup $\Delta \triangleleft \Gamma$ satisfying $[\Gamma : \Delta]<\LPsym(\alg_p(\Gamma))$, a connected reductive algebraic group $\mathbf{H}$ satisfying $\dim\, \mathbf{H} \leq 2\alg_p(\Gamma)$, an involution $t$ of $\mathbf{H}$, and an $S_2$-equivariant embedding $\Delta \subset \mathbf{H}(\mathbb{F}_p)$ such that $\mathbf{H}(\mathbb{F}_p)' \subset \Delta \subset \mathbf{H}(\mathbb{F}_p)$.
\end{corollary}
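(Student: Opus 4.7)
The plan is to reduce to Proposition \ref{prop:LP.sym} via the doubling trick of Lemma \ref{lem:doubling}. First, I would apply Lemma \ref{lem:doubling} to the symmetric pair $(\Gamma,\theta)$ to obtain a connected reductive group $\mathbf{G}$ over $\mathbb{F}_p$ of dimension at most $2\alg_p(\Gamma)$, an involution $t$ of $\mathbf{G}$, and an $S_2$-equivariant embedding $\Gamma \hookrightarrow \mathbf{G}(\mathbb{F}_p)$ compatible with $\theta$ and $t$. This lets me work inside an algebraic group equipped with an involution, even though Proposition \ref{prop:LP.sym} was formulated at that level rather than starting from a symmetric pair. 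The function $\LPsym$ will be defined by $\LPsym(n) := \LP(2n)$.

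Next I would feed the $t$-invariant subgroup $\Gamma \subset \mathbf{G}(\mathbb{F}_p)$ into Proposition \ref{prop:LP.sym}. This produces a $t$-invariant normal subgroup $\Delta \triangleleft \Gamma$ with $[\Gamma:\Delta] < \LP(\dim\,\mathbf{G}) \leq \LP(2\alg_p(\Gamma)) = \LPsym(\alg_p(\Gamma))$, a connected reductive group $\mathbf{H}$ over $\mathbb{F}_p$ with $\dim\,\mathbf{H} \leq \dim\,\mathbf{G} \leq 2\alg_p(\Gamma)$, an involution $s$ of $\mathbf{H}$, and an $S_2$-equivariant homomorphism $\rho:\Delta \to \mathbf{H}(\mathbb{F}_p)$ whose image is big and whose kernel is a $p$-group. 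The $t$-invariance of $\Delta$ translates back to $\theta$-invariance, which gives the first part of what is needed.

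The one genuine step is to promote $\rho$ to an embedding by showing $\ker\rho = 1$. The kernel is a normal $p$-subgroup of $\Delta$, hence $\ker\rho \subset \Rad_p(\Delta)$. Since $\Rad_p(\Delta)$ is characteristic in $\Delta$ and $\Delta \triangleleft \Gamma$, the subgroup $\Rad_p(\Delta)$ is itself a normal $p$-subgroup of $\Gamma$, and therefore $\Rad_p(\Delta) \subset \Rad_p(\Gamma) = 1$ by hypothesis. So $\rho$ is injective, and identifying $\Delta$ with its image gives exactly $\mathbf{H}(\mathbb{F}_p)' \subset \Delta \subset \mathbf{H}(\mathbb{F}_p)$ (the meaning of ``big''), equivariantly with respect to $\theta$ and $s$. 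I do not expect any step here to be a real obstacle, since the main work has been packaged into Proposition \ref{prop:LP.sym}; the only subtlety is the chain of normality arguments used to transfer the hypothesis $\Rad_p(\Gamma)=1$ down to $\Delta$.
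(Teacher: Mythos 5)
Your proposal matches the paper's own proof: the paper also applies Proposition \ref{prop:LP.sym} to the doubled group $\mathbf{G}\times\mathbf{G}$ with the flip involution and the graph $\{(x,\theta(x))\}$ of $\theta$, which is precisely the construction of Lemma \ref{lem:doubling}, and likewise sets $\LPsym(n)=\LP(2n)$. The one step you spell out explicitly---that $\ker\rho\subset\Rad_p(\Delta)\subset\Rad_p(\Gamma)=1$ because $\Rad_p(\Delta)$ is characteristic in $\Delta\triangleleft\Gamma$---is left implicit in the paper's one-line proof, and your justification is exactly the right one.
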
 

\begin{proof} Set $\LPsym(n)=\LP(2n).$ Embed $\Gamma \hookrightarrow \mathbf{G}$ and apply Proposition \ref{prop:LP.sym} to $\mathbf{G} \times \mathbf{G}$, the involution $t(x,y)=(y,x)$, and the subgroup $\left\{ (x,\theta(x)) \mid x\in \Gamma \right\}\cong \Gamma$.
\end{proof}

\subsection{$\theta$-invariant subgroups of bounded index}\label{ssec:S2}
In this subsection we prove Corollary \ref{cor:LP.ind.sym}, which is a $S_2$-equivariant version of the monotonicity of $\ralg$, Corollary \ref{cor:LP.ind}.

\begin{lemma}\label{lem:norm.sub}
There is a function $C_{inv}:\N \times \N  \to \N$ which is increasing in both variables such that, for any
\begin{itemize}
\item pair of finite groups  $\Delta<\Gamma$ and
\item a prime $p$,
\end{itemize}
there exists a subgroup $\Delta^\circ \lhd \Delta$ which is normal in $ \Gamma$ and satisfies
 $$[\Gamma:\Delta^\circ] \leq C_{inv}(\ralg_p(\Delta),[\Gamma:\Delta])$$ and $$\ralg_p(\Delta^\circ)\leq  \ralg_p(\Delta).$$
\end{lemma}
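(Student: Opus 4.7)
The plan is to induct on $n := \ralg_p(\Delta)$. In the base case $n = 0$ the group $\Delta$ is a $p$-group, so every subgroup is as well, and I would simply take $\Delta^\circ$ to be the normal core $\bigcap_{g\in\Gamma} g\Delta g^{-1}$; this is normal in $\Gamma$ with $[\Gamma:\Delta^\circ] \leq [\Gamma:\Delta]!$ via the standard action of $\Gamma$ on $\Gamma/\Delta$, and $\ralg_p(\Delta^\circ) = 0$.

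For the inductive step, I would again form $\Delta^* := \bigcap_{g\in\Gamma} g\Delta g^{-1}$, so $\Delta^* \lhd \Gamma$ and $[\Gamma:\Delta^*] \leq [\Gamma:\Delta]!$. Since $\Delta^* \cap \Rad_p(\Delta)$ is a normal $p$-subgroup of $\Delta^*$ (normal in $\Delta$ as an intersection of two normal subgroups of $\Delta$), the inclusion $\Delta^* \cap \Rad_p(\Delta) \subseteq \Rad_p(\Delta^*)$ always holds. The proof then splits according to whether this inclusion is strict.

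If $\Rad_p(\Delta^*) = \Delta^* \cap \Rad_p(\Delta)$, then the natural map $\Delta^*/\Rad_p(\Delta^*) \hookrightarrow \Delta/\Rad_p(\Delta)$ is injective, so $\ralg_p(\Delta^*) \leq \ralg_p(\Delta) = n$, and I set $\Delta^\circ := \Delta^*$. Otherwise, I would apply Corollary \ref{cor:LP.ind} to the pair $\Delta^* \subset \Delta$ (whose hypothesis is precisely what we are assuming) to extract a subgroup $\Delta^\sharp \lhd \Delta^*$ with $[\Delta^*:\Delta^\sharp] \leq C_{mon0}(n)$ and $\ralg_p(\Delta^\sharp) < n$. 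Then $\Delta^\sharp \subseteq \Delta$ with $[\Gamma:\Delta^\sharp] \leq [\Gamma:\Delta]! \cdot C_{mon0}(n)$, and the induction hypothesis applied to the pair $\Delta^\sharp \subset \Gamma$ delivers a $\Gamma$-normal subgroup $\Delta^\circ \subseteq \Delta^\sharp \subseteq \Delta$ satisfying $\ralg_p(\Delta^\circ) \leq \ralg_p(\Delta^\sharp) < n$ with $[\Gamma:\Delta^\circ]$ bounded in terms of $n$ and $[\Gamma:\Delta]$.

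The main obstacle is bookkeeping rather than conceptual: one must define $C_{inv}$ recursively so that it is monotone in both arguments. A choice such as $C_{inv}(0,k) = k!$ and $C_{inv}(n,k) = \max\{k!,\; C_{inv}(n-1,\; k! \cdot C_{mon0}(n))\}$ should work, and monotonicity in both $n$ and $k$ follows by induction on $n$, using that $C_{mon0}$ is already known to be increasing.
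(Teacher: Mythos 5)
Your proposal is correct and follows essentially the same strategy as the paper's proof: induct on $\ralg_p(\Delta)$, pass to the normal core $\bigcap_{g\in\Gamma} g\Delta g^{-1}$, split on whether its $p$-radical coincides with the intersection with $\Rad_p(\Delta)$, and in the nontrivial case invoke Corollary \ref{cor:LP.ind} followed by the inductive hypothesis; the only differences are cosmetic (your $C_{inv}$ uses a $\max$ where the paper uses a product, and you spell out more carefully why $\Delta^*\cap\Rad_p(\Delta)\subseteq\Rad_p(\Delta^*)$ and why equality gives $\ralg_p(\Delta^*)\leq\ralg_p(\Delta)$).
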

\begin{proof}
Define recursively $$C_{inv}(0,k)=k!$$ and 
$$C_{inv}(n,k):=k!\mon(n)C_{inv}(n-1, k!\mon(n)).$$
 We will prove the lemma by induction on $\ralg_p(\Delta)$. For $\ralg_p(\Delta)=0$, the claim is clear. For the induction step, let $n>0$ be an integer, and assume the lemma holds if
$\ralg_p(\Delta)<n$. We prove the lemma for $\ralg_p(\Delta)=n$. Let

$$\Delta_1:=\bigcap_{\gamma\in \Gamma}\gamma \Delta \gamma^{-1}.$$ We consider the following cases:
\begin{enumerate} [{Case} 1.]
\item $\Rad_p(\Delta_1)= \Rad_p(\Delta)\cap \Delta_1$.\\
In this case $\ralg_p(\Delta_1)\leq\ralg_p(\Delta)=n$ so we can take 
$\Delta^\circ:= \Delta_1$ and we are done.
\item $\Rad_p(\Delta_1)\neq \Rad_p(\Delta)\cap \Delta_1$.\\
In this case, Corollary \ref{cor:LP.ind} implies that we can find $\Delta_2$ such that 
$$\ralg_p(\Delta_2)<\ralg_p(\Delta)=n$$ 
and
$$[\Delta_1:\Delta_2]\leq \mon(n).$$ 
By the induction hypothesis, there exists $\Delta^\circ \lhd \Delta_2$  which is normal in $\Gamma$ and satisfies
 $$[\Delta_2:\Delta^\circ] \leq C_{inv}(n-1,[\Gamma:\Delta_2])$$ and $$\ralg_p(\Delta^\circ)\leq \ralg_p(\Delta_2)<n.$$
We get 
\begin{align*}
[\Gamma:\Delta^\circ] &\leq  [\Gamma:\Delta_1]\cdot [\Delta_1:\Delta_2]\cdot [\Delta_2:\Delta^\circ]
\\&\leq 
[\Gamma:\Delta]!  \mon(n) C_{inv}(n-1,[\Gamma:\Delta_2])
\\&\leq 
[\Gamma:\Delta]!  \mon(n) C_{inv}(n-1,[\Gamma:\Delta_1]\cdot [\Delta_1:\Delta_2])
\\&\leq 
[\Gamma:\Delta]!  \mon(n) C_{inv}(n-1,[\Gamma:\Delta]!  C_{LP}(n))=C_{inv}(n,[\Gamma:\Delta]).
\end{align*}
\end{enumerate} 
\end{proof}

The last lemma and Corollary \ref{cor:LP.ind} imply:
\begin{corollary}\label{cor:LP.ind.sym} There is a function $\monSym:\N \to \N$ such that, for any odd prime $p$, any finite group $\Gamma$, any subgroup $\Delta < \Gamma$ satisfying $\Rad_p(\Delta)\neq \Delta\cap \Rad_p(\Gamma)$, and any involution $\theta$ of $\Delta$,
there is  a normal $\theta$-invariant subgroup $\Delta^\circ\lhd \Delta$ such that
\begin{itemize}
\item $\ralg_p(\Delta^\circ) <  \ralg_p (\Gamma).$
\item $[\Delta:\Delta^\circ]\leq \monSym(\ralg_p(\Gamma))$
\end{itemize}
%
\end{corollary}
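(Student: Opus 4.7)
The plan is to combine Corollary \ref{cor:LP.ind} with Lemma \ref{lem:norm.sub}, using the semidirect product trick to convert ``$\theta$-invariant'' into ``normal in a slightly larger group''. Set $\monSym(n) := C_{inv}(n, 2C_{mon0}(n))$ and argue as follows.

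First I would apply Corollary \ref{cor:LP.ind} to the pair $\Delta < \Gamma$ (whose hypothesis $\Rad_p(\Delta) \neq \Delta \cap \Rad_p(\Gamma)$ is exactly what we assume). This produces a normal subgroup $\Delta_1 \lhd \Delta$ with $[\Delta:\Delta_1] \leq C_{mon0}(\ralg_p(\Gamma))$ and $\ralg_p(\Delta_1) < \ralg_p(\Gamma)$. The subgroup $\Delta_1$ need not be $\theta$-invariant; this is the only wrinkle to resolve.

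To force $\theta$-invariance, I would form the semidirect product $\widetilde{\Delta} := \Delta \rtimes \langle \theta \rangle$, in which $\Delta$ sits as a normal subgroup of index $2$ and in which a subgroup of $\Delta$ is $\theta$-invariant if and only if it is normal in $\widetilde{\Delta}$. Now apply Lemma \ref{lem:norm.sub} to the pair $\Delta_1 < \widetilde{\Delta}$. This yields a subgroup $\Delta^\circ \lhd \Delta_1$ that is normal in $\widetilde{\Delta}$ with
\[
[\widetilde{\Delta} : \Delta^\circ] \leq C_{inv}(\ralg_p(\Delta_1), [\widetilde{\Delta}:\Delta_1]) \quad \text{and} \quad \ralg_p(\Delta^\circ) \leq \ralg_p(\Delta_1).
\]
Since $[\widetilde{\Delta}:\Delta_1] = 2[\Delta:\Delta_1] \leq 2C_{mon0}(\ralg_p(\Gamma))$ and $\ralg_p(\Delta_1) < \ralg_p(\Gamma)$, using that $C_{inv}$ is increasing in both variables, we get $[\Delta:\Delta^\circ] \leq [\widetilde{\Delta}:\Delta^\circ] \leq \monSym(\ralg_p(\Gamma))$. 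Moreover $\Delta^\circ \subset \Delta_1 \subset \Delta$ is normal in $\Delta$ (since normal in the larger $\widetilde{\Delta}$), is $\theta$-invariant (same reason), and satisfies $\ralg_p(\Delta^\circ) \leq \ralg_p(\Delta_1) < \ralg_p(\Gamma)$, as required.

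There is no real obstacle here: both main inputs (the existence of an index-bounded normal subgroup with reduced $\ralg_p$, and the existence of an index-bounded $\Gamma$-invariant refinement that does not increase $\ralg_p$) are already proved, and the only new idea needed is the standard semidirect-product embedding $\Delta \hookrightarrow \Delta \rtimes \langle \theta \rangle$, which was already the underlying mechanism for getting $\Gamma$-invariance from Lemma \ref{lem:norm.sub}. The only point requiring care is the order of operations: one must apply Corollary \ref{cor:LP.ind} \emph{first} (to drop $\ralg_p$ strictly below $\ralg_p(\Gamma)$) and Lemma \ref{lem:norm.sub} \emph{second} (to symmetrize without increasing $\ralg_p$); doing it the other way around would symmetrize $\Delta$ itself first and then leave us without the hypothesis $\Rad_p(\Delta)\neq \Delta\cap \Rad_p(\Gamma)$ needed to invoke Corollary \ref{cor:LP.ind}.
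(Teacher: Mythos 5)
Your proof is correct and follows exactly the same route as the paper's: apply Corollary \ref{cor:LP.ind} first to drop $\ralg_p$ strictly, then symmetrize via Lemma \ref{lem:norm.sub} applied to $\Delta_1 < \Delta \rtimes \langle\theta\rangle$, with the same choice $\monSym(n)=C_{inv}(n,2\mon(n))$. Your closing remark about the order of operations correctly identifies the one place where care is needed.
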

\begin{proof}
Set:
$$\monSym(n):=C_{inv}(n,2\mon(n)).$$
By the monotonicity of the $\ralg_p$ (Corollary \ref{cor:LP.ind}), we can find $\Delta_1\lhd \Delta$ satisfying 
$$\ralg_p(\Delta_1) <  \ralg_p (\Gamma)$$ and
$$[\Delta:\Delta_1]\leq \mon(\ralg_p(\Gamma)).$$
Let $\tilde \Delta:=\langle \theta \rangle \ltimes \Delta$. 
By Lemma  \ref{lem:norm.sub}, there is a normal subgroup $\Delta^\circ \lhd \Delta_1$ which is also normal in $\tilde \Delta$ and satisfies
 $$[\tilde \Delta:\Delta^\circ] \leq C_{inv}(\ralg_p(\Delta_1),[\tilde \Delta:\Delta_1])$$ and $$\ralg_p(\Delta^\circ)\leq  \ralg_p(\Delta_1)<\ralg_p(\Gamma).$$
 The fact that $\Delta^\circ$ is normal in  $\tilde \Delta$ implies that $\Delta^\circ$ is $\theta$-invariant.
 We also have:
 \begin{align*}
[\Delta:\Delta^\circ]&\leq [\tilde \Delta:\Delta^\circ] \leq C_{inv}(\ralg_p(\Delta_1),[\tilde \Delta:\Delta_1])
\\&\leq
C_{inv}(\ralg_p(\Gamma),[\tilde \Delta:\Delta_1])
=
C_{inv}(\ralg_p(\Gamma),2[\Delta:\Delta_1])
\\&\leq
C_{inv}(\ralg_p(\Gamma),2\mon(\ralg_p(\Gamma)))= \monSym(\ralg_p(\Gamma))
 \end{align*}
 \end{proof}

\section{Groups of odd order} \label{sec:odd} 
In this section we analyze symmetric pairs of groups of odd order and prove several statements about them that will be used in the proof of the main theorem. In particular, we prove a strong version of the main theorem for symmetric pairs of groups of odd order. Namely, we prove that they are all Gelfand pairs (Corollary \ref{cor:gel.Lap.Pra}\eqref{cor:gel.Lap.Pra:1}).

We also prove some other results for symmetric pairs of groups of odd order, see Lemma \ref{lem:H1odd} and Corollary \ref{cor:gel.Lap.Pra}\co{\eqref{cor:gel.Lap.Pra:2}}.

\begin{remark} $ $ \begin{enumerate}
\item Even though the results in this section are valid for arbitrary groups of odd order, we will only use them for $p$-groups for odd $p$.
\item All the proofs in this sections are based on the fact that, for a group of odd order $n$, the map $x\mapsto x^{\frac{n+1}{2}}$ is a square root.
\end{enumerate} 
\end{remark}

\begin{lemma} \label{lem:H1odd} If $\Omega$ is a finite group of odd order and $\theta$ is its involution, then $H^1(S_2,\Omega)=1$.
\end{lemma}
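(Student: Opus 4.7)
The plan is to unwind the definition of non-abelian $H^1$ and then exhibit every cocycle as a coboundary using the fact that, since $|\Omega|$ is odd, every element of $\Omega$ has a canonical square root given by an integer power of itself.

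Concretely, a $1$-cocycle for the $S_2$-action on $\Omega$ determined by $\theta$ is an element $\omega \in \Omega$ satisfying the cocycle relation $\omega \cdot \theta(\omega) = 1$, equivalently $\theta(\omega) = \omega^{-1}$. Two cocycles $\omega_1,\omega_2$ are cohomologous precisely when there exists $g\in\Omega$ with $\omega_2 = g^{-1}\omega_1\theta(g)$. Thus showing $H^1(S_2,\Omega)=1$ amounts to producing, for each $\omega$ with $\theta(\omega) = \omega^{-1}$, some $g\in\Omega$ satisfying $\omega = g^{-1}\theta(g)$.

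Write $n:=|\Omega|$, which is odd, and set $g := \omega^{-(n+1)/2}$. By Lagrange's theorem $\omega^{n}=1$, and since $\theta$ is a homomorphism with $\theta(\omega)=\omega^{-1}$, one has
\[
\theta(g) = \theta(\omega)^{-(n+1)/2} = \omega^{(n+1)/2}, \qquad g^{-1} = \omega^{(n+1)/2},
\]
so that $g^{-1}\theta(g) = \omega^{n+1} = \omega$, as required.

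There is no substantive obstacle: the whole argument is a one-line computation once one recalls the explicit definition of non-abelian $H^1(S_2,-)$, and it uses only the fact already highlighted in the preamble of the section, namely that on a group of odd order $n$ the $(n+1)/2$-th power map is a square root.
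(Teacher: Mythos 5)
Your proof is correct and takes essentially the same route as the paper: you identify a $1$-cocycle with an element $\omega$ satisfying $\theta(\omega)=\omega^{-1}$ and then show it is a coboundary by taking $g$ to be the $\pm(|\Omega|+1)/2$ power of $\omega$, exactly as the paper does. The only difference is cosmetic (the paper writes the coboundary condition as $s=\theta(g^{-1})g$ and sets $g=s^{(|\Omega|+1)/2}$, while you write $\omega=g^{-1}\theta(g)$ and set $g=\omega^{-(|\Omega|+1)/2}$; these are related by $g\mapsto g^{-1}$).
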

\begin{proof}
We need to show that, for every element $s\in \Omega$ that satisfies $s=\theta(s^{-1})$, there exists an element $g\in \Omega$ such that $s=\theta(g^{-1})g$.

Let $g=s^{\frac{|\Omega|+1}{2}}.$ Then $$\theta(g^{-1})g=\theta((s^{\frac{|\Omega|+1}{2}})^{-1}) s^{\frac{|\Omega|+1}{2}}=\theta(s^{-1})^{\frac{|\Omega|+1}{2}} s^{\frac{|\Omega|+1}{2}}=s^{\frac{|\Omega|+1}{2}} s^{\frac{|\Omega|+1}{2}}=s^{|\Omega|+1}=s.$$
\end{proof}

\begin{lemma}[Gelfand-Kazhdan Property for symmetric pairs of odd order]\label{lem:odd.GK} If $\Omega$ is a finite group of odd order and $\theta: \Omega \rightarrow \Omega$ is an involution, then, for any $g\in \Omega$, there are $h_1,h_2\in \Omega^\theta$ such that $h_1 g h_2=\theta(g^{-1})$.
\end{lemma}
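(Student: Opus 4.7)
The plan is to construct a \emph{polar decomposition} $g=th$, where $\theta(t)=t^{-1}$ and $\theta(h)=h$, and then read off the desired elements $h_1,h_2$ from this factorization. The whole argument rests on the fact that, because $|\Omega|$ is odd, the squaring map $x\mapsto x^2$ is a bijection of $\Omega$; equivalently, with $m=(|\Omega|+1)/2$, every $x\in\Omega$ has a canonical square root $x^m$ which lies in the cyclic subgroup generated by $x$.

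First, I would set $s:=g\theta(g)^{-1}$ and observe that $\theta(s)=\theta(g)g^{-1}=s^{-1}$, so $s$ is $\theta$-inverted. Then I put $t:=s^m$. Using that $s^{|\Omega|}=1$, we get $t^2=s^{2m}=s$, and $\theta(t)=\theta(s)^m=s^{-m}=t^{-1}$, so $t$ is a $\theta$-antiinvariant square root of $s$ contained in $\langle s\rangle$. Now define $h:=t^{-1}g$. A direct computation $\theta(h)=\theta(t)^{-1}\theta(g)=t\theta(g)$, together with the identity $t^2=g\theta(g)^{-1}$ (rearranged as $t\theta(g)=t^{-1}g$), shows $\theta(h)=h$, i.e.\ $h\in\Omega^{\theta}$. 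Thus $g=th$ with $t$ $\theta$-antiinvariant and $h\in\Omega^{\theta}$.

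Finally, from $g=th$ one gets $\theta(g)=\theta(t)\theta(h)=t^{-1}h$, hence $\theta(g^{-1})=\theta(g)^{-1}=h^{-1}t$. On the other hand,
\[
h^{-1}\cdot g\cdot h^{-1}=h^{-1}(th)h^{-1}=h^{-1}t.
\]
So taking $h_1=h_2=h^{-1}\in\Omega^{\theta}$ yields $h_1 g h_2=\theta(g^{-1})$, as required.

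The argument has essentially no obstacle: the only ingredient beyond formal manipulation is the existence of a square root of $s$ \emph{inside} $\langle s\rangle$, which fails in groups of even order and is precisely where the odd-order hypothesis is used. Everything else is a single line of algebra once the decomposition $g=th$ is in place.
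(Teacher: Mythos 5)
Your proof is correct and follows essentially the same route as the paper: the paper likewise establishes a polar decomposition $g = os$ (invariant $o$ times $\theta$-antiinvariant $s$) by taking $s = (\theta(g^{-1})g)^{(|\Omega|+1)/2}$, and then deduces the Gelfand--Kazhdan property by conjugating with $o^{-1}$. Your version just places the antiinvariant factor on the other side, which is a cosmetic variant of the same odd-order square-root argument.
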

Lemma \ref{lem:odd.GK} follows immediately from the following:
\begin{lemma}[polar decomposition for symmetric pairs of odd order]\label{lem:odd.small} If $\Omega$ is a finite group of odd order and $\theta: \Omega \rightarrow \Omega$ is an involution, then
	$$\Omega=\Omega^{\theta} \cdot \Omega^{\theta\circ inv},$$ where $\Omega^{\theta\circ inv}:=\{g\in \Omega|\theta(g)=g^{-1}\}$.
\end{lemma}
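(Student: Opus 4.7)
The plan is to prove the polar decomposition $\Omega = \Omega^\theta \cdot \Omega^{\theta \circ inv}$ by writing down an explicit factorization of each $g \in \Omega$, using the fact that every element of $\Omega$ has a canonical square root.

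Given $g \in \Omega$, I would first look at the element $a := \theta(g)^{-1} g$. A direct computation shows $\theta(a) = g^{-1}\theta(g) = a^{-1}$, so $a \in \Omega^{\theta \circ inv}$. Motivated by the ``polar decomposition" formula that $g = hs$ with $h \in \Omega^\theta$, $s \in \Omega^{\theta \circ inv}$ would force $s^2 = \theta(g)^{-1} g = a$, I would then define
\[
s := a^{(|\Omega|+1)/2}
\]
using oddness of $|\Omega|$ to get a canonical square root. Because $a \in \Omega^{\theta\circ inv}$, any power of $a$ lies in $\Omega^{\theta\circ inv}$ as well, so $\theta(s) = s^{-1}$; and $s^2 = a^{|\Omega|+1} = a$ by Lagrange.

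Finally I would set $h := g s^{-1}$ and verify $\theta(h) = h$. Indeed,
\[
\theta(h) = \theta(g)\theta(s)^{-1} = \theta(g) \cdot s,
\]
and the equality $\theta(g) s = g s^{-1}$ is equivalent to $s^2 = \theta(g)^{-1} g = a$, which holds by construction. Hence $g = h \cdot s$ is the desired factorization.

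There is no real obstacle in this proof; the only nontrivial ingredient is the square-root map $x \mapsto x^{(|\Omega|+1)/2}$ on a group of odd order, which the section introduction already flags as the key tool. The two lemmas follow immediately: Lemma \ref{lem:odd.small} is exactly the factorization above, and Lemma \ref{lem:odd.GK} follows by writing $g = h \cdot s$ as above and noting that $\theta(g^{-1}) = \theta(s)^{-1}\theta(h)^{-1} = s \cdot h^{-1} = (s h s^{-1}) \cdot (s \cdot s \cdot h^{-1} \cdot h \cdot ...)$; more cleanly, take $h_1 = h^{-1}$ and $h_2 = sh^{-1}s \cdot s^{-1} \cdot h$ — or just observe that $h_1 g h_2 = \theta(g^{-1})$ with $h_1, h_2 \in \Omega^\theta$ follows directly from $g = hs$, $\theta(g^{-1}) = sh^{-1}$, by setting $h_1 := \theta(g^{-1}) g^{-1} = sh^{-1} \cdot s^{-1}h^{-1}$ (which is easily checked to be $\theta$-fixed) and $h_2 = 1$, or by symmetric manipulations.
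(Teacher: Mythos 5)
Your proof is correct and is essentially the same as the paper's: both set $a=\theta(g)^{-1}g$, take $s=a^{(|\Omega|+1)/2}$ using oddness of $|\Omega|$ to get a canonical square root with $\theta(s)=s^{-1}$, and put $h=gs^{-1}$. Your verification that $\theta(h)=h$ is a little slicker—you reduce it to the identity $s^2=a$—whereas the paper grinds through the computation directly, but the argument is the same.
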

\begin{proof}
Let $g\in \Omega$. Define $s=(\theta(g^{-1})g)^{\frac{|\Omega|+1}{2}}$ and $o=gs^{-1}$.
Then,
\begin{align*}
\theta(s)&=\theta((\theta(g^{-1})g)^{\frac{|\Omega|+1}{2}})\\
            &= (\theta(\theta(g^{-1})g))^{\frac{|\Omega|+1}{2}}\\
            &=(g^{-1})\theta(g))^{\frac{|\Omega|+1}{2}}\\
            &=((\theta(g^{-1}) g)^{-1})^{\frac{|\Omega|+1}{2}}\\
            &=((\theta(g^{-1}) g)^{\frac{|\Omega|+1}{2}})^{-1})=s^{-1},
\end{align*}
\begin{align*}
\theta(o)&=\theta(gs^{-1})=\theta(g)s\\
            &=\theta(g)(\theta(g^{-1})g)^{\frac{|\Omega|+1}{2}}\\
            &=\theta(g)\theta(g^{-1})g(\theta(g^{-1})g)^{\frac{|\Omega|+1}{2}-1}\\
            &=g(\theta(g^{-1})g)^{\frac{|\Omega|+1}{2}-1}\\
            &=g(\theta(g^{-1})g)^{\frac{|\Omega|-1}{2}}\\
            &=g(\theta(g^{-1})g)^{|\Omega|-\frac{|\Omega|+1}{2}}\\
            &=g(\theta(g^{-1})g)^{-\frac{|\Omega|+1}{2}}=gs^{-1}=o\\
\end{align*}
and, thus, 
$$g=gs^{-1}s=os.$$
\end{proof}

Lemma \ref{lem:odd.GK} gives the following:
\begin{corollary}\label{cor:gel.Lap.Pra} If $\Gamma$ is a finite group of odd order and $\theta$ is an involution of $\Omega$, then
\begin{enumerate}
\item\label{cor:gel.Lap.Pra:1} (Gelfand property for symmetric pairs of odd order:) $(\Omega,\Omega^ \theta)$ is a Gelfand pair, i.e., for any $\rho\in \irr(\Omega)$, $$\dim\rho^{\Omega^\theta}\leq 1.$$
\item\label{cor:gel.Lap.Pra:2} (Lapid-Prasad property for symmetric pairs of odd order:) Any representation $\rho$ of $G$ which is $\Omega^ \theta$ distinguished (i.e. satisfies $\dim\rho^{\Omega^\theta}>0$) also satisfies $$\rho^*\simeq\rho\circ \theta.$$
\end{enumerate}
\end{corollary}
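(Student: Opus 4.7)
The plan is to apply the Gelfand--Kazhdan method, using Lemma \ref{lem:odd.GK} as the sole substantive input. Set $H := \Omega^\theta$ and define $\sigma:\Omega\to\Omega$ by $\sigma(g):=\theta(g^{-1})$. Then $\sigma$ is an anti-involution of $\Omega$ which preserves $H$ setwise (sending $h\mapsto h^{-1}$), and Lemma \ref{lem:odd.GK} asserts precisely that $\sigma$ stabilizes every $(H,H)$-double coset. Extending $\sigma$ linearly to the group algebra $\mathbb{C}[\Omega]$ gives an anti-involution which fixes the idempotent $e_H:=|H|^{-1}\sum_{h\in H}h$, so it descends to an anti-involution of the Hecke algebra $\mathcal{H}:=e_H\mathbb{C}[\Omega]e_H\cong\mathbb{C}[H\backslash\Omega/H]$. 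Since this anti-involution acts as the identity on the natural basis $\{e_H g e_H\}$ indexed by double cosets, it is the identity on all of $\mathcal{H}$; an algebra whose identity map is an anti-automorphism is commutative, so $\mathcal{H}$ is commutative.

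Part (1) is immediate from the standard identification $\mathcal{H}\cong\End_\Omega(\Ind_H^\Omega\mathbb{1})^{\mathrm{op}}$: its commutativity forces $\Ind_H^\Omega\mathbb{1}$ to be multiplicity-free, which by Frobenius reciprocity is equivalent to $\dim\rho^H\leq 1$ for every irreducible $\rho$. For part (2), fix $\rho$ with $\rho^H\neq 0$ and set $\rho':=(\rho\circ\theta)^*$, so that $\chi_{\rho'}(g)=\chi_\rho(\sigma(g))$. The key computation is that for any $f\in\mathcal{H}$,
\[
\tr\rho'(f)=\sum_g f(g)\chi_\rho(\sigma(g))=\sum_g(\sigma f)(g)\chi_\rho(g)=\tr\rho(\sigma f)=\tr\rho(f),
\]
where the last equality uses the just-proved $\sigma$-invariance of $\mathcal{H}$. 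Because $f=e_H f e_H$, the operator $\rho(f)$ annihilates a complement of $\rho^H$, so (using $\dim\rho^H=1$) $\tr\rho(f)$ equals the value $\phi(f)$ of the character by which $\mathcal{H}$ acts on $\rho^H$. The same formula applied to $\rho'$, together with $\phi(e_H)=1$, forces $(\rho')^H\neq 0$ and shows that $\mathcal{H}$ acts on $(\rho')^H$ via the same character $\phi$. Under the bijection between characters of the commutative semisimple algebra $\mathcal{H}$ and irreducible constituents of $\Ind_H^\Omega\mathbb{1}$, this gives $\rho\cong\rho'$, i.e., $\rho^*\cong\rho\circ\theta$.

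All non-formal content is packaged in Lemma \ref{lem:odd.GK}; the above steps are the classical Gelfand--Kazhdan trick and present no significant obstacle. The only mildly subtle point is the implicit deduction $(\rho')^H\neq 0$ via $\phi(e_H)=1$, which is needed in order to even speak of the character of $\mathcal{H}$ on $(\rho')^H$ before identifying it with $\phi$.
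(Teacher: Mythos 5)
Your argument is correct and follows the same Gelfand--Kazhdan route the paper takes. The paper's proof simply combines Lemma \ref{lem:odd.GK} with the black-box reference \cite[Theorem 2]{Aiz_GK} (an adaptation of Gelfand--Kazhdan and Jacquet--Rallis); you have unpacked that citation into the explicit Hecke-algebra computation, with no substantive difference in the underlying method.
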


\begin{proof} The claims follow from Lemma \ref{lem:odd.GK} and \cite[Theorem 2]{Aiz_GK} (which is an adaptation of results from \cite{GK} and \cite{JR}).
\end{proof}

\section{bounds on $H^1(S_2,\Gamma)$}\label{sec:H1}
In this section we prove Corollary \ref{cor:h1.her} which gives bounds on the cohomology group $H^1(S_2,\Delta)$, for a finite symmetric pair $(\Delta,\theta)$.

\begin{lemma}\label{lem:h1r}
There is an increasing  function $C_{H1R0}:\N\to \N$  such that, for any finite field $F$ of odd characteristic, any connected reductive group $\bG$, and any involution $t$ of $\bG$, both defined over $F$,
   $$|H^1(S_2,\bG(F))|<C_{H1R0}(\dim \bG).$$
\end{lemma}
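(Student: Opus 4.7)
The plan is to reinterpret $H^1(S_2, \bG(F))$ as a set of conjugacy classes of involutions in a semidirect product, and then bound this set via the classification of involutions of reductive groups together with a Lang--Steinberg argument.

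Form the algebraic group $\widetilde{\bG} := \bG \rtimes \langle t \rangle$ over $F$. For $g \in \bG(F)$, $(gt)^2 = g \cdot t(g)$, so the cocycle condition $g \cdot t(g) = 1$ is equivalent to $gt$ having order $2$ in $\widetilde{\bG}(F)$, and the coboundary relation $g_1 \sim h^{-1} g_1 t(h)$ translates to $\bG(F)$-conjugation in the non-identity coset $\bG(F) \cdot t$. Consequently, $H^1(S_2, \bG(F))$ is in bijection with the set of $\bG(F)$-conjugacy classes of involutions inside the coset $\bG(F) \cdot t \subset \widetilde{\bG}(F)$.

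I would bound this set in two stages. First, over $\overline{F}$: the elements of order $2$ in $\bG(\overline{F}) \cdot t$ correspond to involutive automorphisms of $\bG$ lying in the fixed outer class of $t$, and there are only finitely many such classes modulo $\bG(\overline{F})$-conjugation. The count is bounded by a function of $\dim \bG$ alone; uniformity in $F$ follows from a Noetherian/constructibility argument applied to the versal family $\mathcal{R}_n \to \mathcal{S}_n$ of Lemma~\ref{lem:fam.of.sym} (the locus of involutions and its orbits form subschemes of bounded complexity). Second, for a representative $\theta$ of such a geometric class, the fiber of the forgetful map from $F$-rational to geometric $\bG$-conjugacy classes is controlled by $H^1(\Gal(\overline{F}/F), C_\bG(\theta))$ via the standard twisting bijection in non-abelian Galois cohomology. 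Since $F$ has odd characteristic, $C_\bG(\theta)$ is the fixed subgroup of the involution $x \mapsto \theta x \theta^{-1}$ acting on $\bG$, and is therefore reductive; by Lang--Steinberg applied to $C_\bG(\theta)^\circ$, $H^1(F, C_\bG(\theta)) = H^1(F, \pi_0 C_\bG(\theta))$. Finally, $\pi_0 C_\bG(\theta)$ is an elementary abelian $2$-group of rank at most $\rk \bG \leq \dim \bG$, giving $|H^1| \leq 2^{\dim \bG}$. Combining the two stages yields the required bound.

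The main technical point is making the geometric bound uniform in $F$; the versal family of Lemma~\ref{lem:fam.of.sym} is precisely what lets us replace a case-by-case analysis (over all finite fields of odd characteristic and all reductive groups of a given dimension) by a single geometric argument on $\mathcal{S}_n$.
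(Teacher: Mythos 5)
Your proposal is correct, and it actually repairs a gap in the paper's own argument. Both approaches begin identically: you and the paper reinterpret $H^1(S_2,\bG(F))$ as a set of twisted conjugacy classes in $\bG(F)$ (you via order-two elements of the coset $\bG(F)\,t$, the paper via the variety $\bG^{s_n}=\{g : g\,t(g)=1\}$ under twisted conjugation), and both appeal to the versal family of Lemma~\ref{lem:fam.of.sym} for uniformity over all $F$ and all $\bG$ of bounded dimension. The divergence is in passing from geometric to $F$-rational classes. The paper asserts, invoking Lang's theorem, that $\bG(F)^{s_n}/\bG(F)$ injects into $(\bG^{s_n}/\bG)(F)=\pi_0(\bG^{s_n})(F)$, hence $|H^1(S_2,\bG(F))|\le|\pi_0(\bG^{s_n}_{\overline F})|$; but Lang only kills $H^1(F,-)$ for the \emph{connected} $\bG$, whereas the stabilizer of a point of $\bG^{s_n}$ is a fixed-point group $\bG^{\Ad(gt)}$, which is typically disconnected, and $H^1(F,\pi_0)$ of that stabilizer genuinely adds rational classes. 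Indeed, for $\bG=\PGL_2$ with $t=\mathrm{id}$ one has two $\PGL_2(\overline{\F_q})$-components of $\{g:g^2=1\}$ but three $\PGL_2(\F_q)$-conjugacy classes of such elements, so the paper's stated inequality fails. Your second stage -- Lang--Steinberg to kill $H^1(F,C_\bG(\theta)^\circ)$, then the bound $|\pi_0\,C_\bG(\theta)|\le 2^{\rk\,\bG}$ for the fixed points of an involution in odd characteristic -- supplies exactly the missing factor and yields the lemma with the corrected bound $|\pi_0(\bG^{s_n}_{\overline F})|\cdot 2^{\rk\,\bG}$. One small imprecision worth flagging: order-two elements of $\bG(\overline F)\,t$ do not biject with involutive automorphisms in the outer class of $t$, since $\Ad(gt)^2=1$ only forces $(gt)^2\in Z(\bG)$; however, since your finiteness of geometric classes is derived from constructibility over the versal family rather than from a classification of automorphisms, this does not affect the argument.
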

\begin{proof} Let $n$ be an integer. Let $\Phi_n:\cR_n\to \cS_n$ and $\tau_n$  be as in Lemma \ref{lem:fam.of.sym}. Let $s_n$ be the  anti automorphism $s_n(g):=\tau_n(g^{-1})$. Let $C_{H1R0}(n)$ be such that, for any geometric point $x$ of $\cS_n$, 
\[
\left|\pi_0 \left( \Phi_n^{-1}(x)^{s_n} \right)\right |<C_{H1R0}(n).
\]
Fix $x\in \cS_n(F)$ and let $\bG:=\Phi_n^{-1}(x)$. We need to show that $$|H^1(S_2,\bG(F))|\leq C_{H1R0(n)}.$$

By definition we have $H^1(S_2,\bG(F))=\bG(F)^{s_n}/\bG(F)$ where the action of  $\bG$ on $\bG^{s_n}$ is given by $g \cdot x:=gx s_n(g).$ Since $\bG$ is connected, Lang's theorem implies  that the map $\bG(F)^{s_n}/\bG(F)\to (\bG^{s_n}/\bG)(F)$ is an embedding. By  analyzing the action of the Lie algebra it is easy to see that the orbits of the action of $\bG$ on  $\bG^{s_n}$ are open. Thus, $$|H^1(S_2,\bG(F))|=|\bG(F)^{s_n}/\bG(F)|\leq  |(\bG^{s_n}/\bG)(F)|\leq |(\bG^{s_n}/\bG)(\overline{F})|=|\pi_0(\bG_{\overline{F}}^{s_n})|\leq C_{H1R0}(n).$$
\end{proof}

{
The following lemmas are straightforward:
\begin{lemma}\label{lem:h1ab}
Let $A$ be a finite abelian group with an action of $S_2$. Denote $A[2]=\left\{ x\in A \mid x^2=0 \right\}$. Then $|H^1(S_2,A)|\leq |A[2]|^2$.
\end{lemma}

\begin{proof} Let $\theta\in \Aut(A)$ be the non-trivial element of $S_2$. Denote $C=\left\{ x\in A \mid x \theta(x)=1 \right\}$, $B=\left\{ x ^{-1} \theta(x) \mid x\in A \right\}$, and $S=\left\{ x^2 \mid x\in A \right\}$. We need to show that $|C/B| \leq |A[2]|^2$. Since $|C/B| = |C/C\cap S| \cdot |C \cap S / B\cap S| \leq |A[2]|\cdot |C \cap S / B\cap S|$, it in enough to show that $|C \cap S / B\cap S| \leq |A[2]|$.

Let $x\in C\cap S$, and let $z\in A$ be such that $z^2=x$. Then $z \theta(z)\in A[2]$ and $x \left( z ^{-1} \theta(z) \right) = z \theta(z)\in A[2]$. Hence, $C\cap S \subseteq A[2] \cdot B$, so $|C \cap S / B\cap S| \leq |A[2] \cdot B / B| \leq |A[2]|$.
\end{proof} 

\begin{lemma} \label{lem:h1.ses} Let $\Gamma$ be a finite group, let $\theta$ be an involution of $\Gamma$, and let $N\triangleleft \Gamma$ be a normal $\theta$-invariant subgroup. Then \begin{enumerate}
\item $|H^1(S_2, N)| \leq |H^1(S_2,\Gamma)| \cdot [\Gamma : N]$. 
\item 
\[
|H^1(S_2,\Gamma)| \leq |H^1(S_2,\Gamma/N)| \cdot \max_{\tau}|H_\tau^1(S_2,N)|,
\]
where $\tau$ ranges over all involutions of $N$ (including $\tau=1$) and $H_\tau ^1(S_2,N)$ is the cohomology of the $S_2$-module $N$ given by the involution $\tau$.
\end{enumerate} 
\end{lemma}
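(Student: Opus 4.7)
The plan is to derive both bounds from the long exact sequence of non-abelian $S_2$-cohomology attached to the short exact sequence $1\to N\to\Gamma\to\Gamma/N\to 1$, by carefully counting the sizes of the fibers of its maps.

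For the first bound, I would analyze the map $H^1(S_2,N)\to H^1(S_2,\Gamma)$. Two $\theta$-cocycles $n_1,n_2\in N$ map to the same class in $H^1(S_2,\Gamma)$ iff there exists $h\in\Gamma$ with $n_2=hn_1\theta(h)^{-1}$. Writing this out and using that $N$ is normal and $\theta$-invariant, one sees that $hn_1\theta(h)^{-1}\in N$ forces $h\theta(h)^{-1}\in N$, i.e., $\bar h\in(\Gamma/N)^{\bar\theta}$. Moreover, replacing $h$ by $hm$ for $m\in N$ produces $(hm)n_1\theta(hm)^{-1}=h(mn_1\theta(m)^{-1})\theta(h)^{-1}$, which is obtained from $hn_1\theta(h)^{-1}$ by an $H^1(S_2,N)$-equivalence on the cocycle $n_1\leadsto mn_1\theta(m)^{-1}$; hence representatives $h,h'\in\Gamma$ with $\bar h=\bar{h'}$ yield the same class in $H^1(S_2,N)$. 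Therefore the fiber has size at most $|(\Gamma/N)^{\bar\theta}|\le[\Gamma:N]$, and multiplying by $|H^1(S_2,\Gamma)|$ gives the first inequality.

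For the second bound, I would bound the fibers of $H^1(S_2,\Gamma)\to H^1(S_2,\Gamma/N)$ by twisted first cohomologies of $N$. Fix a class $[\bar g]$ in the image and a $\theta$-cocycle $g\in\Gamma$ above it, so $g\theta(g)=1$. Any class in the fiber over $[\bar g]$ has a representative of the form $gn$ for some $n\in N$ (by twisting by an element of $\Gamma$ to align the image modulo $N$). Define $\tau:N\to N$ by $\tau(n)=g\theta(n)g^{-1}$; the relation $g\theta(g)=1$ implies $\tau^2=\mathrm{id}$, so $\tau$ is an involution of $N$ (possibly trivial). Expanding the cocycle condition $(gn)\theta(gn)=1$ and substituting $\theta(g)=g^{-1}$ gives exactly $n\tau(n)=1$, i.e., $n$ is a $\tau$-cocycle. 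Similarly, writing an ambient equivalence $gn_2=h(gn_1)\theta(h)^{-1}$ with $h=gkg^{-1}$, $k\in N$, reduces to $n_2=kn_1\tau(k)^{-1}$, the $\tau$-coboundary relation. Hence the fiber over $[\bar g]$ is a quotient of $H^1_\tau(S_2,N)$ and has size at most $\max_\tau|H^1_\tau(S_2,N)|$; summing over the image in $H^1(S_2,\Gamma/N)$ yields the second inequality.

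The only subtle point is the twisting in the second part: because the natural involution of $N$ on the fiber is the conjugate $\tau(n)=g\theta(n)g^{-1}$ of $\theta$ by the chosen lift $g$, rather than $\theta$ itself, the bound must be formulated as a maximum over all involutions $\tau$ of $N$. Beyond verifying that $\tau^2=\mathrm{id}$ and that the cocycle/coboundary conditions transform correctly under $g n\mapsto n$, the argument is a direct orbit-counting manipulation of the non-abelian cohomology exact sequence and presents no serious obstacle.
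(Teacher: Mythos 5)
The paper offers no proof of this lemma (it is labelled ``straightforward''), and your approach---counting fibers in the non-abelian $H^1(S_2,-)$ exact sequence attached to $1\to N\to\Gamma\to\Gamma/N\to 1$---is exactly the intended one. Part (1) is fine: once you know $h n_1\theta(h)^{-1}\in N$ forces $\bar h\in(\Gamma/N)^{\bar\theta}$, and that replacing $h$ by $hm$ ($m\in N$) changes the result by conjugation by $hmh^{-1}\in N$, the fiber injects into $(\Gamma/N)^{\bar\theta}$. Part (2) is also structurally correct: fix a cocycle lift $g$ with $g\theta(g)=1$, observe every fiber element is represented by $gn$ with $n\in N$, translate the cocycle/coboundary conditions on $gn$ into $\tau$-cocycle/coboundary conditions on $n$, and conclude the fiber is a quotient of $H^1_\tau(S_2,N)$.

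One small slip to flag: the twist $\tau$ you wrote, $\tau(n)=g\theta(n)g^{-1}$, is not the one forced by the computation. Expanding $(gn)\theta(gn)=gn\,\theta(g)\theta(n)=gng^{-1}\theta(n)=1$ gives $n^{-1}=g^{-1}\theta(n)g$, i.e.\ $n\tau(n)=1$ with
\[
\tau(n)=g^{-1}\theta(n)g=\theta(g)\theta(n)\theta(g)^{-1}=\theta\bigl(gng^{-1}\bigr),
\]
not $g\theta(n)g^{-1}$. The same $\tau$ is what makes the coboundary check go through (with $h=gkg^{-1}$, the ambient equivalence $gn_2=h\,gn_1\,\theta(h)^{-1}$ reduces to $n_2=kn_1\tau(k)^{-1}$). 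Since both $g^{-1}\theta(n)g$ and $g\theta(n)g^{-1}$ are involutions of $N$ and the lemma takes a maximum over all involutions $\tau$, the slip is harmless to the stated bound, but the displayed identity $n\tau(n)=1$ is only true with the corrected $\tau$.
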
 

\begin{proof} In the proof, if $G$ is a group with involution $\sigma$, we identify $H^1(S_2,G)$ with the quotient of $C_{G,\sigma}:=\left\{ x\in G \mid x \sigma(x)=1 \right\}$ by the action of $G$ given by $g \cdot x= \sigma(g)x g ^{-1}$. 
\begin{enumerate}
\item We have have a map $\alpha:C_{N,\theta} / N \rightarrow C_{\Gamma,\theta} / \Gamma$ taking $N \cdot x$ to $\Gamma \cdot x$. Given $x\in C_{N,\theta}$, the fiber $\alpha ^{-1} (\alpha (N \cdot x))=\alpha ^{-1} (\Gamma \cdot x)$ is equal to $(\Gamma \cdot x) / N$, so its size is at most $[\Gamma : N]$. Hence, $|C_{N,\theta} / N| \leq |C_{\Gamma,\theta} / \Gamma | \cdot [\Gamma : N]$.
\item We have a map $\beta:C_{\Gamma,\theta}/\Gamma \rightarrow C_{\Gamma /N,\theta}/ (\Gamma/N) $ sending $\Gamma \cdot x$ to $(\Gamma / N) \cdot (xN)$. It is enough to show that the sizes of the fibers of $\beta$ are bounded by $\max_{\tau}|H_\tau^1(S_2,N)|$.

For $x\in C_{\Gamma,\theta}$, let $\tau_x:N \rightarrow N$ be the automorphism $\tau_x(n)=x ^{-1} \theta(n) x$. Since $\tau_x^2(n)=x ^{-1} \theta(x ^{-1}) n \theta(x)x=n$, the automorphism $\tau_x$ is an involution. 

Suppose that $y\in C_{\Gamma,\theta}$ and $\beta(\Gamma \cdot x)=\beta(\Gamma \cdot y)$. Then there is $y'\in \Gamma \cdot y$ such that $y'=x n$, for some $n\in N$. Since 
$$1=\theta(y')y'=\theta(x)\theta(n)x n=\theta(x) x x ^{-1} \theta(n) x n= \tau_x(n)n,$$ we get that $n\in C_{N,\tau_x}$. Hence $\beta ^{-1} (\beta(\Gamma \cdot x) )$ can be identified with $xC_{N,\tau_x}/ \Gamma$. For any $m\in N$, 
\[
m \cdot y'=\theta(m) x n m ^{-1} = x \tau_x(m) n m ^{-1},
\]
so $|\beta ^{-1} (\beta(\Gamma \cdot x) )| \leq |xC_{N,\tau_x} / N |=|H^1_{\tau_x}(S_2,N)|$.
\end{enumerate} 
\end{proof} 
}
Now Lemma \ref{lem:h1r}	implies the following:

\begin{cor}\label{cor:h1r} There is an increasing  function $C_{H1R}:\N\to \N$  such that, for any prime $p$, any connected semi-simple group $\bG$, defined over $\F_p$, any involution $\theta$ of $\bG(\F_p)'$, we have 
$$|H^1(S_2,\bG(\F_p)')|<C_{H1R}(\dim \bG).$$		
	\end{cor}
	
\begin{proof}
Take $C_{H1R}(n)=C_{H1R0}(\dim \bG)4^{\dim G}$, where $C_{H1R0}$ is the function given by Lemma \ref{lem:h1r}.

Using the bound $|\mathbf{G}(\mathbb{F}_p)| \leq (p+1)^{\dim\, \mathbf{G}}$ from \cite[Lemma 3.5]{Nor}, we can  assume that $p>3$.
		
Let $\bar \bG:=\bG/Z(\bG)$. By Corollary \ref{cor:univ.cover}, the induced map $\bG(\F_p)'\to \bar \bG(\F_p)'$ is onto, and its kernel is $Z(\bG(\F_p)')$.  Let $\bar\theta$ be the involution induced by $\theta$ on $\bar \bG(\F_p)'$. 		
By Theorem \ref{thm:class}, we can lift $\bar \theta$ to an involution $t$ of $\bar \bG$. By Lemmas \ref{lem:h1.ses}, \ref{lem:h1r},
\ref{lem:der}  and Theorem \ref{thm:simp.con}, we have:
\begin{align*}
		|H^1(S_2,\bG(\F_p)')|&
		\overset{\ref{lem:h1.ses}}{\leq}
		 |H^1(S_2,\bar \bG(\F_p)')| \max_\tau |H^1_\tau(S_2,Z(\bG(\F_p)'))|
		{\leq}
		|H^1(S_2,\bar \bG(\F_p)')| \cdot |Z(\bG(\F_p)')|\\&
		\overset{\ref{thm:simp.con}}{\leq}   
		|H^1(S_2,\bar \bG(\F_p)')| 2^{\dim G}
		\overset{\ref{lem:h1.ses}}{\leq}
		|H^1_t(S_2,\bar \bG(\F_p))|\cdot [\bar \bG(\F_p):\bar \bG(\F_p)'] 2^{\dim G}\\&
		\overset{\ref{lem:der}}{\leq}
		|H^1_t(S_2,\bar \bG(\F_p))|\cdot 4^{\dim G}
		\overset{\ref{lem:h1r}}{<}   
			C_{H1R0}(\dim \bG)4^{\dim G}=C_{H1R}(\dim \bG).		
\end{align*}
		
\end{proof}

Now we can derive our bound on the first cohomology:
\begin{proposition}\label{prop:h1}
There is a function $C_{H1}:\N\to \N$  such that for any finite group $\Gamma$,  any involution $\theta$ of $\Gamma$, and any prime $p>2$  
we have
   $$H^1(S_2,\Gamma)<C_{H1}(\ralg_p(\Gamma)).$$
\end{proposition}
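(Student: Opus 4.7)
The strategy is a chain of short exact sequence reductions that peels off the $p$-radical, reduces to a subgroup of bounded index in the $\F_p$-points of a reductive group, and finally applies Lemma \ref{lem:h1r} to a simply connected cover. Since $\Rad_p(\Gamma)$ has odd order (as $p>2$), Lemma \ref{lem:H1odd} gives $|H^1_\tau(S_2,\Rad_p(\Gamma))|=1$ for every involution $\tau$, so the second part of Lemma \ref{lem:h1.ses}, applied to $1\to \Rad_p(\Gamma)\to \Gamma\to \Gamma/\Rad_p(\Gamma)\to 1$, yields $|H^1(S_2,\Gamma)|\leq |H^1(S_2,\Gamma/\Rad_p(\Gamma))|$. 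We may therefore assume $\Rad_p(\Gamma)=1$ and set $n:=\ralg_p(\Gamma)=\alg_p(\Gamma)$. For $p=3$, the crude bound $|\Gamma|\leq (p+1)^n=4^n$ from \cite[Lemma~3.5]{Nor} settles the claim directly, so we henceforth assume $p>3$.

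Next, Corollary \ref{cor:LP.reductive.with.involution} supplies a $\theta$-invariant normal subgroup $\Delta\lhd \Gamma$ of index at most $\LPsym(n)$, a connected reductive $\F_p$-group $\bH$ of dimension at most $2n$ equipped with an algebraic involution, and an $S_2$-equivariant embedding $\Delta\hookrightarrow \bH(\F_p)$ with $\bH(\F_p)'\subset \Delta\subset \bH(\F_p)$. Applying Lemma \ref{lem:h1.ses}(2) to $1\to \Delta\to \Gamma\to \Gamma/\Delta\to 1$ reduces the problem to bounding $|H^1_\tau(S_2,\Delta)|$ for an arbitrary involution $\tau$ of $\Delta$. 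For $p>3$, $\bH(\F_p)'$ is the image of the perfect group $\bH^{sc}(\F_p)$ under the universal cover $\phi:\bH^{sc}\to \bH'$ (Corollary \ref{cor:univ.cover} combined with Theorem \ref{thm:simp.con}\eqref{thm:simp.con:2}), hence itself perfect. Since $\Delta/\bH(\F_p)'$ is abelian, this forces $\Delta'=\bH(\F_p)'$, so $\bH(\F_p)'$ is characteristic in $\Delta$ and therefore $\tau$-stable. A second application of Lemma \ref{lem:h1.ses}(2), now to $1\to \bH(\F_p)'\to \Delta\to \Delta/\bH(\F_p)'\to 1$, splits the estimate into $|H^1_{\bar\tau}(S_2,\Delta/\bH(\F_p)')|$ and $\max_{\tau'}|H^1_{\tau'}(S_2,\bH(\F_p)')|$. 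Lemma \ref{lem:h1ab} bounds the abelian factor by the $2$-torsion of $\bH(\F_p)/\bH(\F_p)'$; filtering through $1\to \bH'(\F_p)/\bH(\F_p)' \to \bH(\F_p)/\bH(\F_p)' \to \bH(\F_p)/\bH'(\F_p) \to 1$ and using Lemma \ref{lem:der} (for a $2^{\dim\bH}$ bound on the first term) together with the fact that $\bH/\bH'$ is a torus (so the $2$-torsion of the last term, contained in $(\bH/\bH')(\F_p)$, has size at most $2^{\dim\bH}$) yields the bound $4^{\dim\bH}\leq 16^n$.

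The main remaining task is to bound $|H^1_{\tau'}(S_2,\bH(\F_p)')|$ for an arbitrary abstract involution $\tau'$. The kernel $K$ of $\phi$ is finite central in $\bH^{sc}$, with $|K(\F_p)|\leq 2^{\dim\bH}$ by Theorem \ref{thm:simp.con}\eqref{thm:simp.con:3}, and $\phi(\bH^{sc}(\F_p))=\bH(\F_p)'$ by Corollary \ref{cor:univ.cover}. The crucial step is to promote $\tau'$ to an algebraic involution of $\bH^{sc}$ defined over $\F_p$: the universal central extension property of $\bH^{sc}(\F_p)$ (Theorem \ref{thm:simp.con}\eqref{thm:simp.con:1}, \eqref{thm:simp.con:2}) produces a lift of $\tau'$ to an abstract involution $\tilde\tau'$ of $\bH^{sc}(\F_p)$, and Corollary \ref{cor:ext} (applied to the adjoint quotient, then pulled back by functoriality of the simply connected cover) promotes $\tilde\tau'$ to an algebraic involution of $\bH^{sc}$ over $\F_p$. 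Lemma \ref{lem:h1r} then gives $|H^1(S_2,\bH^{sc}(\F_p))|\leq C_{H1R}(2n)$, and the long exact sequence associated to the central extension $1\to K(\F_p)\to \bH^{sc}(\F_p)\to \bH(\F_p)'\to 1$ (with $K(\F_p)$ central abelian) yields
\[
|H^1_{\tau'}(S_2,\bH(\F_p)')|\leq |H^1(S_2,\bH^{sc}(\F_p))|\cdot |H^2(S_2,K(\F_p))|\leq C_{H1R}(2n)\cdot 2^{2n}.
\]
Chaining all the estimates defines the required function $C_{H1}(n)$. The main obstacle is precisely this algebraic promotion of an arbitrary abstract involution of $\bH(\F_p)'$: Corollary \ref{cor:LP.reductive.with.involution} only supplies an equivariant embedding for the original involution $\theta$, whereas the second part of Lemma \ref{lem:h1.ses} forces us to consider all involutions of $\Delta$ (and hence of $\bH(\F_p)'$) simultaneously.
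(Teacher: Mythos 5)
Your proposal is correct and follows the same broad outline as the paper's proof (peel off $\Rad_p$ using Lemma \ref{lem:H1odd} and Lemma \ref{lem:h1.ses}, reduce via Corollary \ref{cor:LP.reductive.with.involution} to a subgroup sandwiched between $\bH(\F_p)'$ and $\bH(\F_p)$, bound the abelian layers by $2$-torsion using Lemma \ref{lem:h1ab} and Lemma \ref{lem:der}, and finally land on Lemma \ref{lem:h1r} for the points of a connected reductive group). Where you genuinely diverge — and this is the valuable part — is in handling the $\max_\tau$ in Lemma \ref{lem:h1.ses}(2). The paper's chain of inequalities simply writes $|H^1(S_2,\Delta)|$ and $|H^1(S_2,\Delta\cap\bH'(F))|$ without the $\max_\tau$ that the lemma demands, and then applies Lemma \ref{lem:h1r} to $\bH'(\F_p)$ with the algebraic involution $t|_{\bH'}$; taken at face value this ignores that the twisted involutions appearing in the max need not be restrictions of algebraic involutions of $\bH'$. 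Your fix is exactly the right one: reduce to $\bH(\F_p)'$, which is a perfect characteristic subgroup of $\Delta$, lift an arbitrary abstract involution $\tau'$ through the universal central extension $\bH^{sc}(\F_p)\to\bH(\F_p)'$ (using Theorem \ref{thm:simp.con}\eqref{thm:simp.con:1},\eqref{thm:simp.con:2}), promote the lift to an algebraic involution of $\bH^{sc}$ over $\F_p$ via Corollary \ref{cor:ext} applied to the adjoint quotient, apply Lemma \ref{lem:h1r} to $\bH^{sc}$, and descend by the central extension to pick up the $|H^2(S_2,K(\F_p))|\leq|K(\F_p)|\leq 2^{2n}$ factor. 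This gives a clean, uniform bound over all involutions of $\Delta$, which the paper's version does not explicitly deliver, at the cost of some extra machinery (passing to $\bH^{sc}$ rather than $\bH'$).

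One small imprecision worth flagging: you invoke Corollary \ref{cor:univ.cover} to conclude that $\bH(\F_p)'$ is the image of $\bH^{sc}(\F_p)$ and hence perfect, but that corollary is stated for semisimple $\mathbf{G}$ and gives $\phi(\widetilde{\mathbf{G}}(F))=\mathbf{G}(F)'$; applied to $\bH'$ it yields $\phi(\bH^{sc}(\F_p))=\bH'(\F_p)'$, which is not a priori the same as $\bH(\F_p)'$. The equality $\bH(\F_p)'=\bH'(\F_p)'$ is true but needs a short extra argument: lift $g_1,g_2\in\bH(\F_p)$ along the isogeny $\bH^{sc}\times Z(\bH)^\circ\to\bH$ to $\overline{\F_p}$-points; the commutator of the lifts lands in $\bH^{sc}$ and is independent of the choice of lifts (the kernel is central), hence Galois-invariant, so $[g_1,g_2]\in\phi(\bH^{sc}(\F_p))=\bH'(\F_p)'$. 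With that patch, $\bH(\F_p)'$ is indeed perfect, hence equals $\Delta'$, hence is characteristic in $\Delta$, and the rest of your argument goes through as written.
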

\begin{proof}
We take $$C_{H1}(n):=C_{H1R}(2n)\LPsym(n)2^{8n},$$
where $C_{H1R}$ is the function given by Corollary \ref{cor:h1r} and $\LPsym$ is the function given by Corollary \ref{cor:LP.reductive.with.involution}.

Using the bound $|\mathbf{G}(\mathbb{F}_p)| \leq (p+1)^{\dim\, \mathbf{G}}$ from \cite[Lemma 3.5]{Nor} and Lemmas \ref{lem:H1odd}, \ref{lem:h1.ses}, we may assume that $p>3.$

Let $\bar\theta$ is the involution of $\Gamma/\Rad_p(\Gamma)$ induced by $\theta$.  
By Corollary \ref{cor:LP.reductive.with.involution} there is a $\bar\theta$-invariant normal subgroup $\Delta$ of $\Gamma/\Rad_p(\Gamma)$, a connected reductive group $\bH$ defined over $\F_p$, and an involution $t$ of $\bH$ such that
\begin{itemize}
\item $[\Gamma/\Rad_p(\Gamma):\Delta]\leq C_{LP}(\ralg(\Gamma))$,
\item $\dim\, \bH\leq 2\ralg(\Gamma)$,
\item There is an equivariant embedding $(\Delta,\bar\theta|_\Delta)\hookrightarrow (\bH(\F_p),t)$,
\item $\bH(\mathbb{F}_p)'<\Delta< \bH(\mathbb{F}_p)$.
\end{itemize}
{By Corollary \ref{cor:perfect.prime}, we have $\Delta'=\mathbf{H}(\mathbb{F}_p)'$.}

Applying  Theorem \ref{thm:simp.con}, Corollary \ref{cor:h1r} and Lemmas \ref{lem:H1odd},
\ref{lem:h1ab}, \ref{lem:h1.ses}, we get

\begin{align*}
|H^1(S_2,\Gamma)| &\overset{\ref{lem:h1.ses}}{\leq} |H^1(S_2,\Gamma/\Rad_p(\Gamma))|
		 \cdot \max_{\tau} |H^1_\tau(S_2,\Rad_p(\Gamma))|\overset{\ref{lem:H1odd}}{=}
		 |H^1(S_2,\Gamma/\Rad_p(\Gamma))| 
            \\&\overset{\ref{lem:h1.ses}}{\leq} \max_{\tau}|H^1_\tau(S_2,\Delta)| \cdot |H^1(S_2,(\Gamma/(\Rad_p(\Gamma))/\Delta))|  
      		     \\&\overset{\ref{lem:h1.ses}}{\leq}
      		     \max_{\tau} |H^1_{\tau}(S_2,\Delta)| \cdot [\Gamma/\Rad_p(\Gamma):\Delta]
		     \\&\overset{\quad\,\,\,\,\,}{\leq}
		     \max_{\tau}|H^1_\tau(S_2,\Delta)| C_{LP}(\ralg(\Gamma))
		     \\& \overset{\ref{lem:h1.ses}}{\leq}
		      \max_{\sigma} |H^1_\sigma(S_2,\Delta')| \cdot \max_{\tau} |H^1_\tau(S_2,\Delta / \Delta')| \cdot C_{LP}(\ralg(\Gamma))
		     \\& 		     \overset{\ref{lem:h1ab}}{\leq}
		      \max_{\sigma} |H^1_\sigma(S_2,\mathbf{H}(\mathbb{F}_p)')| \cdot | \left( \Delta / \Delta' \right)[2]|^2 \cdot C_{LP}(\ralg(\Gamma))
		     \\& \overset{\ref{cor:h1r}}{\leq}
		     C_{H1R}(2\ralg(\Gamma)) \cdot | \left( \Delta / \Delta' \right)[2]|^2 \cdot C_{LP}(\ralg(\Gamma))
		     \\&\overset{\quad\,\,\,\,\,}{\leq}
		     C_{H1R}(2\ralg(\Gamma)) \cdot | \left( \mathbf{H}(\mathbb{F}_p) / \mathbf{H}(\mathbb{F}_p)' \right)[2]|^2 \cdot C_{LP}(\ralg(\Gamma))
		     \\&\overset{\quad\,\,\,\,\,}{\leq}
		     C_{H1R}(2\ralg(\Gamma)) \cdot | 
		     \left( \mathbf{H}(\mathbb{F}_p) / \mathbf{H}'(\mathbb{F}_p) \right)[2]|^2   
		     \cdot [ \mathbf{H}'(\mathbb{F}_p) : \mathbf{H}(\mathbb{F}_p)' ]^2\cdot C_{LP}(\ralg(\Gamma))
		     \\&\overset{\ref{lem:der}}{\leq}
		     C_{H1R}(2\ralg(\Gamma)) \cdot | 
		     \left( (\mathbf{H}/\mathbf{H}')(\mathbb{F}_p)\right)[2]|^2 
		     \cdot  2^{4\ralg(\Gamma)}\cdot C_{LP}(\ralg(\Gamma))
		     \\&\overset{\quad\,\,\,\,\,}{\leq}
		      C_{H1R}(2\ralg(\Gamma)) \cdot  2^{8\ralg(\Gamma)}\cdot C_{LP}(\ralg(\Gamma))=C_{H1}(\ralg(\Gamma))		     
\end{align*}

\end{proof}

\begin{corollary}\label{cor:h1.her} There is an increasing function $ C^{her}_{H1}:\N\to \N$ such that, for any pair of finite groups $\Delta \subset \Gamma$,  any involution $\theta$ of $\Delta$, and any prime $p>2$, 
	$$|H^1(S_2,\Delta)|< C^{her}_{H1}(\ralg_p(\Gamma)).$$
\end{corollary}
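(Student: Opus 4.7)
The corollary strengthens Proposition \ref{prop:h1}: we must bound $|H^1(S_2,\Delta)|$ in terms of $\ralg_p(\Gamma)$ rather than $\ralg_p(\Delta)$, where $\Delta \subset \Gamma$. The invariant $\ralg_p$ is not monotone under inclusion in general (the $p$-radical of $\Delta$ can be much larger than $\Delta \cap \Rad_p(\Gamma)$), so this is exactly the situation already handled by Corollary \ref{cor:LP.ind.sym}. The plan is therefore a straightforward dichotomy on the relationship between $\Rad_p(\Delta)$ and $\Delta \cap \Rad_p(\Gamma)$, combined with Proposition \ref{prop:h1} and the inflation--restriction bound of Lemma \ref{lem:h1.ses}.

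\textbf{Case 1: $\Rad_p(\Delta) = \Delta \cap \Rad_p(\Gamma)$.} Here the composition $\Delta \hookrightarrow \Gamma \twoheadrightarrow \Gamma/\Rad_p(\Gamma)$ has kernel exactly $\Rad_p(\Delta)$, producing an injection $\Delta/\Rad_p(\Delta) \hookrightarrow \Gamma/\Rad_p(\Gamma)$. Composing with an embedding of $\Gamma/\Rad_p(\Gamma)$ into $\mathbf{G}(\F_p)$ realizing $\ralg_p(\Gamma)$ yields $\ralg_p(\Delta) \leq \ralg_p(\Gamma)$, and Proposition \ref{prop:h1} gives $|H^1(S_2,\Delta)| < C_{H1}(\ralg_p(\Gamma))$.

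\textbf{Case 2: $\Rad_p(\Delta) \neq \Delta \cap \Rad_p(\Gamma)$.} Apply Corollary \ref{cor:LP.ind.sym} to produce a $\theta$-invariant normal subgroup $\Delta^\circ \lhd \Delta$ with $\ralg_p(\Delta^\circ) < \ralg_p(\Gamma)$ and $[\Delta:\Delta^\circ] \leq \monSym(\ralg_p(\Gamma))$. By Lemma \ref{lem:h1.ses},
\[
|H^1(S_2,\Delta)| \leq |H^1(S_2,\Delta/\Delta^\circ)| \cdot \max_{\tau} |H_\tau^1(S_2,\Delta^\circ)|.
\]
The first factor is bounded trivially by $|\Delta/\Delta^\circ| \leq \monSym(\ralg_p(\Gamma))$. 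For the second, Proposition \ref{prop:h1} applied to each involution $\tau$ of $\Delta^\circ$ gives $|H_\tau^1(S_2,\Delta^\circ)| < C_{H1}(\ralg_p(\Delta^\circ)) \leq C_{H1}(\ralg_p(\Gamma))$.

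Combining the two cases, the function $C^{her}_{H1}(n) := \monSym(n) \cdot C_{H1}(n)$ works (and is increasing since both factors are). There is no real obstacle; the only subtle point is ensuring that the subgroup produced by Corollary \ref{cor:LP.ind.sym} is actually $\theta$-invariant, which is exactly the $S_2$-equivariant strengthening achieved in \S\ref{ssec:S2}. Without that equivariance, the induced action on $\Delta/\Delta^\circ$ would not be well-defined and Lemma \ref{lem:h1.ses} would not apply.
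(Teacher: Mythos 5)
Your proof is correct and follows essentially the same route as the paper: reduce to a $\theta$-invariant normal subgroup $\Delta^\circ\lhd\Delta$ of bounded index with $\ralg_p(\Delta^\circ)\leq\ralg_p(\Gamma)$ via Corollary \ref{cor:LP.ind.sym}, then combine Proposition \ref{prop:h1} with Lemma \ref{lem:h1.ses}. The only difference is cosmetic: you make the case split explicit and invoke Lemma \ref{lem:h1.ses}(2) with its $\max_\tau$, whereas the paper folds Case 1 into a non-strict inequality $\ralg_p(\Delta^\circ)\leq\ralg_p(\Gamma)$ and argues directly from the exact sequence; your version is, if anything, a touch more careful about the twisted cohomologies of $\Delta^\circ$.
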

\begin{proof}
Define $$C^{her}_{H1}(n):=\monSym(n) C_{H1}(n).$$ By Corollary \ref{cor:LP.ind.sym}, there is a $\theta$-invariant normal subgroup $\Delta^\circ\lhd \Delta$ such that $\ralg_p(\Delta^\circ)\leq \ralg_p(\Gamma)$ and $[\Delta:\Delta^\circ]\leq \monSym(\ralg_p(\Gamma))$. The previous proposition (Proposition \ref{prop:h1}) implies that $|H^1(S_2,\Delta^\circ)|< C_{H1}(\ralg_p(\Gamma)).$ From the exact sequence:
	$$H^1(S_2,\Delta^\circ)\to H^1(S_2,\Delta)\to H^1(S_2,\Delta/\Delta^\circ)$$
we get
	\begin{align*}
		|H^1(S_2,\Delta)| &\leq |H^1(S_2,\Delta^\circ)|\cdot |H^1(S_2,\Delta/\Delta^\circ)|
		\\&\leq
		 C_{H1}(\ralg_p(\Gamma)) [\Delta:\Delta^\circ]
		\\&\leq
		 C_{H1}(\ralg_p(\Gamma)) \monSym(\ralg_p(\Gamma))
		 \\&=
		C^{her}_{H1}(\ralg_p(\Gamma)).
	\end{align*}	
\end{proof}

\section{bounds on $H^2(\Gamma,\mu_{p^\infty})$}  \label{sec:H2}

Let  $\Bmu_{p^\infty}$ denote the group of roots of unity of order which is a power of $p$.
In this section we prove that $H^2(\Gamma,\Bmu_{p^\infty})$ is trivial whenever $\Rad_p(\Gamma)=1$ and $p$ is large with respect to 
$\alg_p(\Gamma)$. See Proposition \ref{prop:H2} below.

We will need the following:
\begin{lemma}\label{lem:H2}
For any short exact sequence of finite groups
$$1\to \Gamma_1\to \Gamma_2\to \Gamma_3\to 1,$$
we have 
\begin{enumerate}
\item  \label{lem:H2:1} If $p \nmid | \Gamma_1 |$, then $H^i(\Gamma_2,\mathbb{F}_p)\cong H^i(\Gamma_3,\mathbb{F}_p)$, for all $i$.
\item\label{lem:H2:2} If $p \nmid | \Gamma_3 |$, then $H^i(\Gamma_2,\mathbb{F}_p)\cong H^i(\Gamma_1,\mathbb{F}_p)^{\Gamma_3}$, for all $i$.
\end{enumerate}
\end{lemma}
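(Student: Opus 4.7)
The plan is to apply the Lyndon--Hochschild--Serre spectral sequence
\[
E_2^{a,b} = H^a(\Gamma_3, H^b(\Gamma_1, \mathbb{F}_p)) \Rightarrow H^{a+b}(\Gamma_2, \mathbb{F}_p)
\]
associated to the given short exact sequence, together with the classical fact that the cohomology $H^i(G, M)$ of a finite group $G$ is annihilated by $|G|$ for $i > 0$. In particular, if $|G|$ is invertible in the coefficient module, then $H^i(G, M) = 0$ for all $i > 0$.

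For part \eqref{lem:H2:1}, I would argue as follows. When $p \nmid |\Gamma_1|$, the integer $|\Gamma_1|$ is invertible in $\mathbb{F}_p$, so $H^b(\Gamma_1, \mathbb{F}_p) = 0$ for all $b > 0$, while $H^0(\Gamma_1, \mathbb{F}_p) = \mathbb{F}_p$ with trivial $\Gamma_3$-action (since $\Gamma_1$ acts trivially on $\mathbb{F}_p$ and $\Gamma_3$ acts on $H^0$ through the quotient, which fixes the constants). Thus the $E_2$-page of the spectral sequence is concentrated in the row $b = 0$, so it degenerates and gives the desired isomorphism $H^i(\Gamma_2, \mathbb{F}_p) \cong H^i(\Gamma_3, \mathbb{F}_p)$.

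For part \eqref{lem:H2:2}, the argument is dual. When $p \nmid |\Gamma_3|$, the groups $H^a(\Gamma_3, M)$ vanish for all $a > 0$ and every $\mathbb{F}_p$-vector space $M$ equipped with a $\Gamma_3$-action, since such $M$ is annihilated by multiplication by $p$ but not by $|\Gamma_3|$. So the $E_2$-page is concentrated in the column $a = 0$, the spectral sequence collapses, and we obtain
\[
H^i(\Gamma_2, \mathbb{F}_p) \cong H^0(\Gamma_3, H^i(\Gamma_1, \mathbb{F}_p)) = H^i(\Gamma_1, \mathbb{F}_p)^{\Gamma_3}.
\]

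There is no real obstacle here; both parts are immediate consequences of the LHS spectral sequence plus the standard vanishing of cohomology in orders coprime to the coefficient characteristic. The only mild point to verify is that in part \eqref{lem:H2:1} the $\Gamma_3$-action on $H^0(\Gamma_1, \mathbb{F}_p) = \mathbb{F}_p$ is trivial, which is automatic.
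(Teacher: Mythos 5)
Your proof is correct and is essentially the same argument as the paper's: the paper phrases things via composition of derived functors of the invariants functor (which is exact when the group order is invertible), while you phrase it via the Lyndon--Hochschild--Serre spectral sequence collapsing because the appropriate row or column vanishes; these are the same Grothendieck spectral sequence viewed from two angles.
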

\begin{proof}
For a pair finite group $\Gamma\lhd \Gamma'$, let $I^{\Gamma'}_{\Gamma}:\cM_{\F_p}(\Gamma') \to \cM(\Gamma'/\Gamma)$  be the functor of $\Gamma$-invariants from the category of $\F_p$-representations of $\Gamma'$ to the category of $\F_p$-representations of $\Gamma'/\Gamma$. Note that, if $p \nmid |\Gamma|$, then $I^{\Gamma'}_{\Gamma}$ is exact. Also denote $$I_{\Gamma}:=I^{\Gamma}_{\Gamma}.$$
Now
\begin{enumerate}
\item If $p \nmid | \Gamma_1 |$, then 
\begin{align*}
H^i(\Gamma_2,\mathbb{F}_p) 
&
\cong R^i(I_{\Gamma_2})(\F_p) \cong R^i(I^{}_{\Gamma_3} \circ I^{\Gamma_2}_{\Gamma_1})(F_p) \cong R^i(I^{}_{\Gamma_3}) \circ I^{\Gamma_2}_{\Gamma_1} (\F_p) \cong
\\&\cong
 R^i(I^{}_{\Gamma_3}) (\F_p)   \cong H^i(\Gamma_3,\mathbb{F}_p).
\end{align*}
 
\item If $p\nmid | \Gamma_3 |$, then 
\begin{align*}
H^i(\Gamma_2,\mathbb{F}_p)&\cong R^i(I_{\Gamma_2})(\F_p)\cong R^i(I^{}_{\Gamma_3} \circ I^{\Gamma_2}_{\Gamma_1})(F_p)
\\&\cong
I^{}_{\Gamma_3} \circ R^i I^{\Gamma_2}_{\Gamma_1} (\F_p) \cong I^{}_{\Gamma_3} (H^i(\Gamma_1,\mathbb{F}_p))   \cong 
H^i(\Gamma_1,\mathbb{F}_p)^{\Gamma_3}.
\end{align*}
\end{enumerate}
\end{proof}

Now we can prove the main result of this section:

\begin{prop}[vanishing of $H^2$ for large $p$]\label{prop:H2}
There is an increasing function $C_{H2}:\N\to \N$  such that for any 
\begin{enumerate}
\item integer $n$,
\item  prime $p>C_{H2}(n)$,
\item  finite group $\Gamma$ such that $\Rad_p(\Gamma)=1$ and $\alg_p(\Gamma)\leq n$,
\end{enumerate}
the group $H^2(\Gamma,\Bmu_{p^\infty})$ is trivial.  
\end{prop}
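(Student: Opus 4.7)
The plan is to reduce the vanishing of $H^2(\Gamma,\Bmu_{p^\infty})$ to the classical vanishing of the Schur multiplier of a simply connected group of Lie type, after invoking Proposition \ref{prop:LP.sym} to approximate $\Gamma$ by such a group.

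First I would pass to $\F_p$-coefficients. Since cohomology of a finite group commutes with filtered colimits of coefficients, $H^2(\Gamma,\Bmu_{p^\infty})=\varinjlim_k H^2(\Gamma,\Bmu_{p^k})$. Using the short exact sequences $0\to\Bmu_{p^{k-1}}\to\Bmu_{p^k}\to\Bmu_p\to 0$ of trivial $\Gamma$-modules and the identification $\Bmu_p\cong\F_p$, induction on $k$ reduces the statement to showing $H^2(\Gamma,\F_p)=0$.

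Next I would fix an embedding $\Gamma\hookrightarrow\mathbf{G}(\F_p)$ with $\mathbf{G}$ connected reductive of dimension at most $n$ and apply Proposition \ref{prop:LP.sym} with trivial involution. This produces a normal subgroup $\Delta\lhd\Gamma$ of index at most $\LP(n)$, a connected reductive group $\mathbf{H}$ over $\F_p$ with $\dim\mathbf{H}\leq n$, and a homomorphism $\rho:\Delta\to\mathbf{H}(\F_p)$ with $p$-group kernel and big image. For $p>3$, Lemma \ref{lem:no.p} identifies $\ker\rho$ with $\Rad_p(\Delta)$. This is $\Gamma$-invariant: any $\Gamma$-conjugate of $\Rad_p(\Delta)$ is again a normal $p$-subgroup of $\Delta$, hence lies in $\Rad_p(\Delta)$. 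Therefore $\ker\rho\subseteq\Rad_p(\Gamma)=1$, so $\rho$ embeds $\Delta$ with $\mathbf{H}(\F_p)'\subseteq\Delta\subseteq\mathbf{H}(\F_p)$.

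I would then chase $H^2(-,\F_p)$ down the chain $\mathbf{H}'(\F_p)'\lhd\mathbf{H}(\F_p)'\lhd\Delta\lhd\Gamma$, in which each consecutive quotient has order prime to $p$ for $p$ large. Indeed, $[\Gamma:\Delta]\leq\LP(n)$; the quotient $\Delta/\mathbf{H}(\F_p)'$ injects into $\mathbf{H}(\F_p)/\mathbf{H}(\F_p)'$, which is an extension of a subgroup of $(\mathbf{H}/\mathbf{H}')(\F_p)$ (a torus, hence of order prime to $p$) by a group of order at most $2^n$ by Lemma \ref{lem:der}; and $[\mathbf{H}(\F_p)':\mathbf{H}'(\F_p)']\leq[\mathbf{H}'(\F_p):\mathbf{H}'(\F_p)']\leq 2^n$. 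Repeated application of Lemma \ref{lem:H2}\eqref{lem:H2:2} then exhibits $H^2(\Gamma,\F_p)$ as an iterated invariant subquotient of $H^2(\mathbf{H}'(\F_p)',\F_p)$.

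Finally, Corollary \ref{cor:univ.cover} gives a central extension $1\to Z\to\widetilde{\mathbf{H}'}(\F_p)\to\mathbf{H}'(\F_p)'\to 1$ with $|Z|\leq 2^n$ by Theorem \ref{thm:simp.con}\eqref{thm:simp.con:3}. For $p>2^n$, Lemma \ref{lem:H2}\eqref{lem:H2:1} gives
\[
H^2(\mathbf{H}'(\F_p)',\F_p)\cong H^2(\widetilde{\mathbf{H}'}(\F_p),\F_p),
\]
which vanishes by Theorem \ref{thm:simp.con}\eqref{thm:simp.con:1} as soon as $p>3$. Taking $C_{H2}(n):=\max(3,2^n,\LP(n))$ completes the proof. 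The only real difficulty is the bookkeeping of normality and prime-to-$p$ indices at each stage; no step is conceptually deep beyond the inputs already in hand.
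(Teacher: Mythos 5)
Your proof is correct and follows the same strategy as the paper: reduce to $\F_p$-coefficients, apply the Larsen--Pink approximation to produce a bounded-index normal subgroup $\Delta\lhd\Gamma$ sandwiched between $\mathbf{H}(\F_p)'$ and $\mathbf{H}(\F_p)$, descend through a chain of normal subgroups with prime-to-$p$ quotients via Lemma \ref{lem:H2}\eqref{lem:H2:2} down to $\mathbf{H}'(\F_p)'$, and finish via the universal cover (Corollary \ref{cor:univ.cover}), Lemma \ref{lem:H2}\eqref{lem:H2:1}, and Steinberg's $H^2$-vanishing (Theorem \ref{thm:simp.con}\eqref{thm:simp.con:1}). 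The only cosmetic differences are that you apply Proposition \ref{prop:LP.sym} directly rather than the doubled Corollary \ref{cor:LP.reductive.with.involution} — which the paper's own footnote explicitly permits, and which obliges you to add the (correctly justified) observation that $\ker\rho=\Rad_p(\Delta)$ is characteristic in $\Delta\lhd\Gamma$ hence contained in $\Rad_p(\Gamma)=1$ — and that your intermediate subgroup is $\mathbf{H}(\F_p)'$ where the paper uses $\Delta\cap\mathbf{H}'(\F_p)$.
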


\begin{proof} 
It is enough to show the claim after replacing $\Bmu_{p^ \infty}$ by $\mathbb{F}_p$.
Define $$C_{H2}(n):=\max(3,\LP(n),4^{n}).$$
Fix $n$.
Let   $p>C_{H2}(n)$   be a prime and 
 $\Gamma$  be a finite group such that $\Rad_p(\Gamma)=1$, and $\alg_p(\Gamma)\leq n$.
 we have to show that 
 $$H^2(\Gamma,\mathbb{F}_p)=0.$$

By Corollary \ref{cor:LP.reductive.with.involution}
applied with trivial involution\footnote{in fact Proposition  \ref{prop:LP.sym} is enough}, there are a normal subgroup $\Delta \lhd \Gamma$, a connected reductive algebraic group $\mathbf{H}$ defined over $\mathbb{F}_p$, and an injective homomorphism $\rho: \Delta \rightarrow \mathbf{H}(\mathbb{F}_p)$ such that 
\begin{enumerate}
\item $[\Gamma:\Delta] \leq \LPsym(n)$.
\item $\bH(\F_p)'< \rho(\Delta) < \bH(\F_p)$.
\item $\dim \bH\leq 2n$.
\end{enumerate}
Applying \ref{lem:H2}\eqref{lem:H2:2} to the embedding $\Delta \subset \Gamma$, it is enough to prove that $H^2(\Delta,\mathbb{F}_p)=0$. We will identify $\rho(\Delta)$ with $\Delta$.

Let $\Delta_0=\Delta\cap \bfH'(\mathbb{F}_p)$. We have an embedding of $\Delta/\Delta_0$ into $(\bfH/\bfH')(\mathbb{F}_p)$. Thus $p \nmid [\Delta,\Delta_0]$. By descent of $H^2$ to a subgroup (Lemma \ref{lem:H2}\eqref{lem:H2:2}) this implies that it is
 engh to show that 
 $$H^2(\Delta_0,\mathbb{F}_p)=0.$$

By Lemma \ref{lem:der}, $[\Delta_0:\mathbf{H}'(\mathbb{F}_p)'] \leq 4^n$. By Lemma \ref{lem:H2}\eqref{lem:H2:2}, it is enough to show that $H^2(\mathbf{H}'(\mathbb{F}_p)',\mathbb{F}_p)=0$.

Let $\pi: \widetilde{\mathbf{H}'} \to \mathbf{H}'$ the universal cover. By Corollary \ref{cor:univ.cover}, 

\[
H^2(\mathbf{H}'(\mathbb{F}_p)',\mathbb{F}_p)=H^2(\pi(\widetilde{\mathbf{H}'}(\mathbb{F}_p)),\mathbb{F}_p)
\]
and the latter group vanishes by combining  descent of $H^2$ to a quotient (Lemma \ref{lem:H2}\eqref{lem:H2:1})   and  vanishing of $H^2$ for simply connected groups (Theorem \ref{thm:simp.con}\eqref{thm:simp.con:1}).

\end{proof}

\section{The case of trivial $p$-radical}\label{sec:triv.prad}
In this section we prove a twisted version of the main result of the paper for the special case when $\Gamma$ have a trivial $p$-radical (See Corrolary \ref{cor:nr} below).
	
We start with the case that $\Gamma$ is a finite group of Lie type. This case, in the larger generality of spherical pairs but without a twist, was proved in \cite{She}. A variation of the argument of \cite{She} proves the twisted case. We include this variation in Appendix \ref{sec:Shai}. In particular,
the following is a special case of Theorem \ref{thm:Shai}:
\begin{theorem}\label{thm:Shai.sp} There is an increasing function $C_{rd}:\N\to \N$	such that,	for every finite field $F$ of characteristic $>3$, every connected reductive group $\bG$, and every involution $t$ of $\bG$, we have
	$$\mu(\bG(F),t)<C_{rd}(\dim(\bG)).$$
\end{theorem}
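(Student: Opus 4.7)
The plan is to adapt the argument of \cite{She}, which treats the untwisted case, to bound twisted multiplicities. First I would apply the versal family of Lemma \ref{lem:fam.of.sym}: all pairs $(\bG,t)$ with $\dim \bG \leq n$ over finite fields arise as fibers of $(\cR_n,\tau_n) \to \cS_n$, and by Lemma \ref{lem:fam.of.sym}\eqref{lem:fam.of.sym:3} there are only finitely many combinatorial types (connected components of $\cS_n$). It suffices to bound $\mu(\bG(F),t)$ uniformly in $|F|$ for each fixed type, that is, for fixed absolute root datum together with the action of $t$ on it.

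For a fixed type, I would use Harish-Chandra induction: every irreducible representation $\rho$ of $\bG(F)$ is a subquotient of $\Ind_{P(F)}^{\bG(F)}(\sigma)$ for some $F$-rational parabolic $P=L\ltimes U$ and cuspidal representation $\sigma$ of $L(F)$. By Frobenius reciprocity and Mackey's formula,
$$\dim \rho^{\bG(F)^t,\chi} \leq \sum_{g} \dim \sigma^{g^{-1}\bG(F)^tg \cap P(F),\, \chi^g},$$
with the sum indexed by $\bG(F)^t \backslash \bG(F) / P(F)$. The number of such double cosets is at most the number of $\bG^t$-orbits on $\bG/P$, which, by Richardson-Springer theory, is parameterized by a finite combinatorial invariant of the root datum equipped with the involution; together with Lang-Steinberg this controls the $F$-rational count. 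Each term on the right is a twisted multiplicity for a representation on a smaller group, so one can reduce to the cuspidal case by induction on $\dim \bG$ (using that proper Levis have strictly smaller dimension and the versal family applies to them too).

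The hard part will be the cuspidal base case, where $\rho$ itself is cuspidal and no proper Levi is available. For cuspidal representations, Deligne-Lusztig theory expresses $\rho$ as an alternating sum of characters $R_T^G(\theta)$ indexed by elliptic maximal tori and characters $\theta$, and the twisted multiplicity $\dim \rho^{\bG(F)^t,\chi}$ becomes a sum of values $\chi(k)\overline{R_T^G(\theta)(k)}$ averaged over $k\in \bG(F)^t$. Uniform estimation of such sums is the heart of \cite{She} in the untwisted case, where one exploits cancellation between Deligne-Lusztig characters. The adaptation to the twisted case rests on the observation that $\bG(F)^t/[\bG(F)^t,\bG(F)^t]$ has order bounded in terms of $\dim \bG$ alone (by Lemma \ref{lem:der} applied to $(\bG^t)^\circ$, together with a bound on $|\pi_0(\bG^t)|$), so only boundedly many essentially distinct characters $\chi$ need to be considered, and the character estimates of \cite{She} carry through for each such $\chi$ after a routine bookkeeping modification. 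The detailed execution is deferred to Appendix \ref{sec:Shai}.
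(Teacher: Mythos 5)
Your overall framework---invoke the versal family to reduce to finitely many combinatorial types, then bound twisted multiplicities uniformly in $|F|$ for a fixed type---matches the way the paper reduces Theorem \ref{thm:Shai.sp} to the family statement Theorem \ref{thm:Shai}. But from that point on your route is genuinely different from the paper's, and it has a concrete gap.

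The central problem is the claim that $\bG(F)^t/[\bG(F)^t,\bG(F)^t]$ has order bounded in terms of $\dim\bG$ alone. This is false. Whenever $\bG^t$ has a positive-dimensional central torus $\mathbf{T}$, the image of $\mathbf{T}(F)$ in the abelianization of $\bG(F)^t$ is a subgroup of order growing like $|F|$. A minimal example is $\bG=\mathbb{G}_m^2$ with $t(x,y)=(y,x)$, where $\bG^t\cong\mathbb{G}_m$ is abelian, so $\bG(F)^t/[\bG(F)^t,\bG(F)^t]\cong F^\times$. Lemma \ref{lem:der} controls $[\bH'(F):\bH(F)']$, which only helps when $\bH$ is semisimple; it says nothing about the torus quotient $\bH(F)/\bH'(F)$. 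Since the number of characters of $\bG(F)^t$ is unbounded, your proposed reduction ``only boundedly many $\chi$ need to be considered'' collapses, and the remaining phrase ``the character estimates of \cite{She} carry through after routine bookkeeping'' is exactly the step where new work is needed, not routine bookkeeping. This is the actual hard point, and it is where the paper introduces its genuinely new ingredient, Lemma \ref{lem:geom.1.d.char}: every one-dimensional character $\theta$ of $\calH_s(F)$ (here $\calH_s=\bG^t$, possibly disconnected, possibly with a torus factor) is realized as the function associated to a weight-zero $\ell$-adic local system $\fL$ on $\calH_s$ of bounded rank, and then the geometric weight estimate (Lemma \ref{lem:geom.mult.bound}) bounds $\langle\chi_{\fK},\Ind\chi_\fL\rangle$ linearly in $\rk\fL$. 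No count of characters is needed; each character is handled uniformly by geometrization.

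Separately, your proposed Harish-Chandra induction and Mackey reduction to cuspidals is not what the paper does and is not obviously sound as stated. After Mackey, the local summands are invariants of $\sigma$ under the groups $g^{-1}\bG(F)^tg\cap P(F)$, which are neither reductive nor spherical subgroups of $L(F)$ in general, so the inductive hypothesis does not apply to them directly; you would need further work to push the twist through the unipotent radical of $P$ and identify the result as a twisted multiplicity on the Levi. The paper avoids all of this by working directly with character sheaves: Proposition \ref{prop:LL} (Laumon--Lusztig, via \cite{She}) approximates the irreducible character by a bounded linear combination of characters of induced character sheaves $\fK_i$, and the bound on each pairing $\langle\chi_{\fK_i},\Ind\chi_\fL\rangle$ comes from a weight argument on $p_!(\fK_i\otimes f_!\fM^\vee)$ controlled by the sphericity of $(\cG_s,\calH_s)$ (through a bound on connected components of $(\bfX\times\bfG/\bfB)_\bfG$), not from any cuspidal base case.
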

\Rami{In order to apply this theorem to arbitrary groups with trivial $p$-radical, we will need the following:}

\begin{lemma}[{cf. \cite[Lemma 3.2.1]{AA_bnd}}]\label{lem:mult.iso}
Let $\phi:\Gamma_1\to \Gamma_2$ be morphism finite groups and let $\theta_1,\theta_2$  be  involutions of $\Gamma_1,\Gamma_2$ such that $\theta_2\circ \phi=\phi \circ \theta_1$. Then,
\begin{enumerate}         
\item\label{lem:mult.iso:1} $\mu(\Gamma_2,\theta_2) \leq [\Gamma_2 : \phi(\Gamma_1)] \mu(\Gamma_1,\theta_1).$
\item\label{lem:mult.iso:2} $\nu(\Gamma_2,\theta_2) \leq [\Gamma_2 : \phi(\Gamma_1)] \nu(\Gamma_1,\theta_1).$
\end{enumerate}
\end{lemma}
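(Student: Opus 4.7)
The plan is to first reduce to the case where $\phi$ is the inclusion of a $\theta_2$-invariant subgroup of $\Gamma_2$ with $\theta_2|_{\Gamma_1}=\theta_1$, and then apply Frobenius reciprocity via this inclusion. For the reduction, set $\Gamma_1':=\phi(\Gamma_1)\subseteq\Gamma_2$. Any irrep $\rho'\in\Irr(\Gamma_1')$ pulls back to an irrep $\rho:=\rho'\circ\phi\in\Irr(\Gamma_1)$, and any character $\chi'\in\widehat{(\Gamma_1')^{\theta_2}}$ pulls back to $\chi:=\chi'\circ\phi|_{\Gamma_1^{\theta_1}}\in\widehat{\Gamma_1^{\theta_1}}$, since $\phi(\Gamma_1^{\theta_1})\subseteq(\Gamma_1')^{\theta_2}$. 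Because $\phi(\Gamma_1^{\theta_1})$ may be a proper subgroup of $(\Gamma_1')^{\theta_2}$, invariance under the smaller group only enlarges the eigenspace, giving $\dim(\rho')^{(\Gamma_1')^{\theta_2},\chi'}\le\dim\rho^{\Gamma_1^{\theta_1},\chi}$. Taking suprema yields $\mu(\Gamma_1',\theta_2|_{\Gamma_1'})\le\mu(\Gamma_1,\theta_1)$ and similarly for $\nu$, so one may assume that $\phi$ is the inclusion $\Gamma_1\hookrightarrow\Gamma_2$.

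In the inclusion case, fix $\rho\in\Irr(\Gamma_2)$ and $\chi\in\widehat{\Gamma_2^{\theta_2}}$. Because $\Gamma_1^{\theta_1}=\Gamma_1\cap\Gamma_2^{\theta_2}$, every $\chi$-eigenvector for $\Gamma_2^{\theta_2}$ is a $\chi|_{\Gamma_1^{\theta_1}}$-eigenvector for $\Gamma_1^{\theta_1}$, so $\rho^{\Gamma_2^{\theta_2},\chi}\subseteq\rho^{\Gamma_1^{\theta_1},\chi|}$. Decompose $\rho|_{\Gamma_1}=\bigoplus_j m_j\rho_j$ into $\Gamma_1$-isotypic pieces and bound each $\dim\rho_j^{\Gamma_1^{\theta_1},\chi|}$ by $\mu(\Gamma_1,\theta_1)$:
\[
\dim\rho^{\Gamma_2^{\theta_2},\chi}\;\le\;\dim\rho^{\Gamma_1^{\theta_1},\chi|}\;=\;\sum_{j\colon m_j\ge 1}m_j\dim\rho_j^{\Gamma_1^{\theta_1},\chi|}\;\le\;\mu(\Gamma_1,\theta_1)\sum_{j\colon m_j\ge 1}m_j.
\]

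It remains to prove the combinatorial bound $\sum_{j\colon m_j\ge 1}m_j\le[\Gamma_2:\Gamma_1]$. Let $\tilde R:=\bigoplus_{j\colon m_j\ge 1}\rho_j$; then $\dim\tilde R\le\dim\rho$, since $\dim\rho=\sum m_j\dim\rho_j\ge\sum\dim\rho_j$. By Frobenius reciprocity,
\[
\sum_j m_j\;=\;\dim\Hom_{\Gamma_1}(\tilde R,\rho|_{\Gamma_1})\;=\;\dim\Hom_{\Gamma_2}(\Ind_{\Gamma_1}^{\Gamma_2}\tilde R,\rho)\;\le\;\frac{\dim\Ind_{\Gamma_1}^{\Gamma_2}\tilde R}{\dim\rho}\;=\;\frac{[\Gamma_2:\Gamma_1]\dim\tilde R}{\dim\rho}\;\le\;[\Gamma_2:\Gamma_1],
\]
where the middle inequality uses that the multiplicity of the irreducible $\rho$ in any representation is bounded by that representation's dimension divided by $\dim\rho$. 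This completes part \eqref{lem:mult.iso:1}; for \eqref{lem:mult.iso:2}, run the same argument with $\chi$ trivial, replacing $\mu(\Gamma_1,\theta_1)$ by $\nu(\Gamma_1,\theta_1)$ throughout. The essential non-formal step is the introduction of the ``reduced regular representation'' $\tilde R$ on the $\Gamma_1$-side, which is precisely what makes the index $[\Gamma_2:\Gamma_1]$ (rather than something larger, like $\dim\rho$) appear.
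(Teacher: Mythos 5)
The paper itself does not prove this lemma; it only cites \cite[Lemma 3.2.1]{AA_bnd}, so there is no internal proof to compare against. Your argument is correct and self-contained. The reduction to the case where $\phi$ is an inclusion (via the surjection $\Gamma_1\twoheadrightarrow\phi(\Gamma_1)$, pulling back irreps and characters and observing $\phi(\Gamma_1^{\theta_1})\subseteq\phi(\Gamma_1)^{\theta_2}$ so the eigenspace can only grow) is clean, and the combinatorial bound $\sum_{j\colon m_j\ge 1}m_j\le[\Gamma_2:\Gamma_1]$ via $\tilde R=\bigoplus_{m_j\ge1}\rho_j$ and Frobenius reciprocity is correct. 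A marginally more standard route to that same bound: since $m_j\le m_j^2$ when $m_j\ge 1$, one has $\sum_{m_j\ge1}m_j\le\sum_j m_j^2=\dim\End_{\Gamma_1}(\rho|_{\Gamma_1})=\dim\Hom_{\Gamma_2}(\Ind_{\Gamma_1}^{\Gamma_2}(\rho|_{\Gamma_1}),\rho)\le[\Gamma_2:\Gamma_1]$, using the same dimension-count for multiplicity; this avoids introducing $\tilde R$ but is otherwise the same idea. Either way the essential point — that the number of $\Gamma_1$-constituents of $\rho|_{\Gamma_1}$, counted with multiplicity but with each type counted once, is at most the index — is exactly what makes the lemma go, and you identified it correctly.
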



\begin{lemma} \label{lem:bound.center.derived} For every prime $p$ and every connected reductive group $\mathbf{H}$ over $\mathbb{F}_p$, we have $\left[ \mathbf{H}(\mathbb{F}_p) : \mathbf{H}(\mathbb{F}_p)' \cdot Z(\mathbf{H})(\mathbb{F}_p) \right] \leq 2^{2\dim\, \mathbf{H}}$.
\end{lemma}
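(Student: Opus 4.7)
The plan is to split into two regimes based on the characteristic. For $p\leq 3$, the cited bound $|\mathbf{H}(\mathbb{F}_p)|\leq (p+1)^{\dim\,\mathbf{H}} \leq 4^{\dim\,\mathbf{H}}=2^{2\dim\,\mathbf{H}}$ from \cite[Lemma 3.5]{Nor} makes the claim trivial, since the index of any subgroup is at most the order of the ambient group. So the substantive case is $p>3$, where the strategy is to interpolate the product $\mathbf{H}(\mathbb{F}_p)'\cdot Z(\mathbf{H})(\mathbb{F}_p)$ between two larger subgroups and control each jump separately.

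The key tool is the multiplication isogeny $\phi: \mathbf{H}'\times Z(\mathbf{H})^\circ \to \mathbf{H}$. This map is surjective because $\mathbf{H}$ is reductive, and its kernel $\mathbf{K}$ is isomorphic via $z\mapsto (z,z^{-1})$ to $\mathbf{H}'\cap Z(\mathbf{H})^\circ$, which sits inside the (finite) center $Z(\mathbf{H}')$. Pulling $Z(\mathbf{H}')$ back to the universal cover $\widetilde{\mathbf{H}'}\to \mathbf{H}'$ and applying Theorem \ref{thm:simp.con}\eqref{thm:simp.con:3} gives $|\mathbf{K}(\overline{\mathbb{F}_p})|\leq |Z(\widetilde{\mathbf{H}'})(\overline{\mathbb{F}_p})|\leq 2^{\dim\,\mathbf{H}'}$. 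Feeding this into Lemma \ref{lem:coker} yields
\[
[\mathbf{H}(\mathbb{F}_p):\mathbf{H}'(\mathbb{F}_p)\cdot Z(\mathbf{H})^\circ(\mathbb{F}_p)]\leq 2^{\dim\,\mathbf{H}'},
\]
and enlarging $Z(\mathbf{H})^\circ(\mathbb{F}_p)$ to $Z(\mathbf{H})(\mathbb{F}_p)$ only makes the index smaller.

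The second index to control is $[\mathbf{H}'(\mathbb{F}_p)\cdot Z(\mathbf{H})(\mathbb{F}_p):\mathbf{H}(\mathbb{F}_p)'\cdot Z(\mathbf{H})(\mathbb{F}_p)]$. By the second isomorphism theorem this is bounded by $[\mathbf{H}'(\mathbb{F}_p):\mathbf{H}(\mathbb{F}_p)']$, and Lemma \ref{lem:der} (which is exactly why I need $p>3$) gives this index as less than $2^{\dim\,\mathbf{H}}$. Multiplying the two bounds yields
\[
[\mathbf{H}(\mathbb{F}_p):\mathbf{H}(\mathbb{F}_p)'\cdot Z(\mathbf{H})(\mathbb{F}_p)]\leq 2^{\dim\,\mathbf{H}'}\cdot 2^{\dim\,\mathbf{H}}\leq 2^{2\dim\,\mathbf{H}},
\]
as required.

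The main conceptual subtlety is in the first step: $\mathbf{H}'$ is generally not simply connected, so the bound on $|Z(\mathbf{H}')|$ has to be obtained by passing to the simply connected cover (under which the center only projects down). Everything else is routine index bookkeeping; the small-characteristic case is absorbed by the trivial cardinality estimate rather than by any sharper argument.
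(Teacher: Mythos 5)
Your proof is correct and takes essentially the same route as the paper: both proofs use the multiplication isogeny from (derived group) $\times$ (center) to $\mathbf{H}$, bound the resulting index via Lemma \ref{lem:coker} together with the bound $|Z(\widetilde{\mathbf{H}'})(\overline{\mathbb{F}_p})|\leq 2^{\dim\,\mathbf{H}'}$ from Theorem \ref{thm:simp.con}, and then use Lemma \ref{lem:der} to compare $\mathbf{H}'(\mathbb{F}_p)$ with $\mathbf{H}(\mathbb{F}_p)'$. The only structural difference is that the paper applies Lemma \ref{lem:coker} directly to $\phi:\mathbf{H}'\times Z(\mathbf{H})\to\mathbf{H}$ (with $Z(\mathbf{H})$ possibly disconnected but still having finite intersection $Z(\mathbf{H}')$ with $\mathbf{H}'$), whereas you use the connected center $Z(\mathbf{H})^\circ$ and then enlarge; these are interchangeable. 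One genuine improvement on your part: you explicitly dispose of $p\leq 3$ via Nori's bound, which is needed since Lemma \ref{lem:der} assumes characteristic greater than 3, a hypothesis the paper's proof invokes silently even though the lemma as stated asserts ``for every prime $p$.''
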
 

\begin{proof} The map $\phi : \mathbf{H}' \times Z(\mathbf{H}) \rightarrow \mathbf{H}$ is an isogeny. By Lemma \ref{lem:coker} and Theorem \ref{thm:simp.con}, 
\[
\left[ \mathbf{H}(\mathbb{F}_p) : \mathbf{H}'(\mathbb{F}_p) \cdot Z(\mathbf{H})(\mathbb{F}_p) \right] \leq |(\ker \phi)(\overline{\mathbb{F}_p})|=|Z(\mathbf{H}'(\overline{\mathbb{F}_p})| \leq |Z(\widetilde{\mathbf{H}'}(\overline{\mathbb{F}_p}))|\leq 2^{\dim\, \mathbf{H}'},
\]
where $\widetilde{\mathbf{H}'}$ is the universal cover of $\mathbf{H}'$. By Lemma \ref{lem:der}, $[\mathbf{H}'(\mathbb{F}_p):\mathbf{H}(\mathbb{F}_p)'] \leq 2^{\dim\, \mathbf{H}'}$, and the result follows.
\end{proof} 

The following lemma is straightforward
\begin{lemma} \label{lem:mu.product} If $A,B$ are finite groups and $\theta_A,\theta_B$ are involutions of $A,B$, then $\mu(A \times B, \theta_A \times \theta_B)=\mu(A,\theta_A)\mu(B,\theta_B)$.
\end{lemma}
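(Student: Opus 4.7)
The plan is to invoke the standard factorization of irreducible representations and one-dimensional characters across a direct product. First I would observe the identification $(A\times B)^{\theta_A\times\theta_B}=A^{\theta_A}\times B^{\theta_B}$, which is immediate from the coordinate-wise action. Then, since we are working over $\mathbb{C}$, every $\pi\in\irr(A\times B)$ has the form $\pi=\rho_A\boxtimes\rho_B$ with $\rho_A\in\irr(A)$ and $\rho_B\in\irr(B)$, and every one-dimensional character $\chi\in\widehat{A^{\theta_A}\times B^{\theta_B}}$ factors uniquely as $\chi=\chi_A\boxtimes\chi_B$ with $\chi_A\in\widehat{A^{\theta_A}}$, $\chi_B\in\widehat{B^{\theta_B}}$.

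Next I would compute the $\chi$-isotypic component. Since the actions of $A^{\theta_A}$ and $B^{\theta_B}$ on $\rho_A\boxtimes\rho_B$ commute and act on separate tensor factors, one has the tensor decomposition
\[
(\rho_A\boxtimes\rho_B)^{A^{\theta_A}\times B^{\theta_B},\,\chi_A\boxtimes\chi_B}=\rho_A^{A^{\theta_A},\chi_A}\otimes\rho_B^{B^{\theta_B},\chi_B},
\]
so the dimensions multiply. Taking the maximum over all quadruples $(\rho_A,\rho_B,\chi_A,\chi_B)$ gives $\mu(A\times B,\theta_A\times\theta_B)=\mu(A,\theta_A)\cdot\mu(B,\theta_B)$: the inequality $\le$ comes from the displayed factorization (the maximum of a product over a product of domains is the product of the maxima), and the inequality $\ge$ follows by choosing $(\rho_A,\chi_A)$ and $(\rho_B,\chi_B)$ that separately achieve $\mu(A,\theta_A)$ and $\mu(B,\theta_B)$ and forming the external tensor products.

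There is no real obstacle here; the whole content is in recognizing that both the representation ring of $A\times B$ and its character variety of $(A\times B)^{\theta_A\times\theta_B}$ split as external products, so the calculation of multiplicities decouples between the two factors.
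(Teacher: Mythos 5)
Your argument is correct and is precisely the standard factorization argument one would give; the paper itself states the lemma without proof, labeling it ``straightforward,'' and your write-up supplies exactly the routine details (factoring $\irr(A\times B)$, factoring $\widehat{A^{\theta_A}\times B^{\theta_B}}$, and observing that the $\chi_A\boxtimes\chi_B$-eigenspace of $\rho_A\boxtimes\rho_B$ is the tensor product of the corresponding eigenspaces).
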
 


\begin{corollary}\label{cor:nr} There is an increasing function $C_{nr}: \mathbb{N} \rightarrow \mathbb{N}$ such that, for any prime $p>3$ and any finite group $\Gamma$ that has a trivial $p$-radical, we have $\mu(\Gamma)<C_{nr}(\alg(\Gamma))$.
\end{corollary}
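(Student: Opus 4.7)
The plan is to reduce the bound to Theorem~\ref{thm:Shai.sp} via Corollary~\ref{cor:LP.reductive.with.involution} and an isogeny trick. First, fix an involution $\theta$ of $\Gamma$. Since $\Rad_p(\Gamma)=1$, Corollary~\ref{cor:LP.reductive.with.involution} supplies a $\theta$-invariant normal subgroup $\Delta\triangleleft\Gamma$ of index at most $\LPsym(\alg_p(\Gamma))$, a connected reductive group $\mathbf{H}$ over $\mathbb{F}_p$ with $\dim\mathbf{H}\leq 2\alg_p(\Gamma)$, an involution $t$ of $\mathbf{H}$, and an $S_2$-equivariant embedding $\Delta\hookrightarrow\mathbf{H}(\mathbb{F}_p)$ satisfying $\mathbf{H}(\mathbb{F}_p)'\subset\Delta\subset\mathbf{H}(\mathbb{F}_p)$. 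Applying Lemma~\ref{lem:mult.iso}\eqref{lem:mult.iso:1} to the inclusion $\Delta\hookrightarrow\Gamma$ reduces the task to bounding $\mu(\Delta,\theta|_\Delta)$.

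Next, I would introduce the isogeny $\phi:\tilde{\mathbf{H}}:=\widetilde{\mathbf{H}'}\times Z(\mathbf{H})^\circ\to\mathbf{H}$, where $\widetilde{\mathbf{H}'}$ is the simply connected cover of $\mathbf{H}'$. Since $t$ preserves the characteristic subgroups $\mathbf{H}'$ and $Z(\mathbf{H})^\circ$, it lifts (by the universal property of $\widetilde{\mathbf{H}'}$) to an involution $\tilde t$ of $\tilde{\mathbf{H}}$ which is a product on the two factors and makes $\phi$ equivariant. By Lemma~\ref{lem:coker} together with Theorem~\ref{thm:simp.con}\eqref{thm:simp.con:3}, the index $[\mathbf{H}(\mathbb{F}_p):\phi(\tilde{\mathbf{H}}(\mathbb{F}_p))]$ is bounded in terms of $\dim\mathbf{H}$. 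Setting $\Delta_1:=\Delta\cap\phi(\tilde{\mathbf{H}}(\mathbb{F}_p))$ and $\tilde\Delta:=\phi^{-1}(\Delta_1)\cap\tilde{\mathbf{H}}(\mathbb{F}_p)$, the map $\phi|_{\tilde\Delta}:\tilde\Delta\twoheadrightarrow\Delta_1$ is surjective, so two applications of Lemma~\ref{lem:mult.iso}\eqref{lem:mult.iso:1}---once for the inclusion $\Delta_1\hookrightarrow\Delta$ and once for the surjection---further reduce the task to bounding $\mu(\tilde\Delta,\tilde\theta)$, where $\tilde\theta:=\tilde t|_{\tilde\Delta}$.

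The crucial observation is that $\tilde\Delta$ is itself a direct product. Corollary~\ref{cor:univ.cover} gives $\phi(\widetilde{\mathbf{H}'}(\mathbb{F}_p))=\mathbf{H}'(\mathbb{F}_p)'\subset\mathbf{H}(\mathbb{F}_p)'\subset\Delta$, so $\widetilde{\mathbf{H}'}(\mathbb{F}_p)\times\{1\}\subset\tilde\Delta$. Given any $(h,z)\in\tilde\Delta$, the identity $(1,z)=(h^{-1},1)(h,z)$ then shows that
\[\tilde\Delta=\widetilde{\mathbf{H}'}(\mathbb{F}_p)\times Z_\Delta,\]
where $Z_\Delta\subset Z(\mathbf{H})^\circ(\mathbb{F}_p)$ is the projection of $\tilde\Delta$, and $\tilde\theta$ respects this decomposition.

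Finally, $Z_\Delta$ is abelian, so all its irreducible representations are one-dimensional and $\mu(Z_\Delta,-)\leq 1$. Lemma~\ref{lem:mu.product} then gives $\mu(\tilde\Delta,\tilde\theta)=\mu(\widetilde{\mathbf{H}'}(\mathbb{F}_p),\tilde\theta|_{\widetilde{\mathbf{H}'}(\mathbb{F}_p)})$, and Theorem~\ref{thm:Shai.sp} bounds the latter by $C_{rd}(\dim\widetilde{\mathbf{H}'})\leq C_{rd}(2\alg_p(\Gamma))$. Collecting all bounds yields the required function $C_{nr}$. The main technical obstacle is precisely the product decomposition of $\tilde\Delta$ above: it is this step that lets us strip off the potentially large abelian central-torus contribution (where Lemma~\ref{lem:mult.iso} would give nothing useful, since $[\mathbf{H}(\mathbb{F}_p):\Delta]$ is a priori unbounded) and pass the remaining piece to the finite-group-of-Lie-type Theorem~\ref{thm:Shai.sp}.
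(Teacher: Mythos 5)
Your argument is correct and follows essentially the same strategy as the paper: reduce via Corollary \ref{cor:LP.reductive.with.involution} and Lemma \ref{lem:mult.iso} to a group sandwiched between $\mathbf{H}(\mathbb{F}_p)'$ and $\mathbf{H}(\mathbb{F}_p)$, split that group as a product of a semisimple piece and an abelian piece so that Lemma \ref{lem:mu.product} kills the abelian contribution, and invoke Theorem \ref{thm:Shai.sp} on the semisimple piece.

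The only difference is where the product decomposition is carried out. The paper stays at the finite-group level: it takes the multiplication map $\varphi\colon S\times Z\to\mathbf{H}(\mathbb{F}_p)$ with $S=\mathbf{H}(\mathbb{F}_p)'$ and $Z=Z(\mathbf{H})(\mathbb{F}_p)$, uses $S\subset\Delta$ to see $\varphi^{-1}(\Delta)=S\times A$, bounds the index via Lemma \ref{lem:bound.center.derived}, and then needs Lemma \ref{lem:der} once more to pass from the abstract commutator group $S$ to $\mathbf{H}'(\mathbb{F}_p)$ before applying Theorem \ref{thm:Shai.sp}. You instead pull back along the algebraic isogeny $\widetilde{\mathbf{H}'}\times Z(\mathbf{H})^\circ\to\mathbf{H}$, decompose $\tilde\Delta$, and land directly on $\widetilde{\mathbf{H}'}(\mathbb{F}_p)$, saving the final Lemma \ref{lem:der} step by absorbing it into the isogeny and using Lemma \ref{lem:coker} plus Theorem \ref{thm:simp.con}\eqref{thm:simp.con:3} to bound the index of $\phi(\tilde{\mathbf{H}}(\mathbb{F}_p))$. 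The two routes are equivalent in strength. One point you state but should flag as needing a (short) justification is the existence of the equivariant lift $\tilde t$: this does follow, because the universal property of the simply connected cover gives a unique algebraic lift of $t|_{\mathbf{H}'}$, uniqueness forces it to be an involution, and since $\widetilde{\mathbf{H}'}$ and $Z(\mathbf{H})^\circ$ are respectively the derived subgroup and the radical of $\tilde{\mathbf{H}}$, the resulting automorphism of $\tilde{\mathbf{H}}$ is automatically a product on the two factors.
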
 

\begin{proof} 
\Rami{Set $$C_{nr}(n):=\LPsym(n)16^{n}C_{rd}(2n).$$} Let $\theta$ be an involution of $\Gamma$. Applying Corollary \ref{cor:LP.reductive.with.involution} to $\Gamma$, we get a triple $\Delta,\mathbf{H},t$. By Lemma \ref{lem:mult.iso}, we have
\[
\mu(\Gamma,\theta) \leq [\Gamma:\Delta] \cdot \mu(\Delta,\theta) \leq \LPsym(\alg_p(\Gamma))\mu(\Delta,\theta).
\]
Denote $S=\mathbf{H}(\mathbb{F}_p)'$ and $Z=Z(\mathbf{H})(\mathbb{F}_p)$. Note that $S$ and $Z$ are $t$-invariant subgroups of $\mathbf{H}(\mathbb{F}_p)$. By Lemma \ref{lem:bound.center.derived}, 
\[
\mu(\Delta,\theta)\leq [\Delta:\Delta \cap S \cdot Z] \mu(\Delta \cap S \cdot Z,\theta) \leq 2^{\dim\, \mathbf{H}} \mu (\Delta \cap S \cdot Z,\theta) \leq 4^{\alg_p(\Gamma)} \mu (\Delta \cap S \cdot Z,\theta).
\]
Let $\varphi:S \times Z \rightarrow H(\mathbb{F}_p)$ be the multiplication map. $\varphi$ is equivariant if we use the involution $\theta \times \theta$ on $S \times Z$. By Lemma \ref{lem:mult.iso}\eqref{lem:mult.iso:1}, 
\[
\mu(\Delta \cap S \cdot Z,\theta) \leq \mu(\varphi ^{-1} (\Delta), \theta \times \theta).
\]
Since $S \subset \Delta$, we get that $\varphi ^{-1} (\Delta)=S \times A$, for some $A \subset Z$. Therefore, by Lemma \ref{lem:mu.product},
\[
\mu(\varphi ^{-1} (\Delta),\theta \times \theta) \leq \mu(S,\theta).
\]
Finally, by \Rami{Lemma \ref{lem:der}},
\[
\mu(S,\theta) \leq \left[ \bH'(\mathbb{F}_p):S\right]\mu(\bH'(\mathbb{F}_p),t) \leq 2^{\dim(\bH')} \mu(\bH'(\mathbb{F}_p),t)\leq 4^{\alg_p(\Gamma)} \mu(\bH'(\mathbb{F}_p),t),
\]
and the result follows from Theorem \ref{thm:Shai.sp}.
\end{proof} 

\section{Clifford theory}\label{sec:Clif}
We recall the elements of Clifford theory. The following lemma is standard.

\begin{lemma}\label{lem:clif.ind}
Let $\Gamma$ be a finite group, let $\rho$ be an irreducible representation of $\Gamma$, and let $N \lhd \Gamma$ be a normal subgroup of $\Gamma$. Let $\tau$ be an irreducible sub-representation of $\rho\restriction_N$ and let $\sigma$ be the $\tau$-isotypic component of $\rho\restriction_N$. Let 
$\Delta:=\Gamma_\tau$ be the stabilizer of $\tau\in \irr(N)$ with respect to the adjoint action. Then $\sigma$ is $\Delta$-invariant and $$\rho\cong \ind_{\Delta}^{\Gamma}\sigma.$$
\end{lemma}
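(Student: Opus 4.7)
The plan is to prove the two claims in the lemma using the standard mechanics of Clifford theory. For the first claim, I would observe that the decomposition of $\rho|_N$ into $N$-isotypic components is canonical, so conjugation by an element $\gamma \in \Gamma$ sends the $\tau$-isotypic component to the ${}^{\gamma}\tau$-isotypic component (where ${}^\gamma \tau$ denotes the twist of $\tau$ by the adjoint action). In particular, if $\gamma \in \Delta = \Gamma_\tau$, then ${}^\gamma\tau \cong \tau$, and therefore $\gamma \cdot \sigma = \sigma$ as a subspace of $\rho$. Thus $\sigma$ is $\Delta$-invariant and carries a representation of $\Delta$.

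For the second claim, I would first decompose $\rho|_N = \bigoplus_{\tau' \in \mathcal{O}} \sigma_{\tau'}$, where $\sigma_{\tau'}$ is the $\tau'$-isotypic component and $\mathcal{O} \subset \mathrm{Irr}(N)$ is the set of isomorphism classes appearing in $\rho|_N$. The $\Gamma$-action on $\rho$ permutes these isotypic components via the adjoint action on $\mathrm{Irr}(N)$, so $\bigoplus_{\tau' \in \mathcal{O}'} \sigma_{\tau'}$ is a $\Gamma$-subrepresentation for any $\Gamma$-stable subset $\mathcal{O}' \subset \mathcal{O}$. Since $\rho$ is irreducible, $\Gamma$ acts transitively on $\mathcal{O}$, and the stabilizer of $\tau$ is exactly $\Delta$, giving a $\Gamma$-equivariant bijection $\mathcal{O} \leftrightarrow \Gamma/\Delta$.

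From here I would finish the identification $\rho \cong \mathrm{Ind}_\Delta^\Gamma \sigma$ by Frobenius reciprocity: the inclusion $\sigma \hookrightarrow \rho|_\Delta$ is a nonzero $\Delta$-equivariant map, and therefore yields a nonzero $\Gamma$-equivariant map $\mathrm{Ind}_\Delta^\Gamma \sigma \to \rho$, which is surjective because $\rho$ is irreducible. To see the map is an isomorphism one compares dimensions: $\dim \mathrm{Ind}_\Delta^\Gamma \sigma = [\Gamma : \Delta]\cdot \dim \sigma$, and on the other side $\dim \rho = \sum_{\tau' \in \mathcal{O}} \dim \sigma_{\tau'} = |\mathcal{O}| \cdot \dim \sigma = [\Gamma:\Delta] \cdot \dim \sigma$, where the middle equality uses that all $\sigma_{\tau'}$ in one $\Gamma$-orbit have the same dimension. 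There is no real obstacle here; the only point requiring slight care is checking that the canonical isotypic decomposition is genuinely permuted by $\Gamma$ (so that the $\Gamma$-orbit structure on $\mathcal{O}$ really reflects the $\Gamma$-module structure of $\rho$), but this is immediate from the naturality of isotypic projectors.
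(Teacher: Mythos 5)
Your proof is correct; it is the standard Clifford theory argument. The paper does not actually supply a proof of this lemma---it simply states ``The following lemma is standard''---so there is no author argument to compare against, but your reasoning (the $\Gamma$-action permutes isotypic components of $\rho|_N$, irreducibility forces transitivity of that action, Frobenius reciprocity yields a nonzero $\Gamma$-map $\mathrm{Ind}_\Delta^\Gamma\sigma\to\rho$, and a dimension count via $|\mathcal{O}|=[\Gamma:\Delta]$ upgrades surjectivity to an isomorphism) is exactly the canonical one and fills the gap the authors left to the reader.
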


\begin{lemma}\label{lem:clif.H2}
Let $\Gamma$ be a finite group, let $\rho$ be an irreducible representation of $\Gamma$, and let $N \lhd \Gamma$ be a normal $p$-subgroup of $\Gamma$, and $\rho\in \irr(\Gamma)$. Assume that $\rho\restriction_N$ is isotypic and that $H^2(\Gamma/N,\Bmu_{p^\infty})$ is trivial. 

Then there exist $\pi_1,\pi_2\in \irr(\Gamma)$ such that $\pi_1\restriction_N$ is irreducible, the action of $N$ on $\pi_2$ is trivial, and $\rho\simeq \pi_1\otimes \pi_2$
\end{lemma}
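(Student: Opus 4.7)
My plan is to apply standard Clifford theory: extend the irreducible constituent of $\rho|_N$ to a genuine representation of $\Gamma$ (using the $H^2$ vanishing hypothesis), and then factor $\rho$ as a tensor product.

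First I would fix an irreducible $\tau \in \irr(N)$ such that $\rho|_N \cong \tau^{\oplus m}$. Because there is only one isotypic component, $\tau$ is automatically $\Gamma$-stable under the adjoint action. Standard Clifford-theoretic arguments (Schur's lemma applied to each $\rho(g)$ viewed as an intertwiner from $\tau$ to $\tau^g = \tau$) produce a map $\tilde{\tau} : \Gamma \to GL(V_\tau)$ that is a projective representation extending $\tau$. The failure of $\tilde{\tau}$ to be a genuine homomorphism is encoded in a $2$-cocycle $\alpha(g,h) = \tilde{\tau}(g)\tilde{\tau}(h)\tilde{\tau}(gh)^{-1} \in \mathbb{C}^\times$, which is trivial on $N$ and thus factors through a cocycle in $Z^2(\Gamma/N,\mathbb{C}^\times)$.

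Next I would observe that $\alpha$ actually takes values in $\Bmu_{p^\infty}$. Indeed, $\tau$ is an irreducible representation of the $p$-group $N$, so $\dim \tau = p^k$ for some $k \geq 0$. Taking determinants of the defining equation for $\alpha$ gives $\alpha(g,h)^{p^k} = 1$, so $\alpha(g,h) \in \Bmu_{p^k} \subset \Bmu_{p^\infty}$. The hypothesis $H^2(\Gamma/N, \Bmu_{p^\infty}) = 1$ then forces $\alpha$ to be a coboundary in this cohomology group, so after multiplying $\tilde{\tau}$ by a suitable function $\Gamma/N \to \Bmu_{p^\infty}$ we obtain a genuine representation $\pi_1$ of $\Gamma$ whose restriction to $N$ is still $\tau$, hence irreducible.

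Finally I would set $\pi_2 := \Hom_N(\pi_1, \rho)$. Since $N$ acts trivially on $\pi_2$, it descends to a representation of $\Gamma/N$, and the canonical evaluation map $\pi_1 \otimes \pi_2 \to \rho$ is a $\Gamma$-equivariant isomorphism (it is an $N$-isomorphism by the definition of $\pi_2$ together with $\rho|_N$ being $\tau$-isotypic, and $\Gamma$-equivariance is automatic). Irreducibility of $\rho$ forces irreducibility of $\pi_2$. The main (small) subtlety is verifying that the cocycle $\alpha$ lands in $p$-power roots of unity so that the $H^2$ hypothesis applies; everything else is formal Clifford theory.
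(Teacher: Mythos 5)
Your overall strategy is exactly the paper's: extend $\tau$ to a projective representation of $\Gamma$, observe that the obstruction $2$-cocycle lies in $\Bmu_{p^\infty}$, kill it using the $H^2$ hypothesis, and factor $\rho$ off the resulting genuine extension. The only place you diverge is the very last step, where you build $\pi_2=\Hom_N(\pi_1,\rho)$ by hand instead of citing Isaacs' Corollary 6.17 as the paper does — both are fine.

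There is, however, a real imprecision in the determinant step that you flag as the ``main (small) subtlety,'' and as written it does not quite work. Taking determinants of $\tilde\tau(g)\tilde\tau(h)=\alpha(g,h)\,\tilde\tau(gh)$ gives $\alpha(g,h)^{p^k}=\det\tilde\tau(g)\det\tilde\tau(h)/\det\tilde\tau(gh)$, which is not $1$ unless $g\mapsto\det\tilde\tau(g)$ is a homomorphism — and it is not, since $\tilde\tau$ is only a projective representation. You cannot fix this by rescaling every $\tilde\tau(g)$ to lie in $SL(V_\tau)$, because the requirement $\tilde\tau|_N=\tau$ (which you need so that $\alpha$ is trivial on $N$ and descends to $\Gamma/N$) forces $\det\tilde\tau(n)=\det\tau(n)$, and these are nontrivial $p$-power roots of unity in general. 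The correct normalization is the paper's: choose coset representatives $t$, set $\tilde\tau(tn)=A_t\tau(n)$ with $\det A_t=1$ and $A_1=I$. Then $\det\tilde\tau(g)\in\Bmu_{p^\infty}$ for all $g$ (since $N$ is a $p$-group), so $\alpha(g,h)^{p^k}\in\Bmu_{p^\infty}$, and because $p^k$ is a power of $p$ this still forces $\alpha(g,h)\in\Bmu_{p^\infty}$. So the conclusion you want is correct, and the rest of your argument goes through unchanged, but the equation $\alpha(g,h)^{p^k}=1$ should be replaced by $\alpha(g,h)^{p^k}\in\Bmu_{p^\infty}$, and you must make the normalization explicit.
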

\begin{proof} Write $\rho \restriction_N= \tau^{\oplus C}$, where $(\tau,V) \in Irr(N)$. Recall the construction of the 2-cocycle corresponding to the triple $(\Gamma,N,\tau)$: choose a set of coset representatives $T \subset \Gamma$ such that $1\in T$. For every $t\in T\smallsetminus \left\{ 1 \right\}$, choose an isomorphism $A_t: \tau^t \rightarrow \tau$  such that $\det(A_t)=1$, and let $A_1=I$. 
	
Define a  map $\pi:\Gamma\to \End_{\C}(V)$ by $\pi(tn)=A_t \tau(n)$ for $t\in T$ and $n\in N$. This is a projective representation of $\Gamma$  that extands $\tau$ and satisfies $\pi(tn)=\pi(t) \pi(n)$ and 
$$\pi(nt)=\pi(t t^{-1}nt)=A_t \tau(t^{-1}nt)=A_t\tau^{t}(n)=\tau(n) A_t=\pi(n) \pi(t).$$
 For every $\gamma_1,\gamma_2\in \Gamma$, the map $$\pi(\gamma _2 ^{-1})\pi(\gamma_1 ^{-1}) \pi(\gamma_1 \gamma_2 )$$ is an intertwiner of $\tau$, so $$\pi(\gamma _2 ^{-1})\pi(\gamma _1 ^{-1}) \pi( \gamma _1 \gamma _2 ) = \alpha(\gamma _1,\gamma _2) I,$$ for some $\alpha(\gamma_1,\gamma_2)\in \mathbb{C} ^ \times$. The map $\alpha:\Gamma\times \Gamma\to \C^\times$ is a 2-cocycle of $\Gamma$ with coefficients in $\mathbb{C} ^ \times$ and a simple computation shows that $\alpha$ descends to a 2-cocycle on $\Gamma / N$. Since $N$ is a $p$-group, $\det(\tau(n))\in \Bmu_{p^\infty}$, for every $n\in N$. Since $\det(A_t)=1$, for every $t\in T$, we get that $\alpha(\gamma_1,\gamma_2)^{\dim\, \tau}\in \Bmu_{p ^ \infty}$. Since $\dim\, \tau$ is a p-th power, we get that $\alpha(\gamma_1,\gamma_2)\in \Bmu_{p^ \infty}$. 

By assumption, this implies that $\alpha$ is cohomologous to the trivial cocycle, and, therefore, there is a representation $\tau_1$ of $\Gamma$ such that $\tau_1\restriction_{N}=\tau$. Since $\tau$ is irreducible, so is $\tau_1$. By \cite[Corollary 6.17]{Isa}, $\rho=\tau_1 \otimes \tau_2$, for some $\tau_2\in \Irr(\Gamma / N)$.
\end{proof}

\section{Proof of the main theorem}\label{sec:pf.main}
In this section we prove Theorem \ref{thm:main} and deduce Theorem \ref{thm:main.str} and Corollary \ref{cor:main}.

\begin{lemma}[the main theorem for product case] \label{lem:split.case} Let $C_{nr}$ be the increasing  function given by Corollary \ref{cor:nr}. Then, for every 
\begin{itemize}
\item prime $p>2$,
\item symmetric pair of finite groups  $(\Gamma,\Gamma^\theta)$,
\item $\rho_1 \in \irr(\Gamma)$ such that ${\rho_1}|_{\Rad_p(\Gamma)}$ is irreducible,
\item $\rho_2 \in \irr(\Gamma)$ that factors through $\Gamma/\Rad_p(\Gamma)$,
\end{itemize}
we have
$$\dim \left( (\rho_1\otimes \rho_2)^{\Gamma^\theta} \right)<C_{nr}(\ralg(\Gamma)).$$
\end{lemma}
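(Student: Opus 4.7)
The plan is to use the odd-order Gelfand property of Corollary \ref{cor:gel.Lap.Pra} to ``peel off'' the contribution of $\rho_1$ on $\Rad_p(\Gamma)$, reducing the question to a \emph{twisted} multiplicity on the quotient $\Gamma/\Rad_p(\Gamma)$, to which Corollary \ref{cor:nr} applies. This is, in fact, the whole reason Corollary \ref{cor:nr} is formulated in terms of $\mu$ rather than $\nu$.

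First, let $N := \Rad_p(\Gamma)$. Since $N$ is characteristic, it is $\theta$-invariant, and setting $\bar\Gamma := \Gamma/N$ with the induced involution $\bar\theta$, we have $\Rad_p(\bar\Gamma)=1$ and $\alg_p(\bar\Gamma)=\ralg_p(\Gamma)$. Because $p>2$ the group $N$ has odd order, so Lemma \ref{lem:H1odd} gives $H^1(S_2,N)=1$, yielding the short exact sequence
\[
1 \to N^\theta \to \Gamma^\theta \to \bar\Gamma^{\bar\theta} \to 1.
\]

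Next I would take $\Gamma^\theta$-invariants in two stages. Restricting $\rho_1$ to $N$ gives an irreducible representation of the odd-order group $N$; by Corollary \ref{cor:gel.Lap.Pra}\eqref{cor:gel.Lap.Pra:1}, $\dim \rho_1^{N^\theta} \leq 1$. If this dimension is $0$, then using the triviality of $\rho_2$ on $N^\theta$,
\[
(\rho_1 \otimes \rho_2)^{N^\theta} \;=\; \rho_1^{N^\theta} \otimes \rho_2 \;=\; 0,
\]
and we are done. Otherwise $\rho_1^{N^\theta}$ is a one-dimensional $\Gamma^\theta$-submodule, on which $\Gamma^\theta$ acts through some character $\chi$; since $\chi$ is trivial on $N^\theta$, it descends to a character $\bar\chi$ of $\bar\Gamma^{\bar\theta}$. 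Thus, as a $\Gamma^\theta$-module,
\[
(\rho_1 \otimes \rho_2)^{N^\theta} \;\cong\; \chi \otimes \rho_2.
\]

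Taking the further $\bar\Gamma^{\bar\theta}$-invariants picks out the $\bar\chi^{-1}$-isotypic line of $\rho_2$, so
\[
\dim (\rho_1 \otimes \rho_2)^{\Gamma^\theta} \;=\; \dim \rho_2^{\,\bar\Gamma^{\bar\theta},\,\bar\chi^{-1}} \;\leq\; \mu(\bar\Gamma,\bar\theta) \;\leq\; \mu(\bar\Gamma).
\]
Since $\Rad_p(\bar\Gamma)=1$, Corollary \ref{cor:nr} gives $\mu(\bar\Gamma) < C_{nr}(\alg_p(\bar\Gamma)) = C_{nr}(\ralg_p(\Gamma))$, which yields the desired bound.

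No step is a serious obstacle; the conceptual content is that the one-dimensional space $\rho_1^{N^\theta}$ produced by the odd-order Gelfand property manifests itself as a nontrivial character twist on the quotient, which is precisely what the twisted invariant $\mu$ in Corollary \ref{cor:nr} is designed to accommodate.
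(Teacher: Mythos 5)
Your proof is correct and follows essentially the same route as the paper's: use the odd-order Gelfand property to show $\rho_1^{N^\theta}$ is at most one-dimensional, fold that line into a character twist on the quotient, and apply the twisted bound of Corollary~\ref{cor:nr}. The one place you go beyond the paper's terse write-up is worth noting: you explicitly invoke Lemma~\ref{lem:H1odd} to justify that $\Gamma^\theta/N^\theta \cong \bar\Gamma^{\bar\theta}$ (i.e.\ that the fixed-point functor is exact on this short exact sequence), which is needed for the bound $\dim(\chi\otimes\rho_2)^{\Gamma^\theta/N^\theta}\le\mu(\bar\Gamma,\bar\theta)$ to make sense; the paper uses this identification silently. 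This is a genuine clarification, not a different argument.
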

\begin{proof}
Since odd order symmetric pairs are Gelfand pairs (Corollary \ref{cor:gel.Lap.Pra}\eqref{cor:gel.Lap.Pra:1}), we have  $$\dim \rho_1^{Rad_p(\Gamma)^\theta}\leq 1.$$ 
If $(\rho_1\otimes \rho_2)^{\Gamma^\theta}=0$, the claim trivially holds. Otherwise, let $\chi$ be the character with which $\Gamma^\theta/Rad_p(\Gamma)^\theta$ acts on  $\rho_1^{Rad_p(\Gamma)^\theta}$. Using Corollary \ref{cor:nr},
$$\dim(\rho_1\otimes \rho_2)^{\Gamma^\theta}=\dim(\chi\otimes \rho_2)^{\Gamma^\theta/Rad_p(\Gamma)^\theta}\leq \mu(\Gamma/Rad_p(\Gamma))\leq C_{nr}(\alg(\Gamma/Rad_p(\Gamma)))\leq C_{nr}(\ralg(\Gamma)).$$
\end{proof}

Using Clifford theory, the last lemma implies

\begin{corollary}\label{cor:nup} There is an increasing function $C_{\nu'}:\N\to \N$ such that, for every prime $p>2$ and any finite group $\Gamma$, we have
	$$\nu'_p(\Gamma)<C_{\nu'}(\ralg_p(\Gamma)),$$
where $\nu'_p$ is the function defined in Notations \ref{nota:mu.nu}.
\end{corollary}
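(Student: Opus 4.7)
The plan is to bootstrap Lemma~\ref{lem:split.case} to the general isotypic case by invoking the Clifford tensor factorization of Lemma~\ref{lem:clif.H2}. Fix an odd prime $p$, a finite symmetric pair $(\Gamma,\theta)$, and $\rho\in\irr(\Gamma)$ with $\rho|_{\Rad_p(\Gamma)}$ isotypic. I would split the argument into two regimes according to the size of $p$ relative to $C_{H2}(\ralg_p(\Gamma))$.

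When $p > C_{H2}(\ralg_p(\Gamma))$, the quotient $\Gamma/\Rad_p(\Gamma)$ has trivial $p$-radical and satisfies $\alg_p(\Gamma/\Rad_p(\Gamma))=\ralg_p(\Gamma)$, so Proposition~\ref{prop:H2} forces $H^2(\Gamma/\Rad_p(\Gamma),\Bmu_{p^\infty}) = 1$. Lemma~\ref{lem:clif.H2}, applied with $N=\Rad_p(\Gamma)$, then produces a factorization $\rho\simeq \pi_1\otimes \pi_2$ with $\pi_1|_{\Rad_p(\Gamma)}$ irreducible and $\pi_2$ trivial on $\Rad_p(\Gamma)$. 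Feeding this into Lemma~\ref{lem:split.case} gives $\dim \rho^{\Gamma^\theta} < C_{nr}(\ralg_p(\Gamma))$.

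When $p \leq C_{H2}(\ralg_p(\Gamma))$, the prime $p$ itself is already controlled by $\ralg_p(\Gamma)$. Combining the definitional embedding $\Gamma/\Rad_p(\Gamma)\hookrightarrow \mathbf{G}(\mathbb{F}_p)$ (with $\dim\mathbf{G}=\ralg_p(\Gamma)$) and the estimate $|\mathbf{G}(\mathbb{F}_p)|\leq (p+1)^{\dim\mathbf{G}}$ from \cite[Lemma~3.5]{Nor}, the order $|\Gamma/\Rad_p(\Gamma)|$ is bounded by an explicit function of $\ralg_p(\Gamma)$. Writing the Clifford decomposition $\rho|_{\Rad_p(\Gamma)}=V\otimes W$ with $V\in\irr(\Rad_p(\Gamma))$ and multiplicity space $W$, I get
$$\dim \rho^{\Gamma^\theta} \leq \dim \rho^{\Rad_p(\Gamma)^\theta} = \dim V^{\Rad_p(\Gamma)^\theta}\cdot \dim W \leq \dim W,$$
the last step being the Gelfand property of odd-order symmetric pairs (Corollary~\ref{cor:gel.Lap.Pra}\eqref{cor:gel.Lap.Pra:1}). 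Since $W$ is a projective irreducible representation of $\Gamma/\Rad_p(\Gamma)$, it lifts to an ordinary irreducible representation of a Schur cover of order bounded in terms of $|\Gamma/\Rad_p(\Gamma)|$, so $\dim W$ is bounded by a function of $\ralg_p(\Gamma)$.

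Setting $C_{\nu'}(n)$ to be the maximum of the two bounds produced above yields the required increasing function. The substantive input is the vanishing result Proposition~\ref{prop:H2}, which unlocks the tensor factorization in the large-$p$ regime; the small-$p$ regime is essentially counting, since every relevant quantity is already finite of bounded size, so I do not foresee any essential obstacle beyond bookkeeping of the constants.
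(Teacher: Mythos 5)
Your proof is correct and follows the same two-case architecture as the paper: you split on $p$ versus $C_{H2}(\ralg_p(\Gamma))$, and in the large-$p$ regime you apply Proposition~\ref{prop:H2} (to $\Gamma/\Rad_p(\Gamma)$, which is actually the cleaner way to say what the paper says), trigger the tensor factorization of Lemma~\ref{lem:clif.H2}, and close with Lemma~\ref{lem:split.case} exactly as the paper does. The only divergence is in the small-$p$ regime. The paper disposes of it by Lemma~\ref{lem:mult.iso}\eqref{lem:mult.iso:2}: restricting to the bounded-index subgroup $\Rad_p(\Gamma)$ costs a factor $[\Gamma:\Rad_p(\Gamma)]$, and $\nu(\Rad_p(\Gamma))=1$ by the Gelfand property of Corollary~\ref{cor:gel.Lap.Pra}\eqref{cor:gel.Lap.Pra:1}. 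You instead pass to $\Rad_p(\Gamma)^\theta$-invariants directly, use the Gelfand property on the irreducible factor $V$, and reduce the task to bounding $\dim W$, which you do by noting $W$ is an irreducible projective representation of $\Gamma/\Rad_p(\Gamma)$. Both are correct and give bounds of the same quality in terms of $\ralg_p(\Gamma)$; your route is marginally sharper in principle (an irreducible module over the twisted group algebra $\mathbb{C}_\alpha[\Gamma/\Rad_p(\Gamma)]$, which has dimension $|\Gamma/\Rad_p(\Gamma)|$, has dimension at most $|\Gamma/\Rad_p(\Gamma)|^{1/2}$, so there is no need to pass through a Schur cover at all), whereas the paper's route is more economical in that it reuses Lemma~\ref{lem:mult.iso}, which is already in hand, and avoids invoking projective-representation machinery.
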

\begin{proof}
	Define $C_{\nu'}(n)=\max((C_{H2}(n)+1)^{n},C_{nr}(n))$.
	We will prove the corollary by analyzing two cases:
	\begin{enumerate}[{Case} 1.]
	\item $p\leq C_{H2}(\ralg(\Gamma))$.\\
	In this case
	$$|\Gamma/Rad_p(\Gamma)|\leq (p+1)^{\ralg(\Gamma)}\leq (C_{H2}(\ralg(\Gamma))+1)^{\ralg(\Gamma)}\leq C_{\nu'}(\ralg(\Gamma)).$$ 
	Since we can control multiplicties when we pass to subgroup of bounded index (Lemma \ref{lem:mult.iso}), we get that $$\nu(\Gamma)\leq |\Gamma/Rad_p(\Gamma)| \cdot \nu(Rad_p(\Gamma)).$$
	Since odd order symmetric pairs are Gelfand pairs (Corollary \ref{cor:gel.Lap.Pra}\eqref{cor:gel.Lap.Pra:1}), we have that
	$$\nu(Rad_p(\Gamma))=1.$$
	We obtain
	$$\nu'_p(\Gamma)\leq \nu(\Gamma)\leq \#(\Gamma/Rad_p(\Gamma)) \leq C_{\nu'}(\ralg(\Gamma))$$
	\item $p> C_{H2}(\ralg(\Gamma))$.\\
By Proposition \ref{prop:H2}, the group 
	$H^2(\Gamma,\Bmu_{p^\infty})$ is trivial. Let $\rho\in \irr(\Gamma)$ be such that $\rho|_{Rad_p(\Gamma)}$ is isotypic. By Clifford theory (Lemma \ref{lem:clif.H2}), there exist $\pi_1,\pi_2\in \irr(\Gamma)$ such that $\pi_1|_{Rad_p(\Gamma)}$ is irreducible, the action of $Rad_p(\Gamma)$ on $\pi_2$ is trivial, and $\rho\simeq \pi_1\otimes \pi_2$. Thus, the main theorem for product case (Lemma \ref{lem:split.case}) implies that, for any involution $\theta$ of $\Gamma$, we have $$\rho^{\Gamma^\theta}= (\pi_1\otimes \pi_2)^{\Gamma^\theta}\leq C_{nr}(\ralg(\Gamma))\leq C_{\nu'}(\ralg(\Gamma)).$$
	\end{enumerate}
\end{proof}



We are now ready to prove the main theorem:

\begin{proof}[Proof of Theorem \ref{thm:main}] Define $C$ recursively by
\begin{enumerate}
\item $C(0):=1$
\item $C(n):=C^{her}_{H1}(n)\max(C_{\nu'}(n),C(n-1)\monSym(n))$,
\end{enumerate}
where $C^{her}_{H1}$ is given by Corollary \ref{cor:h1.her}, $C_{\nu'}$ is given by Corollary \ref{cor:nup}, and $\monSym$ is given by Corollary \ref{cor:LP.ind.sym}.
	We will prove the theorem by induction. We assume the theorem holds if $\ralg_p(\Gamma)<n$ and prove it in the case $\ralg(\Gamma)=n$. 
	
	Let $\theta$ be an involution of $\Gamma$ and $\rho\in \Irr \Gamma$. Let $N:=Rad_p(\Gamma)$. If $(\rho|_N)^{N^\theta}=0$, we are done. Otherwise, let $\tau$  be an irreducible direct summand of $\rho|_N$ such that $(\tau)^{N^\theta}\neq 0$. By the Lapid--Prasad property for symmetric pairs of odd order (Corollary \ref{cor:gel.Lap.Pra}\eqref{cor:gel.Lap.Pra:2}), we have $(\tau)^{\theta}\cong  \tau^*$.
	Let 
	$\Delta:=\Gamma_\tau$. We have $$\theta(\Delta)=\Gamma_{\tau^\theta}=\Gamma_{\tau^*}=\Gamma_{\tau}=\Delta,$$ showing that $\Delta$  is $\theta$-stable.
	
	By Clifford theory (Lemma \ref{lem:clif.ind}), we have $$\rho=ind_\Delta^\Gamma(\sigma),$$ where $\sigma \in \irr(\Delta)$ is such that $\sigma|_N$ is $\tau$-isotypic. Therefore,
\begin{equation}\label{eq:rho.inv}
	\rho^{\Gamma^\theta}=(ind_\Delta^\Gamma(\sigma))^H= \oplus_{[g]\in \Delta \backslash \Gamma / \Gamma^\theta} \sigma^{\Delta \cap (\Gamma^\theta)^g}= \oplus_{[g]\in \Delta\backslash \Gamma^{dis}/\Gamma^\theta } \sigma^{\Delta \cap (\Gamma^\theta)^g},
\end{equation}
	where 
	\begin{multline*}
	\Gamma^{dis} = \{g\in \Gamma| \sigma^{\Delta\cap (\Gamma^\theta)^g}  \neq 0\} 
	\subset  \{g\in \Gamma| \sigma^{N\cap  (\Gamma^\theta)^g}  \neq 0\}
	=   \{g\in \Gamma| \tau^{N\cap  (\Gamma^\theta)^g}  \neq 0\}=\\=
	\{g\in \Gamma| (\tau^g)^{N\cap  (\Gamma^\theta)}  \neq 0\}=\{g\in \Gamma| (\tau^g)^{N^\theta}  \neq 0\}\subset \{g\in \Gamma| (\tau^g)^* \circ \theta \simeq \tau^g\} =:\Gamma^{inv}
	\end{multline*}
	The last inclution is by the Lapid-Prasad property for symmetric pairs of odd order (Corollary \ref{cor:gel.Lap.Pra}\eqref{cor:gel.Lap.Pra:2}).

We analyze the quotient $\Delta \backslash \Gamma^{inv}/\Gamma^\theta$. Since $(\tau^g)^* \circ \theta=(\tau^*\circ \theta)^{\theta(g)}=\tau^{\theta(g)}$, we have $$\Gamma^{inv}=\{g\in \Gamma| \tau^{\theta(g)} \simeq \tau^g\}=\{g| \tau^{g\theta(g^{-1})} \simeq \tau \}=\{g|g\theta(g^{-1})\in\Delta \}.$$

	Thus, for any $g\in\Gamma^{inv}$,  we have $$\theta(ad(g^{-1})(\Delta))=ad(\theta(g^{-1}))(\Delta)=
	ad(g^{-1}g\theta(g^{-1}))(\Delta)=ad(g^{-1})(\Delta),$$ 
	and hence $ad(g^{-1})(\Delta)$ is $\theta$-stable.
	
	We also have $$\Gamma^{inv}/\Gamma^\theta= \{g\theta{g^{-1}\in \Delta|g\in \Gamma}\},$$ and thus,
	$$\Delta\backslash \Gamma^{inv}/\Gamma^\theta
	=Ker(H^1(S_2,\Delta)\to H^1(S_2,\Gamma)).$$
	Combining the last equality with \eqref{eq:rho.inv}, we get
	\begin{align*}
	\dim \rho^{\Gamma^\theta}&\leq
		|H^1(S_2,\Delta)| \max_{g\in \Gamma^{inv}} \dim \sigma^{\Delta \cap (\Gamma^{\theta})^g}
		\\&
		=   |H^1(S_2,\Delta)|   \max\limits_{g\in \Gamma^{inv}} \dim (\sigma\circ ad(g))^{(ad(g^{-1})(\Delta)) \cap \Gamma^\theta}
		\\&
		=   |H^1(S_2,\Delta)|  \max\limits_{g\in \Gamma^{inv}} \dim (\sigma\circ ad(g))^{(ad(g^{-1})(\Delta))^\theta}
		\\&
		=  |H^1(S_2,\Delta)|  \max\limits_{g\in \Gamma^{inv}} \dim \sigma^{\Delta^{ad(g)\circ\theta\circ ad(g^{-1})}}
		\\&
		\leq |H^1(S_2,\Delta)|\max\limits_{\theta' \text{ is an involution of } \Delta} (\dim (\sigma^{\Delta^{\theta'}})).
	\end{align*}

By our bound on $H^1(S_2,\Delta)$ (Corollary  \ref{cor:h1.her}),
	\begin{equation}\label{eq:bnd.on.mul}
	\dim \rho^{\Gamma^\theta}\leq C_{H1}^{her}(\ralg_p(\Gamma))\max\limits_{\theta' \text{ is an involution of } \Delta}  (\dim (\sigma^{\Delta^{\theta'}})).
	\end{equation}

	We finish the proof by analyzing the following cases:
	\begin{enumerate}[{Case} 1.]		
		\item $\Rad_p(\Gamma)\neq \Rad_p(\Delta)$:\\
		In order to bound $\nu(\Delta)$, we fix an involution $\theta'$  of $\Delta$.
		By Corollary \ref{cor:LP.ind.sym}, we can find a $\theta'$-invariant subgroup $\Delta^\circ\lhd \Delta$ such that
		 $$\ralg_p(\Delta^\circ)<\ralg_p(\Gamma)$$ and 
		 $$[\Delta:\Delta^\circ] \leq \monSym(\ralg_p(\Gamma)).$$
		 The induction assumption implies that $\nu(\Delta^\circ)\leq C(\ralg_p(\Gamma)-1)$. 
		 Lemma \ref{lem:mult.iso}\eqref{lem:mult.iso:2} implies that 
 		\begin{align*}
		\nu(\Delta,\theta')&\leq  [\Delta:\Delta^\circ]\cdot \nu(\Delta^\circ,\theta'|_{\Delta^\circ})
		\leq
			\monSym(\ralg_p(\Gamma)) C(\ralg_p(\Gamma)-1),
		\end{align*}
		so $$\nu(\Delta) \leq \monSym(\ralg_p(\Gamma)) C(\ralg_p(\Gamma)-1).$$ 
		
It follows that
		$$
		\dim \rho^{\Gamma^\theta}\leq  C_{H1}^{her}(\ralg_p(\Gamma ))\cdot   \monSym(\ralg_p(\Gamma)) C(\ralg_p(\Gamma)-1)\leq C(\ralg_p(\Gamma )),$$
		as required.
		 
		\item  $\Rad_p(\Gamma)= \Rad_p(\Delta)$.\\
		%
		%
		In this case $\ralg_p(\Delta)\leq \ralg_p(\Gamma)$, and $\sigma|_{\Rad_p(\Delta)}$ is isotypic.
		By \eqref{eq:bnd.on.mul} and Corollary \ref{cor:nup},

		\begin{align*}
			\dim \rho^{\Gamma^\theta}&\leq  C^{her}_{H1}(\ralg_p(\Gamma))\cdot  \nu'(\Delta)
			\\&\leq
			C^{her}_{H1}(\ralg_p(\Gamma)) C_{\nu'}(\ralg_p(\Delta))
			\\&\leq
			C^{her}_{H1}(\ralg_p(\Gamma)) C_{\nu'}(\ralg_p(\Gamma))\leq
			C(\ralg_p(\Gamma)).		
		\end{align*}
	\end{enumerate}
	
\end{proof}
\subsection{Deduction of Theorem \ref{thm:main.str} and Corollary \ref{cor:main}}\label{ssec:ded.str}
We will now deduce Theorem \ref{thm:main.str} from our main result, first reminding its formulation.
\NextVer{Name entered manualy}
\begin{customthm}{A}
There is an increasing function $C^{fin}:\N\to \N$
such that, for any
\begin{itemize}
\item odd prime $p$,
\item positive integer $d$
\item finite group $\Gamma$,
\item normal $p$-subgroup $N\lhd \Gamma$,
\item an embedding $\Gamma/N \hookrightarrow GL_d(\F_p)$,
\item an involution $\theta$ of $\Gamma$, 
\item an  irreducible representation $\rho$ of $\Gamma$,
\end{itemize}
we have $\dim \rho^{ \Gamma^\theta} \leq C^{fin}(d)$.
\end{customthm}
\begin{proof}
Set
$$C^{fin}(d):=\monSym(d^2)C(d^2).$$
By Corollary \ref{cor:LP.ind.sym}, we have a subgroup $\Delta\lhd \Gamma/N$ such that $$\ralg(\Delta)\leq \ralg(GL_d(\F_p))=d^2$$ and $$[\Gamma/N:\Delta]\leq \monSym(\ralg(GL_d(\F_p)))=\monSym(d^2).$$
Let $\Gamma^\circ$ be the preimage of $\Delta$ under the projection $\Gamma \to \Gamma/N$. we have
$$\ralg(\Gamma^\circ)=\ralg(\Delta)\leq d^2.$$
By the main theorem (Theorem \ref{thm:main}), $$\nu(\Gamma^\circ)\leq C(d^2).$$ 
Lemma \ref{lem:mult.iso}\eqref{lem:mult.iso:2} implies that 
$$\nu(\Gamma)=[\Gamma:\Gamma^\circ] \nu(\Gamma^\circ)=[\Gamma/N:\Delta] \nu(\Gamma^\circ) \leq \monSym(d^2)C(d^2)=C^{fin}(d).$$
\end{proof}

\begin{proof}[Proof of Corollary \ref{cor:main}] 
Set $\Lambda=C^{fin}(C^{lin}(d))$ where $C^{lin}$ is the function given by Lemma \ref{cor:faithful.rep.local}. 

Let $\psi:\mathbf{G} \rightarrow \mathbf{R}$ be the reductive quotient of $\mathbf{G}$ and let $\varphi : \mathbf{R}\to GL_{\co{C^{lin}}(d),F}$ be the embedding given by \co{Lemma} \ref{cor:faithful.rep.local}. The group $\varphi\circ\psi(K)$ is a compact subgroup  of $GL_{\co{C^{lin}}(d)}(F)$ and, after conjugation, we may assume that $\varphi \circ \psi(K)\subset GL_{\co{C^{lin}}(d)}(O_F)$. Let $K_1\subset GL_{\co{C^{lin}}(d)}(O_F)$ be the first congruence subgroup.

Set $M:=K\cap (\varphi\circ\psi)^{-1}(K_1)$, $L:=M\cap \ker \rho $, and $P:=L \cap \theta(L)$. The claim now follows by applying Theorem \ref{thm:main.str} to $\Gamma := K/P$, $N:=M/P$, and the embedding $\Gamma / N = K/M \hookrightarrow \GL_{\co{C^{lin}}(d)}(O_F)/K_1=\GL_{\co{C^{lin}}(d)}(\mathbb{F}_p)$.


\end{proof} 
\appendix
\section{Bounds on twisted multiplicities for spherical spaces of finite groups of Lie type} \label{sec:Shai}
\Rami{In this appendix we prove the following:}
\begin{theorem} \label{thm:Shai} Let $\cS$ be a scheme of finite type, let $\cG \rightarrow \cS$ be a connected reductive group scheme of finite type over $\cS$, and let $\calH \subseteq \cG$ be a closed (not necessarily connected) reductive subgroup scheme. Assume that, for every geometric point $s$ of $\cS$, the pair $(\cG_s,\calH_s)$ is spherical. Then there is a constant $C$ such that, for every finite field $F$, any $s\in \cS(F)$, any irreducible representation $\rho$ of $\cG_s(F)$, and any 1-dimensional character $\theta$ of $\calH_s(F)$,
\[
\dim\, \Hom_{\cG_s(F)} \left( \rho,\Ind_{\calH_s(F)}^{\cG_s(F)} \theta \right)  < C.
\]
\end{theorem}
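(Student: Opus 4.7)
The plan is to follow the proof in \cite{She}, which establishes the untwisted version of the statement, and modify each step to accommodate the 1-dimensional twist $\theta$. The first reduction is via Frobenius reciprocity:
\[
\dim \Hom_{\cG_s(F)} \left( \rho, \Ind_{\calH_s(F)}^{\cG_s(F)} \theta \right) = \dim \Hom_{\calH_s(F)}(\rho|_{\calH_s(F)}, \theta),
\]
so we seek a uniform bound on the multiplicity of a 1-dimensional character $\theta$ in $\rho|_{\calH_s(F)}$. Since $\cS$ is of finite type, by stratification and constructibility (in the spirit of Lemma \ref{lem:fam.of.sym}) we may reduce to the case where the combinatorial invariants of the pair $(\cG_s, \calH_s)$ -- root data, component groups, the $\calH_s$-orbit stratification of the flag variety, and analogous data at Levi subgroups -- are constant on each piece. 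It then suffices to prove, for each fixed combinatorial type, a bound uniform as $F$ varies over finite fields.

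Second, I would import the core estimate of \cite{She} and trace through its proof in the twisted setting. Shechter's strategy expresses $\dim \rho^{\calH_s(F)}$ as an inner product of characters on $\calH_s(F)$, applies the Deligne--Lusztig character formula, and invokes the sphericality of $(\cG_s, \calH_s)$ together with the induced sphericality at Levi subgroups to bound the resulting sum. In the twisted version, this inner product acquires a factor of $\overline{\theta(h)}$, but the key geometric inputs -- bounds on the number of $\calH_s$-orbits on $\cG_s/B$, and bounds on the dimensions of $(\calH_s \cap L)$-invariants in cuspidal representations of Levis $L$ -- depend only on sphericality, a property insensitive to the choice of $\theta$. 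Consequently, the same estimates propagate through, with $\theta$ appearing only as bounded-modulus multiplicative factors in each conjugacy-class contribution.

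The main obstacle is to ensure uniformity of the twist in the inductive step, which in \cite{She} proceeds by parabolic restriction to a Levi $L$ and to $\calH_s \cap L$. The crucial observation is that the restriction of a 1-dimensional character is again 1-dimensional, so the induction hypothesis applies uniformly to every character encountered in the reduction (rather than requiring a separate adaptation for each). Small residual characteristics can be handled separately: when $|F|$ is bounded in terms of $\dim \cG_s$, the crude bound $|\cG_s(F)| \leq (|F|+1)^{\dim \cG_s}$ of \cite[Lemma 3.5]{Nor} already yields a uniform bound on the number of irreducible constituents of any representation of $\cG_s(F)$, and hence on all multiplicities, including the twisted one.
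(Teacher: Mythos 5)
Your plan correctly identifies the high-level structure (Frobenius reciprocity, stratification by combinatorial type, crude bounds for small fields, adapting Shechter's argument to carry a twist), but it glosses over the one genuinely new technical ingredient that the adaptation actually requires, and the way you propose to handle the twist does not work as stated.

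The issue is the claim that, after replacing $\dim \rho^{\calH_s(F)}$ by $\langle \rho|_{\calH_s(F)},\theta\rangle$, ``the same estimates propagate through, with $\theta$ appearing only as bounded-modulus multiplicative factors in each conjugacy-class contribution.'' The Aizenbud--Avni--Shechter bound is \emph{not} a term-by-term estimate over conjugacy classes; it is a cohomological argument. The multiplicity is realized as $\trace\bigl(Fr_F \mid p_!(\fK\otimes f_!\fM^\vee)\bigr)$ for a suitable complex, and the bound comes from Deligne's weight theory together with a $\limsup$ argument over all extensions $F_n/F$ (using the fact that each $\trace(Fr_F^n\mid\cdot)$ is a non-negative integer, being a multiplicity). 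That mechanism requires every input to be \emph{geometric}, i.e., realized via the sheaf-to-function correspondence. In particular, the twist $\theta$ must itself be replaced by a Weil $\overline{\Q_\ell}$-local system $\fL$ of pure weight zero on $\calH_s$ with $\chi_\fL$ closely related to $\theta$. Without this geometrization there is no complex on which to read off weights, and there is no way to relate the inner product over $F$ to the inner products over $F_n$ (the abstract character $\theta$ does not extend canonically across field extensions). The paper isolates exactly this point as ``the main additional ingredient'' and supplies it as Lemma \ref{lem:geom.1.d.char} (constructing $\fL$ using the structure of $\calH_s$: triviality of the character on the image of the simply connected cover by Theorem \ref{thm:simp.con}\eqref{thm:simp.con:2}, plus Lusztig's geometrization of characters on the radical torus), and then Lemma \ref{lem:loc.sys.pull} (a descent lemma producing an $\calH_s$-equivariant local system on the horocycle-type space $\bfX_\bfG$ from $\fL$).

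A secondary inaccuracy: the inductive reduction you describe (parabolic restriction to a Levi $L$ and to $\calH_s\cap L$, with the observation that restriction of a 1-dimensional character is 1-dimensional) is not what the paper or \cite{She} does. The reduction is instead via character sheaves: one bounds an irreducible character $\chi_\rho$ by a fixed-length $\Z$-linear combination of functions $\chi_{\fK_i}$ attached to \emph{induced character sheaves} $\fK_i$ (Proposition \ref{prop:LL}, quoting Laumon and Lusztig via \cite{She}), and then bounds each $\langle \chi_{\fK_i},\Ind\chi_\fL\rangle$ by the cohomological argument of Lemma \ref{lem:geom.mult.bound}. There is no uniformity issue ``propagating through a Levi induction''; the whole point of the character-sheaf formulation is that it sidesteps such an induction.

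So to repair your proposal you would need to add: (i) a construction of a weight-zero local system $\fL$ on $\calH_s$ with $\chi_\fL$ containing $\theta$ and of uniformly bounded rank, valid uniformly in $s$ and $F$; and (ii) a descent argument producing from $\fL$ a local system on $\bfX_\bfG$ so that $\Ind\chi_\fL$ becomes a sheaf-to-function trace, at which point the weight and $\limsup$ machinery of \cite{AA_bnd,She} applies verbatim. These are exactly Lemmas \ref{lem:geom.1.d.char} and \ref{lem:loc.sys.pull} of the paper.
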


The proof is very similar to the one in \cite{She}. The main additional ingredient is the geometrization of (1-dimensional) characters of finite groups of Lie type.

Given a Weil $\overline{\mathbb{Q}_\ell}$-local system $\fL$ on a scheme $X$ over a finite field $F$, the sheaf to function correspondence gives a function $X(F) \rightarrow \overline{\mathbb{Q}_\ell}$ which we denote by $\chi_\fL$.

\begin{lemma} \label{lem:geom.1.d.char} Let $\cS$ be a scheme of finite type and let $\calH \rightarrow \cS$ be a (not necessarily connected) reductive group scheme of finite type over $\cS$. There is a constant $C$ such that, for every finite field $F$ of size grater then $9$, any prime $\ell \neq char(F)$, any $s\in \cS(F)$, and any 1-dimensional character $\chi: \calH_s(F) \rightarrow \overline{\mathbb{Q}_\ell}$, there is a $\overline{\mathbb{Q}_\ell}$-local system $\fL$ over $\calH_{s}$ with a Weil structure of pure weight zero such that $\chi_\fL$ is a character of a representation of $\calH_s(F)$ of dimension at most $C$ that contains $\chi$ as one of its irreducible constituents.
\end{lemma}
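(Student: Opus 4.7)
The plan is to realise $\chi$ as an irreducible constituent of an induced representation whose character is the trace function of an explicit Weil local system, obtained by pushing forward a Kummer-type sheaf on a central torus through the canonical isogeny of a connected reductive group. The bound $C$ will come from the uniform boundedness on $\mathcal{S}$ of the fibrewise number of connected components of $\mathcal{H}$ and of the order of the kernel of the isogeny $\widetilde{\mathbf{H}}'\times Z^\circ\to\mathbf{H}$ (bounded by $4^{\dim\mathbf{H}}$ via Theorem \ref{thm:simp.con}\eqref{thm:simp.con:3}).

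First I would reduce to the case that $\mathcal{H}_s$ is connected. The identity component $\mathcal{H}^\circ$ is a clopen reductive subgroup scheme of uniformly bounded fibrewise index $N$. Given $\chi$, if a local system $\mathfrak{L}_0$ on $\mathcal{H}^\circ_s$ realises the trace function of a bounded-dimensional representation containing $\chi|_{\mathcal{H}^\circ_s(F)}$, then $\Ind_{\mathcal{H}^\circ_s(F)}^{\mathcal{H}_s(F)}$ of this representation contains $\chi$ by Frobenius reciprocity and has bounded dimension. Geometrically, one would transplant $\mathfrak{L}_0$ to every $F$-rational component of $\mathcal{H}_s$ by translation by an $F$-rational coset representative, twisted by the scalar value of $\chi$ on that representative; for non-$F$-rational components one works over a Galois extension of bounded degree and descends.

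For the main step, let $\mathbf{H}=\mathcal{H}^\circ_s$ be connected reductive, and let $\phi:\widetilde{\mathbf{H}}'\times Z^\circ\to\mathbf{H}$ be the canonical isogeny with kernel $K$, where $\widetilde{\mathbf{H}}'$ is the simply-connected cover of the derived group and $Z^\circ$ is the connected centre. Since $|F|>9$, $\widetilde{\mathbf{H}}'(F)$ is perfect by Theorem \ref{thm:simp.con}\eqref{thm:simp.con:2}, so the pullback $\phi^*\chi$ is trivial on $\widetilde{\mathbf{H}}'(F)$ and factors as $\eta_0\circ p_2$ for a character $\eta_0$ of the torus $Z^\circ(F)$. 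Let $\mathcal{K}_{\eta_0}$ be the associated rank-$1$ Kummer-type Weil local system on $Z^\circ$, pure of weight $0$, and set $\mathfrak{L}_0:=\phi_*p_2^*\mathcal{K}_{\eta_0}$. A direct trace computation gives
\[
\chi_{\mathfrak{L}_0}(h)=\sum_{\tilde h\in\phi^{-1}(h)(F)}\eta_0(p_2(\tilde h))=|K(F)|\,\chi(h)\,\mathbf{1}_{\phi(\widetilde{\mathbf{H}}(F))}(h),
\]
where triviality of $\eta_0$ on $p_2(K(F))$ follows since, for $(k_1,k_2)\in K(F)$, one has $\eta_0(k_2)=\chi(\phi(1,k_2))=\chi(\phi(k_1^{-1},1))\in\chi(\mathbf{H}(F)')=\{1\}$ using Corollary \ref{cor:univ.cover}. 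Comparing orders gives $|K(F)|=[\mathbf{H}(F):\phi(\widetilde{\mathbf{H}}(F))]$, so $\chi_{\mathfrak{L}_0}$ is exactly the character of $\Ind_{\phi(\widetilde{\mathbf{H}}(F))}^{\mathbf{H}(F)}(\chi)$, a representation of dimension at most $4^{\dim\mathbf{H}}$ containing $\chi$.

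The main obstacle will be formulating the component-descent step cleanly within the Weil local system formalism, since a naïve extension by zero from $\mathcal{H}^\circ_s$ to $\mathcal{H}_s$ is not a locally constant sheaf of constant rank. My plan is to trivialise the $(\mathcal{H}_s/\mathcal{H}^\circ_s)$-torsor after a Galois extension of bounded degree, transplant $\mathfrak{L}_0$ to every component by translation twisted by scalar values of $\chi$, and take Galois descent. Combined with the connected case, this yields a Weil local system on $\mathcal{H}_s$, pure of weight $0$, whose trace function is the character of a representation of dimension at most $C:=N\cdot 4^{\dim\mathcal{H}_s}$ containing $\chi$, with the bound uniform in $F$, $s$, and $\chi$.
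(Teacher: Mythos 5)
Your core connected-case argument is essentially the paper's: use the isogeny $\phi:\widetilde{\mathbf{H}}'\times Z^\circ\to\mathbf{H}$, observe that $\chi\circ\phi$ is trivial on $\widetilde{\mathbf{H}}'(F)$ by perfectness (Theorem \ref{thm:simp.con}\eqref{thm:simp.con:2}), geometrize the resulting torus character, and push forward by $\phi_!$ to get a sheaf whose trace is the induced character $\Ind_{\phi(\widetilde{\mathbf{H}}(F))}^{\mathbf{H}(F)}\chi$. (The paper cites Lusztig \cite[\S5]{Lus} for the torus geometrization rather than saying ``Kummer sheaf,'' but it is the same object.) The bounds also match in spirit.

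The gap is exactly where you flag it: the passage from $\mathcal{H}^\circ_s$ to $\mathcal{H}_s$. Your ``translate by coset representatives, twist by $\chi$, Galois-descend'' plan produces the wrong trace function. Since $\mathcal{H}^\circ_s(F)$ is normal in $\mathcal{H}_s(F)$, the induced character $\Ind_{\mathcal{H}^\circ_s(F)}^{\mathcal{H}_s(F)}\psi$ vanishes identically outside $\mathcal{H}^\circ_s(F)$; a sheaf built by transplanting $\mathfrak{L}_0$ onto the other components by translation would have a generically nonzero trace on those components, so it cannot be the character of the induced representation (nor, absent extra miracles, of any representation containing $\chi$). The correct geometrization is by conjugation pullbacks, not translations: take coset representatives $g_1,\ldots,g_c$ and set $\mathfrak{L}:=\bigoplus_i \Ad(g_i)^*\mathfrak{M}$. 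Since $\Ad(g_i)$ is a group automorphism of $\mathcal{H}_s$ preserving the (normal) support of $\mathfrak{M}$, the trace of $\mathfrak{L}$ is literally the induced-character formula $\sum_i\chi_{\mathfrak{M}}(g_i^{-1}\cdot g_i)$, which vanishes off the support as it should; this also makes all Galois-descent considerations unnecessary, since everything happens over $F$ at the level of $F$-points.

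The paper in fact avoids the connected-then-induce split altogether: it takes $\widetilde{\mathcal{H}_s}$ to be the universal cover of $(\mathcal{H}_s^\circ)'$ times the radical, maps it to the full (disconnected) group $\mathcal{H}_s$, and observes that the cokernel of $\widetilde{\mathcal{H}_s}(F)\to\mathcal{H}_s(F)$ is uniformly bounded (Lemma \ref{lem:coker} plus a stratification argument over $\mathcal{S}$). Then a single application of $\bigoplus_i\Ad(g_i)^*(-)$ over coset representatives of $\varphi(\widetilde{\mathcal{H}_s}(F))$ handles the isogeny index and the component group simultaneously. If you replace ``translation and descent'' by conjugation pullbacks $\Ad(g_i)^*$, your two-step variant also closes; as written, the last step is not correct.
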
 

\begin{proof} For every (not necessarily connected) reductive group $\mathbf{G}$ defined over a field $F$, let $(\mathbf{G}^\circ)'$ be the derived subgroup of the connected component of $\mathbf{G}$, and let $\widetilde{\mathbf{G}}$ be the the product of the universal cover of $(\mathbf{G}^\circ)'$ and the radical of $\mathbf{G}$. The map $\widetilde{\mathbf{G}}(\bar F) \rightarrow \mathbf{G}(\bar F)$ has finite kernel and cokernel. 

By a stratification argument, there is a constant $c$ such that, for every field $F$, any $s\in \cS(F)$, the kernel and cokernel of the map $\widetilde{\calH_s}(\bar F) \rightarrow \calH_s(\bar F)$ are bounded by $c$. By Lemma \ref{lem:coker}, if $F$ is a finite field, the kernel and cokernel of the map $\widetilde{\calH_s}({ F} ) \rightarrow \calH_s({ F} )$ are bounded by $c^2$. We will show that the lemma holds with $C=c^4$.

Given $F ,\ell,s,\chi$ as in the lemma, denote the map $\widetilde{\calH_s} \rightarrow \calH_s$ by $\varphi$. Let $R(\calH_s)$ be the radical of $\calH_s$. By \cite[\S5]{Lus}, there is a 1-dimensional local system $\mathcal{F}$ on $R(\calH_s)$ such that $\chi_\mathcal{F}=\chi \restriction_{R(\calH_s)}$.

By Theorem \ref{thm:simp.con}\eqref{thm:simp.con:2}, the character $\chi\circ \varphi\restriction_{\widetilde{\calH_s'}(F)}$ is trivial. Therefore, under the decomposition $\widetilde{\calH_s}=\widetilde{\calH_s'}\times R(\calH_s)$, we have 
	$$\chi\circ \varphi=1_{\widetilde{\calH_s'}(\F_p)}\boxtimes \chi\restriction_{R(\calH_s)}= 1_{\widetilde{\calH_s'}(\F_p)}\boxtimes \chi_\mathcal{F}=\chi_{\mathbb{1}_{\widetilde{\calH_s'}} \boxtimes \mathcal{F}},$$
	where $\mathbb{1}_{\widetilde{\calH_s'}}$ denotes the 1-dimensional trivial local system on $\widetilde{\calH_s'}$

Setting $\fM=\varphi_!(\mathbb{1}_{\widetilde{\calH_s'}}\boxtimes \mathcal{F})$, the function $\chi_\fM$ is equal to $\varphi_*(\chi \circ \varphi)$. It follows that the restrictions of $\chi_\fM$ and $|\ker \varphi (F)|\chi$ to $\varphi(\widetilde{ \calH_s}(F))$ coincide. Choose coset representatives $g_1,\ldots,g_c\in\calH_s(F)$ to $\varphi(\widetilde{\calH_s}(F))$ and let $\fL = \bigoplus \Ad(g_i)^*(\fM)$. We have that 
\[
\chi_\fL=\Ind_{\varphi(\widetilde{\calH_s}(F))}^{\calH_s(F)} \chi_\fM \restriction_{\varphi(\widetilde{ \calH_s}(F))}=\Ind_{\varphi(\widetilde{\calH_s}(F))^{\calH_s(F)}} \Res_{\varphi(\widetilde{\calH_s}(F))}^{\calH_s(F)} |\ker \varphi (F)|\chi,
\]
which implies the claim.
\end{proof} 

Now we will continue with the original argument of \cite{AA_bnd,She}, adapting it to include the character $\chi$ and its geometrization $\fL$.

\begin{definition} If $\bfG$ is an algebraic group acting on a variety $\bfX$, we let $\co{\bfX}_\co{\bfG} =\left\{ (x,g)\in X \times \co{\bfG}  \mid g \cdot x=x \right\}$.
\end{definition} 

\begin{lemma}\label{lem:loc.sys.pull} Let $\co{\bfG} ,\co{\bfH}$ be reductive algebraic groups, let $\co{\bfX}=\co{\bfG} /\co{\bfH}$, and consider the diagram
\[
\xymatrix{\co{\bfG}  \times \co{\bfH} \ar[d]^{p} \ar[r]^a & \co{\bfX}_\co{\bfG} \\ \co{\bfH} & }
\]
where $a(g,h)=(g\co{\bfH},g h g ^{-1})$ and $p(g,h)=h$. If $\co{\fL}$ is an $\mathbf{H}$-equivariant local system on $\co{\bfH}$, then there is a local system $\co{\fM}$ on $X_G$ such that $a^* \co{\fM} \cong p^* \co{\fL}$.
\end{lemma}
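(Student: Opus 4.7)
The plan is to exhibit $a$ as the quotient map for a free $\bfH$-action on $\bfG\times\bfH$, and then descend $p^*\fL$ along this quotient using the $\bfH$-equivariant structure on $\fL$.

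First, I would define an action of $\bfH$ on $\bfG\times\bfH$ by
\[
h'\cdot (g,h)=(gh'^{-1},\,h'hh'^{-1}).
\]
A direct substitution gives
\[
a(gh'^{-1},h'hh'^{-1})=(gh'^{-1}\bfH,\,gh'^{-1}\cdot h'hh'^{-1}\cdot h'g^{-1})=(g\bfH,ghg^{-1})=a(g,h),
\]
so $a$ is invariant. The action is free (visible on the first coordinate), $a$ is surjective (given $(x,g)\in\bfX_\bfG$, pick any $g_0\in\bfG$ with $x=g_0\bfH$, and set $h=g_0^{-1}gg_0$; this lies in $\bfH$ precisely because $g\cdot x=x$), and the $\bfH$-orbit of $(g,h)$ coincides with the fiber $a^{-1}(a(g,h))$. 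Hence $a$ realizes $\bfX_\bfG$ as the associated bundle $\bfG\times^\bfH\bfH$, with $\bfH$ acting on itself by conjugation; in particular, $a$ is a principal $\bfH$-bundle.

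Next, note that $p$ is $\bfH$-equivariant with respect to the conjugation action of $\bfH$ on itself, since $p(h'\cdot(g,h))=h'hh'^{-1}=h'\cdot p(g,h)$. Because $\fL$ is given as $\bfH$-equivariant for conjugation, the pullback $p^*\fL$ inherits an $\bfH$-equivariant structure on $\bfG\times\bfH$. By étale descent of local systems along principal $\bfH$-bundles, this equivariant local system descends uniquely to a local system $\fM$ on $\bfX_\bfG$, and $a^*\fM\cong p^*\fL$ by construction.

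The only technical point is the étale-local triviality of $a$, which justifies descent; this reduces via the identification $\bfX_\bfG\cong\bfG\times^\bfH\bfH$ to the corresponding triviality of $\bfG\to\bfG/\bfH$, which is standard since $\bfH$ is a smooth closed subgroup of $\bfG$.
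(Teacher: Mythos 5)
Your proof is correct and is essentially the paper's argument: both descend $p^*\fL$ along $a$ using the $\bfH$-equivariant structure on $\fL$. You phrase this as equivariant descent along the principal $\bfH$-bundle $a\colon \bfG\times\bfH\to \bfG\times^{\bfH}\bfH\cong \bfX_\bfG$ (with $\bfH$ acting by $h'\cdot(g,h)=(gh'^{-1},h'hh'^{-1})$ and $p$ being $\bfH$-equivariant for conjugation), whereas the paper writes out the corresponding \v{C}ech fiber-product $(\bfG\times\bfH)\times_{\bfX_\bfG}(\bfG\times\bfH)\cong\bfG\times\bfH\times\bfH$ and verifies the descent cocycle explicitly; these are the same argument at two levels of explicitness.
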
 

\begin{proof} \co{We construct $\mathfrak{M}$ using descent.} The main point is that $(\co{\bfG}  \times \co{\bfH}) \times_{\co{\bfX}_\co{\bfG}} (\co{\bfG}  \times \co{\bfH})\cong\co{\bfG}  \times \co{\bfH} \times \co{\bfH}$ via the map $((g_1,h_1),(g_2,h_2)) \mapsto (g_1,g_2 ^{-1} g_1,h_1)$. We get a diagram
\[
\xymatrix{\co{\bfG}  \times \co{\bfH} \times \co{\bfH} \ar@/^2.0pt/[r]^{a_1} \ar@/_2.0pt/[r]_{a_2} \ar[d]^{p_2} & \co{\bfG}  \times \co{\bfH} \ar[r]^a \ar[d]^{p_1} & \co{\bfX}_\co{\bfG} \\ \co{\bfH} \times \co{\bfH} \ar@/^2.0pt/[r]^{b_1} \ar@/_2.0pt/[r]_{b_2} & \co{\bfH} & }
\]
where $a_1,a_2,p_1,p_2$ are the projections, $b_1(x,y)=x$, and $b_2(x,y)=x y x ^{-1}$. We have $b_ip_2=p_1a_i$ for $i=1,2$. The equivariance of $\co{\fL}$ gives an identification $\alpha : b_1^* \co{\fL} \rightarrow b_2^* \co{\fL}$, which gives an identification $\beta:a_1^* p_1^* \co{\fL} = (b_1p_2)^*\co{\fL} \rightarrow (b_2p_2)^* \co{\fL} = a_2^* p_1^* \co{\fL}$, and it is easy to check that $\beta$ satisfies the cocycle identity. By descent, $p_1^* \co{\fL}$ is the pullback of a sheaf $\co{\fM}$ on $\co{\bfX}_\co{\bfG}$. Since $a$ is onto, we get that $\co{\fM}$ is a local system.
\end{proof} 

In the next lemma, we use the notion of induced character sheaf, see \cite[Definition 2.2.1]{AA_bnd}.

\begin{lemma} \label{lem:geom.mult.bound} Let $\co{\cS}$ be a scheme of finite type, $\co{\cG} \rightarrow \co{\cS}$ be a connected reductive group scheme of finite type over $\co{\cS}$, and let $\co{\calH} \subseteq \co{\cG}$ be a closed (not necessarily connected) reductive subgroup scheme. Assume that, for every geometric point $s$ of $\co{\cS}$, the pair $(\co{\cG}_s,\co{\calH}_s)$ is spherical. There is a constant $C_1$ such that, for any finite field $\co{F} $, any $s\in \co{\cS}(\co{F} )$, any induced character sheaf $\co{\fK}$ on $\co{\cG}_s$ and any $\co{\calH}_s$-equivariant local system $\co{\fL}$ on $\co{\calH}_s$ of weight zero, $\dim \Hom_{\co{\cG}_s(\co{F} )}\left( \chi_{\co{\fK}},\Ind_{\co{\calH}_s(\co{F} )}^{\co{\cG}_s(\co{F} )} \chi_\co{\fL} \right) < C_1 \cdot \rk \co{\fL}$.
\end{lemma}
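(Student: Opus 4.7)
The plan is to geometrize the twisted multiplicity via Frobenius reciprocity, convert the resulting inner product into a trace of Frobenius on compactly supported $\ell$-adic cohomology of a constructible sheaf on an auxiliary variety, and then control the dimension of this cohomology using the sphericality of the pair together with the machinery of \cite{She}.

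First, by Frobenius reciprocity the multiplicity equals
\begin{equation*}
\frac{1}{|\calH_s(F)|} \sum_{h \in \calH_s(F)} \chi_\fK(h) \overline{\chi_\fL(h)}.
\end{equation*}
Using the explicit parabolic-induction description of an induced character sheaf \cite[Definition 2.2.1]{AA_bnd}, one rewrites $\chi_\fK\restriction_{\calH_s(F)}$ as a weighted sum over $F$-points of an incidence variety of the form $\mathbf{Z}_s = \{(g,h) \in \cG_s \times \calH_s : g h g^{-1} \in \mathbf{P}\}$, with the weight given by $\chi_{\fK_0}(g h g^{-1})$ for the character sheaf $\fK_0$ on the Levi from which $\fK$ is induced. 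Combining this with $\overline{\chi_\fL}$ and using Lemma \ref{lem:loc.sys.pull} to realize $\fL^\vee$ as the pullback of a local system along the map $\calH_s \to (\calH_s)_{\cG_s}$, the Grothendieck--Lefschetz trace formula expresses the whole inner product as a normalized trace of Frobenius on $H^*_c(\mathbf{Z}_s \otimes \overline{F}, \mathfrak{N})$, where $\mathfrak{N}$ is a constructible sheaf of generic rank bounded by $(\rk \fK_0)\cdot(\rk \fL)$.

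Next I would invoke the geometric heart of the argument in \cite{She}: the sphericality of $(\cG_s, \calH_s)$ translates into uniform control of the geometry of $\mathbf{Z}_s$, namely a bounded number of $\cG_s$-orbit strata of bounded dimension. A standard constructibility estimate then bounds $\dim H^*_c(\mathbf{Z}_s \otimes \overline{F}, \mathfrak{N})$ by an absolute constant times the generic rank of $\mathfrak{N}$; after absorbing $\rk \fK_0$ (which depends only on the scheme-theoretic data) into the constant, this gives a bound of the form $C_1 \cdot \rk \fL$.

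Finally, since $\fK_0$ is pure of weight zero as a character sheaf and $\fL$ is pure of weight zero by hypothesis, Deligne's Weil II gives that the Frobenius eigenvalues on the relevant compactly supported cohomology groups have absolute value exactly balancing the normalization factor $1/|\calH_s(F)|$ and the $q^{-d}$ appearing in the induction formula. This converts the dimension bound into the stated bound on the multiplicity. The main obstacle is the cohomological dimension bound in the presence of the twisting sheaf $\mathfrak{N}$: one must check that the stratification and purity arguments of \cite{She} remain valid for nontrivial coefficients, but since $\fL$ enters only through its rank and $\calH_s$-equivariance (both of which are preserved by the construction), the necessary modifications are essentially formal.
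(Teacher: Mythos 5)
Your overall skeleton — geometrize the twisted multiplicity via a Lefschetz trace, reuse Lemma \ref{lem:loc.sys.pull}, and let sphericality control the geometry — is the same as the paper's. But the final step, where the geometric input is converted into an actual multiplicity bound, is where your argument departs from the paper and where there are genuine gaps.

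The paper takes the constant $C_1$ to be the number of connected components of $(\cG_s/\calH_s \times \Fl_s)_{\cG_s}$, and the only cohomological dimension it ever bounds is $\dim H^{2\dim\bfG}_c$ of the relevant complex. This top-degree group is controlled by $(\text{number of top-dimensional components}) \times \rk\fL$, a soft estimate that follows from sphericality and requires nothing about the monodromy of $\fM^\vee$. Crucially, the paper then does \emph{not} try to bound the other $H^i_c$; instead it observes that (i) the twisted multiplicity over every finite extension $F_n$ is a nonnegative integer, (ii) the complex is mixed so the Frobenius eigenvalues in degree $i < 2\dim\bfG$ are strictly smaller, and (iii) therefore $\limsup_n$ of the normalized trace is governed by the top degree alone. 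Passing from this asymptotic statement (a bound on the limsup over $n$) to a bound at $n=1$ is the nontrivial input of \cite[Lemma 2.4.1 and the proof of Theorem 2.1.3]{AA_bnd}, and it is exactly here that the nonnegativity of multiplicities is used. Your proposal never mentions this step.

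In its place you propose a ``standard constructibility estimate'' bounding $\dim H^*_c(\mathbf{Z}_s, \mathfrak{N})$ by a constant times $\rk \mathfrak{N}$, followed by Weil II supposedly giving eigenvalues that ``exactly balance'' the normalization. Neither piece survives scrutiny. First, the total $\dim H^*_c$ with coefficients in a varying local system is \emph{not} controlled by the rank times a constant depending only on the underlying variety; Betti numbers with coefficients in a local system depend on the monodromy (already visible for rank-one local systems on $\mathbb{G}_m$, and worse with wild ramification), and there is no reason the local systems $\fM^\vee$ arising from arbitrary weight-zero $\fL$ on $\calH_s$ would be uniformly tame. Only the top-degree group $H^{2\dim\bfG}_c$ escapes this, since it computes coinvariants of the stalks over the top-dimensional components, which is why the paper restricts attention to it. Second, there is no ``exact balancing'': weights in degree $i$ are merely $\leq i$, so lower-degree eigenvalues are strictly smaller after normalization, meaning the dominant term is again the top degree — and even after isolating the top degree, one still must justify passing from an asymptotic bound to a bound at $n=1$, which is the nontrivial content of \cite{AA_bnd} that you omit. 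As written, the proposal collapses to ``bound the top degree and invoke \cite{AA_bnd}'' once the unjustified steps are removed, i.e., to the paper's argument, but those two steps are precisely what is missing.
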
 

\begin{proof} For every geometric point $s$ of $\co{\cS}$, let $\Fl_s$ be the flag variety of $\co{\cG}_s$. Let $C_1$ be such that, for every geometric point $s$ of $\co{\cS}$, the number of connected components of $(\co{\cG}_s/ \co{\calH}_s \times \Fl_s)_{\co{\cG}_s}$ is bounded by $C_1$.

Let $\co{F} ,s,\co{\fK},\co{\fL}$ be as in the lemma. Denote $\co{\bfG}=\co{\cG}_s, \co{\bfH}=\co{\calH}_s,\co{\bfX}=\bfG/\bfH$, and fix a Borel subgroup $\bfB$ of $\bfG$ defined over $F$. Consider the diagram
\[
\xymatrix{& (\bfX \times \bfG/\bfB)_\bfG \ar[dr]^{\widetilde{f}} \ar[dd]_{q} \ar[dl]_{\widetilde{\pi}} & \\ \bfX_\bfG \ar[dr]_f & & (\bfG/\bfB)_\bfG \ar[dl]^\pi \\ & \bfG \ar[d]_p& \\ & \Spec(F) &}
\]
where $f,\pi,\widetilde{f},\widetilde{\pi}$ are the projections and $q=\pi\circ \widetilde{f}=\widetilde{\pi}\circ f$.

By definition, there is a weight zero local system $\mathfrak{F}$ on $(\bfG/\bfB)_\bfG$ such that $\co{\fK}$ is a direct summand of $R \pi_*\mathfrak{F}$. Applying Lemma \ref{lem:loc.sys.pull} to $\co{\fL}$, we get a local system $\co{\fM}$ on $\bfX_\bfG$. Since $\co{\fL}$ has weight zero, so does $\co{\fM}$. 

By construction, $\chi_{Rf_! \co{\fM}}=\Ind \chi_\co{\fL}$. Denoting the standard inner product of functions on $G(\co{F} )$ by $\langle - , - \rangle$, we get

\begin{align*}
\dim \Hom_{\bfG(\co{F} )} \left( \chi_\co{\fK} , \Ind_{\bfH(\co{F} )}^{\bfG(\co{F} )} \chi_\co{\fL} \right) 
&= \left \langle \chi_\co{\fK} , \Ind_{\bfH(\co{F} )}^{\bfG(\co{F} )} \chi_\co{\fL} \right \rangle \\
&= \left \langle \chi_{\co{\fK}}, \chi_{f_! \co{\fM}} \right \rangle \\
&= \left \langle \chi_{\co{\fK} \otimes f_! \co{\fM} ^\vee}, 1\right \rangle = \trace \left( Fr_F\mid p_! \co{(\fK \otimes f_! \fM^\vee)} \right),
\end{align*}
\co{where $Fr_F$ is induced by the Frobenius map of $\Spec(F)$.}

For every $n$, denote the degree $n$ extension of $\co{F} $ by $\co{F} _n$ and denote the pullbacks of $\co{\fL},\co{\fK}$ to $\bfG_{\co{F}_n},\bfH_{\co{F} _n}$ by $\co{\fL}_n,\co{\fK}_n$. From the same reasoning as before, we get
\[
\dim \Hom_{\bfG(\co{F} _n)} \left( \chi_{\co{\fK}_n} , \Ind_{\bfH(\co{F} _n)}^{\bfG(\co{F} _n)} \chi_{\co{\fL}_n} \right) = \trace \left( Fr_F ^n \mid p_! \co{(\fK \otimes f_! \fM^\vee)} \right).
\]

The complex $p_! \left( \co{\fK} \otimes f_! \co{\fM} ^\vee \right)$ is a direct summand of $p_! \left( \pi_! \mathfrak{F} \otimes f_! \co{\fM}^\vee \right)$. Since
\[
\pi_! \widetilde{f}_! \left( \widetilde{f}^* \mathfrak{F} \otimes \widetilde{\pi}^* \co{\fM}^\vee \right) = \pi_! \left( \mathfrak{F} \otimes \widetilde{f}_! \widetilde{\pi}^* \co{\fM}^\vee \right) = \pi_! \left( \mathfrak{F} \otimes \pi^* f_! \co{\fM}^\vee \right) = \pi_! \mathfrak{F} \otimes f_! \co{\fM}^\vee,
\]
we get that $p_! \left( \co{\fK} \otimes f_! \co{\fM} ^\vee \right)$ is a direct summand of $p_! q_! \left( \widetilde{f}^* \mathfrak{F} \otimes \widetilde{\pi}^* \co{\fM}^\vee \right)$. Since $\widetilde{f}^* \mathfrak{F} \otimes \widetilde{\pi}^* \co{\fM}^\vee$ has weight zero, we get that the complex $p_! \left( \co{\fK} \otimes f_! \co{\fM} ^\vee \right)$ has weight zero, is concentrated in degrees $0,\ldots,2\dim (\bfX \times \bfG/\bfB)_G=2\dim \bfG$, and 
\[
\dim H^{2\dim \bfG}p_! \left( \co{\fK} \otimes f_! \co{\fM} ^\vee \right) \leq \dim H^{2\dim \bfG}p_! q_! \left( \widetilde{f}^* \mathfrak{F} \otimes \widetilde{\pi}^* \co{\fM}^\vee \right) \leq c \rk \co{\fL}.
\]
Thus,
\[
\limsup_{n \rightarrow \infty} \trace \left( Fr_F^n \mid p_! \co{\fK} \otimes f_! \co{\fM} ^\vee \right) \leq c \rk \co{\fL} 
\]

so, by \cite[Lemma 2.4.1 and the proof of Theorem 2.1.3]{AA_bnd},
\[
\dim \Hom_{\bfG(\co{F} )} \left( \chi_\co{\fK} , \Ind_{\bfH(\co{F} )}^{\bfG(\co{F} )} \chi_\co{\fL} \right) = \trace \left( Fr_F \mid p_! \co{\fK} \otimes f_! \co{\fM} ^\vee \right) \leq c \rk \co{\fL}
\]

\end{proof} 

\begin{proposition} \label{prop:LL} For any $d$, there are integers $N,C$ such that, if 
\begin{enumerate}
\item $F$ is a finite field of size greater than $N$,
\item $\bfG$ is a reductive group  defined over $F$ of dimension at most $d$,
\item $\chi$ is an irreducible character of $\bfG(F)$,
\end{enumerate}
then there are induced character sheaves $\fK_1,\ldots,\fK_C$ and real numbers $\alpha_1,\ldots,\alpha_C\in[-C,C]$ such that $\sum \alpha_i \chi_{\fK_i}-\chi$ is a non-negative combination of irreducible characters of $\bfG(F)$.
\end{proposition}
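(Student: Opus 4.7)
The plan is to use Lusztig's classification of irreducible characters of finite groups of Lie type via character sheaves, and then to replace any non-induced character sheaves appearing in the resulting decomposition by induced ones, at the cost of a non-negative error.

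First, I would reduce to a uniform setting. By Lemma \ref{lem:fam.of.sym}, connected reductive groups $\bfG$ of dimension at most $d$ over arbitrary finite fields are parametrized by finitely many connected components of the scheme $\cS_d$, each corresponding to a root datum $\fX$ of dimension at most $d$. It thus suffices to establish the statement with a constant $C(\fX)$ for each fixed $\fX$ (and a threshold $N(\fX)$ on $|F|$) and then take the maximum over the finite set of such $\fX$.

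Second, I would invoke Lusztig's theorem on almost characters and character sheaves: for $|F|$ exceeding a threshold depending only on $\fX$, every irreducible character $\chi$ of $\bfG(F)$ admits a decomposition
\[
\chi = \sum_{j=1}^{M} c_j \chi_{\fL_j},
\]
where the $\fL_j$ are character sheaves on $\bfG$ equipped with appropriate Weil structures, $M$ is bounded by a constant depending only on $\fX$, and the coefficients $c_j$ are bounded entries of the Lusztig--Fourier transform matrix associated with the family of $\chi$.

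Third, I would handle the fact that not every $\fL_j$ is itself an induced character sheaf in the sense of \cite[Definition 2.2.1]{AA_bnd}. By Lusztig's classification, every character sheaf on $\bfG$ arises as a direct summand of the parabolic induction $\fK$ of a cuspidal character sheaf on some Levi subgroup $\mathbf{L}$. When $\mathbf{L}$ is a proper Levi, $\fK$ is an induced character sheaf and $\chi_\fK - \chi_{\fL_j}$ is a non-negative combination of characters of the other summands, which after iterating the same summand-replacement step becomes a non-negative combination of irreducible characters. When $\mathbf{L} = \bfG$, the sheaf $\fL_j$ is itself cuspidal; such sheaves are finite in number (bounded in terms of $\fX$), and their contributions to $\chi$ can be dominated using an inductive argument on the dimension of Levi subgroups combined with direct control of the characters of cuspidal character sheaves. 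The main obstacle I anticipate is precisely this last step: converting the equality $\chi = \sum c_j \chi_{\fL_j}$ into the one-sided inequality required by the statement, using only induced character sheaves and while maintaining the uniform bound on the coefficients.
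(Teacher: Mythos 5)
The paper's own ``proof'' of Proposition~\ref{prop:LL} is a one-line citation: it simply refers to \cite[Lemma A.1, Theorem 2.2, Theorem 3.3]{She}, with the latter two attributed to Laumon and Lusztig. So there is no detailed argument in the paper to compare against; what matters is whether your sketch could actually be developed into a proof.

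Your plan has the right overall shape --- reduce to a fixed root datum, invoke Lusztig's almost-character decomposition, then trade arbitrary character sheaves for induced ones --- and your first two steps are sound (the bounded-coefficient almost-character decomposition is indeed Lusztig's result, and appears as one of the two cited theorems). But the gap you flag at the end is genuine and is in fact the entire content of the proposition. Two specific problems with the way you try to close it. First, the notion of ``induced character sheaf'' used here (Definition~2.2.1 of \cite{AA_bnd}, as recalled inside the proof of Lemma~\ref{lem:geom.mult.bound}) is a direct summand of $R\pi_*\mathfrak{F}$ for a local system $\mathfrak{F}$ on $(\bfG/\bfB)_\bfG$, i.e.\ pushforward along the Grothendieck--Springer map from the \emph{Borel}; your replacement step routes through parabolic induction of cuspidal character sheaves from arbitrary proper Levi subgroups, which does not obviously produce objects of the required type. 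Second, even granting a matching notion, the relation $\chi_\fK - \chi_{\fL_j} \geq 0$ you want when $\fL_j$ is a summand of $\fK$ only majorizes $\chi_{\fL_j}$ by $\chi_\fK$; what is actually required is to majorize the \emph{irreducible character} $\chi$, which in Lusztig's formula is a signed, roots-of-unity-weighted combination $\sum c_j \chi_{\fL_j}$, and there is no reason that replacing each summand by an induced majorant turns the signed sum into a nonnegative error with coefficients uniformly in $[-C,C]$. This is exactly the role of the positivity/spanning input cited from Laumon (Theorem~2.2 of \cite{She}) combined with the technical bookkeeping of \cite[Lemma A.1]{She}: without sourcing or reproving that positivity statement, your sketch does not establish the proposition.
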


\begin{proof} \cite[Lemma A.1, Theorem 2.2, Theorem 3.3]{She} (the last two are due to Laumon and Lusztig).
\end{proof}

\begin{proof}[Proof of Theorem \ref{thm:Shai}] 
Without loss of generality we may assume that $|F|>9$. Let $\rho$ be an irreducible representation of $G(F)$ and let $\theta$ be a character of $H(F)$. Let $C,\fK_i,\alpha_i$ be as in Lemma \ref{prop:LL} (applied to $\rho$), and let $\fL$ be as in Lemma \ref{lem:geom.1.d.char} (applied to $\theta$). By Lemma \ref{lem:geom.mult.bound}, we have
\[
\dim \Hom \left( \rho, \Ind \theta \right) \leq \left\langle \sum \alpha_i \chi_{\fK_i} , \Ind \chi_\fL \right\rangle \leq \sum | \alpha_i | \cdot \left| \left\langle \chi_{\fK_i} , \Ind \chi_\fL \right\rangle\right| < C^2 \cdot C_1 \cdot C
\]
\end{proof}

\section{A versal family of symmetric pairs of reductive groups over finite fields}\label{app:fam} In this appendix we prove Lemma \ref{cor:faithful.rep.local} and construct a family of symmetric pairs of reductive groups that includes all symmetric pairs of reductive groups of a given dimension over all finite fields (Lemma \ref{lem:fam.of.sym}).

\subsection{Proof of Lemma \ref{cor:faithful.rep.local}} 
For the proof we will need the following:
\begin{lemma} \label{lem:split.ext} There is an increasing function $C^{spt}: \mathbb{N} \rightarrow \mathbb{N}$ such that any reductive algebraic group $\mathbf{G}$ over an arbitrary  field $F$ splits over an extension $F'/F$ of degree at most $C^{spt}(\dim\, \mathbf{G})$.
\end{lemma}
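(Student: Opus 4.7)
The plan is to reduce the problem of splitting $\mathbf{G}$ to that of splitting a single maximal torus of $\mathbf{G}$, and then to control the degree of the splitting field of that torus via Minkowski's bound on finite subgroups of $\GL_n(\mathbb{Z})$.

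First, by Grothendieck's existence theorem (cf.\ SGA 3, Exp.\ XIV), every smooth affine algebraic group over an arbitrary field $F$ admits a maximal torus defined over $F$. Fix such a maximal $F$-torus $\mathbf{T} \subseteq \mathbf{G}$ and set $n := \dim \mathbf{T} \leq \dim \mathbf{G}$. Recall that a connected reductive group is split precisely when it contains a split maximal torus; hence, as soon as $\mathbf{T}$ splits over some finite separable extension $F'/F$, the torus $\mathbf{T}_{F'}$ becomes a split maximal torus of $\mathbf{G}_{F'}$, so $\mathbf{G}_{F'}$ is split by definition. It therefore suffices to bound the minimal degree of a splitting field of $\mathbf{T}$ in terms of $n$.

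The isomorphism class of $\mathbf{T}$ over $F$ is captured by its character lattice $X^*(\mathbf{T}) \cong \mathbb{Z}^n$ together with the continuous action of $\Gamma := \Gal(F^{sep}/F)$. This action factors through a finite quotient $G \subseteq \Aut(X^*(\mathbf{T})) \cong \GL_n(\mathbb{Z})$, and $\mathbf{T}$ splits precisely over the fixed field $F'$ of the kernel, which is a Galois extension of degree $|G|$. By Minkowski's classical bound, there is an explicit function $M : \mathbb{N} \to \mathbb{N}$ such that every finite subgroup of $\GL_n(\mathbb{Z})$ has order at most $M(n)$ (one may take $M(n) = 2^n n!$ for $n \geq 3$, which is sharp). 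Setting $C^{spt}(d) := \max_{d' \leq d} M(d')$ gives the required increasing function.

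The main points requiring care are the invocation of Grothendieck's theorem over an \emph{arbitrary} (possibly imperfect) field, which is what ensures the existence of a rational maximal torus with no hypothesis on $F$, and the observation that the splitting extension of $\mathbf{T}$ is always separable since the $\Gamma$-action on $X^*(\mathbf{T})$ factors through the absolute Galois group. Beyond these, the argument reduces entirely to classical facts about tori and the arithmetic of $\GL_n(\mathbb{Z})$, so no serious obstacle remains.
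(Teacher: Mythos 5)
Your reduction to the torus case is identical to the paper's: both invoke SGA~3, Exp.~XIV to produce a maximal $F$-torus, observe that $\mathbf{G}$ is split as soon as that torus is, and then translate the splitting question into bounding the image of $\Gal(F^{sep}/F) \to \Aut(X^*(\mathbf{T})) \cong \GL_n(\mathbb{Z})$. Where you diverge is in how you bound this finite image. You appeal directly to Minkowski's theorem that finite subgroups of $\GL_n(\mathbb{Z})$ have uniformly bounded order; the paper instead applies Jordan's theorem to find an abelian subgroup $A$ of index at most $8^d(d+1)!$ and then bounds $|A|$ by analyzing the orders of its elements (each element's minimal polynomial is a product of cyclotomic polynomials $\Phi_{k_i}$ with $\phi(k_i) \le d$, so each element has order less than $3^{d^2}$, and $A$ is simultaneously diagonalizable). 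Your route is the more direct and standard one, and is certainly correct; the paper's route is self-contained modulo Jordan's theorem and produces the explicit value $8^d(d+1)!\,3^{d^3}$.

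One small but genuine inaccuracy in your aside: the bound $M(n) = 2^n n!$ is \emph{not} sharp for all $n \ge 3$. For $n = 8$ the Weyl group $W(E_8)$ embeds in $\GL_8(\mathbb{Z})$ with order $696{,}729{,}600 > 2^8 \cdot 8! = 10{,}321{,}920$; the assertion that $2^n n!$ is maximal is a theorem of Feit (modulo CFSG) only for $n$ sufficiently large, and there are finitely many exceptional dimensions. This does not damage your argument, since you only need \emph{some} function $M$; Minkowski's congruence argument (finite subgroups of $\GL_n(\mathbb{Z})$ embed in $\GL_n(\mathbb{Z}/3)$, giving $M(n) \le 3^{n^2}$) already supplies one. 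You should either cite the crude $3^{n^2}$ bound or drop the ``sharp'' claim.
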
 

\begin{proof} 
Set $C^{spt}(d):=8^d(d+1)!3^{d^3}$.
	
There is a maximal torus of $\mathbf{G}$ that is defined over $F$ (\cite[XIV 1.1]{SGA3}), so we can assume that $\mathbf{G}$ is a torus. Denote $d=\dim\, \mathbf{G}$. 
	
Since $\bG$ is an $F$-form of $\mathbb{G}_m^{d}$, we get a continuous homomorphism $\rho :\Gal_F \rightarrow \Aut(\mathbb{G}_m^{d})=\GL_d(\mathbb{Z})$. The image of $\rho$ is finite, so, by Jordan's theorem (see, for example \cite{Col}), there is an abelian subgroup $A \subset \rho(\Gal_F)$ of index at most $8^d(d+1)!$.
	
If $g\in A$, then the order of $g$ is finite, so its minimal polynomial is product of cyclotomic polynomials $\Phi_{k_i}$. We get that $\phi(k_i):=\deg(\Phi_{k_i})\leq d$. This implies that $k_i\leq 3^{log_2(d)+1}$, and, thus, $g$ is semisimple of order at most  $3^{d(log_2(d)+1)}< 3^{d^2}.$
	
	Since $A$ is simultaneously diagonalizable (over $\mathbb{C}$), we get that $$|\rho(\Gal_F)| \leq 8^d(d+1)!3^{d^3}=C^{spt}(d).$$
\end{proof} 


\begin{proof}[Proof of Lemma \ref{cor:faithful.rep.local}] Since there are finitely many split reductive groups of given dimension, there is a function $C^{linSpt}$ such that every split reductive group $\mathbf{H}$ has a faithful representation of dimension $C^{linSpt}(\dim\,\mathbf{H})$. Given an arbitrary reductive group $\mathbf{G}$, Lemma \ref{lem:split.ext} implies that $\mathbf{G}$ splits over an extension $F'/F$ of degree at most $C^{spt}(\dim\, \mathbf{G})$. Hence, there is a faithful representation $\mathbf{G} \rightarrow \Res_{F'/F}\GL_{C^{linSpt}(\dim\, \mathbf{G})}$, so we can take $C^{lin}(n)=C^{linSpt}(n)C^{spt}(n)$.
\end{proof}

\subsection{Sketch of the proof of Lemma \ref{lem:fam.of.sym}} 
We first show that there are finitely many root data of a given dimension (See Lemma \ref{lem:f.root.dat} below). Thus, we restrict our attention to a given root datum $\fX$. We denote by $\cG$ the split reductive group scheme corresponding to $\fX$.
By Lemma \ref{lem:split.ext} there is an integer $k$ such that any reductive group of type  $\fX$ over a finite field splits after passing to a field extension of degree $k$.

We then construct a finite etale map of schemes $\cE\to \cF$ that forms a family containing all degree $k$ extensions of finite fields. This means that, for any degree $k$ extension of finite fields $E/F$, we can find an $F$-point $y$ of  $\cF$ whose fiber $\cE_y$ is $\spec E$.  Moreover, we equip $\mathcal{E}$ with an action of the cyclic group $C_k$ such that, if $F$ is a finite field, we can find $y$ as above such that the action of $C_k$ on $\mathcal{E}_y$ is the Frobenius. See Lemma \ref{lem:f.F.and.Fr.fam} below. 

By Lang's theorem, a reductive group of type $\fX$ over a finite field $F$ that splits over a degree $k$ extension $E/F$ is determined by an action of $C_k$ on $\fX$. We show that there are finitely may such actions up to conjugation (see Lemma \ref{lem:f.act.on.root.dat} below). Thus, we can fix one such an action $\xi$. We can also consider $\xi$ as an action on $\cG$.

At this point we can construct a group scheme $\calH \to \cF$ containing all groups of type $(\frak{X},\xi)$ over finite fields. Namely, we first construct a group scheme $\calH'\to \cF$ whose fiber over $y\in \mathcal{F}(F)$ is the restriction of scalars of $\cG_{\cE_y}$ to $F$. Using the two actions of $C_k$ on $\mathcal{E}$ and $\mathcal{G}$, we equip $\mathcal{H}'$ with an action of $C_k$. Finally, set $\calH := (\calH' )^{C_k}$.

Next, we incorporate all possible involutions. We first note that, up to inner automorphisms, there are only finitely many involutions of $\fX$ commuting with the action of $C_k$ (see Lemma \ref{lem:f.act.on.root.dat} below). Thus, we can restrict our attention to a specific such involution $\eta$. We then construct an $\mathcal{F}$-scheme $\cS$ whose $F$-points are pairs $(y,t)$ consisting of a point $y\in \cF(F)$  and an involution $t$ of $\cG_{\cE_y}$ which commutes with $\xi$ and is of outer class $\eta$.

Finally, we pull back the group schemes $\calH'$ and $\calH$ to $\cS$ and denote the resulting groups schemes $\cR'$ and $\cR$. Both $\mathcal{R}$ and $\mathcal{R}'$ are equipped with a natural involution $\tau$. The group scheme $\cR\to \cS$ with the involution $\tau$ gives the required family.

\begin{remark} In the proof below we skip $\mathcal{H},\mathcal{H}'$ and construct $\mathcal{R},\mathcal{R}'$ directly.
\end{remark} 

\subsection{Some preparations} 
	
\begin{lemma}\label{lem:f.root.dat} For any integer $n>0$  there is a finite number of isomorphism classes of complex connected reductive groups of dimension $n$.
\end{lemma}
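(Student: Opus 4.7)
My plan is to use the fundamental classification theorem: a connected reductive complex algebraic group $\mathbf{G}$ is determined up to isomorphism by its root datum $\mathfrak{X} = (X, \Phi, X^{\vee}, \Phi^{\vee})$, where $X = X^*(T)$ is the character lattice of a maximal torus $T$. Since the Lie algebra decomposes as $\mathfrak{t} \oplus \bigoplus_{\alpha \in \Phi} \mathfrak{g}_{\alpha}$ with each root space of dimension one, we have the dimension formula $\dim \mathbf{G} = \mathrm{rank}(X) + |\Phi|$. In particular, if $\dim \mathbf{G} = n$, then both $\mathrm{rank}(X) \leq n$ and $|\Phi| \leq n$. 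So it suffices to show there are finitely many isomorphism classes of root data with these bounds.

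First, I would handle the root system. Crystallographic root systems are classified by Dynkin diagrams, and since the rank is bounded by $n$, there are only finitely many Dynkin diagrams of rank at most $n$ (they are built from the finite list $A_k, B_k, C_k, D_k, E_6, E_7, E_8, F_4, G_2$ with $k \leq n$). Hence there are finitely many isomorphism classes of root systems appearing.

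Next, fix an abstract root system $\Phi$ of rank $s \leq n$, and let $V = \mathbb{Q}\Phi$, $Q = \mathbb{Z}\Phi \subset V$ the root lattice, and $P \subset V$ the weight lattice. Any lattice $X$ completing $\Phi$ to a root datum of rank $r \leq n$ has the property that $M := X \cap V$ is a sublattice satisfying $Q \subset M \subset P$. Since $[P:Q]$ is finite, there are finitely many choices of $M$. The quotient $X/M$ is torsion-free of rank $r - s \leq n$, so it is isomorphic to $\mathbb{Z}^{r-s}$, and the extension $0 \to M \to X \to \mathbb{Z}^{r-s} \to 0$ splits. Thus, up to isomorphism, $X \cong M \oplus \mathbb{Z}^{r-s}$ with the embedding $\Phi \hookrightarrow M \subset X$, giving only finitely many possibilities.

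Finally, once $X$ and the embedding $\Phi \hookrightarrow X$ are fixed, the dual lattice $X^{\vee} = \mathrm{Hom}(X, \mathbb{Z})$ is determined, and the coroots $\alpha^{\vee} \in X^{\vee}$ are uniquely determined by the reflection axiom of a root datum (concretely via $\alpha^{\vee} = 2\alpha/(\alpha,\alpha)$ using the Weyl-invariant inner product, interpreted in $X^{\vee}$). So the root datum $\mathfrak{X}$ is completely pinned down by the finite data above, and the lemma follows. I do not expect any significant obstacle; the only care needed is in checking that two different choices of $M$ and $r-s$ genuinely produce non-isomorphic root data, but since the argument only needs finiteness, not an exact count, even a wasteful over-count suffices.
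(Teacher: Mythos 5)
Your proof is correct, but it is a genuinely different argument from the one in the paper. You classify reductive groups via their root data: you bound the rank and the size of the root system by the dimension formula $\dim\bG = \mathrm{rank}(X) + |\Phi|$, reduce to finitely many abstract root systems by rank, pin down the semisimple part of the character lattice by the sandwich $Q\subset X\cap\mathbb{Q}\Phi\subset P$ with $[P:Q]<\infty$, and observe that the torus part splits off as a free summand so only its rank matters; the coroots then come for free. The paper instead uses the structure theorem directly on the group: it writes $\bG$ as a quotient of $\widetilde{\bG}'\times Z^0(\bG)$ by a finite central subgroup $\Gamma$, shows $\Gamma$ must sit inside the finite group $Z(\widetilde{\bG}')\times Z^0(\bG)[|Z(\widetilde{\bG}')|]$, and notes there are finitely many choices for each of $\widetilde{\bG}'$, $Z^0(\bG)$, and $\Gamma$ at a fixed dimension. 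The two routes are dual to each other (the paper's $\Gamma$ corresponds to your lattice data between $Q$ and $P$ twisted against the torus factor), but yours sits entirely in the combinatorics of lattices while the paper's works with the groups themselves and avoids invoking the full root-datum classification. One small place where you should be careful: the formula $\alpha^\vee = 2\alpha/(\alpha,\alpha)$ lands in $X\otimes\mathbb{Q}$, not in $X^\vee$; the cleaner statement is that $\alpha^\vee$ is the unique element of $X^\vee$ with $\langle\alpha,\alpha^\vee\rangle=2$ whose associated reflection preserves $\Phi$, which is what actually makes the coroot datum redundant. This does not affect the validity of the proof.
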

\begin{proof} Fix a complex connected reductive group $\bG$. Let $\tilde \bG'$ be  the universal cover of its derived group, let $Z^0(\bG)$ be the connected component of the center of $\bG$, and let $\Gamma$ be the kernel of the multiplication map $\tilde \bG' \times Z^0(\bG) \to \bG$. 

Let $Z(\tilde \bG')$ be the center of  $\tilde \bG'$. Note that $Z(\tilde{\mathbf{G}}')$ is finite, that $\Gamma \subset Z(\tilde \bG') \times Z^0(\bG)$, and that $\Gamma \cap Z^0(\bG)$ is trivial. Thus, $\Gamma$  is a graph of a morphism from  subgroup of $Z(\tilde \bG')$ to $Z^0(\bG)$. This implies that $\Gamma< Z(\tilde \bG') \times Z^0(\bG)[|Z(\tilde \bG')|]$, where for an integer $k$ the group $Z^0(\bG)[k]$ is the subgroup of elements of order dividing $k$ in $Z^0(\bG)$.

Any complex connected reductive group $\bG$ is uniquely determined (up to isomorphism)  by the following:
\begin{itemize}
\item the simply conected semi-simple complex group $\tilde \bG'$.
\item the complex algebraic torus $Z^0(\bG)$.
\item the finite subgroup  $\Gamma< Z(\tilde \bG') \times Z^0(\bG)[|Z(\tilde \bG')|]$.
\end{itemize}
Since each of those has only finitely many options given the dimension of $\bG$, the claim follows.
\end{proof}

\begin{lemma}\label{lem:f.act.on.root.dat} For any complex connected reductive group $\mathbf{G}$ and any finite abelian group $A$,
$$\# Mor(A,Out(\mathbf{G}))/Ad(Out(\mathbf{G}))<\infty$$
\end{lemma}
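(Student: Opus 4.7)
The plan is to reduce the statement to the Jordan--Zassenhaus theorem via the decomposition of $\mathbf{G}$ into its derived subgroup and connected center. Let $\mathbf{G}' := [\mathbf{G},\mathbf{G}]$ be the (semisimple) derived subgroup and $\mathbf{Z}^\circ := Z(\mathbf{G})^\circ$ the (toral) connected center; both are characteristic in $\mathbf{G}$, we have $\mathbf{G} = \mathbf{G}' \cdot \mathbf{Z}^\circ$, and the finite intersection $K := \mathbf{G}' \cap \mathbf{Z}^\circ$ lies in $Z(\mathbf{G}')$. Any automorphism of $\mathbf{G}$ restricts to automorphisms of $\mathbf{G}'$ and of $\mathbf{Z}^\circ$; conversely, a pair $(\varphi',\varphi_Z) \in \Aut(\mathbf{G}') \times \Aut(\mathbf{Z}^\circ)$ descends to an automorphism of $\mathbf{G}$ iff $\varphi_Z(K) = K$ and $\varphi'|_K = \varphi_Z|_K$. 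Since $\Inn(\mathbf{G})$ acts trivially on $\mathbf{Z}^\circ$, passing to outer automorphisms yields an injection
$$
Out(\mathbf{G}) \hookrightarrow Out(\mathbf{G}') \times \Aut(\mathbf{Z}^\circ),
$$
whose image is cut out by the two compatibility conditions. Since $K$ and $\Aut(K)$ are finite, both conditions are of finite index in the target, so the image has finite index. Set $H := Out(\mathbf{G}') \times \Aut(\mathbf{Z}^\circ)$; here $Out(\mathbf{G}')$ is finite (it embeds in the automorphism group of the Dynkin diagram of $\mathbf{G}'$), while $\Aut(\mathbf{Z}^\circ) \cong GL_m(\Z)$ with $m := \dim \mathbf{Z}^\circ$.

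Next, I would prove the analogue of the statement for $H$ in place of $Out(\mathbf{G})$. Because $H$ is a direct product and conjugation is componentwise, $Mor(A,H)/Ad(H)$ factors as the product of $Mor(A, Out(\mathbf{G}'))/Ad(Out(\mathbf{G}'))$ and $Mor(A, GL_m(\Z))/Ad(GL_m(\Z))$. The first factor is finite because both $A$ and $Out(\mathbf{G}')$ are. The second factor is finite by the Jordan--Zassenhaus theorem: for every finite group $A$ there are only finitely many isomorphism classes of $\Z[A]$-modules of given $\Z$-rank, equivalently, finitely many $GL_m(\Z)$-conjugacy classes of homomorphisms $A \to GL_m(\Z)$.

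To pass from $H$ down to the finite-index subgroup $\Gamma := Out(\mathbf{G})$, I would use a double-coset argument. The natural map $Mor(A,\Gamma)/Ad(\Gamma) \to Mor(A,H)/Ad(H)$ has finite codomain by the previous step, so it suffices to bound its fibers. Given a class represented by $f_0 \in Mor(A,\Gamma)$, the $H$-conjugates of $f_0$ are parametrized by $H/C_H(f_0)$; two such conjugates $h_1 f_0 h_1^{-1}$ and $h_2 f_0 h_2^{-1}$ are $\Gamma$-conjugate iff $h_2 \in \Gamma \cdot h_1 \cdot C_H(f_0)$. Thus the fiber injects into the double coset set $\Gamma \backslash H / C_H(f_0)$, which has cardinality at most $[H:\Gamma] < \infty$, and the lemma follows.

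The main obstacle is the first paragraph: one must carefully verify that the embedding $Out(\mathbf{G}) \hookrightarrow Out(\mathbf{G}') \times \Aut(\mathbf{Z}^\circ)$ has finite-index image, which amounts to tracking how inner automorphisms interact with the almost-direct-product decomposition $\mathbf{G} = \mathbf{G}' \cdot \mathbf{Z}^\circ$ and how the finite compatibility condition on $K$ cuts out the image. Once this structural decomposition is established, the remaining two steps -- Jordan--Zassenhaus and the double-coset bound -- are essentially automatic.
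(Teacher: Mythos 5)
Your argument is correct and mirrors the paper's proof: both embed $Out(\mathbf{G})$ into $Out(\mathbf{G}')\times\Aut(Z^0(\mathbf{G}))$ with finite cokernel, pass from $Ad(Out(\mathbf{G}))$-classes to $Ad$-classes under the ambient group at the cost of a finite-index (double-coset) factor, and reduce to the finiteness of $GL_m(\mathbb{Z})$-conjugacy classes of homomorphisms $A\to GL_m(\mathbb{Z})$. The only cosmetic difference is that you invoke Jordan--Zassenhaus directly for the last step, whereas the paper deduces it from finiteness of $\mathbb{Q}$-conjugacy classes plus a finite-fiber theorem of Platonov--Rapinchuk; these are interchangeable.
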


\begin{proof} Any automorphism of $\mathbf{G}$ is determined by its restrictions to the derived subgroup $\bG'$ and to the connected component $Z^0(\bG)$ of the center. We first claim that the map $\Aut(\mathbf{G})\rightarrow \Aut(Z^0(\mathbf{G}))\times Aut(\bG')$ has finite cokernel. Indeed, let $\bfK$ be the kernel of the map $\mathbf{G}' \times Z^0(\mathbf{G}) \rightarrow \mathbf{G}$. The group $\mathbf{K}$ is finite. Let $\bfM \subset \mathbf{G}' \times Z^0(\mathbf{G})$ be the product of the center $Z(\mathbf{G}')$ and the finite group of elements of $Z^0(\mathbf{G})$ of order dividing $|\bfK|$. The group $\mathbf{M}$ is finite, contains $\mathbf{K}$, and is characteristic in $\mathbf{G}' \times Z^0(\mathbf{G})$. It follows that the subgroup of $\Aut(Z^0(\mathbf{G}))\times Aut(\bG')$ fixing $\mathbf{K}$ has finite index. Any element in this subgroup extends to an automorphism of $\mathbf{G}$.

Let $\phi$ be the composition $Aut(\mathbf{G})\rightarrow Aut(Z^0(\mathbf{G}))\times Aut(\bG') \rightarrow Aut(Z^0(\mathbf{G}))\times Out(\bG')$. By the paragraph above, the cokernel of $\phi$ is finite. Note also that the kernel of $\phi$ is the subgroup of inner automorphisms. In particular, we have an embedding $Out(\mathbf{G}) \rightarrow Aut(Z^0(\mathbf{G})) \times Out(\mathbf{G}')$ with a finite cokernel.

The group $Out(\bG')$ is finite. Denote it by $\Gamma$. The group $Aut(Z^0(\mathbf{G}))$ is isomorphic to 
$GL_n(\Z)$ for some integer $n$.
We get:
\begin{align*}
|Mor(A,Out(\mathbf{G}))/Ad(Out(\mathbf{G}))|&\leq |Mor(A,GL_n(\Z)\times \Gamma)/Ad(Out(\mathbf{G}))|
	\\&\leq[GL_n(\Z)\times \Gamma:Out(\mathbf{G})] \cdot |Mor(A,GL_n(\Z)\times \Gamma)/Ad(GL_n(\Z)\times \Gamma)|
	\\&\leq [GL_n(\Z)\times \Gamma:Out(\mathbf{G})]\cdot |\Gamma| \cdot |Mor(A,GL_n(\Z))/Ad(GL_n(\Z))|.
\end{align*}
It remains to prove that $Mor(A,GL_n(\Z))/Ad(GL_n(\Z))$ is finite. We have a map  $$\phi:Mor(A,GL_n(\Z))/Ad(GL_n(\Z))\to Mor(A,GL_n(\Q))/Ad(GL_n(\Q)).$$
Basic representation theory of finite groups implies that $Mor(A,GL_n(\Q))/Ad(GL_n(\Q))$ is finite, so it remains to prove that the fibers of $\phi$ are finite. This follows from the fact that the fibers of 
$$\psi:Mat_n(\Z)^{|A|}/Ad(GL_n(\Z))\to Mat_n(\Q)^{|A|}/Ad(GL_n(\Q))$$ are finite, which is  a special case of 
	\cite[Theorm 4.9]{PR}
\end{proof}


\subsection{Construction of the family} 

\begin{lemma} \label{lem:f.F.and.Fr.fam}
For any integer $n>0$, there exists a finite etale morphism  $\Psi_n:\cE_n\to \cF_n$ of schemes of finite type over $\Z$ with an action of $C_n$ on 
$\cE_n$ over $\cF_n$
such that, for any degree $n$ extension $E/F$ of finite fields, there exists $\nu:\spec F \to \cF_n$  such that 
$$\spec(E)  \simeq \spec(F)\times_{\cF_n} \cE_n$$ as a $C_n$-scheme. Here the action of $C_n$ on $E$ is the Galois action.
\end{lemma}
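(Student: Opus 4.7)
The plan is to realize $\cF_n$ as an affine scheme of finite type over $\Z$ parameterizing pairs $(f,g)$ of polynomials with $f$ monic of degree $n$ and $g$ of degree $<n$, subject to the requirement that $g$ induces an order-$n$ automorphism of $R[x]/(f)$ over $R$. Concretely, I would start with $\A^{2n}_{\Z} = \Spec R$, where $R = \Z[a_0,\ldots,a_{n-1},b_0,\ldots,b_{n-1}]$ and $f = x^n+\sum_i a_i x^i$, $g = \sum_j b_j x^j$ are the universal polynomials, and define $\cF_n\subset \A^{2n}_{\Z}$ as the locally closed subscheme cut out by the closed conditions that every coefficient of $f(g(x))\bmod f(x)$ and every coefficient of $g^{\circ n}(x)-x\bmod f(x)$ vanishes, together with the open conditions that $\mathrm{disc}(f)$ be invertible and that, for each $1\le k\le n-1$, at least one coefficient of $g^{\circ k}(x)-x\bmod f(x)$ be invertible. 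I would then set $\cE_n := \Spec R[x]/(f(x))$ restricted over $\cF_n$, take $\Psi_n$ to be the structure projection (finite \'etale of degree $n$, since $\mathrm{disc}(f)$ is invertible on $\cF_n$), and generate the $C_n$-action by the $\cF_n$-morphism $\sigma\colon\cE_n\to\cE_n$, $x\mapsto g(x)$, which is well-defined by the first closed condition and satisfies $\sigma^n=\mathrm{id}$ by the second.

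For the universal property, given a degree-$n$ extension $E/F$ of finite fields with Frobenius generator $\phi$ of $\Gal(E/F)$, I would apply the primitive element theorem to write $E = F[\alpha]$, let $f_0\in F[x]$ be the minimal polynomial of $\alpha$, and let $g_0\in F[x]$ of degree $<n$ be the unique polynomial with $g_0(\alpha)=\phi(\alpha)=\alpha^{|F|}$; the pair $(f_0,g_0)$ then provides the desired map $\nu\colon\Spec F\to\A^{2n}_{\Z}$. The key fact I would use to verify that $\nu$ lands in $\cF_n$ is that, since $f_0$ is the minimal polynomial of $\alpha$ of degree $n$, a polynomial of degree $<n$ vanishes modulo $f_0$ if and only if it vanishes at $\alpha$. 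This immediately yields the two closed conditions (since $\phi(\alpha)$ is a root of $f_0$ and $\phi^n(\alpha)=\alpha$), the invertibility of $\mathrm{disc}(f_0)$ (separability of $E/F$), and the non-vanishing of $g_0^{\circ k}(x)-x\bmod f_0$ for $1\le k\le n-1$ (since $\alpha$ generates $E/F$ and hence is not fixed by any nontrivial element of $\Gal(E/F)$). Under the identification $\Spec F\times_{\cF_n}\cE_n = \Spec F[x]/(f_0)\cong \Spec E$ given by $x\mapsto\alpha$, the action $x\mapsto g_0(x)$ matches the Frobenius action on $E$, as required.

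The main obstacle is really just the set-up: I need to choose the right combination of closed conditions (so that $\sigma$ is a well-defined endomorphism of order dividing $n$ over the universal base) and open conditions (to enforce separability of the extension and fiberwise faithfulness of the induced $C_n$-action). Once these are correctly written down, the verification that every cyclic Galois extension of a finite field arises as a fiber reduces to the primitive element theorem plus the elementary minimal-polynomial observation used above.
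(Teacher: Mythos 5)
Your proposal is correct and matches the paper's construction essentially step-for-step: the paper defines $\cF_n(A)$ and $\cE_n(A)$ by precisely the conditions you write (separability via $\mathrm{res}_t(f,f')\in A^\times$, $f \mid f\circ g$, and $f \mid g^{\circ n}-t$), observes that these functors are representable by finite-type $\Z$-schemes, endows $\cE_n$ with the $C_n$-action $k\cdot(f,g,z)=(f,g,g^{\circ k}(z))$, and for a given degree-$n$ extension $E/F$ picks a primitive element $\alpha$, its minimal polynomial $f$, and the unique $g$ of degree $<n$ with $g(\alpha)=\mathrm{Fr}_F(\alpha)$. The only difference is your additional open "faithfulness" conditions that $g^{\circ k}(x)-x\not\equiv 0 \bmod f$ for $1\le k\le n-1$; the paper omits these, which is harmless since the lemma only requires that every degree-$n$ extension of finite fields arise as a fiber, not that every fiber be such an extension.
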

\begin{proof}
For a unital ring $A$ and an integer $k$, let $A_k[t]$ be the set of polynomials of degree $\leq k$ and let $A'_k[t]$ be the set of monic polynomials of degree $k$. Denote the resultant of two polynomials $f(t),g(t)$ by $res_t(f,g)$. Let 
	$$\cF_n(A):=\{(f,g)\in A'_n[t] \times A_{n-1}[t] \mid res_t(f,f')\in A^\times \text{ and } f\text{ divides }f\circ g\text{ and }g^{\circ n}-t\}$$ and let
	$$\cE_n(A):=\{(f,g,z)\in A'_n[t]\times A_{n-1}[t]\times A\mid (f,g)\in \cF_n \text{ and } f(z)=0 \}.$$
Define an action of $C_n$ on $\cE_n(A)$ by $$k \cdot (f,g,z)\mapsto(f,g,g^{\circ k}(z)).$$
		
By construction, the assignments $A\mapsto \cF_n(A)$ and $A\mapsto \cE_n(A)$ give rise to representable functors. We denote the representing schemes by $\cF_n$ and $\cE_n$. Similarly, the action of $C_n$ on $\cE_n(A)$ gives rise to an action of $C_n$ on $\cE_n$ over  $\cF_n$. Denote by $\Psi_n:\cE_n \to \cF_n$ the projection. The map $\Psi_n$ is an etale map.

Suppose that $E/F$ is a degree $n$ extension of finite fields. Let $\alpha \in E$ be a generator, and let $f\in F_n[t]$ be its (monic) minimal polynomial. Let $g\in F_{n-1}[t]$ be the polynomial of degree $<n$ such that $Fr_F(\alpha)=g(\alpha)$. The tuple $(f,g)$ is a point in $\cF_n(F)$, i.e., it defines a morphism  $\spec F\to \cF_n$. It is easy to see that 
		$$\spec(E)  \simeq \spec(F)\times_{\cF_n} \cE_n,$$
 as required.
\end{proof}

We now prove the main result of this appendix.
\begin{proof}[\Rami{Proof of Lemma \ref{lem:fam.of.sym}}]
Let $\fX$ be a pair consisting of a root datum and a choice of positive roots, and let $k$ be an integer. Let  $\alpha:S_2\times C_k\to Aut(\fX)$ be a morphism.
	
Let $\cG_\fX\to \spec \Z$ be the split reductive group scheme corresponding to $\fX$. Let $\alpha_2:S_2\times C_k\to Aut(\cG_\fX)$ be the corresponding action.

	Let $Aut_{\cG_{\fX}/\Z}:Schemes^{op} \to Groups$ be the functor defined by $Aut_{\cG_{\fX}/\Z}(S)=Aut_{S}(\cG_{\fX}\times_{\spec\Z}S)$, cf. \cite[Definition 7.1.3]{Con}. By \cite[Theorem 7.1.9]{Con}, this functor is representable by a (not necessarily finite type) $\Z$-group scheme that we also denote $Aut_{\cG_{\fX}/\Z}$.
	


Denoting $S_2=\left\{ 1,\eps \right\}$, let $\cI_{\fX,\alpha}\subset Aut_{\cG_{\fX}/\Z}$ be defined by

\[
\cI_{\fX,\alpha}(S):=\left\{ a \in Aut_{\cG_{\fX}/\Z}(S) \left\vert \text{ \parbox{10cm}{$a$ commutes with $\alpha_2(C_k)_S$ and, for every geometric point $s$ of $S$, the automorphism $a_s$ is in the class $\alpha(\eps,0)$}} \right. \right\}, 
\]
where $S$ is an affine scheme. For an automorphism $\beta$ of $\cG_{\fX}$, we denote by $\beta_S$ its restriction to $\cG_{\fX} \times S$. By \cite[Theorem 7.1.9]{Con}, $\cI_{\fX,\alpha}$ is of finite type. 
	
	Define an action $\alpha_3:S_2\times C_k\to Aut_{\cI_{\fX,\alpha}}( \cG_{\fX} \times_{} \cI_{\fX,\alpha})$ by
	$$\alpha_3(\eps^i j)(x,\eta)=(\eta ^i \alpha_2(j)x,\eta),$$ where $i\in \Z, j\in C_k$ and $(x,\eta)\in \cG_{\fX} \times_{} \cI_{\fX,\alpha}(S)$.
	
Let $\mathcal{F}_k,\mathcal{E}_k$ be as in Lemma \ref{lem:f.F.and.Fr.fam} and define $\cS_{\fX,\alpha}:=(\mathcal{I}_{\frak{X},\alpha} \times \mathcal{F}_k)\, ^{\wedge}_{\mathcal{F}_k} \mathcal{E}_k$, where $^{\wedge}$ denotes the internal morphism space, see e.g. \cite[\S\S3.1]{AA_sta}. An $F$-point of $\mathcal{S}_{\frak{X},\alpha}$ is a pair $(z,y)$, where $y\in \mathcal{F}_k(F)$ and $z\in \mathcal{I}_{\frak{X},\alpha}((\mathcal{E}_k)_y)$. Let  
\[
\mathcal{R}'_{\frak{X},\alpha}=\left( \mathcal{G}_\frak{X} \times \mathcal{S}_{\frak{X},\alpha} \right) ^{\,\,\wedge} _{\mathcal{S}_{\frak{X},\alpha}} \left( \mathcal{E}_k \times_{\mathcal{F}_k} \mathcal{S}_{\frak{X},\alpha} \right).
\]
Note that  $\cR'_{\fX,\alpha}$ has a natural structure of a group scheme over $\cS_{\fX,\alpha}$. By their constructions, $\mathcal{E}_k,\mathcal{S}_{\frak{X},\alpha},\mathcal{R}'_{\frak{X},\alpha}$ all have an action of $S_2 \times C_k$ ($S_2$ acts trivially on $\mathcal{E}_k$). Denoting the $S_2 \times C_k$-action on $\mathcal{R}'_{\mathfrak{X},\alpha}$ by $$\alpha_4:S_2\times C_k\to Aut_{\cS_{\fX,\alpha}}(\cR'_{\fX,\alpha}),$$ let 
	$$\cR_{\fX,\alpha}:=(\cR'_{\fX,\alpha})^{\alpha_4(C_k)},$$ and $$t_{\fX,\alpha}:=\alpha_4(\eps)|_{\cR_{\fX,\alpha}}.$$
Denote $n_\fX:=C^{spt}(\dim_{\Spec \mathbb{Z}}(\cG_\fX)),$ where $C^{spt}$ is the function given by Lemma \ref{lem:split.ext}.

Let 
$$\Delta_n:=\{(\fX,d,\kappa)|\fX \text{ is a root datum of dimension } \leq n; d \leq n_\fX ; \kappa \in Mor(S_2\times C_d,\Aut(\fX))/ad(\Aut(\fX))\}.$$
	since there are finitely many root data of a given dimension (Lemma  \ref{lem:f.root.dat}) and finitely many actions of $S_2\times C_d$ ($d \leq n_{\fX}$) on a given root datum (Lemma \ref{lem:f.act.on.root.dat}), the set  $\Delta_n$ is finite.
Finally, set 
	$$\cS_{n}:=\bigsqcup_{(\fX,d,[\alpha]) \in \Delta_n}
	\cS_{\fX,\alpha},$$
	$$\cR_{n}:=\bigsqcup_{(\fX,d,[\alpha]) \in \Delta_n}
	\cR_{\fX,\alpha},$$
	and
	$$t_{n}:=\bigsqcup_{(\fX,d,[\alpha]) \in \Delta_n}
	t_{\fX,\alpha}.$$
We claim that $(\cR_{n},\cS_{n},t_n)$ satisfies the requirements of the lemma.

Parts (\ref{lem:fam.of.sym:1},\ref{lem:fam.of.sym:3}) follow from the fact that, for any geometric point $x$ of $\cS_{\fX,\alpha}$, the group scheme $(\cR_{\fX,\alpha})_x$ is reductive and its absolute root system is $\fX$. It remains to show Part \eqref{lem:fam.of.sym:2}.

Let $n$ be an integer, let $F$ be a finite field, let $\bG$ be a reductive group of dimension $\leq n$ defined over $F$, and let $t$ be an involution of $\mathbf{G}$. We need to find an element $w\in \cS_n(F)$ such that
	 $$(\bG,t)\simeq ((\cR_n)|_w,t_n|_{(\cR_n)|_w}).$$	
Let $\fX$ be the absolute root datum of $\bG$. By Lemma \ref{lem:split.ext}, there is a field extension $E/F$ of degree $d \leq n_\frak{X}$ and an isomorphism $\mathbf{G}_E \simeq (\mathcal{G}_\frak{X})_E$.

Denoting the group of $E$-automorphisms of the algebraic group $(\mathcal{G}_{\frak{X}})_{E}$ by $Aut_{E} \left( (\mathcal{G}_{\frak{X}})_{E} \right)$, we get an element in $H^1(Gal(E/F),Aut_{E}((\cG_\fX)_{E}))$. By Lang's theorem, this element comes from an element $H^1(Gal(E/F),Out_{E} ((\cG_\fX)_{E}))$ via the embedding $Out_{E}( (\cG_\fX)_{E})\cong Out(\fX)\subset Aut_{E} (\cG_\fX)_{E}$. Since the action of $\Gal(E/F)$ on $Out_{E}\left( (\cG_\fX)_{E} \right)$ is trivial, this element is a homomorphism $\xi: Gal(E/F)\to Out_{E} \left( (\cG_{\fX})_{E} \right)=Aut(\fX)$.

Let $[t]\in Aut(\fX)$ be the involution corresponding to $t\in \Aut(\bG)$. We get an action $\alpha:S_2\times C_d\to Aut(\fX)$. By Lemma \ref{lem:f.F.and.Fr.fam}, there is an element $y\in \cF_d(F)$ such that $(\cE_d)|_y=\Spec E$ and the action $C_d$ on this fiber is the Frobenius action.

Let  $t_{E}$ be the automorphism of  $\bG_{E}$ corresponding to $t$. We will consider it as an element in $Aut_{\cG_X/\Z}(E)=Aut_E\left( (\mathcal{G}_\frak{X})_E \right)$. By construction, $t\in\cI_{\fX,\alpha}(F)$. The tuple $(y,t)$ gives a point $w\in \cS_{\fX,\alpha}(F) \subset \cS_n(F)$. Finally,
		 $$(\bG,t)\simeq ((\cR_n)|_w,t_n|_{(\cR_n)|_w}).$$	

	
	
\end{proof}


\bibliographystyle{alpha}
\bibliography{Ramibib}

\end{document}